\documentclass[11pt]{amsart}

\usepackage{amsfonts,amssymb,amsmath,amsthm,mathtools}
\usepackage{url}
\usepackage{enumerate}
\usepackage[T1]{fontenc}
\usepackage{mathrsfs}
\usepackage{psfrag}
\usepackage{pst-all}
\usepackage[dvips]{graphicx}
\usepackage{xcolor}
\usepackage{tikz}
 \usepackage{soul}

\usepackage{cases}
\usepackage{caption, subcaption}
\usepackage{enumitem}       
\usepackage[utf8]{inputenc}
\usepackage[margin=3cm]{geometry}
\usepackage{enumerate}
\usepackage{listings}
\usepackage{amsthm, amsfonts, amssymb, amsmath,mathbbol}

\usepackage{graphicx}
\usepackage{xcolor}
\usepackage{color}
\usepackage{framed}
\usepackage{tikz,tikz-cd}
\usepackage{pgf,tikz}
\usepackage{tcolorbox}
\usepackage{scalefnt}
\usepackage{accents}
\usepackage{caption}
\usepackage{makecell}
\usepackage{float}
\usepackage{thmtools}
\usepackage{thm-restate}
\usepackage{makecell}

\usetikzlibrary{patterns, positioning, arrows,decorations.markings}

\definecolor{rose}{rgb}{0.93, 0.23, 0.51}
\definecolor{ref}{rgb}{0.29, 0.59, 0.82}
\definecolor{bovert}{RGB}{200,255,200}
\usepackage[hypertexnames=false,hidelinks]{hyperref}
\hypersetup{colorlinks,linkcolor=black,citecolor=rose,urlcolor={blue!80!black}}

\newtheorem{lem}{Lemma}[section]
\newtheorem{cor}[lem]{Corollary}
\newtheorem{thm}[lem]{Theorem}
\newtheorem{prop}[lem]{Proposition}

\newtheorem{fact}[lem]{Fact}

\usepackage{cleveref}

\declaretheorem[name=Theorem,numberwithin=section]{theorem}

\theoremstyle{definition}
\newtheorem{definition}[lem]{Definition}
\newtheorem*{definition*}{Definition}
\newtheorem{ex}[lem]{Example}

\newtheorem{rmk}[lem]{Remark}
\newtheorem{question}[lem]{Question}
\newtheorem*{question*}{Question}
\newtheorem*{warning*}{Warning to the reader}

\numberwithin{equation}{section}

\newcommand{\K}{\mathrm{k}}

\newcommand{\R}{\mathbf R}

\newcommand{\tauL}{\mathcal{T}}

\newcommand{\RP}{\mathbf{RP}}

\newcommand{\B}{\mathbf B}

\newcommand{\C}{\mathbf C}

\renewcommand{\H}{\mathbf H}

\newcommand{\dhat}{\smash{\hat{d}}}

\newcommand{\Conv}{\operatorname{Conv}}

\newcommand{\Hcal}{\mathcal{H}}
\newcommand{\Conf}{\mathrm{Conf}}

\newcommand{\Span}{\mathrm{Span}}

\newcommand{\PPL}{\mathsf{PPL}}
\newcommand{\Ric}{\mathsf{Ric}}

\renewcommand{\qed}{$\hfill\blacklozenge$}
\renewcommand{\b}{\mathbf{b}}

\renewcommand{\O}{\operatorname{O}}

\newcommand{\Ccal}{\mathcal{C}}

\newcommand{\light}{{\operatorname{light}}}

\newcommand{\hk}{\mathfrak{h}}

\title{Gromov hyperbolic domains in Minkowski space}

\author{Adam Chalumeau}
\date{}

\begin{document}

\begin{abstract} 
    We investigate domains in Minkowski space that are Gromov hyperbolic with respect to a Kobayashi-like metric introduced by Markowitz in the 1980s. For convex, future complete domains, Gromov hyperbolicity is shown to be equivalent to the stable acausality of the boundary. An analogous characterization is obtained for bounded, convex, causally convex domains in terms of the stable acausality of their Geroch–Kronheimer–Penrose causal boundaries.
    Our approach is based on explicit comparisons between the Markowitz metric, the Sormani--Vega null distance and the quasi-hyperbolic metric. We also make use of dynamical arguments similar to those of Benoist and Zimmer in projective and complex geometry. Finally, we compare the Markowitz metric to the Hilbert metric.
\end{abstract}

\maketitle

\section{Introduction}
\sloppy  

\renewcommand{\thetheorem}{\Alph{theorem}}
\setcounter{theorem}{0}

This article studies domains $\Omega$ in Minkowski space $\R^{1,n}$ for which a certain Kobayashi-like metric is hyperbolic in the sense of Gromov \cite{Gromov_hyp_groups}. This metric, denoted $\delta_\Omega$, was introduced and studied by Markowitz \cite{markowitz_1981,markowitz_warped_product} in the context of conformal pseudo-Riemannian manifolds, as an analogue of the Kobayashi--Hilbert metric in projective geometry \cite{kob_projectif}. It was recently used by Galiay and the author \cite{Cha_Gal} to prove a rigidity result for bounded domains in pseudo-Riemannian Minkowski space, and was later further investigated in \cite{thèse,article_dist_markotitz,thèse_blandine}. In the present work, we focus on the metric properties of the space $(\Omega,\delta_\Omega)$, with particular emphasis on the following question:

\begin{question}
    \label{question_intro}
    For which domains $\Omega\subset\R^{1,n}$ is the metric space $(\Omega,\delta_\Omega)$ Gromov hyperbolic?
\end{question}

This question is motivated by the fact that analogous problems are well understood for other Kobayashi-like metrics. In projective geometry, Karlsson and Noskov \cite{karlsson_Noskov} gave necessary and sufficient conditions on the boundary of a convex domain $\Omega$ for the Gromov hyperbolicity of the Hilbert metric $H_\Omega$, generalizing a result of Benoist \cite{Convexes_divisibles_I} for divisible convex domains. A complete characterization of the hyperbolicity of the Hilbert metric was later obtained by Benoist \cite{Benoist_convexes_quasisymetriques}:

\begin{thm}[Benoist]
    Let $\Omega$ be a bounded convex domain in $\R^n$. Then $(\Omega,H_\Omega)$ is Gromov hyperbolic if and only if $\Omega$ is quasi-symmetrically convex.
\end{thm}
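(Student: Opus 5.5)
We propose to follow Benoist's strategy for this statement, which here is quoted as background. Throughout, $\Omega$ is a bounded convex domain in $\R^n$ with Hilbert metric $H_\Omega$. Quasi-symmetric convexity is taken in Benoist's sense: it is a uniform comparison, at all boundary points and all scales, between the two sides of the boundary on a supporting hyperplane.

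The plan rests on three tools: compactness of the space of pointed convex domains, invariance of the Hilbert metric under projective maps, and the Benzécri-type rescaling argument.

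\textbf{Direction 1: quasi-symmetric convexity implies hyperbolicity.} We argue by contradiction.
\begin{enumerate}
\item Assume $(\Omega,H_\Omega)$ is not Gromov hyperbolic. Then there are geodesic triangles whose thinness constants $\delta_k$ tend to infinity. Since $H_\Omega$ is projectively invariant, we may take these triangles to be straight-segment triangles.
\item Rescale. Choose a point $p_k$ on a side that is far (distance $\delta_k$) from the other two sides. Apply projective maps $g_k$ sending $(\Omega,p_k)$ into a compact set of pointed convex domains, as in Benzécri's theorem.
\item Extract a limit pointed domain $(\Omega_\infty,p_\infty)$ with $H_{g_k\Omega}\to H_{\Omega_\infty}$ uniformly on compacta.
\item The fat triangles converge to a triangle in $\Omega_\infty$ with an ideal configuration. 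This forces $\partial\Omega_\infty$ to contain a nontrivial segment, or a pair of adjacent faces spanning a flat region.
\item Quasi-symmetric convexity is a scale-invariant condition, so it passes to projective limits. We need to check that it excludes such flat pieces: a segment in $\partial\Omega_\infty$ contradicts the uniform two-sided comparison.
\end{enumerate}

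\textbf{Direction 2: hyperbolicity implies quasi-symmetric convexity.} We again argue by contradiction.
\begin{enumerate}
\item Assume the quasi-symmetry constants degenerate along boundary points $\xi_k$ at scales $\epsilon_k$.
\item Rescale as in Direction 1. The limit domain then has a boundary segment or a non-strictly-convex boundary.
\item Gromov hyperbolicity with a fixed $\delta$ passes to Hausdorff limits of pointed metric spaces. Therefore $\Omega_\infty$ is $\delta$-hyperbolic.
\item Conclude with the Karlsson--Noskov observation: a boundary segment in a convex domain yields fat triangles, obtained by taking two vertices converging to the endpoints of the segment. This is a contradiction.
\end{enumerate}

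\textbf{Main obstacle.} The hardest step is making precise that, under the Benzécri compactness, quasi-symmetric convexity of $\Omega$ is equivalent to strict convexity together with $C^1$ boundary of every limit domain obtained by rescaling. Lower semicontinuity of the relevant ratios under projective limits has to be checked carefully. A boundary that fails to be $C^1$ must also be ruled out, since a corner also gives fat triangles. We would treat this by duality, using hyperbolicity of the dual domain.
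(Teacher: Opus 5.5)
\textbf{Context.} The paper does not prove this statement. It is quoted from Benoist as background, so the relevant comparison is with Benoist's own argument. Your skeleton (Benz\'ecri compactness, projective rescaling, passage of Hilbert metrics and of the four-point condition to limits, Karlsson--Noskov) is indeed his. It is also the scheme the paper uses in Section~\ref{section_Gromov_hyp_implique_stab_acaus} for the ``only if'' directions of its own theorems.

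\textbf{What already works.} In Direction~1, the side through $p_k$ converges to a chord $(a,b)$ through $p_\infty$. The other two sides leave every compact subset of $\Omega_\infty$, because their distance to $g_kp_k$ tends to infinity and the Hilbert metrics converge on compacta. Hence they converge to segments $[a,c],[c,b]\subset\partial\Omega_\infty$ with $c\neq a,b$.

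\textbf{The gap.} Quasi-symmetric convexity is never stated or used. The two steps where it must enter (Direction~1, step~5, and Direction~2, step~2) are only asserted. Together they amount to the equivalence ``$\Omega$ is quasi-symmetrically convex if and only if every pointed limit of $g_k\Omega$, $g_k\in\mathrm{PGL}$, is strictly convex''. That equivalence is the actual content of Benoist's theorem, i.e.\ your ``main obstacle''.

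\textbf{Why the reason given for step 5 fails.} Write the two-sided comparison in graph coordinates over a tangent hyperplane as $D_xf(h)\le H\,D_xf(-h)$, with $D_xf(h)=f(x+h)-f(x)-df_x(h)$. This inequality keeps the same $H$ under adapted affine maps $(u,s)\mapsto(Au+b,\lambda s+\ell(u)+c)$, but not under general projective maps. The maps supplied by Benz\'ecri's theorem are arbitrary diverging elements of $\mathrm{PGL}$, so ``scale-invariant, hence it passes to projective limits'' does not follow. Two things are missing:
\begin{itemize}
\item a lemma showing that the normalization at $p_k$ can be done by adapted affine maps at nearby boundary points, while still giving a properly convex limit containing the base point (here quasi-symmetry must control the shape of caps);
\item a proof that a limit of uniformly quasi-symmetric graphs bounds a strictly convex set. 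For a maximal affine piece $[\alpha,\beta]$ of the limit graph, apply the inequality at a point $x$ of the piece closer to $\beta$, with $x-h\ge\alpha$ and $x+h>\beta$. Segments at infinity of the chart must be excluded separately.
\end{itemize}

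\textbf{Direction 2, step 2.} This step is unsupported in the same way, and ``a boundary segment or a non-strictly-convex boundary'' states one alternative twice. Here the missing argument is concrete.
\begin{itemize}
\item Take the $2$-plane $P_k$ spanned by the $k$-th degenerate direction and the normal. Use affine coordinates $(t,s)$ in which $\partial\Omega\cap P_k$ is locally the graph of a convex $f_k$ with $f_k(0)=f_k'(0)=0$ and $f_k(r_k)/f_k(-r_k)\to\infty$. Necessarily $r_k\to0$, since $\Omega$ is $C^1$ and strictly convex by Karlsson--Noskov.
\item The affine maps $(t,s)\mapsto(t/r_k,\,s/f_k(r_k))$ send this boundary to the graph of $F_k(t)=f_k(r_kt)/f_k(r_k)$, with $F_k(1)=1$ and $F_k(-1)\to0$.
\item The limit is the epigraph of a convex function that vanishes on $[-1,0]$ and is not affine. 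It contains no affine line, hence is properly convex, and its boundary contains a segment.
\item The limit is still uniformly Gromov hyperbolic, because $H_{\Omega\cap P_k}$ is the restriction of $H_\Omega$ and hyperbolicity passes to the limit. This contradicts Karlsson--Noskov.
\end{itemize}

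\textbf{Smaller points.}
\begin{itemize}
\item ``Since $H_\Omega$ is projectively invariant we may use straight-segment triangles'' is a non sequitur. The right reason is that straight segments are $H_\Omega$-geodesics, and the only ones once $\Omega$ is strictly convex. Thinness of triangles built from one chosen family of geodesics already gives the four-point condition.
\item Handling non-$C^1$ points through the dual domain is circular, since hyperbolicity of $\Omega^*$ is usually derived from Benoist's theorem. It is also unnecessary: Direction~1 produces boundary segments directly, and in Direction~2 Karlsson--Noskov already gives $C^1$.
\end{itemize}
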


In complex geometry, the Gromov hyperbolicity of the Kobayashi metric $K_\Omega$ on a domain $\Omega \subset \C^n$ is also a central topic. Balogh and Bonk \cite{Balogh2000} proved that the Kobayashi metric on bounded strongly pseudoconvex domains in $\C^n$ is Gromov hyperbolic. Gaussier and Seshadri \cite{GaussierSeshadri} then studied the Gromov hyperbolicity of bounded smooth convex domains, and a complete characterization in this setting was given by Zimmer \cite{Zimmer_complex_1}:

\begin{thm}[Zimmer]
    Let $\Omega$ be a bounded convex domain in $\C^n$ with smooth boundary. Then $(\Omega,K_\Omega)$ is Gromov hyperbolic if and only if $\Omega$ has finite type.
\end{thm}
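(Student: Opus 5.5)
The statement is Zimmer's theorem, quoted from \cite{Zimmer_complex_1}, so the paper does not reprove it. Here is how I would organise a proof, following Zimmer's rescaling strategy. The main tool is Frankel--Pinchuk affine rescaling. Given a bounded convex domain $\Omega\subset\C^n$ and points $p_k\in\Omega$ tending to $\partial\Omega$, choose complex affine maps $A_k$ with $A_k(p_k)=0$ such that $A_k\Omega$ converges, in the local Hausdorff topology, to a convex domain $\Omega_\infty$ containing no complex affine line. One then shows that the Kobayashi distances $K_{A_k\Omega}$ converge to $K_{\Omega_\infty}$ uniformly on compact sets. This should follow from convexity, Lempert's theorem and standard estimates of the form $K_\Omega(x,y)\asymp$ a sum of logarithms of distances to supporting hyperplanes. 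Since $K$ is invariant under the $A_k$, any failure of $\delta$-thinness near $p_k$ transfers to the limit.

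\emph{Necessity (hyperbolic $\Rightarrow$ finite type).} Suppose $\partial\Omega$ has a point $\xi$ of infinite type, so that some complex line has infinite order of contact with $\partial\Omega$ at $\xi$. I would then rescale along the inner normal at $\xi$. In the contact direction, use a dilation faster than any polynomial rate, chosen to match the actual flatness. The aim is to produce a limit $\Omega_\infty$ whose boundary contains a nontrivial complex affine disc. Next I would show directly that a convex domain whose boundary contains such a disc is not Gromov hyperbolic. Take two sequences of points approaching distinct points of the disc. Their Gromov product stays bounded while the points do not converge together, which contradicts the visibility property. Equivalently, one can build fat geodesic triangles with two vertices pushed toward the disc. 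Hyperbolicity passes to Hausdorff limits with the same constant, because the distances converge locally uniformly and geodesics in convex domains are well approximated. Hence $\Omega$ itself is not hyperbolic.

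\emph{Sufficiency (finite type $\Rightarrow$ hyperbolic).} Suppose $\Omega$ has finite type but is not hyperbolic. Then there are geodesic triangles $T_k$ and points $x_k\in T_k$ whose distance to the opposite sides tends to infinity. I would recentre at $x_k$ and rescale. Finite type, via McNeal's adapted polydiscs, forces the limit to be a polynomial model domain of the form $\{\operatorname{Re} z_1 + P(z')<0\}$, with $P$ a convex nondegenerate polynomial that has no complex line in its zero set. In the limit the triangles become fat geodesic triangles, possibly with ideal vertices. Ruling this out is the \textbf{main obstacle}. The problem is to show that such model domains are Gromov hyperbolic, or at least that no fat triangle survives in the limit. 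I would first try a Balogh--Bonk style estimate for the model domain: show $K$ is comparable, up to additive constants, to
\[
\log\frac{d(x,y)\text{-adapted box size}}{\sqrt{\delta(x)\delta(y)}},
\]
using quasi-homogeneous scalings of $P$. Then I would verify the Gromov four-point condition for this explicit quasi-metric. To make the argument uniform in $k$, I would use compactness of the family of normalised models, which holds because the type is bounded on the compact set $\partial\Omega$. A further delicate point is controlling triangles whose vertices escape to infinity in the limit. For these I plan to use the visibility of finite-type domains, namely that geodesics joining distinct boundary points stay in a compact set, to keep the rescaled triangles from degenerating.
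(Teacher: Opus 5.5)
You are right that the paper only quotes this theorem from \cite{Zimmer_complex_1} as motivation and contains no proof of it, so there is nothing internal to compare with. Judged on its own, your outline has a genuine gap in each direction.

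\textbf{Sufficiency.} This is the more serious gap. The step you label the main obstacle is the entire content of the theorem, and the route you propose for it is not viable as stated. The limits produced by McNeal--Gaussier rescaling at arbitrary points $x_k\to\partial\Omega$ are of the form $\{\operatorname{Re} z_1+P(z')<0\}$. Here $P$ is a nondegenerate convex polynomial that is in general not weighted homogeneous, so there are no quasi-homogeneous dilations to exploit. A two-sided Balogh--Bonk type estimate of $K$ holding uniformly over this family is a substantial problem in its own right, and your displayed formula is only a heuristic.

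The point you are missing is that you never need hyperbolicity of the models. What you need is visibility \emph{uniformly along the rescaled sequence}:
\begin{enumerate}
\item[(i)] geodesics of $A_k\Omega$ whose endpoints converge to two distinct points of $\partial\Omega_\infty\cup\{\infty\}$ all meet a fixed compact subset of $\Omega_\infty$;
\item[(ii)] geodesics of $A_k\Omega$ whose endpoints converge to the same such point eventually leave every compact subset.
\end{enumerate}
Given (i) and (ii), the contradiction is immediate. The two sides of $A_kT_k$ far from $A_kx_k\to x_\infty$ leave every compact set. By (i), after extraction, all three vertices converge to a single point of $\partial\Omega_\infty\cup\{\infty\}$. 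Then (ii) forbids the remaining side from passing near $x_\infty$.

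Visibility of the fixed domain $\Omega$, which is what you invoke, does not yield (i) or (ii), because the rescaling blows up a shrinking neighbourhood of the boundary. Proving (i) and (ii), including at the point $\infty$, is where convexity and the finite-type hypothesis really enter, through estimates on the infinitesimal Kobayashi metric that survive rescaling. Your proposal does not do this.

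\textbf{Necessity.} Producing a limit with a boundary disc is plausible. The rate must be tuned so that $\Omega_\infty$ contains no complex affine line, otherwise $K_{\Omega_\infty}$ degenerates. Passing the four-point condition to the limit only uses locally uniform convergence of the distances, exactly as in the paper's Corollary~\ref{cor_limite_de_domaines_Gromov_hyp}, which the paper says is modelled on \cite[Thm.~5.1]{Zimmer_complex_1}.

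However, your argument that a boundary disc rules out hyperbolicity is incorrect as written. Let $\xi\neq\eta$ lie in a complex affine disc $D\subset\partial\Omega_\infty$ and $o\in\Omega_\infty$. The points $x_k=(1-\epsilon_k)\xi+\epsilon_k o$ and $y_k=(1-\epsilon_k)\eta+\epsilon_k o$ lie on the complex affine disc $(1-\epsilon_k)D+\epsilon_k o\subset\Omega_\infty$. So $K_{\Omega_\infty}(x_k,y_k)$ stays bounded while $(x_k|y_k)_o\to\infty$: the Gromov product blows up rather than staying bounded.

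To get a contradiction from this, you would need to know that in a Gromov hyperbolic convex domain, sequences with $(x_k|y_k)_o\to\infty$ have the same Euclidean limit. That is a visibility statement about the domain, not a consequence of hyperbolicity. Combined with the example above, it is at least as strong as the claim you are using it to prove.

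You need a genuine construction instead. For instance, find quasi-geodesic triangles near $D$ with two sides leaving every compact set and the third meeting a fixed compact set, so that Proposition~\ref{lemme_triangle_non_fins} applies. This is exactly the mechanism of the paper's Lorentzian Theorem~\ref{thm_coindi_necessaire_faible}, where the quasi-geodesics are supplied by the comparison with the null distance. In the complex setting you must produce them yourself; ``equivalently, one can build fat triangles'' does not do so. Note also that this step has to work for the rescaled limit $\Omega_\infty$, which is neither bounded nor smooth, so you cannot simply quote the smooth bounded case of \cite{GaussierSeshadri}.
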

Further characterizations in more general settings were later obtained in \cite{Zimmer_complex_2,Zimmer_non_smooth,Fiacchi_C2}.
The hyperbolicity of another Kobayashi-like metric was also studied by Fiacchi \cite{Fiacchi_kob_euclidien}, who gave both necessary and sufficient conditions for the Gromov hyperbolicity of the minimal metric introduced by Forstnerič and Kalaj \cite{Forsternivc_Kalaj}. More recently, Wang and Zimmer \cite{Wang_Zimmer} proved that the minimal metric on a bounded convex domain $\Omega$ in $\R^n$ with smooth boundary is Gromov hyperbolic if and only if affine planes have finite order of contact with $\partial\Omega$. Their work also provides new proofs of the aforementioned results of Benoist and Zimmer.

The present paper aims to extend the study of Gromov hyperbolicity for Kobayashi-like metrics in the Lorentzian setting.

\subsection{Main results}

We first address Question~\ref{question_intro} in the case where $\Omega$ is a bounded, convex, and \emph{causally convex} domain in $\R^{1,n}$, meaning that any causal curve in $\R^{1,n}$ with endpoints in $\Omega$ is entirely contained in $\Omega$. This assumption is natural from the Lorentzian geometric point of view, as it ensures that $\Omega$, endowed with the induced conformal structure, is globally hyperbolic. In this setting, the boundary of $\Omega$ contains two canonical topological hypersurfaces $\partial_c^+\Omega$ and $\partial_c^-\Omega$, called the \emph{future} and \emph{past causal boundaries}. By the work of Smaï \cite{smaï2023enveloping}, these hypersurfaces are canonically identified with the abstract causal boundary defined by Geroch--Kronheimer--Penrose \cite{Penrose_Kronheimer_Geroch}, see Section~\ref{section_causal_convexité}.

Our first result provides a characterization of the Gromov hyperbolicity of the Markowitz metric in terms of the geometry of the causal boundary. A subset of $\R^{1,n}$ is said to be \emph{stably acausal} if any two of its points are joined by a spacelike segment, even after a small perturbation of the flat metric on $\R^{1,n}$, see Definition~\ref{def_stable_acausalité}.

\begin{theorem}
    \label{thm_intro_CNS_cc}
    Let $\Omega$ be a bounded, convex and causally convex domain in $\R^{1,n}$. Then $(\Omega,\delta_\Omega)$ is Gromov hyperbolic if and only if $\partial_c^+\Omega$ and $\partial_c^-\Omega$ are stably acausal.
\end{theorem}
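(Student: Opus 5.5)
The plan is to compare $\delta_\Omega$ with a more tractable metric whose hyperbolicity can be read off from the boundary geometry, and to prove the two implications by different means. For the comparison, I would establish that on a bounded, convex, causally convex domain, $\delta_\Omega$ is bi-Lipschitz (up to additive constants) to an explicit model distance. A natural candidate is a quasi-hyperbolic-type metric in which the Euclidean distance to $\partial\Omega$ is replaced by a Lorentzian ``time-separation'' quantity. Concretely, for $x\in\Omega$ I would use the product $\tau^+(x)\,\tau^-(x)$, where $\tau^\pm(x)$ is the Lorentzian distance from $x$ to $\partial_c^\pm\Omega$; this should enter as a conformal factor on the Sormani--Vega null distance. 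The upper bound on $\delta_\Omega$ comes from chaining Minkowski-diamond subdomains (causal diamonds $I^+(p)\cap I^-(q)\subset\Omega$), on which $\delta$ is explicitly the hyperbolic-type metric of the diamond. The lower bound comes from the monotonicity of $\delta$ under inclusion: $\Omega$ is contained in half-spaces bounded by lightlike or spacelike supporting hyperplanes at points of $\partial_c^\pm\Omega$, and convexity provides these half-spaces.

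\textbf{Sufficiency.} Assume $\partial_c^\pm\Omega$ is stably acausal. By compactness, there is then a uniform cone $C_\varepsilon$, slightly wider than the light cone, such that no two boundary points are $C_\varepsilon$-causally related. I would use this to show that the comparison metric behaves like the Hilbert or quasi-hyperbolic metric of a ``uniformly spacelike'' domain. In that setting, quasi-geodesics between two points are uniformly close to piecewise timelike paths that go up toward $\partial^+$, across, and down. Slim triangles would then follow from a Gehring--Hayman or Bonk--Heinonen--Koskela uniformity argument: stable acausality should give the uniformity of the domain relative to the null distance, and uniform domains with a quasi-hyperbolic metric are Gromov hyperbolic.

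\textbf{Necessity.} Here I would use a Benoist--Zimmer rescaling argument. If $\partial_c^+\Omega$ (say) fails to be stably acausal, there are boundary points $p\neq q$ in the closure of $\partial_c^+\Omega$ joined by a segment that is lightlike or becomes causal under arbitrarily small perturbations. By convexity, either the whole segment $[p,q]$ is a lightlike segment in $\partial\Omega$, or points converge to such a configuration. I would take a sequence $x_k\to [p,q]$ and apply elements $g_k$ of the conformal group of Minkowski space (Lorentz boosts composed with homotheties) that normalise $g_k\Omega$. The domains $g_k\Omega$ then converge in the local Hausdorff topology to a limit domain $\Omega_\infty$, which is convex and contains a full null half-plane or null strip in its boundary, for instance a product of a lightlike half-space with a line. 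This step requires proving that $\delta$ is continuous under such Hausdorff limits, which I would derive from the comparison above. For the limit, I would compute $\delta_{\Omega_\infty}$ explicitly and show that it contains quasi-isometrically embedded flats (a copy of $\R^2$), so it is not Gromov hyperbolic. Because Gromov hyperbolicity with uniform constant passes to limits, this contradicts the hyperbolicity of $\Omega$.

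The main obstacle is the uniform comparison between $\delta_\Omega$ and the model metric, particularly the lower bound near points where $\partial\Omega$ is tangent to the light cone. The precise control needed there is what both directions rely on. For this estimate I would first try explicit computations in the model cases of half-spaces and wedges, as in the hyperbolic upper half-space.
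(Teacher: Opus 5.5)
Your architecture matches the paper's: a metric comparison plus Bonk--Heinonen--Koskela for the ``if'' part, and Benoist--Zimmer rescaling to a non-hyperbolic limit for the ``only if'' part. But each direction has a genuine gap at its decisive step.

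\emph{Sufficiency.} Bounding $\delta_\Omega$ above by diamonds $I(r^-(x),r^+(y))\subset\Omega$ and below by spacelike supporting half-spaces at initial and terminal singularities is exactly how one proves $\tfrac12\dhat_{\ln(\tau^-/\tau^+)}\le\delta_\Omega\le 2\dhat_{\ln(\tau^-/\tau^+)}$. (The time function has to be the ratio $\tau^-/\tau^+$; the product $\tau^+\tau^-$ is not monotone along causal curves.) This comparison holds for \emph{every} convex causally convex domain between two spacelike hyperplanes, diamonds included, and diamonds are not Gromov hyperbolic. It never uses stable acausality, so it cannot produce hyperbolicity. Uniformity is not where the hypothesis enters either: every bounded convex domain is uniform, so $(\Omega,\K_\Omega)$ is always Gromov hyperbolic here. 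The Hilbert metric is the wrong model, because $\partial\Omega$ has a corner along the edge where $\partial_c^+\Omega$ meets $\partial_c^-\Omega$, so $H_\Omega$ is never hyperbolic for these domains.

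The missing idea is the local estimate giving $\delta_\Omega\asymp\K_\Omega$. If $\partial_c^\pm\Omega$ are $\varepsilon$-stably acausal, then for lightlike $v\in T_x\Omega$ the stable diamond $I_\varepsilon(x_v^-,x_v^+)$ lies in $\Omega$. Hence $\min\{d_h(x,x_v^-),d_h(x,x_v^+)\}\le C_\varepsilon\,d_h(x,\partial\Omega)$ with $C_\varepsilon=\sqrt{(2+\varepsilon)^2+\varepsilon^2}/\varepsilon$. Plugging this into $F_\Omega(v)=|v|/d_h(x,x_v^-)+|v|/d_h(x,x_v^+)$ gives $C_\varepsilon^{-1}\le F_\Omega(v)\,d_h(x,\partial\Omega)/|v|\le 2$. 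Approximating curves by piecewise lightlike ones, at cost $\sqrt2$, then yields $C_\varepsilon^{-1}\K_\Omega\le\delta_\Omega\le 2\sqrt2\,\K_\Omega$.

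\emph{Necessity.} Blowing up is the right idea, but the limit you aim for gives no contradiction. A domain with a single null wall (a lightlike half-space, or such a half-space times a line) contains lightlike lines, so $\delta_{\Omega_\infty}$ is only a pseudodistance. The locally uniform convergence $\delta_{g_k\Omega}\to\delta_{\Omega_\infty}$, which the paper proves only for convex domains without lightlike lines, is then unavailable. A limiting four-point inequality is also useless: on a lightlike half-space $\{t>p_1\}$, $\delta$ collapses to $|\Delta\ln(t-p_1)|$, i.e.\ to a line.

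The paper instead proceeds as follows:
\begin{enumerate}
\item It slides $x_k$ along the nearly null direction to the edge $\partial U$ where $f_+=f_-$.
\item There it pairs the nearly lightlike future supporting hyperplane $H_k$ with a past supporting hyperplane $H_k'$, obtained as a limit of supporting hyperplanes of $\partial_c^-\Omega$.
\item It composes a dilation $\lambda_k$ with a boost $\mu_k$ in a Lorentzian plane $P_k$, with $\mu_k\ll\lambda_k$, so that both walls survive as two transverse degenerate hyperplanes.
\end{enumerate}
The limit then contains no lightlike line, and its boundary contains a causal broken lightlike segment. It is not hyperbolic because causal curves are $(2,0)$-quasi-geodesics, and causal triangles hugging the broken segment are not uniformly thin. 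This is where the null-distance comparison is really used.

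Your flats would suffice for a model limit such as $\{p_1>|t|\}\times\R^{n-1}$. Its $(t,p_1)$-section is bi-Lipschitz to an $\ell^1$-plane: the upper bound comes from staircase chains in the section, the lower bound from the linear projective functions $p_1\pm t$. But you never produce such a limit.

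Finally, a bi-Lipschitz comparison does not by itself give convergence of $\delta_{g_k\Omega}$. The paper proves a $(1\pm\epsilon)$-convergence directly, following van Limbeek--Zimmer.
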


Somewhat surprisingly, the topological boundary $\partial\Omega$ of the previous domains is never of class $C^1$. This contrasts with the behavior of convex domains in $\RP^n$ for which the Hilbert metric is Gromov hyperbolic: in that setting, the boundary must be $C^1$, see \cite{karlsson_Noskov,Benoist_convexes_quasisymetriques}. In particular, Theorem~\ref{thm_intro_CNS_cc} shows that there exist infinitely many convex domains $\Omega$ in $ \R^{1,n}$ for which the metric $\delta_\Omega$ is not equivalent to the Hilbert metric $H_\Omega$, see Corollary~\ref{corollaire_pour_hilbert}. More generally, these two metrics appear to be non-comparable, see Section~\ref{Section_hilbert} for a complete comparison in dimension two.

The preceding theorem has an analogue for certain unbounded causally convex domains. Recall that a domain $\Omega$ is \emph{future complete} if any future-directed causal curve in $\R^{1,n}$ starting in $\Omega$ is entirely contained in $\Omega$. Convex future complete domains play an important role in the study of globally hyperbolic flat spacetimes \cite{Bonsante,Barbot_flat} and conformally flat spacetimes \cite{Rym_causal_completion,smaï2025futures}, and their geometry is by now well understood, see for instance \cite{seppi_bonsante_Smillie}.

\begin{theorem}
    \label{thm_intro_CNS_futur_complet}
    Let $\Omega$ be a convex future complete domain in $\R^{1,n}$ containing no lightlike line. Then $(\Omega,\delta_\Omega)$ is Gromov hyperbolic if and only if $\partial\Omega$ is stably acausal.
\end{theorem}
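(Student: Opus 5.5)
We prove the two implications separately. Both rest on the comparison announced in the abstract: on $\Omega$ the Markowitz metric $\delta_\Omega$ is bi-Lipschitz, or at least quasi-isometric, to a quasi-hyperbolic-type metric built from a Lorentzian ``distance to the boundary''. For a future complete convex domain $\Omega$ the natural boundary function is the cosmological time $\tau(x)$. This is the supremum of the Lorentzian lengths of past causal curves from $x$ to $\partial\Omega$, and it is concave on $\Omega$. The model metric is the Sormani--Vega null distance associated with the time function $\log\tau$, combined with a quasi-hyperbolic term. The first step is therefore to establish
\[ \delta_\Omega(x,y) \asymp d^{\log\tau}_{\mathrm{null}}(x,y) + \text{(transverse quasi-hyperbolic term)} \]
up to additive and multiplicative constants depending only on $n$. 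To get this, we would use the monotonicity of $\delta$ under inclusion. The lower bound comes from comparing $\Omega$ with half-spaces bounded by spacelike or null hyperplanes that support $\Omega$, whose metrics are explicit. The upper bound comes from comparing $\Omega$ with futures of points and with diamonds contained in $\Omega$.

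\textbf{Sufficiency (stably acausal $\Rightarrow$ hyperbolic).} If $\partial\Omega$ is stably acausal, then every support hyperplane of $\Omega$ is spacelike, with normals uniformly bounded away from the light cone on compact pieces. We would use this to show that future-directed timelike rays ending at boundary points are quasi-geodesics for $\delta_\Omega$. From that we build a ``tree-like'' structure: for $x,y\in\Omega$, the concatenation of the timelike ray up from $x$, a horizontal path at height roughly $\log$ of the spacelike distance between the projections, and the ray down to $y$ is a uniform quasi-geodesic. Thin triangles then follow by the same argument as for hyperbolic space in the upper half-space model. Since $\Omega$ may be non-cocompact, we would run the argument uniformly, using that the constants depend only on the stable acausality cone. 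Future completeness is needed, though: the ``top'' of the domain is missing, so the picture really is a half-space. The absence of lightlike lines guarantees properness and completeness of $\delta_\Omega$.

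\textbf{Necessity (hyperbolic $\Rightarrow$ stably acausal).} We argue by contradiction with a Benoist--Zimmer rescaling argument. Suppose $\partial\Omega$ is not stably acausal. Then $\partial\Omega$ contains a lightlike segment, or a sequence of pairs of boundary points becoming asymptotically null. In the first case we take points $x_k\to$ an interior point of the null segment. We apply affine conformal maps of $\R^{1,n}$, that is, Lorentz boosts composed with homotheties and translations; these are isometries between the rescaled domains for $\delta$. We choose them to renormalize $x_k$ to a base point. After extracting a subsequence, the domains converge in the local Hausdorff topology to a limit convex future complete domain $\Omega_\infty$ whose boundary contains a full null line, or a null half-plane. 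We need two facts. The first is continuity of $\delta$ under local Hausdorff convergence, which we would prove via the explicit comparisons of the first step. The second is that Gromov hyperbolicity with a uniform constant passes to the limit. Together these give that $\Omega_\infty$ is Gromov hyperbolic. On the other hand, $\Omega_\infty$ splits off a null direction: it is invariant under the one-parameter group of null translations along the line. For such a domain we exhibit a quasi-isometrically embedded flat, namely a copy of $\R^2$ with a non-hyperbolic metric. A ``box'' built from the null direction and a timelike direction has $\delta$ comparable to a sup-type metric. This contradicts hyperbolicity.

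The main obstacle is the limiting step. The difficulty is not in Hausdorff convergence itself, which follows from compactness of convex sets. It is in guaranteeing that the limit still contains no lightlike line, or else handling that degenerate case directly, and in ensuring that $\delta_{\Omega_k}\to\delta_{\Omega_\infty}$ locally uniformly. I would first try to treat the case where the limit contains a lightlike line head-on, by showing directly that such domains contain quasi-flats.
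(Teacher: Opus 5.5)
\paragraph{Necessity: the chosen limit gives no contradiction.} Your necessity argument breaks at the limit, and the problem is not only technical. You rescale so that $\partial\Omega_\infty$ contains a full null line $x_0+\R v$, so that $\Omega_\infty$ is invariant under $\R v$. This is forced, since a closed convex set containing a line is invariant under translation along it. Then every point of $\Omega_\infty$ lies on a complete lightlike line in $\Omega_\infty$, so $F_{\Omega_\infty}(v)=0$ by Proposition~\ref{formule_F_Omega}. Three things follow. First, $\delta_{\Omega_\infty}$ is only a pseudodistance. Second, Lemma~\ref{lemme_convergence_distance_markowitz_dun_convexe} (which requires no lightlike line in the limit) is unavailable. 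Third, your ``box'' spanned by $v$ and a timelike direction has zero $\delta$-extent along $v$, so it is not a quasi-flat.

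No better flat exists in general. Take the null half-space $\{u>0\}$ with $u=t-p_1$. This is exactly what you obtain by zooming in at an interior point of a null boundary segment of $\Omega_\ell$, $\ell<n$, a domain that is \emph{not} Gromov hyperbolic. There $F(v)=\vert du(v)\vert/u(x)$, and lightlike zigzags with $\vert du(v)\vert/\vert v\vert\to 0$ show $\delta(x,y)=\vert\ln(u(x)/u(y))\vert$, whose metric quotient is $\R$. Zooming in thus erases the obstruction.

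The paper instead uses spatial translations and dilations of factor $1/\max\{1,t_k\}$. These keep every $g_k(\Omega)$ disjoint from the fixed spacelike hyperplane $\{t=0\}$, with the bad boundary point at height in $[0,1]$. The limit therefore contains no lightlike line, but still has a null supporting hyperplane, hence by future completeness a null \emph{half}-line in its boundary (Lemma~\ref{lemme_dynamique_cas_futur_complet}).

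Non-hyperbolicity of this limit then comes from non-thin triangles, not flats. By $\dhat_{\ln(\tau)}\le\delta_\Omega\le 2\dhat_{\ln(\tau)}$ (Theorem~\ref{thm_equivalence_d_lnt_delta_Omega}, essentially your first step), causal curves are $(2,0)$-quasi-geodesics. Take causal triangles whose two null sides escape every compact set (one runs parallel to the boundary half-line at distance $2^{-k}$), while the vertical timelike side meets a fixed compact set. These contradict Proposition~\ref{lemme_triangle_non_fins}. If $\partial\Omega$ already contains a null segment, this works directly with no rescaling (Theorem~\ref{thm_coindi_necessaire_faible}).

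\paragraph{Sufficiency: the key estimate is missing.} The null-distance comparison only makes causal curves quasi-geodesic; it gives no control over your ``horizontal'' legs. Moreover, exhibiting a family of uniform quasi-geodesics with thin triangles does not prove hyperbolicity, since the Morse lemma presupposes it. You would need a guessing-geodesics criterion or a quasi-isometry to a known hyperbolic space.

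The missing estimate is the following. $\varepsilon$-stable acausality gives $I^+_\varepsilon(x_v^-)\subset\Omega$, hence
\[
d_h(x,x_v^-)\le\tfrac{\sqrt{(2+\varepsilon)^2+\varepsilon^2}}{\varepsilon}\,d_h(x,\partial\Omega)
\]
for every lightlike $v$ (Proposition~\ref{prop_estimee}). So $F_\Omega(v)=\vert v\vert/d_h(x,x_v^-)$ is pointwise comparable to the quasi-hyperbolic norm. Lightlike zigzags approximate any path at cost at most $\sqrt2$ (Corollary~\ref{cor_general_metric_conf_plat_et_d_light}), so $\delta_\Omega$ is bi-Lipschitz to $\K_\Omega$ (Theorem~\ref{thm_equiv_explicite_delta_Omega_et_qh}). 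Future complete domains are uniform, so Bonk--Heinonen--Koskela gives hyperbolicity of $\K_\Omega$, and convexity is not needed.

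With this comparison in hand, your half-space picture becomes literally true. Write $\Omega=\{t>f(p)\}$; then $(t,p)\mapsto(t-f(p),p)$ is a bi-Lipschitz self-map of $\R^{n+1}$ taking $\Omega$ onto a half-space, so $\K_\Omega$ is bi-Lipschitz to $\H^{n+1}$.
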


Once again, Theorem~\ref{thm_intro_CNS_futur_complet} yields a wide class of Gromov hyperbolic domains $\Omega$ whose boundaries may be non-smooth. For instance, the Einstein--de Sitter space, namely, a half-space bounded by a spacelike hyperplane, is Gromov hyperbolic. In this case, the metric space $(\Omega,\delta_\Omega)$ is bi-Lipschitz isometric to real hyperbolic space $\H^{n+1}$, see Section~\ref{section_examples}. 

In contrast, any convex future complete domain whose boundary contains a lightlike segment is not Gromov hyperbolic by Theorem~\ref{thm_intro_CNS_futur_complet}. For example, regular domains in Minkowski space are not Gromov hyperbolic, since their boundaries contain lightlike segments, see \cite{Bonsante}. 

\begin{rmk}
    Note that in Theorem~\ref{thm_intro_CNS_futur_complet}, the assumption that $\Omega$ contains no lightlike line is used only to ensure that $\delta_\Omega$ defines a distance on $\Omega$, see Proposition~\ref{prop_complétude_delta_Omega}. Moreover, for the ``if'' direction, the convexity assumption on $\Omega$ can be dropped, see Theorem~\ref{thm_général_condi_suffisante}.
\end{rmk}

\subsection{Comparison with the null distance} The present work is an opportunity to compare the Markowitz metric with a recently defined metric on spacetimes, called the \emph{null distance}. This distance was introduced by Sormani--Vega \cite{Sormani_vega} as a metric construction on conformal spacetimes $(M,[g])$ associated to a time function $\tau$ on $M$. It has proven to be a powerful tool in the analysis of spacetimes and their convergence, see for instance \cite{Allen_Burtscher,Kunzinger_Steinbauer, Burtscher_Garcia_Heveling}. Both the Markowitz metric and the null distance are defined in terms of certain notions of length of piecewise lightlike curves, hence it is natural to think that they should be comparable, see our discussion in Section \ref{section_comparaison_generale_Mark_vs_null}. In many cases, we prove that the Markowitz metric is equivalent or quasi-isometrically equivalent to the null distance, see Section \ref{section_comparaison_avec_null_dist}. As explained below, these comparisons are used in the proof of Theorems \ref{thm_intro_CNS_cc} and \ref{thm_intro_CNS_futur_complet}. Another consequence of these comparisons is the following theorem.

\begin{theorem}
    \label{thm_intro_null_dist}
    Theorems \ref{thm_intro_CNS_cc} and \ref{thm_intro_CNS_futur_complet} have analogues for the null distance. Precisely, Theorem \ref{thm_intro_CNS_cc} holds if $\delta_\Omega$ is replaced with the null distance $\dhat_{\ln(\tau^-/\tau^+)}$, where $\tau^+$ and $\tau^-$ are the cosmological time functions of $\Omega$, and Theorem \ref{thm_intro_CNS_futur_complet} holds if $\delta_\Omega$ is replaced with the null distance $\dhat_{\ln(\tau)}$, where $\tau$ is the cosmological time function of $\Omega$.
\end{theorem}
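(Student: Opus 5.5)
The plan is to reduce Theorem~\ref{thm_intro_null_dist} to Theorems~\ref{thm_intro_CNS_cc} and \ref{thm_intro_CNS_futur_complet} through the comparisons of Section~\ref{section_comparaison_avec_null_dist}. The relevant fact is that Gromov hyperbolicity is a quasi-isometry invariant among geodesic (or length) metric spaces. It is therefore enough to show two things: that $\delta_\Omega$ and the relevant null distance are quasi-isometric via the identity map, and that both spaces are length spaces.

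\textbf{Step 1: future complete case.} Let $\Omega$ be convex, future complete, and contain no lightlike line, and let $\tau$ be its cosmological time. I would invoke the comparison of Section~\ref{section_comparaison_avec_null_dist} asserting that there are constants $A\ge 1$, $B\ge 0$ with
\[
A^{-1}\dhat_{\ln\tau}(p,q)-B\le \delta_\Omega(p,q)\le A\,\dhat_{\ln\tau}(p,q)+B
\]
for all $p,q\in\Omega$. The heuristic is that a lightlike segment of $\Omega$ has Markowitz length comparable to the variation of $\ln\tau$ along it, since the Markowitz length of a lightlike segment is a logarithmic cross-ratio-type quantity governed by how far the segment can be extended in $\Omega$.

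\textbf{Step 2: bounded causally convex case.} Let $\Omega$ be bounded, convex and causally convex. Here the natural time function is $\ln(\tau^-/\tau^+)$, which tends to $\pm\infty$ at $\partial_c^\pm\Omega$. The corresponding comparison yields the same kind of quasi-isometric inequality between $\delta_\Omega$ and $\dhat_{\ln(\tau^-/\tau^+)}$.

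\textbf{Step 3: length structure and conclusion.} Next I would check that both metrics are length metrics.
\begin{itemize}
\item $\delta_\Omega$ is defined as an infimum of lengths of piecewise lightlike chains. By Proposition~\ref{prop_complétude_delta_Omega} it is a proper, hence geodesic, length metric.
\item $\dhat_\tau$ is a length metric with respect to its own structure by Sormani--Vega; for properness one uses that the time function is proper along causal curves in our setting.
\end{itemize}
Once both spaces are length spaces, the standard stability of Gromov hyperbolicity under quasi-isometries between length spaces (for instance via the $(1,C)$-quasi-geodesic approximation lemma) transfers hyperbolicity in both directions. The statement then follows verbatim from the two earlier theorems.

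\textbf{Main obstacle.} I expect the hardest step to be the comparison in Step~2. There the estimate degenerates near the edge $\partial_c^+\Omega\cap\partial_c^-\Omega$, where both $\tau^+$ and $\tau^-$ vanish. A purely multiplicative comparison may therefore fail, and only an additive-multiplicative (quasi-isometric) one holds.

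My first attempt would be to compare on lightlike segments. I would bound the Markowitz length of a segment $[p,q]$ using the two points where its lightlike line exits $\Omega$. I would then relate these to $\tau^\pm(p)$ and $\tau^\pm(q)$ using convexity: a support hyperplane at the exit point bounds $\tau^\pm$ linearly along the segment.

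If this only gives a one-sided bound, I would instead use the quasi-hyperbolic metric as an intermediate object, as the abstract suggests. The idea is to show that both $\delta_\Omega$ and the null distance are quasi-isometric to it on the relevant domains.
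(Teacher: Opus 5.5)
Your reduction is the paper's own proof of this statement. The paper gets completeness of $\dhat_{\ln(\tau)}$ (resp. $\dhat_{\ln(\tau^-/\tau^+)}$) from its comparison with the complete metric $\delta_\Omega$ (Proposition~\ref{prop_completude_null_dist}), quotes Theorems~\ref{thm_equivalence_d_lnt_delta_Omega} and~\ref{thm_equivalence_mark_nulldistance_cas_cc}, and concludes with Proposition~\ref{prop_dist_quasi_iso}.

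Two remarks on your discussion. The obstacle you anticipate does not arise: for convex domains these comparisons are purely multiplicative, namely $\dhat_{\ln(\tau)}\le\delta_\Omega\le 2\dhat_{\ln(\tau)}$, resp. $\frac12\dhat_{\ln(\tau^-/\tau^+)}\le\delta_\Omega\le 2\dhat_{\ln(\tau^-/\tau^+)}$. They are proved by squeezing $\Omega$ between $I^+(r(x))$ (resp. the diamond $I(r^-(x),r^+(y))$) and half-spaces bounded by support hyperplanes at the singularities; on these model domains $\delta$ is exactly a null distance by Lemma~\ref{lemme_temps_projectif}. Also, your quasi-hyperbolic fallback could not give the ``only if'' direction, because $\delta_\Omega$ is comparable to $\K_\Omega$ only under stable acausality.
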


It is worth pointing out that other theorems for the Markowitz metric have analogues for the null distance, see Section \ref{section_consequence_null_distance}.

\subsection{A necessary condition} A consequence of Theorem \ref{thm_intro_CNS_cc} is that bounded, convex and causally convex domains that are Gromov hyperbolic admit nontrivial Cauchy extensions into globally hyperbolic spacetimes. More precisely, these spacetimes are not \emph{$C$-maximal} in the sense of \cite{Clara}, see Section \ref{Section_preuve_non_maximalité}. We are able to prove this fact for non-convex domains.

\begin{theorem}
    \label{thm_intro_maximalité}
    Let $\Omega$ be a bounded, causally convex domain in $\R^{1,n}$. If $(\Omega,\delta_\Omega)$ is Gromov hyperbolic, then $\Omega$ is not $C$-maximal. 
\end{theorem}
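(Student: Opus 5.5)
We argue by contraposition: assume $\Omega$ is $C$-maximal and construct, in $(\Omega,\delta_\Omega)$, a family of quasi-geodesic configurations that violate Gromov hyperbolicity uniformly, for instance arbitrarily fat geodesic triangles, or a quasi-flat (a bi-Lipschitz copy of a Euclidean half-plane or sector).

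\textbf{Step 1: lightlike segments in the boundary.} We first extract from $C$-maximality a concrete boundary feature. For bounded causally convex domains, the characterization of $C$-maximality (following \cite{Clara} and Section \ref{Section_preuve_non_maximalité}) should say that $\Omega$ equals the Cauchy development of any Cauchy hypersurface, with no nontrivial extension. Concretely, we expect this to force the causal boundaries $\partial_c^\pm\Omega$ to fail to be acausal: there is a point $p\in\partial_c^+\Omega$ (say) and a nontrivial lightlike segment $[p,q]$ contained in $\partial\Omega$. Heuristically, if $\partial_c^\pm\Omega$ were acausal near a point, we could push the boundary outward there and obtain a Cauchy extension. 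We would make this precise by a local argument at a boundary point where the boundary is not achronal-stable.

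\textbf{Step 2: a non-hyperbolic local model.} Near the lightlike segment $[p,q]\subset\partial\Omega$, we compare $\delta_\Omega$ with the Markowitz metric of a simpler model domain. Monotonicity of $\delta$ under inclusion gives $\delta_{\Omega'}\le\delta_\Omega\le\delta_{\Omega''}$ for $\Omega''\subset\Omega\subset\Omega'$. Natural candidates for the models are:
\begin{itemize}
\item a domain bounded by a lightlike hyperplane containing $[p,q]$, such as a half-space $\{x_0-x_1>0\}$ intersected with a diamond;
\item an inner diamond touching $[p,q]$.
\end{itemize}
Using the comparisons with the null distance and the quasi-hyperbolic metric established earlier in the paper, we estimate $\delta_\Omega$ along sequences $x_k\to p$ and $y_k\to q$ approaching the segment. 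The aim is to show that their Gromov products stay bounded while the "midpoints" remain far from both. This mimics Benoist's and Zimmer's argument that a flat piece of boundary (here a lightlike segment, which is degenerate for the conformal structure) prevents hyperbolicity. It would be cleanest to apply rescaling: use the conformal/Lorentz dilations fixing the segment direction to blow up at the segment, obtain a limit domain containing a lightlike line in its boundary, and conclude with the dynamical argument used for Theorem~\ref{thm_intro_CNS_futur_complet}, which shows that boundaries containing lightlike segments are not Gromov hyperbolic. Since Gromov hyperbolicity with a uniform constant passes to limits of rescaled pointed domains (given continuity of $\delta$ under Hausdorff convergence), we would get a contradiction.

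\textbf{Main obstacle.} The hardest part is Step 1 together with the rescaling without convexity. Without convexity, we must ensure that the rescaled domains converge to something on which $\delta$ is still a genuine distance and where the earlier non-hyperbolicity result applies. We would first try to take the limit to be a future-complete (or past-complete) domain, since blow-up at a causal boundary point of a causally convex domain tends to produce one. We would then invoke the ``only if'' direction of Theorem~\ref{thm_intro_CNS_futur_complet}, noting that its necessity argument might not require convexity once a lightlike segment is present.
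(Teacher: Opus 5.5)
There is a genuine gap, in both steps. Step 1 is only a heuristic (``pushing the boundary outward'' is not an argument). The characterization of $C$-maximality you need is already available: by Fact~\ref{fact_structure_des_CC_maximaux}, $C$-maximality means dual convexity, i.e.\ every boundary point lies on a lightcone or lightlike hyperplane disjoint from $\Omega$, and it also gives completeness of $\delta_\Omega$. Moreover, for a bounded domain the structure the argument actually exploits is not a single lightlike segment but a \emph{corner}. Combining dual convexity with Sma\"{\i}'s conformal identification of $I^+(x)\cap\Omega$ with a regular domain and Bonsante's description of its singularities, the paper produces a future maximal 2-section (Lemma~\ref{lem_existance_2_sections}). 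This is a Lorentzian plane $\Sigma$ and a component $\Omega_\Sigma$ of $\Omega\cap\Sigma$ whose future causal boundary is a broken lightlike segment $c_1\cup c_2$, with vertex $s_+$ and endpoints $e_1,e_2$.

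Step 2 fails as planned, because every tool you invoke is proved only under hypotheses that are absent here.
\begin{itemize}
\item Continuity of $\delta$ under Hausdorff convergence (Lemma~\ref{lemme_convergence_distance_markowitz_dun_convexe}), and hence Corollary~\ref{cor_limite_de_domaines_Gromov_hyp}, is proved for convex domains.
\item The $(2,0)$-quasi-geodesicity of causal curves (Lemma~\ref{lemme_2_0_quasi_geod_grace_a_la_null_dist}) drives Theorem~\ref{thm_coindi_necessaire_faible}. It rests on the lower bound $\dhat_{\ln\tau}\le\delta_\Omega$, which comes from a spacelike supporting hyperplane at the initial singularity, i.e.\ from convexity.
\item The comparison with $\K_\Omega$ requires stable acausality of $\partial_c^\pm\Omega$, which is exactly what fails here.
\end{itemize}
Without convexity you also cannot steer the rescaling as in Lemma~\ref{lemme_dynamique_cas_causalement_convexe}. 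Naive blow-up at an interior point of a lightlike boundary segment of a diamond (which is $C$-maximal) yields a half-space bounded by a lightlike hyperplane, on which $\delta$ is not a distance.

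The paper needs no limits at all. On $\Omega_\Sigma\cap I^+(a)$ it sandwiches $\delta_\Omega$ between two explicit bounds:
\begin{itemize}
\item Below by $\delta_U$, where $U$ is the set of points causally unrelated to $e_1$ and $e_2$; causal convexity and the choice of $\Sigma$ give $\Omega\subset U$. The projective function $f(x)=\b(x-e_1,x-e_1)/\b(x-e_2,x-e_2)$ then gives $\vert\ln(f(x)/f(y))\vert\le\delta_U(x,y)$.
\item Above by $\delta_S$ for the inscribed planar diamond $S=\Sigma\cap I(b,s_+)$. Lemma~\ref{Lemme_D1_1} compares $\delta_S$ with the explicit metric $\delta_D$ of $D=\Sigma\cap I(s_-,s_+)$.
\end{itemize}
Since $\delta_D(x,y)=\vert\ln(f(x)/f(y))\vert$ for causally unrelated $x,y$, achronal curves there are $(A,0)$-quasi-geodesics of $(\Omega,\delta_\Omega)$ (Lemma~\ref{lemme_2_quasi_geod_CCM}). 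Achronal triangles with two lightlike sides tending to $c_1\cup c_2$ and a spacelike side in a fixed compact set then contradict Proposition~\ref{lemme_triangle_non_fins}.
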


The preceding theorem is well illustrated by examples of Theorem \ref{thm_intro_CNS_futur_complet}. Indeed, if $\Omega$ is a convex future complete domain in $\R^{1,n}$ with stably acausal boundary, then the inclusion $\Omega\hookrightarrow\R^{1,n}$ is a non-trivial Cauchy extension, hence $\Omega$ is not $C$-maximal. Theorem \ref{thm_intro_maximalité} shows that if $\Omega$ is a bounded domain in $\R^{1,n}$, and if $(\Omega,\delta_\Omega)$ is Gromov hyperbolic, then $\Omega$ is not \emph{dually convex} in the sense of Zimmer \cite{Zimpropqh}, see Section \ref{Section_preuve_non_maximalité}. This last point was proved by Galiay in her thesis for a broader class of geometric structures called Nagano pairs, under additional assumptions, see \cite[Sec. 8.2.1]{thèse_blandine}. That work extended Zimmer's unpublished work \cite{zimmer_unpublished}. The proof that we develop here is independent of that work and relies on a geometric description of causally convex maximal domains of Smaï \cite{SmaiThese,smaï2025futures}.

\subsection{Strategy} 
The main difficulty for answering Question \ref{question_intro} is the lack of knowledge of Markowitz's geodesics. In general, one can only say that for some domains $\Omega$ in $\R^{1,n}$, lightlike geodesics in $\Omega$ are metric space geodesics of $(\Omega,\delta_\Omega)$, see \cite[Thm. C]{article_dist_markotitz}. 

The proof of the ``only if'' directions of Theorems \ref{thm_intro_CNS_cc} and \ref{thm_intro_CNS_futur_complet} consists in first finding other (quasi)geodesics for the Markowitz metric. This is done using our comparisons with the null distance, which imply that causal curves are $(2,0)$-quasi-geodesics of $(\Omega,\delta_\Omega)$, for any convex causally convex domain $\Omega$ containing no lightlike line, see Lemma \ref{lemme_2_0_quasi_geod_grace_a_la_null_dist}. This leads us to conclude that the boundary of a Gromov hyperbolic domain cannot contain broken lightlike segments, see Theorem \ref{thm_coindi_necessaire_faible}. We then use a dynamical argument, similar to those of Benoist and Zimmer, to show that the boundary of Gromov hyperbolic domains is stably acausal, see Lemma \ref{cor_limite_de_domaines_Gromov_hyp} and Section \ref{section_zooming}.

The proof of the ``if'' directions of Theorems \ref{thm_intro_CNS_cc} and \ref{thm_intro_CNS_futur_complet} is obtained by comparing the Markowitz metric to the \emph{quasi-hyperbolic metric}. 
The Gromov hyperbolicity of this metric is well understood by the work of Bonk--Koskela--Heinonen \cite{Uniformizing_gromov_hyp_space} and Balogh--Buckley \cite{Balogh_Buckley}. The proof consists in showing that for causally convex domains with stably acausal boundary, the Markowitz metric is quantitatively equivalent to the quasi-hyperbolic metric, see Theorem~\ref{thm_equiv_explicite_delta_Omega_et_qh}.

\subsection{Organization of the paper} The Markowitz pseudodistance is introduced in Section \ref{Section_distance_de_mark}. Basic properties of causally convex domains are recalled in Section \ref{section_causal_convexité}. Comparisons with the quasi-hyperbolic distance and the null distance are done in Sections \ref{section_comparaison_avec_dist_quasi_hyp} and \ref{section_comparaison_avec_null_dist}, respectively. The sufficient and necessary conditions of Theorems \ref{thm_intro_CNS_cc} and \ref{thm_intro_CNS_futur_complet} are proved in Sections \ref{section_stab_acaus_implique_Gromov_hyp} and \ref{section_Gromov_hyp_implique_stab_acaus}, respectively. Theorems \ref{thm_intro_null_dist} and \ref{thm_intro_maximalité} are proved in Sections \ref{section_comparaison_avec_null_dist} and \ref{Section_preuve_non_maximalité}, respectively.

\subsection{Acknowledgment} The author is grateful to Andrea Seppi for suggesting comparing the Markowitz metric with the null distance, which ultimately led to this paper. The author is grateful to Pierre-Louis Blayac, Charles Frances, Karin Melnick and Rym Smaï for stimulating discussions during this project.

\section{The pseudodistance}\label{Section_distance_de_mark}

\subsection{Conformal spacetimes} A \emph{conformal Lorentzian manifold} is a manifold $M$ endowed with a conformal class $[g]$ of Lorentzian metrics.
A vector $v\in TM$ is \emph{timelike}, \emph{lightlike}, \emph{spacelike} or \emph{causal} if $g(v,v)$ is negative, null, positive or non-positive, respectively. A \emph{time orientation} on a conformal Lorentzian manifold $(M,[g])$ is a global timelike vector field. Given a time orientation $X$, we say that a causal vector $v$ is \emph{future causal} if $g(v,X)<0$, and  \emph{past causal} otherwise. This infinitesimal language can be translated to curves: a piecewise smooth curve $\gamma:\R\to M$ is a \emph{future causal curve} if its velocity $\gamma^\prime(t)$ is future causal for all $t\in \R$. 
Given two points $x,y$ in a conformal spacetime $(M,[g])$, we write $x\leq y$ (resp. $x\ll y$), if there exists a future causal curve (resp. future timelike curve) from $x$ to $y$.
A topological hypersurface $\Sigma$ in a  conformal spacetime $M$ is a \emph{Cauchy hypersurface} if it intersects every inextensible causal curve exactly once. A conformal spacetime $(M,[g])$ is \emph{globally hyperbolic} if it admits a Cauchy hypersurface.

\subsection{Markowitz's pseudodistance} Let $(M,[g])$ be a conformal spacetime and let $g$ be a metric in the conformal class, with associated Levi-Civita connection $\nabla$. Given a lightlike $\nabla$-geodesic $\gamma(t)$, a function $u(t)$ is a \emph{projective parameter along $\gamma$} if it is a solution to the following differential equation
$$Su(t)=\frac{2}{n-2}\Ric(\dot\gamma(t),\dot\gamma(t)),$$
where $Su$ denotes the Schwarzian derivative of $u$. Given a geodesic $\gamma(t)$ and a projective parameter $u(t)$, the reparametrized geodesic $\gamma(u)=\gamma\circ u^{-1}$ is called a \emph{projectively parametrized lightlike geodesic} of $(M,[g])$. The family of projectively parameterized lightlike geodesics is independent of the choice of metric $g$, and is invariant under precomposition by a homography. In what follows, we denote $I=(-1,1)$ and $\PPL(I,M)$ for the collection of projectively parameterized lightlike geodesics $\gamma:I\to (M,[g])$.

Let $x,y\in M$. A \emph{lightlike chain} from $x$ to $y$, denoted $\Ccal$, consists of a finite sequence $\gamma_1,\dots,\gamma_m\in \PPL(I,M)$ and two finite sequences of points $s_1,\dots,s_m\in I$ and $t_1,\dots,t_m\in I$ such that 
$\gamma_1(s_1)=x$, $\gamma_m(t_m)=y$ and $\gamma_k(t_k)=\gamma_{k+1}(s_{k+1})$ for all $k$.
We will denote $\Ccal=((\gamma_k),(s_k),(t_k))$.

\begin{definition}
    Let $(M,[g])$ be a conformal spacetime. The \emph{Markowitz pseudodistance} between two points $x,y\in M$ is defined as
\begin{equation}
    \label{equation_def_d_mark}
    \delta_M(x,y)=\inf_\Ccal \rho_I(s_1,t_1)+\dots+\rho_I(s_m,t_m),
\end{equation}
where the infimum runs over all lightlike chains $\Ccal=((\gamma_k),(s_k),(t_k))$ from $x$ to $y$, and where $\rho_I$ denotes the distance of $I$ given by $\rho_I(s,t)=\vert\ln\left(\frac{(s-1)(t+1)}{(s+1)(t-1)}\right)\vert$ for all $s,t\in I$. 
\end{definition} 
The Markowitz pseudodistance is symmetric and satisfies the triangle inequality, although it may fail to be definite. It satisfies the following functorial property.
\begin{prop}
    \label{proposition_naturalité}
    Let $f:M\to N$ be a conformal map. Then $\delta_N(f(x),f(y))\leq \delta_M(x,y)$ for all $x,y\in M$. If moreover $f$ is a diffeomorphism, then equality holds in the previous inequality.
\end{prop}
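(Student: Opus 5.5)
The argument is the usual Kobayashi-type one: a conformal map pushes lightlike chains in $M$ forward to lightlike chains in $N$ with the same parameters and hence the same length. So the infimum defining $\delta_N(f(x),f(y))$ is taken over a family containing all these image chains.

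The key step is to check that if $\gamma\in\PPL(I,M)$, then $f\circ\gamma\in\PPL(I,N)$. Since $f$ is conformal, $f^*h\in[g]$ for any metric $h$ in the conformal class of $N$. Being a lightlike geodesic together with its projective parametrization is defined intrinsically from the conformal class: the paper notes that the family of projectively parametrized lightlike geodesics does not depend on the choice of representative. We may therefore compute on $M$ with the representative $f^*h$. For this metric, $f$ is a local isometry onto its image, so it carries $f^*h$-geodesics to $h$-geodesics and preserves lightlikeness. It also carries the Ricci tensor of $f^*h$ to that of $h$ (this uses only that $f$ is a local diffeomorphism, which holds because conformal maps between manifolds of the same dimension are immersions). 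Hence the Schwarzian equation $Su=\frac{2}{n-2}\Ric(\dot\gamma,\dot\gamma)$ is preserved, and a projective parameter along $\gamma$ is a projective parameter along $f\circ\gamma$. This is the only point needing care: one has to use the independence of the class $\PPL$ from the representative metric, rather than compare two unrelated metrics.

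Now take a lightlike chain $\Ccal=((\gamma_k),(s_k),(t_k))$ from $x$ to $y$ in $M$. Then $f(\Ccal)=((f\circ\gamma_k),(s_k),(t_k))$ is a lightlike chain from $f(x)$ to $f(y)$ in $N$. The matching conditions $\gamma_k(t_k)=\gamma_{k+1}(s_{k+1})$ are preserved by applying $f$, and the length $\sum\rho_I(s_k,t_k)$ is unchanged. Therefore
\[
\delta_N(f(x),f(y))\le \sum_k\rho_I(s_k,t_k).
\]
Taking the infimum over all chains $\Ccal$ in $M$ gives $\delta_N(f(x),f(y))\le\delta_M(x,y)$.

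If $f$ is a conformal diffeomorphism, then $f^{-1}$ is also conformal. Applying the inequality to $f^{-1}$ at the points $f(x),f(y)$ gives $\delta_M(x,y)\le\delta_N(f(x),f(y))$, and combining the two inequalities yields equality.
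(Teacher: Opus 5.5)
Your argument is correct. The paper gives no proof of this proposition; it is quoted as a standard property of Markowitz's pseudodistance. Your route is the standard justification: show $f\circ\gamma\in\PPL(I,N)$ by computing with the representative $f^*g_N\in[g_M]$, for which $f$ is a local isometry, push chains forward with unchanged $\rho_I$-length, and apply the result to $f^{-1}$.

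One caveat: the independence of $\PPL$ from the representative metric, which you invoke, is a $\dim M\geq 3$ fact (note the factor $\frac{2}{n-2}$). In dimension $2$ the statement fails for general conformal maps. There it should only be used as the paper uses it, namely for similarities and for inclusions of flat pieces such as $S\subset\Omega$ in Lemma~\ref{lemme_2_quasi_geod_CCM}. In those cases affine parametrizations are projective on both sides, so your push-forward argument applies directly.
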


\subsection{The infinitesimal functional} The Markowitz pseudodistance admits an infinitesimal form defined on the bundle of lightlike vectors. 
\begin{definition}
    Let $(M,[g])$ be a conformal spacetime, and let $v\in TM$ be lightlike. The \emph{infinitesimal functional} at $v$ is defined as 
\begin{equation}
    F_M(v)=\inf_\gamma \{\vert u\vert\,\vert\,\gamma\in\PPL(I,M),\,u\in TI \text{ and } \gamma_*u=v\}.
\end{equation}
\end{definition}
The infinitesimal functional and the Markowitz pseudodistance are related by the following formula.

\begin{thm}[{\cite[Thm. 4.8]{markowitz_1981}}]
\label{thm_lien_dmark_Fmark}
    Let $(M,[g])$ be a conformal spacetime. Then, for all $x,y\in M$, one has 
    $$\delta_M(x,y)=\inf_\gamma\int F_M(\gamma^\prime(t)) dt,$$
    where the infimum runs over all piecewise lightlike geodesic curves $\gamma$ from $x$ to $y$.\qed
\end{thm}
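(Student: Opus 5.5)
The identity is proved by showing two inequalities. Throughout, $\int F_M(\gamma')$ for a piecewise lightlike geodesic $\gamma$ is understood segment by segment.

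\emph{Inequality $\geq$.} Take a lightlike chain $\Ccal=((\gamma_k),(s_k),(t_k))$ from $x$ to $y$. Concatenating the restrictions $\gamma_k|_{[s_k,t_k]}$ (each reparametrized by $I$) gives a piecewise lightlike geodesic curve $c$ from $x$ to $y$. On the $k$-th piece, the definition of $F_M$ as an infimum, tested against $\gamma_k$ itself, gives
$$F_M(\gamma_k'(s))\leq \vert\partial_s\vert_{\rho_I},$$
where $\vert\cdot\vert_{\rho_I}$ is the Poincaré-type Finsler norm on $I$ whose length distance is $\rho_I$. A direct computation gives $\rho_I(s,t)=\vert\int_s^t 2\,du/(1-u^2)\vert$. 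Integrating yields $\int F_M(c')\leq \sum_k\rho_I(s_k,t_k)$. Taking the infimum over chains gives $\inf_\gamma\int F_M\leq\delta_M(x,y)$.

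\emph{Inequality $\leq$.} Fix a piecewise lightlike geodesic $c$ and $\varepsilon>0$, and reduce to a single lightlike geodesic segment $c:[a,b]\to M$. The aim is a chain whose total $\rho_I$-length is at most $\int_a^b F_M(c')+\varepsilon$.

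First, I need $F_M$ to be upper semicontinuous on the bundle of lightlike vectors. This follows because a projective lightlike geodesic $\gamma\in\PPL(I,M)$ with $\gamma_*u=v$ can be perturbed: shrinking the interval slightly and using continuous dependence of the projective-parameter ODE on initial data produces nearby $\gamma\in\PPL(I,M)$ realizing nearby lightlike vectors with $\vert u\vert$ changing by less than $\varepsilon$. One must also handle tangency: the perturbed geodesic should be tangent to $c$ itself at nearby points. Since a lightlike geodesic through $c(t)$ tangent to $c'(t)$ is a reparametrization of $c$, a geodesic $\gamma$ realizing $F_M(c'(t_0))$ up to $\varepsilon$ in fact parametrizes an arc of $c$ around $t_0$, after reparametrizing by a homography of $I$.

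Next, I subdivide $[a,b]$ by a partition $a=t_0<\dots<t_m=b$ fine enough that on each $[t_{j-1},t_j]$ there is a $\gamma_j\in\PPL(I,M)$ covering $c([t_{j-1},t_j])$ with $\gamma_j(s_j)=c(t_{j-1})$, $\gamma_j(\sigma_j)=c(t_j)$, and
$$\rho_I(s_j,\sigma_j)\leq\int_{t_{j-1}}^{t_j}F_M(c')\,dt+\varepsilon/m.$$
This step uses upper semicontinuity together with a Riemann-sum argument. Since $\gamma_j$ is a reparametrization of $c$, $\rho_I(s_j,\sigma_j)=\int \vert\dot u\vert_{\rho_I}$ along the arc. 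The infinitesimal norm $\vert\dot u\vert_{\rho_I}$ at $t$ is close to $F_M(c'(t))$ at the base point and stays controlled on a small interval, provided one shows that $t\mapsto F_M(c'(t))$ is Riemann integrable, which holds since it is upper semicontinuous and locally bounded. Summing over $j$ and then over segments gives $\delta_M(x,y)\leq\int F_M(c')+\varepsilon$.

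\emph{Main obstacle.} The hardest point is the uniformity in the previous step: a near-optimal $\gamma$ at $c'(t_0)$ must remain near-optimal at $c'(t)$ for all $t$ in a whole neighborhood. I would handle this by a compactness argument on $[a,b]$. Fix $t_0$ and a near-optimal $\gamma$, which is an arc of $c$ parametrized by $I$. Because $\gamma$ is a single fixed curve, the pulled-back norm $\vert\dot u\vert_{\rho_I}$ is continuous in $t$. Hence on a small neighborhood of $t_0$ it stays within $\varepsilon$ of its value at $t_0$, which is at most $F_M(c'(t_0))+\varepsilon$, and so at most $F_M(c'(t))+2\varepsilon$ there by upper semicontinuity applied in the reverse direction. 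This last bound requires a lower bound on $F_M$ near $t_0$; if lower semicontinuity is unavailable, I would instead work with the upper envelope and use that the Lebesgue integral of an upper semicontinuous function is approximated from above by continuous majorants. Covering $[a,b]$ by finitely many such neighborhoods (Lebesgue number argument) then gives the required partition.
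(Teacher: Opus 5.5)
The paper does not prove this statement (it is quoted from Markowitz), so I judge your argument on its own terms. Your proof of $\inf_\gamma\int F_M\le\delta_M$ is fine. For the converse, your key observation is the right one. Any $\gamma\in\PPL(I,M)$ with $\gamma_*u=c'(t_0)$ reparametrizes the maximal extension of $c$, so it contains an arc $c(U_{t_0})$. The $\rho_I$-norm $g_{t_0}(t)$ of the vector $u(t)$ with $\gamma_*u(t)=c'(t)$ is then continuous on $U_{t_0}$, satisfies $g_{t_0}\ge F_M(c')$ there, and $g_{t_0}(t_0)\le F_M(c'(t_0))+\varepsilon$.

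\textbf{The gap.} The step you flagged is a genuine gap as written. To bound $g_{t_0}(t)$ by $F_M(c'(t))+O(\varepsilon)$ you need $F_M(c'(t_0))\le F_M(c'(t))+O(\varepsilon)$ near $t_0$. That is lower semicontinuity, and upper semicontinuity does not supply it in any direction. The Riemann-sum route does not rescue this: a bounded upper semicontinuous function need not be Riemann integrable (the indicator of a fat Cantor set is a counterexample). Also, an error of $\varepsilon/m$ per piece is circular when $m$ is only fixed by the covering; the error must instead be proportional to the length of each piece.

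\textbf{How to close it.} Your fallback is the correct remedy (Royden's trick), but it has to be carried out.
\begin{enumerate}
\item The map $t\mapsto F_M(c'(t))$ is upper semicontinuous on $[a,b]$ for free, since $g_{t_0}$ is a continuous majorant near $t_0$ that is $\varepsilon$-close to it at $t_0$. So your perturbation argument on the whole lightlike bundle is unnecessary, and this also gives the measurability implicitly needed in your first inequality.
\item Choose a continuous $G\ge F_M(c')$ on $[a,b]$ with $\int_a^bG\le\int_a^bF_M(c')+\varepsilon$.
\item Shrink $U_{t_0}$ so that $g_{t_0}\le G+2\varepsilon$ on it, using $g_{t_0}(t_0)\le G(t_0)+\varepsilon$ and continuity of both functions.
\item For all $t_1<t_2$ in $U_{t_0}$, the points $c(t_1),c(t_2)$ lie on the single curve $\gamma$. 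A one-link chain therefore gives $\delta_M(c(t_1),c(t_2))\le\int_{t_1}^{t_2}g_{t_0}\le\int_{t_1}^{t_2}(G+2\varepsilon)$ for every such pair, not only pairs involving $t_0$.
\item A partition of mesh below a Lebesgue number of the cover $\{U_{t_0}\}$ then yields $\delta_M(c(a),c(b))\le\int_a^bF_M(c')+\varepsilon(1+2(b-a))$.
\end{enumerate}

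\textbf{Alternative.} You can also remove the obstacle altogether. Along a maximal lightlike geodesic, the elements of $\PPL(I,M)$ tangent to it correspond to subintervals of its developed image in the universal cover of $\RP^1$ that project injectively onto arcs whose complement has nonempty interior. Hence $F_M(c'(t))$ is either the pulled-back Hilbert density of the whole developed image (when it is such an arc) or identically $0$. In both cases it is continuous in $t$, and your original Riemann-sum argument becomes valid.
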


\subsection{Projective maps} Let $J$ be a strict interval of $\R$, and let $h$ be a homography such that $h(I)=J$. We let $\rho_J=h_*\rho_I$. This distance on $J$ is independent of the choice of $h$.

\begin{ex}
    If $J=\R_{>0}$, then $\rho_J(x,y)=\vert \ln(x/y)\vert$ for all $x,y\in J$.
\end{ex}

\begin{definition}
    Let $(M,[g])$ be a conformal spacetime. A function $f:M\to \R$ is \emph{projective} if $f\circ \gamma$ is a homography for every $\gamma\in\PPL(I,M)$.
\end{definition}

\begin{prop}[{\cite{markowitz_warped_product}}]
    \label{Proposition_applications_projectives}
    Let $(M,[g])$ be a conformal spacetime, and let $f:M\to \R$ be a projective map. Assume that  $J=f(M)$ is a strict interval. Then, for every $x,y\in M$, one has 
    $$\rho_J(f(x),f(y))\leq \delta_M(x,y).$$
\end{prop}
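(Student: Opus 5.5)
The argument is a direct Schwarz--Pick type computation, using the chain definition of $\delta_M$ and the fact that homographies are contractions for the $\rho$ distances. I will prove that every homography $h$ with $h(I)\subset J$ satisfies
\[
\rho_J(h(s),h(t))\leq \rho_I(s,t)\qquad\text{for all } s,t\in I .
\]
Granting this, the proposition follows quickly. Fix a lightlike chain $\Ccal=((\gamma_k),(s_k),(t_k))$ from $x$ to $y$. For each $k$, the map $f\circ\gamma_k$ is a homography because $f$ is projective. It maps $I$ into $J=f(M)$, since $\gamma_k(I)\subset M$.

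If $f\circ\gamma_k$ is constant (a degenerate homography, which I will allow), the $k$-th term contributes nothing on the left. Otherwise the inequality gives $\rho_J(f(\gamma_k(s_k)),f(\gamma_k(t_k)))\leq\rho_I(s_k,t_k)$. The points $f(\gamma_k(t_k))=f(\gamma_{k+1}(s_{k+1}))$ match up along the chain, so the triangle inequality for $\rho_J$ gives
\[
\rho_J(f(x),f(y))\leq\sum_k \rho_J\bigl(f(\gamma_k(s_k)),f(\gamma_k(t_k))\bigr)\leq\sum_k\rho_I(s_k,t_k).
\]
Taking the infimum over chains yields $\rho_J(f(x),f(y))\leq\delta_M(x,y)$.

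It remains to prove the contraction inequality. Let $g$ be a homography with $g(I)=J$, which exists because $J$ is a strict interval, and recall that $\rho_J=g_*\rho_I$. Then $\phi=g^{-1}\circ h$ is a homography with $\phi(I)\subset I$, and the claim reduces to $\rho_I(\phi(s),\phi(t))\leq\rho_I(s,t)$.

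Now $\phi(I)$ is a subinterval $[a,b]\subset I$, possibly with open ends, with $\phi$ a homography from $I$ onto it. Composing with an orientation-reversing homography of $I$ if necessary, which is a $\rho_I$-isometry, I may assume $\phi$ is increasing.

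The key observation is that $\rho_I$ is the logarithm of a cross-ratio: $\rho_I(s,t)=|\ln[s,t;-1,1]|$ for the appropriate cross-ratio. Homographies preserve cross-ratios, so
\[
\rho_I(s,t)=\bigl|\ln[\phi(s),\phi(t);a,b]\bigr|=\rho_{(a,b)}(\phi(s),\phi(t)).
\]
I then use the elementary monotonicity of Hilbert distances under inclusion of intervals: for $(a,b)\subset(-1,1)$ and $u,v\in(a,b)$, one has $\rho_{(-1,1)}(u,v)\leq\rho_{(a,b)}(u,v)$. To check this, take $u<v$; then
\[
\rho_{(a,b)}(u,v)=\ln\frac{(v-a)(b-u)}{(u-a)(b-v)}.
\]
Each of the factors $\frac{v-a}{u-a}$ and $\frac{b-u}{b-v}$ decreases as the corresponding endpoint moves outward, which gives the inequality. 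The statement that $\rho_J$ is independent of the choice of $g$ is exactly the homography invariance already noted in the paper.

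The only step requiring care is this contraction lemma, namely identifying $\rho_I$ with a cross-ratio and tracking the orientation and degenerate cases. I expect it to be routine rather than a genuine obstacle.
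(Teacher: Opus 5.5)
Your proof is correct, and it is the standard argument. The paper gives no proof of this statement (it is quoted from Markowitz), and the expected route is the one you take: homographies $I\to J$ are $\rho$-contractions, which follows from cross-ratio invariance together with monotonicity of the interval Hilbert distance under inclusion, and one then sums over a lightlike chain using the triangle inequality for $\rho_J$.

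Two cosmetic points. First, $\phi(I)$ is an open interval $(a,b)$ with $-1\le a<b\le 1$, not $[a,b]\subset I$; your monotonicity step already uses exactly this. Second, the orientation reduction is unnecessary, since exchanging $a$ and $b$ only inverts the cross-ratio and leaves $|\ln|$ unchanged.
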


\subsection{The Minkowski space} The \emph{Minkowski space}, denoted $\R^{1,n}$, is the affine space $\R^{n+1}$ endowed with the canonical flat Lorentzian metric $\b$. Let us fix some notations:

\begin{itemize}
    \item we let $\partial_t$ be a parallel unit timelike vector field on $\R^{1,n}$;
    \item we fix an orthogonal splitting $\R^{n+1}=\R\times\R^n$, where $\R$ is the line generated by $\partial_t$;
    \item we write $x=(t,p)\in \R\times \R^n$ according to the above decomposition. In these coordinates, the metric $\b$ takes the expression:
$$\b=-dt^2+d\sigma^2=-dt^2+dp_1^2+\dots+dp_{n}^2.$$
    \item the parallel timelike vector field $\partial_t$ endows $\R^{1,n}$ with a time orientation. The causal sets associated to a point $x\in \R^{1,n}$ are denoted:
$$I^+(x)=\{y\in \R^{1,n}\,\vert\,x\ll y\},\,\,\,\,\,\,\,\,\,\,\,J^+(x)=\{y\in \R^{1,n}\,\vert\,x\leq y\},\,\,\,\,\,\,\,\,\,\,\,C^+(x)=J^+(x)\setminus I^+(x),$$
$$I^-(x)=\{y\in \R^{1,n}\,\vert\,y\ll x\},\,\,\,\,\,\,\,\,\,\,\,J^-(x)=\{y\in \R^{1,n}\,\vert\,y\leq x\},\,\,\,\,\,\,\,\,\,\,\,\,C^-(x)=J^-(x)\setminus I^-(x).$$
We let $I(x,y)=I^+(x)\cap I^-(y)$ and $J(x,y)=J^+(x)\cap J^-(y)$. Domains of the form $I(x,y)$ are called \emph{diamonds};
\item the \emph{time separation function} $\tauL$ on $\R^{1,n}$ is defined as $\tauL(x,y)=\sqrt{\vert\b(y-x,y-x)\vert}$  for all $x,y\in \R^{1,n}$ such that $x\leq y$;
\item the \emph{Wick rotation} of $\b$ along $\partial_t$ is the Riemannian metric $h=dt^2+d\sigma^2$.
The associated norm and distance on $\R^{1,n}$ are denoted by $\vert \cdot\vert$ and $d_h$, respectively. 
\item For $\varepsilon>0$, we let  $$\b_\varepsilon= -(1+\varepsilon)^2dt^2+dp_1^2+\dots+dp_{n}^2.$$
The conformal spacetime $(\R^{1,n},[\b_\varepsilon])$ is endowed with the same time orientation $\partial_t$, and we denote its causal sets by $I^\pm_\varepsilon(x),J_\varepsilon^\pm(x)$ and $C_\varepsilon^\pm(x)$. 
\end{itemize}

\subsection{The Markowitz metric for domains in Minkowski space} By a \emph{domain} in $\R^{1,n}$, we mean a non-empty, connected, proper open subset of $\R^{1,n}$. Let $\Omega$ be a domain in $\R^{1,n}$, and let $x\in \Omega$ and $v\in T_x\Omega$ a lightlike vector. Let $x_v^+,x_v^-\in \R^{1,n}$ denote the future and past endpoints of the maximal lightlike segment tangent to $v$ and contained in $\Omega$, respectively. These points may be infinite if the maximal lightlike geodesic in $\Omega$ tangent to $v$ is complete in the future or in the past. 
\begin{prop}[{\cite[Prop. 4.5]{article_dist_markotitz}}]
    \label{formule_F_Omega}
    Let $\Omega$ be a domain in $\R^{1,n}$ and let $v\in T_x\Omega$ be lightlike. Then 
    $$F_\Omega(v)=\frac{\vert v \vert}{d_h(x,x_v^-)}+\frac{\vert v \vert}{d_h(x,x_v^+)},$$
    where $d_h(x,x_v^\pm)$ is understood to be infinite when $x_v^\pm$ is infinite. \qed
\end{prop}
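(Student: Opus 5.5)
The proof reduces the computation to the lightlike chord through $x$ tangent to $v$, and then minimizes over projectively parametrized lightlike geodesics of $\Omega$ that pass through $x$ with velocity $v$.

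\textbf{Step 1: projective parametrizations.} In $\R^{1,n}$ the metric $\b$ is flat, so $\Ric=0$ and the defining equation becomes $Su=0$. Hence projective parameters are exactly the homographies of an affine parameter. An element $\gamma\in\PPL(I,\Omega)$ is therefore of the form $\gamma(s)=x_0+\phi(s)w$, with $w$ lightlike and $\phi$ a homography defined on $I$ with values in $\R$. Since $\gamma(I)\subset\Omega$, the affine segment $\phi(I)$ lies inside the maximal interval $J\subset\R$ of parameters $a$ for which $x+av$ belongs to the lightlike segment through $x$ tangent to $v$.

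\textbf{Step 2: pushing forward to $J$.} Set $J=(-a^-,a^+)$, where $a^\pm=d_h(x,x_v^\pm)/\vert v\vert$ and $a^\pm=\infty$ if $x_v^\pm$ is infinite. Suppose $\gamma\in\PPL(I,\Omega)$ and $u\in T_sI$ satisfy $\gamma_*u=v$. After reparametrizing, this is a homography $\phi:I\to J$ with $\phi(s)=0$ and $\phi'(s)u=1$.

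The infimum of $\vert u\vert$ is a Kobayashi-type infinitesimal quantity on the interval $J$. Homographies $I\to J$ need not be onto; the image can be a proper subinterval. By monotonicity of the infinitesimal Hilbert/Poincaré metrics under inclusion, the infimum is attained when $\phi(I)=J$. The value is then the infinitesimal form of $\rho_J$ at $0$ applied to the unit tangent vector.

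\textbf{Step 3: the explicit formula.} Differentiating $\rho_I$ at $s$ gives the infinitesimal metric $\frac{2}{1-s^2}\vert ds\vert$. The factor $2$ should match the normalization used in the paper, so I will check this against the Example with $J=\R_{>0}$. For an interval $(-a^-,a^+)$, pulling back via a homography gives the infinitesimal metric at $0$ as $\frac{1}{a^-}+\frac{1}{a^+}$. This is the Hilbert-metric formula on a segment, with terms equal to $0$ when an endpoint is infinite. Multiplying by $\vert v\vert$, through $\vert v\vert/(a^\pm\vert v\vert)$, yields
$$F_\Omega(v)=\frac{\vert v\vert}{d_h(x,x_v^-)}+\frac{\vert v\vert}{d_h(x,x_v^+)}.$$
The constant is fixed by the convention in $\rho_I$, and I will check it on the half-line case.

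\textbf{Main obstacle.} I expect two points to need care. The first is the infinite or half-infinite cases, where the image $\phi(I)$ must be an unbounded interval; this is fine because homographies can send $\pm1$ to $\infty$. The second is showing that no lightlike geodesic other than the chord through $x$ can realize $v$, which holds because a lightlike geodesic tangent to $v$ at $x$ is the line $x+\R v$.
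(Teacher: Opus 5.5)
Your argument is correct. The paper gives no proof of this statement and only cites \cite[Prop.~4.5]{article_dist_markotitz}, so there is nothing in the text to compare against; your route is the natural one. Flatness makes projective parameters homographies of the affine parameter, so every admissible $\gamma$ lies on the chord $x+Jv$. A homography $\phi$ is an isometry from $(I,\rho_I)$ onto $(\phi(I),\rho_{\phi(I)})$, and for intervals $J'=(A',B')\subset J=(A,B)$ containing $0$ one has
\[
\frac{1}{-A'}+\frac{1}{B'}\geq\frac{1}{-A}+\frac{1}{B}.
\]
This gives the lower bound $\frac{1}{a^-}+\frac{1}{a^+}$, and your computation of the infinitesimal form of $\rho_J$ gives exactly this value.

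Two small repairs are needed.

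First, state the convention explicitly rather than planning to ``check'' it. Here $\vert u\vert$ must mean the $\rho_I$-norm $\frac{2\vert du\vert}{1-s^2}$ at the base point $s$ of $u$. This is forced by Theorem~\ref{thm_lien_dmark_Fmark}, and it is what you actually use.

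Second, your claim that the infimum is attained when $\phi(I)=J$ fails when $x+\R v\subset\Omega$, i.e.\ when both $x_v^\pm$ are infinite. No homography defined on $I$ has image all of $\R$: its pole cannot lie at both $1$ and $-1$. So your remark that homographies can send $\pm1$ to $\infty$ covers only the half-infinite case. In the doubly infinite case, take $\phi(s)=Rs$, so that $\phi(I)=(-R,R)$. This gives $\vert u\vert=2/R\to0$, which matches the value $0$ given by the formula.
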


\begin{prop}
    \label{prop_complétude_delta_Omega}
    Let $\Omega$ be a convex domain in $\R^{1,n}$. The following properties are equivalent:
    \begin{enumerate}
        \item $\delta_\Omega$ is a complete distance on $\Omega$;
        \item $\delta_\Omega$ is a distance on $\Omega$;
        \item all lightlike lines in $\Omega$ are incomplete.
    \end{enumerate}
\end{prop}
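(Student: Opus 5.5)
We prove the cycle of implications $(1)\Rightarrow(2)\Rightarrow(3)\Rightarrow(1)$; the first is trivial.

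\emph{$(2)\Rightarrow(3)$.} We argue by contraposition. Suppose $\Omega$ contains a complete lightlike line $\ell=x+\R v$. Since $\Omega$ is open and convex, for every point $y\in\Omega$ the line $y+\R v$ also lies in $\Omega$. Indeed, $\Omega\supset\Conv(\{y\}\cup\ell)$, and a small neighbourhood of $y$ in $\Omega$ together with $\ell$ generates, by convexity, the whole line through $y$ parallel to $\ell$; this is the standard fact that the recession cone of an open convex set is the same at every point. Consequently every lightlike line parallel to $v$ through a point of $\Omega$ is complete. By Proposition~\ref{formule_F_Omega} we get $F_\Omega(v)=0$ at every point, so Theorem~\ref{thm_lien_dmark_Fmark} gives $\delta_\Omega(x,x+sv)=0$ for all $s$. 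Hence $\delta_\Omega$ is not definite.

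\emph{$(3)\Rightarrow(1)$.} This is the main part. Assume every lightlike line in $\Omega$ is incomplete. The idea is to produce enough projective maps to which Proposition~\ref{Proposition_applications_projectives} applies. Affine functions on $\R^{1,n}$ are projective in the sense above: in flat space $\Ric=0$, so projective parameters are affine parameters up to homography, and an affine function restricted to a line is affine in the parameter. For each supporting half-space $\{\phi<c\}\supset\Omega$ with $\phi$ linear, the map $f=c-\phi$ sends $\Omega$ into $(0,\infty)$. This gives
\[
\delta_\Omega(x,y)\ge \Bigl|\ln\frac{c-\phi(x)}{c-\phi(y)}\Bigr|.
\]

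\emph{Key step (the main obstacle).} We must show that these functions separate points and control the topology. Incompleteness of every lightlike line means that the recession cone $\mathrm{rec}(\Omega)$ contains no lightlike vector, and hence it contains no lightlike line. We claim that the linear parts $\phi$ of supporting functionals then span the dual space. Otherwise there would be a nonzero $w$ with $\phi(w)=0$ for all of them, so $\pm w\in \mathrm{rec}(\Omega)$ and $\Omega$ would be invariant under translations by $\R w$. We must then rule out a non-lightlike $w$ of this kind. If $w$ is spacelike or timelike, then $\Omega=\Omega'\times\R w$, and the orthogonal complement of $w$ contains a lightlike direction $u$ (when $w$ is timelike, one uses instead that the plane spanned by $w$ and any vector contains lightlike lines). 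Since $\Omega\ni x+\R w$, the lines through $x$ in directions $w\pm$ suitable vectors are nearly contained in $\Omega$. The cleanest route is to reduce to the statement that the span of the lineality space of $\Omega$ is zero. This holds because the lineality space $L$ satisfies $\Omega=\Omega+L$, and any nonzero subspace $L$ either contains a lightlike vector, or is spacelike. In the spacelike case, $L^\perp$ together with $\Omega$-directions still yields, via the flat factor, $F_\Omega$ degenerate: each lightlike line $x+\R(e_t+e)$ with $e\in L$ a unit vector satisfies $\delta_\Omega(x,x+se)=0$ by zig-zagging along two lightlike directions $e_t\pm e$ with long incomplete segments whose $\rho$-lengths tend to $0$. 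This zig-zag estimate is the delicate point; if needed, we would replace it by a direct estimate of the form $F_\Omega\ge c|v|/d_h(x,\partial\Omega)$, which gives local comparison with $h$ on compact sets.

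\emph{Completeness.} Having chosen finitely many supporting functionals whose linear parts form a basis, with $\Omega$ contained in the intersection of the corresponding half-spaces, the log-distances to their hyperplanes give a proper map $\Omega\to(\R_{>0})^{k}$ that is injective and Lipschitz from $\delta_\Omega$ to $\sum\rho$. It follows that $\delta_\Omega$ is definite. For completeness, a $\delta_\Omega$-Cauchy sequence has bounded images, so its points stay in a region where all these functionals are at distance bounded below from their supporting values. Combined with the local lower bound $F_\Omega(v)\ge |v|/\mathrm{diam}$ on compact pieces, this yields convergence in $\Omega$.
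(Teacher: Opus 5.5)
Your $(1)\Rightarrow(2)$ and your contrapositive proof of $(2)\Rightarrow(3)$ are fine. The parallel-lines remark is not even needed, since $F_\Omega(v)=0$ already holds along the complete lightlike line itself.

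\textbf{First gap: the key step in $(3)\Rightarrow(1)$ is false.} You claim that under (3) the lineality space $L=\{w:\Omega+\R w=\Omega\}$ is trivial, so that the linear parts of supporting functionals span the dual. Hypothesis (3) only forbids lightlike vectors in $L$. Your intermediate claim that the recession cone contains no lightlike vector is already wrong, for instance for $I^+(a)$. In fact $L$ can be a nonzero spacelike subspace or a timelike line:
\begin{itemize}
\item The spacelike slab $\{|t|<1\}$ and the Einstein--de Sitter half-space $\{t>0\}$ contain no lightlike line and have $L=\{0\}\times\R^n$. Their only supporting functionals are $\pm t$ (resp.\ $t$), which never separate $(0,p)$ from $(0,q)$.
\item A timelike cylinder $\R\times B$, with $B\subset\R^n$ a ball, has $L=\R\partial_t$.
\end{itemize}

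Your proposed way of ruling out a spacelike $L$, namely showing $\delta_\Omega(x,x+se)=0$ by zig-zags, would contradict the very statement you are proving, since these domains satisfy (3). It is also simply wrong. In the slab, $t$ is a projective function with image $(-1,1)$. A lightlike segment changes the $e$-coordinate by at most its change in $t$, and $\rho_{(-1,1)}(a,b)\geq 2|a-b|$. Hence every lightlike chain from $x$ to $x+se$ has length at least $2|s|$. Indeed, the paper shows $\delta_\Omega$ is bi-Lipschitz to $\K_\Omega$ on the slab and to $\H^{n+1}$ on the half-space. Your fallback bound $F_\Omega(v)\geq c|v|/d_h(x,\partial\Omega)$ with uniform $c$ also fails without stable acausality: in $I^+(0)$, take $x$ close to a boundary generator and $v$ parallel to it.

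\textbf{Second gap: the completeness step fails independently.} Even when $L=\{0\}$, finitely many supporting functionals do not give a proper map $\Omega\to(\R_{>0})^k$. A $\delta_\Omega$-bounded sequence may converge to a boundary point at which none of the chosen hyperplanes is supporting. Moreover, the bound $F_\Omega(v)\geq|v|/\mathrm{diam}$ is meaningless for unbounded $\Omega$.

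\textbf{What can be salvaged.}
\begin{itemize}
\item Definiteness can be rescued locally. For convex open $\Omega$, the map $(x,v)\mapsto\min\{d_h(x,x_v^-),d_h(x,x_v^+)\}$ is continuous with values in $(0,+\infty]$, and finite on lightlike $v$ under (3). Hence it is bounded on compact sets of $h$-unit lightlike vectors, which gives $\delta_\Omega(x,y)\geq c\min\{d_h(x,y),r\}$.
\item If $\Omega$ contains no affine line at all, completeness follows from $H_\Omega\leq\delta_\Omega$ (Section~\ref{Section_hilbert}) together with completeness of the Hilbert metric.
\end{itemize}
What your proposal lacks is an argument for the case of nontrivial spacelike or timelike lineality. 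There one must prevent $\delta_\Omega$-Cauchy sequences from escaping to infinity along $L$, with no affine projective function available to separate points in those directions. This is exactly what the paper imports, without proof, from Thm.~C of \cite{article_dist_markotitz}.
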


\begin{proof}
    This is a consequence of \cite[Thm. C]{article_dist_markotitz}.
\end{proof}

\section{Causal convexity and future completeness}\label{section_causal_convexité}

A domain $\Omega$ in $\R^{1,n}$ is \emph{causally convex} if any causal curve in $\R^{1,n}$ with endpoints in $\Omega$ is contained in $\Omega$. We say that $\Omega$ is \emph{future complete} if any future causal curve in $\R^{1,n}$ starting in $\Omega$ is entirely contained in $\Omega$. Any future complete domain is causally convex. 

\begin{fact}
    \label{fait_cc_implique_globalement_hyperbolique}
    Causally convex domains in $\R^{1,n}$ are globally hyperbolic. 
\end{fact}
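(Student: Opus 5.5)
The plan is to exhibit a Cauchy hypersurface directly, by showing that the level sets of the time coordinate restricted to $\Omega$ do the job. Precisely, let $\Omega\subset\R^{1,n}$ be causally convex and fix a value $t_0$ such that $\Sigma=\Omega\cap\{t=t_0\}$ is nonempty. Since $\Omega$ may be unbounded and non-convex, $\Sigma$ need not be met by every inextensible causal curve. So instead of a single slice, I would use a time function adapted to $\Omega$ and invoke the standard characterization of global hyperbolicity, as in Geroch and Bernal--Sánchez: a spacetime is globally hyperbolic if and only if it is causal and all causal diamonds $J^+(x)\cap J^-(y)$ are compact.

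First, the induced spacetime $(\Omega,\b|_\Omega)$ is causal (indeed strongly causal), since $t$ is a time function on all of $\R^{1,n}$ and restricts to one on $\Omega$. Second, for $x,y\in\Omega$, the causal diamond computed inside $\Omega$ is
$$J_\Omega(x,y)=\{z\in\Omega\,\vert\,\text{there is a causal curve in }\Omega\text{ from }x\text{ to }z\text{ and from }z\text{ to }y\}.$$
I claim that $J_\Omega(x,y)=J(x,y)$. One inclusion is trivial. For the other, take $z\in J(x,y)$. The concatenation of the straight causal segments $[x,z]$ and $[z,y]$ is a causal curve in $\R^{1,n}$ with endpoints in $\Omega$, so by causal convexity it lies in $\Omega$. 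Hence $z\in\Omega$, and the segments themselves are causal curves in $\Omega$.

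Finally, $J(x,y)$ in Minkowski space is the intersection of two closed cones, and it is bounded, since it lies in the slab $t(x)\le t\le t(y)$ with spatial radius controlled by $t(y)-t(x)$. It is therefore compact in $\R^{1,n}$. Because it is contained in $\Omega$, it is also compact in $\Omega$. Applying the characterization then yields global hyperbolicity, that is, the existence of a Cauchy hypersurface.

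The only delicate point is which definition of global hyperbolicity to use. The paper defines it via Cauchy hypersurfaces, so I rely on the classical equivalence (Geroch, Bernal--Sánchez) between the existence of a Cauchy hypersurface and the conditions \emph{causal plus compact diamonds}. I would cite this equivalence rather than reprove it. If one prefers a self-contained argument, the Cauchy time function $\ln\bigl(\mathrm{vol}(J^-(x)\cap\Omega)/\mathrm{vol}(J^+(x)\cap\Omega)\bigr)$, with a finite auxiliary measure, works as in Geroch's proof, and its level sets are Cauchy hypersurfaces.
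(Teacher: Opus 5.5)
Your argument is correct and is essentially the paper's proof: causal convexity makes every causal diamond of $\Omega$ coincide with the corresponding compact diamond $J(x,y)$ of $\R^{1,n}$, and global hyperbolicity then follows from the Geroch / Bernal--S\'anchez characterization (causal plus compact diamonds). The opening idea about time slices is a false start that you abandon, so it can simply be dropped.
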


\begin{proof}
    Let $\Omega$ be a causally convex domain in $\R^{1,n}$. Then any diamond in $\Omega$ is a diamond in $\R^{1,n}$. Therefore any diamond in $\Omega$ is compact. It follows that $\Omega$ is globally hyperbolic, see \cite{Geroch} and \cite{Sanchez2008}.
\end{proof}

\begin{prop}
    \label{prop_structure_ouverts_causallement_convexes}
    Let $\Omega$ be a domain in $\R^{1,n}$. Then $\Omega$ is causally convex if and only if there exist an open subset $U\subset \R^n$ and two 1-Lipschitz functions $f_\pm:\overline U\to \overline\R$ such that $f_-<f_+$ on $U$, $f_-=f_+$ on $\partial U$, and  
    $$\Omega=\{(t,p)\in\R\times U\vert\,f_-(p)<t<f_+(p)\}.$$
    Moreover, the domain $\Omega$ is future complete if and only if $f^+=+\infty$, the domain $\Omega$ is bounded if and only if $U$ is bounded in $\R^n$, and the domain $\Omega$ is convex if and only if $U$ and $f_-$ are convex and $f_+$ is concave. 
\end{prop}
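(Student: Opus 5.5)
The plan is to realize $\Omega$ as the region between the past and future ``causal boundary graphs''. Let $\pi:\R^{1,n}\to\R^n$, $(t,p)\mapsto p$, and set $U=\pi(\Omega)$, which is open and connected. For $p\in U$ define
$$f_+(p)=\sup\{t\,\vert\,(t,p)\in\Omega\},\qquad f_-(p)=\inf\{t\,\vert\,(t,p)\in\Omega\}.$$
The first step uses causal convexity: the vertical segment $\{p\}\times\R$ is timelike, so for each $p\in U$ the fiber $\Omega\cap(\{p\}\times\R)$ is an interval, namely $(f_-(p),f_+(p))$. Hence $\Omega=\{(t,p)\,\vert\,p\in U,\ f_-(p)<t<f_+(p)\}$, and $f_-<f_+$ on $U$ because $\Omega$ is open.

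The second step is the 1-Lipschitz property, which I expect to be the main point. Take $p,q\in U$ with $f_+(q)<f_+(p)-|p-q|$, so in particular $f_+(q)<\infty$. Pick $(t,p)\in\Omega$ with $t>f_+(q)+|p-q|$, and pick $(s,q)\in\Omega$. The point $(f_+(q)+\epsilon,q)$ lies causally between $(s,q)$ and $(t,p)$ for small $\epsilon>0$: it is in the future of $(s,q)$ along the vertical segment, and in the past of $(t,p)$ because the time difference exceeds $|p-q|$. Causal convexity then puts $(f_+(q)+\epsilon,q)$ in $\Omega$, contradicting the definition of $f_+(q)$. Therefore $f_+(q)\ge f_+(p)-|p-q|$, and by symmetry $f_+$ is 1-Lipschitz as a map to $\R\cup\{+\infty\}$. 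The value $+\infty$ propagates: if it occurs at one point it occurs everywhere on the connected set $U$. The same argument applies to $f_-$.

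Next, extend both functions to $\overline U$ by continuity; a 1-Lipschitz function extends uniquely. To show $f_-=f_+$ on $\partial U$, suppose $f_-(p)<f_+(p)$ at some $p\in\partial U$, and choose $t$ with $f_-(p)<t<f_+(p)$. Take $p_k\in U$ with $p_k\to p$. Uniform continuity gives points $(t\pm\eta,p_k)\in\Omega$ for some fixed $\eta>0$. The diamond spanned by these two points contains $(t,p)$ once $|p_k-p|<\eta$, and causal convexity then places $(t,p)$ in $\Omega$. This gives $p\in\pi(\Omega)=U$, which is impossible because $U$ is open and $p\in\partial U$. In the infinite case the same argument works with $t$ large, provided $f_-$ is finite there.

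For the converse, assume $\Omega$ is given by such $U$ and $f_\pm$. Let $\gamma$ be a causal curve with endpoints in $\Omega$. Along $\gamma$, the quantity $t-f_-(\pi(\gamma))$ is nondecreasing, since $|d\pi(\dot\gamma)|\le\dot t$ and $f_-$ is 1-Lipschitz; likewise $t-f_+(\pi(\gamma))$ is nondecreasing. The first quantity is positive at the start, so it stays positive; the second is negative at the end, so it was negative throughout. So $f_-<t<f_+$ along $\gamma$. The curve also cannot leave $U$: at a first exit point $p\in\partial U$ we would get $f_-(p)<t<f_+(p)$ by continuity of these monotone quantities, contradicting $f_-=f_+$ on $\partial U$.

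The addenda are routine. Future completeness is equivalent to $f_+=+\infty$, since the vertical future rays and monotonicity give both directions. Boundedness is equivalent to $U$ bounded, because a 1-Lipschitz $f_\pm$ on a bounded set that agree on the boundary are bounded. For convexity, the epigraph and hypograph descriptions give it directly: $\Omega$ is convex iff $U$ is convex, $f_-$ is convex and $f_+$ is concave.
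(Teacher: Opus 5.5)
Your proof follows the paper's route. You take $U=\pi(\Omega)$, observe that the fibres over $U$ are intervals because vertical segments are timelike, and get the 1-Lipschitz bound from the fact that causal convexity forbids a point outside $\Omega$ from lying on a causal curve between two points of $\Omega$. The paper phrases that last step as $(f_+(p),p)$ and $(f_+(q),q)$ not being related. You also supply what the paper defers to Sma\"{\i}, namely $f_-=f_+$ on $\partial U$ (your small-diamond argument is fine) and the converse.

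One step in the converse needs repair. Take $\gamma(s)=(t(s),p(s))$ future directed. The monotonicity of $t-f_+(p(s))$ only makes sense while $p(s)\in\overline U$. So ``negative at the end, hence negative throughout'' cannot be invoked at the first exit point $s^*$ if the curve leaves $\overline U$ after $s^*$ and comes back. There are two easy fixes.
\begin{enumerate}
\item Extend $f_\pm$ to 1-Lipschitz functions on all of $\R^n$, for instance $\tilde f(x)=\inf_{y\in\overline U}\bigl(f(y)+\vert x-y\vert\bigr)$. In the infinite cases $\partial U=\emptyset$ and there is nothing to prove. With the extensions, your argument goes through verbatim.
\item Also use the last entry point $s_*=\max\{s\,\vert\,p(s)\notin U\}\geq s^*$. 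Forward monotonicity on $[0,s^*]$ gives $t(s^*)>g(p(s^*))$. Backward monotonicity on $[s_*,1]$ gives $t(s_*)<g(p(s_*))$, where $g=f_+=f_-$ on $\partial U$. Since $t(s_*)-t(s^*)\geq\vert p(s_*)-p(s^*)\vert$, this contradicts the fact that $g$ is 1-Lipschitz.
\end{enumerate}
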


\begin{proof}
    The proof of the ``if and only if'' part is identical to \cite[Sec. 3.5]{smaï2023enveloping}. Let us recall how the maps $f_+$ and $f_-$ are constructed, when $\Omega$ is causally convex. Let $\pi:\R^{1,n}\to \R^n$ be the orthogonal projection map, and let $U=\pi(\Omega)$. For every $p\in U$, the set $\Omega\cap\pi^{-1}(\{p\})$ takes the form $(f_-(p),f_+(p))\times \{p\}$ since $\Omega$ is causally convex. Let $p,q\in U$ and let $x=(f_+(p),p)$ and $y=(f_+(q),q)$. This defines two maps $f_\pm:U\to \overline\R$. By construction, the points $x$ and $y$ cannot be causally related, which implies that 
    $$0\leq \b(y-x,y-x)=-(f_+(p)-f_+(q))^2+\vert p-q\vert^2.$$
    Hence $\vert f_+(p)-f_+(q)\vert\leq\vert p-q\vert$, so $f_+$ is $1$-Lipschitz. Similarly, $f_-$ is $1$-Lipschitz. 
    The characterizations of future completeness, boundedness or convexity of $\Omega$ follow directly from the construction of $U$, $f_+$ and $f_-$.
\end{proof}

\begin{definition}
    Let $\Omega$ be a causally convex domain in $\R^{1,n}$, and let $f_+$ and $f_-$ be the two functions given by Proposition \ref{prop_structure_ouverts_causallement_convexes}. The \emph{future (resp. past) causal boundary of $\Omega$}, denoted $\partial_c^+\Omega$ (resp. $\partial_c^-\Omega$), is defined as the graph of $f_+$ (resp. $f_-$). By convention, we let $\partial_c^+\Omega=\emptyset$ if $f_+$ is infinite, and similarly for $\partial_c^-\Omega$.
\end{definition}

One can check that the future (resp. past) causal boundary points consist exactly of the endpoints of future (resp. past) inextensible causal curves in $\Omega$. Hence, the notion of future (resp. past) causal boundary is independent of the choice of splitting $\R^{1,n}=\R\times \R^n$. In fact, the future and past causal boundary are closely related to the causal boundary defined by Geroch--Kronheimer--Penrose. 

\begin{definition}[\cite{Penrose_Kronheimer_Geroch}]
    Let $(M,[g])$ be a globally hyperbolic spacetime. The \emph{future causal boundary of $M$} is the set of equivalence classes of future inextensible causal curve $\gamma:[0,+\infty)\to M$, under the relation $\gamma_1\sim \gamma_2$ if and only if $I^-(\gamma_1)=I^-(\gamma_2)$. The \emph{past causal boundary} is defined analogously. These boundaries are naturally endowed with the Alexandrov topology.
\end{definition}

\begin{fact}[\cite{Rym_causal_completion}]
    Let $\Omega$ be a causally convex domain in $\R^{1,n}$. Assume that there exists a stable past lightcone $C_\varepsilon^-$, such that $\Omega$ is contained in $I^+(C_\varepsilon^-)$. Then $\partial_c^-\Omega$ is naturally homeomorphic to the past causal boundary of $\Omega$ in the sense of Geroch--Kronheimer--Penrose. The same property holds for $\partial_c^+\Omega$ if signs are reversed.
\end{fact}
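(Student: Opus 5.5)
We aim to identify the GKP past boundary of $\Omega$ with $\partial_c^-\Omega$ through the map sending a past inextensible causal curve in $\Omega$ to its past endpoint in $\R^{1,n}$. Write $\Omega=\{f_-<t<f_+\}$ over $U$ as in Proposition~\ref{prop_structure_ouverts_causallement_convexes}. The hypothesis $\Omega\subset I^+(C_\varepsilon^-)$, say for the cone with vertex $z$, plays one role: it makes $\Omega$ bounded in the past in a uniform way. A past causal curve in $\Omega$ has Euclidean speed comparable to its $t$-speed, and $t$ is bounded below on $\Omega$ by $t(z)$ plus a positive multiple of $|p-p(z)|$. So a past-directed causal curve cannot escape to infinity: its $t$-coordinate decreases and is bounded below, and its $p$-coordinate is then bounded as well.

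The steps are as follows.

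\textbf{Step 1: endpoints exist and lie on $\partial_c^-\Omega$.} Parametrize a past inextensible causal curve $\gamma$ by $-t$. Its spatial part is $1$-Lipschitz, and by the cone bound $t$ stays bounded below. Hence $\gamma$ converges to a point $e(\gamma)\in\overline\Omega$. Inextensibility in $\Omega$ gives $e(\gamma)\in\partial\Omega$. By causal convexity the limit cannot lie on the graph of $f_+$ unless $f_+=f_-$ there, so it lies on the graph of $f_-$ (which includes $\partial U$).

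\textbf{Step 2: $e$ descends to the quotient and is bijective.} I would prove the identity $I^+(\gamma)=I^+(e(\gamma))\cap\Omega$, which is the past analogue of the GKP relation using $I^+$. Any point of $\Omega$ in the chronological future of some $\gamma(s)$ lies in $I^+(e(\gamma))$, since $e(\gamma)\le\gamma(s)$. Conversely, $I^+(e(\gamma))$ is open and $\gamma(s)\to e(\gamma)$, so any $y\gg e(\gamma)$ satisfies $y\gg\gamma(s)$ for $s$ large. This gives well-definedness and injectivity, because $I^+(x)\cap\Omega$ determines $x\in\partial_c^-\Omega$: the point $x$ is the infimum of $I^+(x)\cap\Omega$, and $\Omega$ near the graph of $f_-$ contains points just above $x$. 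For surjectivity, given $x=(f_-(p),p)$, take a vertical line upward from $x$ when $p\in U$; when $p\in\partial U$, take a timelike segment from $x$ entering $\Omega$, after checking that $I^+(x)\cap\Omega\neq\emptyset$.

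\textbf{Step 3: topology.} This is the main obstacle, since the Alexandrov/GKP topology is defined by the chronological futures and pasts of boundary ideals. I expect $e$ to be a homeomorphism for the following reason. Under the cone hypothesis, $\partial_c^-\Omega$ is a closed set whose projection to $\R^n$ is a homeomorphism onto its image: it is a $1$-Lipschitz graph and is properly embedded. The sets $I^+(x)\cap\Omega$ then vary continuously with $x$ in the Hausdorff sense on compacta. I would show that convergence of $x_k$ to $x$ in $\partial_c^-\Omega$ is equivalent to convergence of the ideals $I^+(x_k)\cap\Omega$ to $I^+(x)\cap\Omega$ in the chronological limit sense that defines the topology (the $\hat L$-limit operator of Flores–Herrera–Sánchez). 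The forward implication comes from openness of the chronology relation. For the converse, use the cone bound to extract a convergent subsequence of the $x_k$, then apply injectivity from Step 2.

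The statement for $\partial_c^+\Omega$ follows by the time reversal $t\mapsto -t$.
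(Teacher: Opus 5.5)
Your strategy is the one the paper uses: send a past inextensible causal curve to its past endpoint. The paper obtains the endpoint from the fact that $C_\varepsilon^-$ is a Cauchy hypersurface of $\R^{1,n}$, and then cites \cite[Cor.~4.1]{Rym_causal_completion} for the homeomorphism. Two points in your proposal need correcting.

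First, in Step~2 the surjectivity onto points over $\partial U$ cannot be done, because the check $I^+(x)\cap\Omega\neq\emptyset$ always fails there. Let $p\in\partial U$ and $x=(f_-(p),p)=(f_+(p),p)$. A point $(t,q)\in J^+(x)\cap\Omega$ would satisfy $t<f_+(q)\le f_+(p)+\vert q-p\vert\le t$, since $f_+$ is $1$-Lipschitz. Hence $J^+(x)\cap\Omega=\emptyset$, and no causal curve of $\Omega$ ends at $x$. For a diamond $I(a,b)$, which satisfies the hypothesis with vertex $a$, these are exactly the points of the sphere $C^+(a)\cap C^-(b)$: if $x\le y\ll b$ then $x\ll b$.

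So the endpoint map is a bijection onto the graph of $f_-$ over the open set $U$, not over $\overline U$. That is what $\partial_c^-\Omega$ must mean for the statement to hold, and it matches the paper's remark that causal boundary points are exactly the endpoints of inextensible causal curves. This set is not closed in $\R^{1,n}$ when $\partial U\neq\emptyset$, contrary to what you assert in Step~3. The same inequality shows directly that every endpoint lies over $U$, so surjectivity only needs vertical segments. In Step~3 you must also rule out that a subsequential limit $x'$ of $(x_k)$ lies over $\partial U$ before using maximality and injectivity. This holds because $J^+(x')$ contains the nonempty set $I^+(x)\cap\Omega$.

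Second, you misread the hypothesis. The set $I^+(C^-_\varepsilon(z))=\R^{1,n}\setminus J^-_\varepsilon(z)=\{(t,p)\,\vert\,t>t(z)-\vert p-p(z)\vert/(1+\varepsilon)\}$ is the region above a downward-opening cone. It is not contained in a forward cone, so $t$ is not bounded below on $\Omega$; take $\Omega=\R^{1,n}\setminus J^-_\varepsilon(z)$ itself, which is future complete. The conclusion of Step~1 survives, but for the paper's reason. $C^-_\varepsilon(z)$ is a graph with Lipschitz constant $1/(1+\varepsilon)<1$, hence Cauchy, so $J^-(y)\cap J^+(C^-_\varepsilon(z))$ is compact for every $y$. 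Explicitly, parametrize a past causal curve from $(t_0,p_0)$ by $\sigma=t_0-t$. Then $\vert p-p_0\vert\le\sigma$, and staying above the barrier forces $\sigma<\frac{1+\varepsilon}{\varepsilon}(t_0-t(z))+\frac{\vert p_0-p(z)\vert}{\varepsilon}$.

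The same compactness, not a cone bound, is what gives a convergent subsequence in Step~3. Convergence of the ideals puts $x_k$ in $J^-(y)$ for a fixed $y\in I^+(x)\cap\Omega$ and all large $k$. With these repairs your $\hat L$-limit argument goes through. In the forward direction, maximality also needs closedness of $J^+$, to get $\limsup_k (I^+(x_k)\cap\Omega)\subset J^+(x)$.
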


\begin{proof}
    Let $\gamma:[0,1)\to \Omega$ be an inextensible past causal curve in $\Omega$. Since $C_\varepsilon^-$ is a Cauchy hypersurface of $\R^{1,n}$, the curve $\gamma$ must accumulate at some point $y\in \R^{1,n}$. The map $[\gamma]\to \lim_{t\to 1}\gamma(t)$ gives the required homeomorphism, see \cite[Cor. 4.1]{Rym_causal_completion}.
\end{proof}

\subsection{Stable acausality}\label{section_acausalité_stable}
A subset $\Ccal$ of $\R^{1,n}$ is \emph{acausal/achronal} if two distinct points of $\Ccal$ are not causally/chronally related in $\R^{1,n}$.

\begin{definition}
    \label{def_stable_acausalité}
    Let $\varepsilon>0$ and let $\Ccal$ be a subset of $\R^{1,n}$. Then $\Ccal$ is \emph{$\varepsilon$-stably acausal} if $\Ccal$ is an acausal subset of $(\R^{1,n},[\b_\varepsilon])$. If there exists some $\varepsilon>0$ such that $\Ccal$ is $\varepsilon$-stably acausal, we say that $\Ccal$ is \emph{stably acausal}. 
\end{definition}

Note that $\varepsilon$-stable acausality is a notion that depends on the choice of splitting $\R^{1,n}=\R\times \R^n$, since the value of $\b_\varepsilon$ depends on that splitting. However, stable acausality is independent of the choice of splitting.

\subsection{Cosmological time functions} Let $\Omega$ be a causally convex domain in $\R^{1,n}$. The \emph{past and future cosmological time functions} on $\Omega$ are the maps $\tau^-:\Omega\to (0,+\infty]$ and $\tau^+:\Omega\to (0,+\infty]$ defined as 
$$\tau^\pm(x)=\sup\{\tauL(x,y)\,\vert\,y\in\Omega \text{ and } y\in J^\pm(x)\}.$$
The past cosmological time function is the standard cosmological time function, as studied in \cite{Andersson_Galloway_Howard_1998}. The future cosmological time function is not a time function in the standard sense, since it is decreasing along future causal curves. When $\Omega$ is future complete, then $\tau^+=+\infty$. In that case, we may write $\tau=\tau^-$ for the (past) cosmological time function of $\Omega$. Recall that for a conformal spacetime $(M,[g])$, the cosmological time function $\tau$ is said to be \emph{regular} if it is finite and if $\tau\to 0$ along any inextensible past causal curve. Regular times are in particular continuous, see \cite{Andersson_Galloway_Howard_1998}.

\begin{prop}
    \label{proposition_existence_initial_singularity}
    Let $\Omega$ be a causally convex domain in $\R^{1,n}$, and let $\varepsilon>0$. Assume that there exists a stable past lightcone $C^-_\varepsilon$, such that $\Omega$ is contained in $I^+(C_\varepsilon^-)$. Then $\tau^-$ is regular. Also, for every $x\in \Omega$, there exists at least one $y\in \partial_c^-\Omega$ such that $\tau^-(x)=\tauL(y,x)$. If $\Omega$ is convex, then $y$ is unique and the hyperplane orthogonal to $x-y$ at $y$ is a supporting hyperplane of $\Omega$ at $y$. The same holds for $\tau^+$ if signs are reversed.
\end{prop}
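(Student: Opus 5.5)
The statement has three parts: $\tau^-$ is finite and tends to $0$ along past inextensible causal curves (regularity), the supremum defining $\tau^-$ is attained on $\partial_c^-\Omega$, and, when $\Omega$ is convex, the maximiser is unique with a supporting hyperplane orthogonal to $x-y$. Everything rests on one compactness fact. Fix $x\in\Omega$ and let $C^-_\varepsilon$ be the stable past cone, with vertex $z$ say, so that $\Omega\subset I^+(C^-_\varepsilon)$.

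\emph{Compactness and finiteness.} Every $y\in\Omega$ with $y\leq x$ lies in $J^-(x)\cap J^+_\varepsilon(z)$. Since the cone of $\b$ is strictly contained in that of $\b_\varepsilon$, this set is compact: it is bounded by $t(z)\le t\le t(x)$ and by a cone of slope $1+\varepsilon$ around the vertical line through $z$. Hence $K_x=J^-(x)\cap\overline\Omega$ is compact. The time separation $\tauL(\cdot,x)$ is continuous on $J^-(x)$, so $\tau^-(x)\le\max_{K_x}\tauL(\cdot,x)<\infty$, and the supremum is attained at some $y\in K_x$.

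\emph{The maximiser lies on $\partial_c^-\Omega$.} Suppose $y$ is not on the graph of $f_-$. Then either $y\in\Omega$, or $y\in\partial\Omega$ lies above $\partial_c^-\Omega$, and in both cases a little past of $y$ is still in $\overline{\Omega}\cap J^-(x)$. Moving $y$ slightly to the past along the timelike segment from $x$ strictly increases $\tauL(\cdot,x)$, which contradicts maximality. So $y\in\partial_c^-\Omega$. By the reverse triangle inequality this also shows $\tau^-(x)$ is the sup over $\partial^-_c\Omega\cap J^-(x)$.

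\emph{Regularity.} Let $\gamma$ be a past inextensible causal curve in $\Omega$. As in the preceding Fact, $\gamma$ converges to a point $p\in\partial_c^-\Omega$, since it cannot escape $J^+_\varepsilon(z)$. Suppose $\tau^-(\gamma(s_k))\ge c>0$ along a sequence. The maximisers $y_k$ lie in the compact set $K_{\gamma(0)}$; pass to a limit $y$. Continuity of $\tauL$ gives $\tauL(y,p)\ge c$, so $p$ lies in $I^+(y)$ with $y\in\overline\Omega$. Then a neighbourhood of $p$ is contained in $I^+(y)$. By causal convexity of the structure of Proposition~\ref{prop_structure_ouverts_causallement_convexes}, points just below $p$ on the vertical line lie in $\Omega$, because they are above $y\ge$ graph of $f_-$ and below $f_+$. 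This contradicts $p\in\partial_c^-\Omega$. This step is the main obstacle, because one must check carefully that $y\in\overline\Omega$ with $y\ll p$ forces $f_-(\pi(p))<t(p)$; I would do it using the $1$-Lipschitz graph description. Continuity then follows from \cite{Andersson_Galloway_Howard_1998}.

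\emph{Convex case.} Here $f_-$ is convex, so $\overline\Omega\cap J^-(x)$ is convex, and $\tauL(\cdot,x)$ is strictly concave along non-lightlike directions on the timelike cone (reverse Cauchy–Schwarz). Two maximisers $y_1\neq y_2$ would make the midpoint give a larger value unless $y_1-y_2$ is lightlike and parallel to both $x-y_i$, which is impossible for timelike $x-y_i$ with equal length. So $y$ is unique. For the supporting hyperplane, the level set $\{\tauL(\cdot,x)=\tau^-(x)\}$ is the hyperboloid through $y$, whose tangent hyperplane at $y$ is orthogonal to $x-y$. The region $\{\tauL(\cdot,x)>\tau^-(x)\}$ is convex and misses $\overline\Omega$, so by separation (Hahn–Banach) the hyperplane tangent to the hyperboloid at $y$ separates, and therefore supports $\Omega$ at $y$. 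The $\tau^+$ statements follow by time reversal.
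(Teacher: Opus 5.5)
\emph{Existence and regularity.} Your arguments for finiteness, attainment on $\partial_c^-\Omega$ and regularity follow the paper's proof. You use compactness of the part of $J^-(x)$ lying above the stable cone, a maximiser that cannot lie in $\Omega$, and, for regularity, a limit $y$ of maximisers lying in $J^-(p)\cap\overline\Omega$, where $p$ is the endpoint of $\gamma$. Your contrapositive formulation (assume $\tau^-\geq c$ along a sequence) is equivalent to the paper's.

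The step you flag as the main obstacle is the inclusion $J^-(p)\cap\overline\Omega\subset C^-(p)$, which the paper uses without comment. It takes one line: if $y=(s,q)\in\overline\Omega$ and $y\ll p=(f_-(p_0),p_0)$, then $f_-(p_0)>s+\vert p_0-q\vert\geq f_-(q)+\vert p_0-q\vert$, contradicting that $f_-$ is $1$-Lipschitz.

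\emph{Convex case.} This is where you genuinely differ: the paper just cites Bonsante. You prove uniqueness by concavity of $\tauL(\cdot,x)$ on the convex set $\overline\Omega\cap J^-(x)$. You get the supporting hyperplane by separating $\Omega$ from the open convex set $\{\tauL(\cdot,x)>\tau^-(x)\}$. Near $y$, the boundary of that set is a smooth hyperboloid with tangent hyperplane $y+(x-y)^\perp$, so any separating hyperplane through $y$ must be that one. This is elementary and self-contained, and does not depend on the external reference.

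\emph{First correction: the hypothesis is misread.} With $C^-_\varepsilon=C^-_\varepsilon(z)$, the set $I^+(C^-_\varepsilon)$ is the region strictly above the graph $t=t(z)-\vert p-p(z)\vert/(1+\varepsilon)$, that is, $\R^{1,n}\setminus J^-_\varepsilon(z)$. This is much larger than $J^+_\varepsilon(z)$. For instance, the half-space $\{t>0\}$ satisfies the hypothesis with $z=0$ but lies in no $J^+_\varepsilon(z)$. The larger region is also exactly how the proposition is used later, namely with $\Omega\subset\R^{1,n}\setminus J^-_\varepsilon(r(x))$ for stably acausal boundaries.

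So your inclusion $\Omega\cap J^-(x)\subset J^+_\varepsilon(z)$ and the bound $t\geq t(z)$ are false. What is true, and is all you need in both the existence and regularity steps, is that $J^-(x)\cap J^+(C^-_\varepsilon)$ is compact. Indeed, $C^-_\varepsilon$ is the graph of a $\tfrac{1}{1+\varepsilon}$-Lipschitz function, hence a Cauchy hypersurface, while $C^-(x)$ descends with slope $1$.

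\emph{Second correction: the degenerate direction in the uniqueness step is backwards.} $\tauL(\cdot,x)$ is affine along the radial timelike direction $x-y$ and strictly concave along lightlike directions. So ``strictly concave along non-lightlike directions'' is wrong, and ``$y_1-y_2$ lightlike and parallel to $x-y_i$'' is meaningless since $x-y_i$ is timelike. For the midpoint $m$, the correct equality case in $\tauL(m,x)\geq\frac12\bigl(\tauL(y_1,x)+\tauL(y_2,x)\bigr)$ is that $x-y_1$ and $x-y_2$ are positively proportional. Combined with $\tauL(y_1,x)=\tauL(y_2,x)$, this gives $y_1=y_2$, and the argument then goes through.
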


\begin{proof}
    Let $x\in \Omega$. Then $K=J^-(x)\cap J^+(C^-_\varepsilon)$ is compact since $C^-_\varepsilon$ is a Cauchy hypersurface of $\R^{1,n}$. Since $J^-(x)\cap \Omega\subset K$, we deduce that $\tau^-$ is finite, and that there exist $y\in \partial\Omega$ such that $\tau^-(x)=\tauL(y,x)$ and $y\leq x$. It follows that $y\in \partial_c^-\Omega$. Let $\gamma:[0,1)\to \Omega$ be a past inextensible causal curve starting at $x$, and let $z\in \partial_c^-\Omega$ be its endpoint. Let $t_k\to 1$ as $k\to \infty$ and $z_k\in \partial_c^-\Omega$ such that $\tau^-(\gamma(t_k))=\tauL(z_k,\gamma(t_k))$ for all $k\geq 0$. Since $z_k\in K$ for all $k\geq 0$, we can always extract a subsequence of $z_k$ and assume that $z_k\to z$, for some $z\in \partial_c^-\Omega$. Then $z\in J^-(y)\cap \overline{\Omega}\subset C^-(y)$, hence $\tauL(z,y)=0$. It follows that $\tau(\gamma)\to 0$ and that $\tau$ is regular. The convex case follows from \cite[Sec. 4]{Bonsante}.
\end{proof}

Following \cite{Andersson_Galloway_Howard_1998}, we say that a point $y\in \partial_c^\pm\Omega$ is a \emph{terminal/initial singularity} for $x\in \Omega$ if $\tau^\pm(x)=\tauL(y,x)$.

\begin{prop}
    \label{prop_initial_singularity}
    Let $\Omega$ be a causally convex domain in $\R^{1,n}$ such that $\partial_c^-\Omega$ is $\varepsilon$-stably acausal. Let $y\in \Omega$ and let $x\in \partial_c^- \Omega$ be an initial singularity for $y$. Then 
    $$y-x\in \{(t,p)\in \R^{1,n}\,\vert\,(1+\varepsilon)\vert p\vert \leq t\}.$$
\end{prop}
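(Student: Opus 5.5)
We argue by contradiction via a first-variation argument: if $y-x$ is not contained in the stable cone, we perturb the initial singularity $x$ along $\partial_c^-\Omega$ and increase the time separation to $y$. Write $x=(f_-(p_0),p_0)$ and $y-x=(t,p)$. Since $x\le y$, we have $t\ge |p|\ge 0$, and $t>0$ because $y\in\Omega$ lies strictly above the graph of $f_-$ while $x$ lies on it. Suppose, for contradiction, that $(1+\varepsilon)|p|>t$. Then $p\neq 0$ and $t/|p|<1+\varepsilon$.

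\textbf{Step 1: a Lipschitz bound along $p$.} The $\varepsilon$-stable acausality of $\partial_c^-\Omega=\mathrm{graph}(f_-)$ means that distinct points of the graph are not $\b_\varepsilon$-causally related. For points $(f_-(q),q)$ and $(f_-(q'),q')$ this gives
$$(1+\varepsilon)\,|f_-(q)-f_-(q')|<|q-q'| \quad (q\neq q'),$$
so $f_-$ is $\tfrac{1}{1+\varepsilon}$-Lipschitz on $\overline U$. For small $s>0$, set $q_s=p_0+s\,p/|p|$. Points of the graph near $x$ are points of $\partial_c^-\Omega$, so it is enough that $q_s\in\overline U$ for small $s$. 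This holds because $y\in\Omega$ lies strictly inside the past-causal side with $t>0$, so $\pi(\Omega)=U$ contains points near $p_0$ in the direction of $p$. More simply, take $q_s=p_0+s\,p/|p|$ within the segment from $p_0$ to $\pi(y)=p_0+p\in U$; this segment lies in $\overline U$ when $U$ is convex. In the general case, use instead nearby points of the graph obtained from a past causal curve. Then $x_s=(f_-(q_s),q_s)\in\partial_c^-\Omega$ satisfies $f_-(q_s)\le f_-(p_0)+s/(1+\varepsilon)$.

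\textbf{Step 2: first variation of $\tauL$.} We have
$$y-x_s=\bigl(t-(f_-(q_s)-f_-(p_0)),\; p-s\,p/|p|\bigr).$$
Hence
$$\tauL(x_s,y)^2\ \ge\ \bigl(t-\tfrac{s}{1+\varepsilon}\bigr)^2-(|p|-s)^2 = t^2-|p|^2+2s\Bigl(|p|-\tfrac{t}{1+\varepsilon}\Bigr)+O(s^2).$$
The coefficient $|p|-t/(1+\varepsilon)$ is strictly positive by the contradiction hypothesis. So for small $s$ we get $\tauL(x_s,y)>\tauL(x,y)=\tau^-(y)$. One also needs $x_s\le y$, which holds for small $s$ since the time component stays at least the spatial norm. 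When $t>|p|$ this follows by continuity. When $t=|p|$, the estimate above shows that the Lorentzian square becomes timelike to first order.

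\textbf{Step 3: contradiction.} Finally, $\tau^-(y)$ is the supremum of $\tauL(y',y)$ over $y'\in\Omega\cap J^-(y)$. Since $x_s\in\overline\Omega$ and $x_s\le y$, we can approximate $x_s$ by points of $\Omega\cap J^-(y)$ slightly to its future. By continuity of $\tauL$, this gives $\tau^-(y)\ge\tauL(x_s,y)>\tau^-(y)$, a contradiction.

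The main obstacle is Step 1: ensuring the perturbed base point $q_s$ stays in $\overline U$ so that $x_s$ is a genuine boundary point. I would handle it by noting that $p_0+p=\pi(y)\in U$. Then I would perturb only along the straight segment toward $\pi(y)$, which is contained in $\overline U$ near $p_0$ when $U$ is convex. In the non-convex case, I would replace the straight segment by using the past-directed lightlike/timelike segment from $y$ and its intersection with the graph, which directly produces the required boundary points.
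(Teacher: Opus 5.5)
Your argument is, in substance, the paper's. The paper notes that the level set $\Hcal=\{z\in I^-(y)\,\vert\,\tauL(z,y)=\tauL(x,y)\}$ cannot enter $I^+_\varepsilon(x)$. Hence $T_x\Hcal$ is $\b_\varepsilon$-spacelike or lightlike, and the paper computes its $\b_\varepsilon$-normal $v=(t/(1+\varepsilon)^2,p)$. Your first-order coefficient $\vert p\vert-t/(1+\varepsilon)$ has the same sign as $\b_\varepsilon(v,v)=(\vert p\vert-t/(1+\varepsilon))(\vert p\vert+t/(1+\varepsilon))$. So your Steps 2 and 3 are a correct, explicit, finite-difference version of that tangency argument.

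The one real gap is Step 1 when $U$ is not convex, which is the general case of the statement. Your fallback (a past causal segment from $y$ and its intersection with the graph) does not give what Step 2 needs. A single causal segment meets the acausal graph of $f_-$ in at most one point. You need the whole family $x_s$ over $q_s=p_0+s\,p/\vert p\vert$, with $q_s\in U$, so that the vertical approximation in Step 3 also makes sense.

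The missing observation is that your straight segment already works for every causally convex $\Omega$.
\begin{itemize}
\item First, $\tau^-(y)>0$, because $y-\eta\partial_t\in\Omega$ for small $\eta>0$. Hence $t>\vert p\vert$, i.e.\ $x\ll y$, and your case $t=\vert p\vert$ is vacuous.
\item Next, take $w$ in the open segment $(x,y)$, so that $x\ll w\ll y$. Since $x\in\overline\Omega$ and $I^-(w)$ is an open neighbourhood of $x$, there is $x'\in\Omega$ with $x'\ll w\ll y$. Causal convexity then gives $w\in\Omega$.
\item Thus $(x,y]\subset\Omega$, and projecting to $\R^n$ gives $q_s\in U$ for $0<s\le\vert p\vert$.
\end{itemize}
With this, the rest of your argument goes through with no convexity assumption: the $\frac{1}{1+\varepsilon}$-Lipschitz bound between $p_0\in\overline U$ and $q_s$, the first-variation estimate, and the approximation of $x_s$ by $(f_-(q_s)+\eta,q_s)\in\Omega\cap I^-(y)$. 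The paper's one-line tangency argument tacitly relies on a similar fact: points of $I^+_\varepsilon(x)$ near $x$ and inside $I^-(y)$ belong to $\Omega$.
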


\begin{proof}
    Write $y-x=(t,p)$. Let $\Hcal=\{z\in I^-(y)\,\vert\,\tauL(z,y)=\tauL(x,y)\}$. Since $\Hcal$ is contained in  $I^-(\Omega)$, it is disjoint from $I^+_\varepsilon(x)$. It follows that
    $T_x\Hcal$
    is a spacelike or lightlike subspace of $(\R^{1,n},[\b_\varepsilon])$. Since $(T_x\Hcal)^{\perp_{\b_\varepsilon}}$ is spanned by $v=\left(\frac{t}{(1+\varepsilon)^2},p\right)$, we obtain 
    $$ \b_\varepsilon(v,v)=-\frac{t^2}{(1+\varepsilon)^2}+\vert p\vert^2\leq 0.$$
    Therefore $(1+\varepsilon)\vert p\vert \leq t$.
\end{proof}

\section{Comparison with the quasi-hyperbolic metric}\label{section_comparaison_avec_dist_quasi_hyp}

Recall that $h=dt^2+d\sigma^2$ is the Wick rotation of $\b$ along $\partial_t$. Let $\Omega$ be a domain in $\R^{1,n}$. The \emph{quasi-hyperbolic metric} of $\Omega$ is defined as
$$\hk_\Omega=\frac{1}{d_h(x,\partial\Omega)^2}h.$$ 
We will denote by $\K_\Omega$ its associated Riemannian distance, called \emph{the quasi-hyperbolic distance} of $\Omega$. This section is devoted to the proof of the following theorem.

\begin{thm}
    \label{thm_equiv_explicite_delta_Omega_et_qh}               
    Let $\Omega$ be a causally convex domain in $\R^{1,n}$ and let $\varepsilon>0$. Assume that $\partial_c^+ \Omega$ and $\partial_c^- \Omega$ are $\varepsilon$-stably acausal. Then  
    \begin{equation}
        \label{ineg_delta_Omega_et_qh}
        \frac{\varepsilon}{\sqrt{(2+\varepsilon)^2+\varepsilon^2}}\K_\Omega\leq \delta_\Omega \leq 2\sqrt{2}\K_\Omega.
    \end{equation}
\end{thm}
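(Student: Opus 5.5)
Both inequalities will be proved infinitesimally and then integrated. The right side will use Theorem~\ref{thm_lien_dmark_Fmark} and Proposition~\ref{formule_F_Omega}. The left side will use the fact that $\K_\Omega$ is a length metric, so it suffices to control $\K_\Omega$ along each lightlike piece of a chain.

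\emph{Upper bound $\delta_\Omega\le 2\sqrt2\,\K_\Omega$.} Fix a piecewise smooth curve $c$ in $\Omega$ and a point $x=c(s)$. Write $w=c'(s)=(a,u)$ in the splitting $\R\times\R^n$. If $u\neq 0$, decompose $w=v_1+v_2$ with $v_1,v_2$ lightlike, namely $v_{1,2}=\tfrac12(a\pm|u|,\;u\pm a\,u/|u|)$; if $u=0$, use any unit spatial direction instead. Then $|v_1|+|v_2|\le\sqrt2\,|w|$. Since $\Omega$ is open, the Euclidean ball of radius $d_h(x,\partial\Omega)$ lies in $\Omega$, so $d_h(x,x_{v_i}^\pm)\ge d_h(x,\partial\Omega)$. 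Proposition~\ref{formule_F_Omega} then gives $F_\Omega(v_i)\le 2|v_i|/d_h(x,\partial\Omega)$. Hence
\[
F_\Omega(v_1)+F_\Omega(v_2)\le 2\sqrt2\,\frac{|w|}{d_h(x,\partial\Omega)}.
\]
Next, approximate $c$ by a zigzag of short lightlike segments following $v_1,v_2$ on a fine subdivision. By continuity of $d_h(\cdot,\partial\Omega)$ and of the endpoints, the Markowitz length of the zigzag converges to at most $2\sqrt2$ times the $\hk_\Omega$-length of $c$. Theorem~\ref{thm_lien_dmark_Fmark} then gives the bound. This inequality does not need stable acausality.

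\emph{Lower bound.} By Theorem~\ref{thm_lien_dmark_Fmark}, it suffices to show $F_\Omega(v)\ge c_\varepsilon |v|/d_h(x,\partial\Omega)$ for every lightlike $v$, with $c_\varepsilon=\varepsilon/\sqrt{(2+\varepsilon)^2+\varepsilon^2}$. Then the Markowitz length of any piecewise lightlike curve dominates $c_\varepsilon$ times its $\hk_\Omega$-length, which is at least $c_\varepsilon\K_\Omega$. Since $F_\Omega(v)\ge |v|/d_h(x,x_v^\pm)$, the key claim is
\[
\min\bigl(d_h(x,x_v^+),d_h(x,x_v^-)\bigr)\le c_\varepsilon^{-1} d_h(x,\partial\Omega).
\]

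\emph{Key step.} Take $v$ future lightlike. Because $\Omega$ is causally convex, a lightlike line leaving $\Omega$ exits through the causal boundaries: $x_v^+\in\partial_c^+\Omega$ and $x_v^-\in\partial_c^-\Omega$, when finite. At least one of them is finite, otherwise the full line lies in $\Omega$ and projects onto all of $\R^n$ contradicting $f_-=f_+$ on $\partial U$, unless $U=\R^n$, which must then be handled separately. Say $y=x_v^+=x+sv$ with $|v|=1$, so $d_h(x,y)=s$. Since $\partial_c^+\Omega$ is $\varepsilon$-acausal, $\Omega$ avoids the $\varepsilon$-stable past/future cone structure near $y$. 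More precisely, $\Omega\cap I^+_\varepsilon(y)=\emptyset$: otherwise some point of $\partial_c^+\Omega$ would lie in $J^+_\varepsilon(y)$, since the vertical line through a point of $\Omega\cap I_\varepsilon^+(y)$ exits $\Omega$ upward through $\partial_c^+\Omega$, still inside $I_\varepsilon^+(y)$. Therefore
\[
d_h(x,\partial\Omega)\le d_h\bigl(x,I^+_\varepsilon(y)\bigr).
\]
The remaining task is the Euclidean distance from $x=y-s(1,e)/\sqrt2$, with $|e|=1$, to the cone $\{t>(1+\varepsilon)^{-1}|p|\}$ translated to $y$. In the 2D plane spanned by $\partial_t$ and $e$, this is the distance from a point on the line $t=|p|$ to the line $t=-|p|/(1+\varepsilon)$ on the opposite side, or to the near branch $t=|p|/(1+\varepsilon)$. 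Computing with the normal vector $(1+\varepsilon,\pm1)$ gives distances of the form
\[
\frac{s\,\varepsilon}{\sqrt2\sqrt{(1+\varepsilon)^2+1}}.
\]
Simplifying this should recover the stated constant $\varepsilon/\sqrt{(2+\varepsilon)^2+\varepsilon^2}$, perhaps via the other cone branch.

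I expect the main obstacle to be this cone-distance computation, together with the justification that $\Omega\cap I_\varepsilon^+(y)=\emptyset$ (and that exits lie on the causal boundaries). I would first check the Euclidean geometry in the 2-plane containing $v$ and $\partial_t$, since the nearest point of the cone lies in that plane by symmetry.
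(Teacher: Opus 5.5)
Your upper bound $\delta_\Omega\le 2\sqrt2\,\K_\Omega$ is correct and is essentially the paper's argument: the bound $F_\Omega(v)\le 2|v|/d_h(x,\partial\Omega)$ combined with the $\sqrt2$ lightlike zigzag of Lemma~\ref{lemme_general_metric_conf_plat_et_d_light}. Your reduction of the lower bound to the estimate $\min\{d_h(x,x_v^-),d_h(x,x_v^+)\}\le c_\varepsilon^{-1}d_h(x,\partial\Omega)$ is also the right one.

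The gap is in the key step, which goes in the wrong direction. You need a \emph{lower} bound on $d_h(x,\partial\Omega)$, i.e.\ you must show that a definite neighbourhood of $x$ lies inside $\Omega$. The fact you prove, $\Omega\cap I^+_\varepsilon(y)=\emptyset$, only yields $d_h(x,\partial\Omega)\le d_h(x,I^+_\varepsilon(y))$. That is an \emph{upper} bound, and it cannot give the estimate you need: knowing that $\Omega$ avoids some region never tells you that $x$ is far from $\partial\Omega$. Working with the single endpoint $y=x_v^+$ also cannot suffice. For bounded $\Omega$, the distance from $x$ to $\partial\Omega$ may be governed by $\partial_c^-\Omega$ near $x_v^-$, which is why the minimum over both endpoints appears. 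Your distance computation is also misattributed. Since $x$ lies on the past null cone of $y$, one has $d_h(x,I^+_\varepsilon(y))=s(2+\varepsilon)/\sqrt{(2+\varepsilon)^2+\varepsilon^2}$, which is of order $s$. The expression $s\varepsilon/(\sqrt2\sqrt{(1+\varepsilon)^2+1})$ you found is instead the distance from $x$ to the \emph{past} cone $C^-_\varepsilon(y)$. It already equals $c_\varepsilon s$, because $2\bigl((1+\varepsilon)^2+1\bigr)=(2+\varepsilon)^2+\varepsilon^2$.

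The missing ingredient is an inclusion rather than an exclusion. Since $\Omega$ is causally convex, $\partial_c^\pm\Omega$ are graphs over $\overline U$, and $\varepsilon$-stable acausality makes these graph functions $\frac{1}{1+\varepsilon}$-Lipschitz. It follows that the stable diamond $I_\varepsilon(x_v^-,x_v^+)$ is contained in $\Omega$; in the future complete case, where $x_v^+$ is infinite, $I^+_\varepsilon(x_v^-)\subset\Omega$. Hence
\[
d_h(x,\partial\Omega)\ge d_h\bigl(x,\partial I_\varepsilon(x_v^-,x_v^+)\bigr)\ge\min\bigl\{d_h(x,C^+_\varepsilon(x_v^-)),\,d_h(x,C^-_\varepsilon(x_v^+))\bigr\}.
\]
The point $x$ lies on the null segment $[x_v^-,x_v^+]$, which makes an $h$-angle $\theta$ with $\tan\theta=\varepsilon/(2+\varepsilon)$ with the $\varepsilon$-cones at its endpoints. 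Therefore the first term equals $\sin\theta\, d_h(x,x_v^-)=c_\varepsilon\, d_h(x,x_v^-)$ and the second equals $c_\varepsilon\, d_h(x,x_v^+)$. This is exactly the content of Proposition~\ref{prop_estimee} and the corollary following it. It is also where the stable acausality of \emph{both} causal boundaries is actually used; your exclusion statement plays no role in the argument.
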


The proof is given at the end of this section. We start with the following lemma. 

\begin{lem}
    \label{lemme_general_metric_conf_plat_et_d_light}
    Let $\Omega$ be a domain in $\R^{1,n}$ and let $f:\Omega\to \R_{>0}$ be a continuous function. Let $\hk=f^2\cdot h$. For every $x\in \Omega$ and for every $\varepsilon>0$, there exists a neighborhood $V_x\subset \Omega$ of $x$ such that for every segment $[a,b]\subset V_x$, there exists a piecewise lightlike geodesic curve $\gamma$, arbitrarily close to $[a,b]$, such that 
    $$L_\hk(\gamma)\leq (\sqrt{2}+\varepsilon)L_\hk([a,b]).$$
\end{lem}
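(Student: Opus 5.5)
The plan is to reduce the lemma to a computation for the flat metric $h$, and then absorb the variation of the conformal factor using continuity of $f$.

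\emph{Flat model.} Let $w=b-a$ and decompose $w=(w_0,\vec w)\in\R\times\R^n$. I claim $w$ is the sum of two lightlike vectors whose $h$-lengths add up to at most $\sqrt2\,|w|$.
\begin{itemize}
\item If $\vec w\neq 0$, let $e=\vec w/|\vec w|$ and $w=\alpha(1,e)+\beta(-1,e)$ with $\alpha+\beta=|\vec w|$ and $\alpha-\beta=w_0$. When $|w_0|\leq|\vec w|$, both $\alpha,\beta\geq0$, so the total $h$-length is $\sqrt2(\alpha+\beta)=\sqrt2|\vec w|\leq\sqrt2|w|$.
\item When $|w_0|>|\vec w|$ (timelike case, including $\vec w=0$), pick a unit vector $e'\in\R^n$ orthogonal to $\vec w$ (possible when $n\geq 2$; for $n=1$ use $e'=\pm$ the other direction suitably). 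Write $w=\tfrac12(w_0,\vec w+s e')+\tfrac12(w_0,\vec w-se')$ with $s$ chosen so that $|\vec w\pm se'|=|w_0|$. Each piece then has $h$-length $\tfrac{\sqrt2}{2}|w_0|$, so the total is $\sqrt2|w_0|\leq\sqrt2|w|$.
\end{itemize}
In all cases, $a$ and $b$ are joined by a broken lightlike path $\gamma_0$ made of two segments, with $L_h(\gamma_0)\leq\sqrt2\,L_h([a,b])$.

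\emph{Arbitrary closeness.} To make $\gamma$ close to $[a,b]$, subdivide $[a,b]$ into $N$ equal pieces and replace each by the two-segment lightlike path above, translated. The total $h$-length is unchanged, at most $\sqrt2 L_h([a,b])$. Every point of the resulting zigzag lies within $h$-distance $\sqrt2|w|/N$ of $[a,b]$, so $\gamma$ is arbitrarily close to $[a,b]$ once $N$ is large. Keeping it inside $V_x$ is ensured by taking $V_x$ to be a convex ball and shrinking slightly; alternatively, let $V_x$ be a small ball $B$ and note the zigzag stays within a slightly larger ball $B'\subset\Omega$, on which the estimates below also hold.

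\emph{Conformal factor.} Given $\varepsilon$, choose $\eta>0$ with $\sqrt2\,\tfrac{1+\eta}{1-\eta}\leq\sqrt2+\varepsilon$. By continuity, take a ball $B'$ around $x$ in $\Omega$ on which $(1-\eta)f(x)\leq f\leq(1+\eta)f(x)$, and let $V_x$ be the concentric ball of half the radius. For $[a,b]\subset V_x$ and $N$ large, $\gamma\subset B'$, hence
$$L_\hk(\gamma)\leq(1+\eta)f(x)L_h(\gamma)\leq(1+\eta)f(x)\sqrt2 L_h([a,b])\leq\sqrt2\,\frac{1+\eta}{1-\eta}L_\hk([a,b]).$$

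The only real point is the flat decomposition, in particular the timelike case, where one must tilt in a spatial direction orthogonal to $\vec w$. This is where the constant $\sqrt2$ arises, attained by purely temporal $w$. The rest is routine continuity.
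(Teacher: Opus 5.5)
Your proof takes the same route as the paper's. You decompose $b-a$ into lightlike pieces of total $h$-length at most $\sqrt2\,L_h([a,b])$, use a zigzag subdivision to make $\gamma$ arbitrarily close to $[a,b]$, and use continuity of $f$ to absorb the conformal factor. Your half-radius ball is a clean way to keep $\gamma$ in the region where $f$ is controlled.

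One step fails as written: the timelike case when $n=1$ and $\vec w\neq0$. In $\R^1$ there is no unit $e'$ orthogonal to $\vec w$. No choice of $e'=\pm1$ and $s$ makes both halves $\tfrac12(w_0,\vec w\pm se')$ lightlike, since that forces $\vec w=0$. So your parenthetical fix for $n=1$ does not exist. The lemma is stated for arbitrary $n\geq1$, so this case still needs an argument.

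The repair is what the paper does, and it shows the tilt is never needed. Stay in the Lorentzian plane $\Pi$ spanned by $w$ and $\partial_t$, and use its two lightlike directions $(1,\pm e)$ with $e=\vec w/|\vec w|$ (any unit $e$ if $\vec w=0$). Your first-bullet identity $w=\alpha(1,e)+\beta(-1,e)$ holds for every $w$. When $|w_0|>|\vec w|$ one coefficient is negative, but that piece is still a lightlike segment, just traversed toward the past. The total $h$-length is then $\sqrt2(|\alpha|+|\beta|)=\sqrt2\max(|w_0|,|\vec w|)\leq\sqrt2|w|$.

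Equivalently, for future timelike $w$, write $w=\alpha'(1,e)+\beta'(1,-e)$ with $\alpha'=\tfrac12(w_0+|\vec w|)\geq0$ and $\beta'=\tfrac12(w_0-|\vec w|)\geq0$. So your remark that ``one must tilt in a spatial direction orthogonal to $\vec w$'' is mistaken. With this in-plane decomposition, which needs no case split, the rest of your argument goes through unchanged in every dimension.
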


\begin{proof}
    Let $x\in\Omega$ and $\varepsilon>0$, and let $\varepsilon_0>0$ such that $\frac{f(x)+\varepsilon_0}{f(x)-\varepsilon_0}<1+\frac{\varepsilon}{\sqrt 2}$. Since $f$ is continuous, we can find a neighborhood $V_x$ of $x$ such that $\vert f(y)-f(x)\vert <\varepsilon_0$ holds for every $y\in V_x$. Let $[a,b]$ be an affine segment contained in $V_x$ and let $\Pi$ be a plane containing $[a,b]$ and $\partial_t$. Then $\Pi$ is a Lorentzian plane and is foliated by two lightlike line distributions $\Delta_+$ and $\Delta_-$ that are orthogonal with respect to $h$. Then, one can find a piecewise affine curve from $a$ to $b$ that is tangent to $\Delta_+$ or $\Delta_-$, arbitrarily close to $[a,b]$ and such that 
    $$L_h(\gamma)\leq \sqrt2L_h([a,b]).$$
    Then, by choosing $\gamma$ sufficiently close to $[a,b]$ so that $\gamma\subset V_x$, and since $(f(x)-\varepsilon_0)^2\hk\leq h\leq (f(x)+\varepsilon_0)^2\hk$ holds on $V_x$, one has 
    $$
    \begin{aligned}
        L_\hk(\gamma)&\leq (f(x)+\varepsilon_0)L_{h}(\gamma)\\
        &\leq \sqrt{2}(f(x)+\varepsilon_0)L_{h}([a,b])\\
        &\leq \sqrt{2}\frac{f(x)+\varepsilon_0}{f(x)-\varepsilon_0}L_\hk([a,b])\\
        &= (\sqrt{2}+\varepsilon)L_\hk([a,b]).    
    \end{aligned}
    $$
\end{proof}

\begin{definition}
    Let $\hk$ be a $C^0$ Riemannian metric on an open subset $\Omega\subset \R^{1,n}$. For every $x,y\in \R^{1,n}$, we let 
$$d_\hk^\light(x,y)=\inf_{x \rightsquigarrow y} L_\hk(\gamma),$$
where the infimum runs over all piecewise lightlike geodesic curves from $x$ to $y$. 
\end{definition}

\begin{cor}
    \label{cor_general_metric_conf_plat_et_d_light}
    Let $\Omega$ be a domain in $\R^{1,n}$ and let $\hk=f^2\cdot h$, where $f:\Omega\to \R_{>0}$ is continuous. Then 
    $$d_\hk\leq d_\hk^\light\leq\sqrt{2}\,d_\hk.$$
\end{cor}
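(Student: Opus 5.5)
The lower bound $d_\hk\leq d_\hk^\light$ is immediate. Every piecewise lightlike geodesic curve from $x$ to $y$ is in particular a piecewise smooth curve from $x$ to $y$. Since $d_\hk$ is the infimum of $L_\hk$ over the larger class, taking infima gives the inequality.

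For the upper bound, the plan is to approximate. Fix $x,y\in\Omega$ and $\eta>0$, and choose a piecewise smooth curve $c$ from $x$ to $y$ with $L_\hk(c)\leq d_\hk(x,y)+\eta$. Then proceed as follows.
\begin{enumerate}
\item Replace $c$ by a piecewise affine curve: an inscribed polygon with sufficiently fine subdivision. Since $f$ is continuous and $c$ is piecewise $C^1$, its $\hk$-length is at most $L_\hk(c)+\eta$. This is standard: on a compact neighborhood of $c$, $f$ is uniformly continuous, and chords approximate arcs in $h$-length.
\item Cover the compact image of the polygon by finitely many neighborhoods $V_z$ given by Lemma~\ref{lemme_general_metric_conf_plat_et_d_light}, all for the same parameter $\varepsilon$.
\item Using a Lebesgue number for this cover, subdivide further so that each affine segment $[a_i,a_{i+1}]$ lies inside a single $V_{z_i}$.
\item On each segment, apply Lemma~\ref{lemme_general_metric_conf_plat_et_d_light}. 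It produces a piecewise lightlike geodesic curve $\gamma_i$ from $a_i$ to $a_{i+1}$ with $L_\hk(\gamma_i)\leq(\sqrt2+\varepsilon)L_\hk([a_i,a_{i+1}])$.
\item Concatenate the $\gamma_i$ to get a piecewise lightlike geodesic curve from $x$ to $y$, which yields
$$d_\hk^\light(x,y)\leq(\sqrt2+\varepsilon)\big(d_\hk(x,y)+2\eta\big).$$
\end{enumerate}
Letting $\eta,\varepsilon\to0$ gives $d_\hk^\light\leq\sqrt2\,d_\hk$.

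The main point needing care is step 4: each $\gamma_i$ must stay inside $\Omega$. This is handled by the closeness clause of the lemma, since $\gamma_i$ can be taken arbitrarily close to $[a_i,a_{i+1}]\subset V_{z_i}\subset\Omega$.

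One further point concerns the lemma's neighborhood. As written, its proof only controls $f$ near the center $x$; it gives $|f-f(z)|<\varepsilon_0$ on $V_z$, which suffices for any segment inside $V_z$. Because the constant depends only on $\varepsilon$ and not on $z$, the same factor $\sqrt2+\varepsilon$ applies uniformly on every piece, so the finite cover causes no difficulty.
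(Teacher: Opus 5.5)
Your proposal is correct and follows essentially the same route as the paper. Both arguments approximate a near-minimizing curve by an inscribed polygon, use compactness so that each small piece lies in a neighborhood $V_z$ from Lemma~\ref{lemme_general_metric_conf_plat_et_d_light}, replace each piece by a piecewise lightlike curve of $\hk$-length at most $(\sqrt2+\varepsilon)$ times that of the piece, and then let the parameters go to zero. Your Lebesgue-number subdivision, and your remark that the factor $\sqrt2+\varepsilon$ does not depend on the center $z$, simply make explicit what the paper's choice of a uniform scale $r>0$ does implicitly.
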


\begin{proof}
    The first inequality is clear from the definition of $d_\hk$ and $d_\hk^\light$. Let $x,y\in \Omega$ and let $\varepsilon>0$. We can find a $C^1$ curve $\alpha:[0,1]\to \Omega$ such that $L_\hk(\alpha)<d_\hk(x,y)(1+\varepsilon)$. For $k\geq 1$, let $\alpha_k:[0,1]\to \Omega$ be the concatenation of the segments $[\alpha(\frac{i}{k}),\alpha(\frac{i+1}{k})]$. Then $\alpha_k\to \alpha$ in the $C^1$ topology as $k\to \infty$, hence we can find $k\geq 1$ such that 
    $$\vert L_\hk(\alpha)-L_\hk(\alpha_k)\vert<\varepsilon L_\hk(\alpha).$$
    Since the image of $\alpha_k$ is a compact subset of $\Omega$, we can find $r>0$ such that the restriction of $\alpha$ to any segment of length $\leq r$ takes values in a subset $V_x$ as in Lemma \ref{lemme_general_metric_conf_plat_et_d_light}. Therefore, there exists a piecewise lightlike curve $\gamma$, arbitrarily close to $\alpha_k$, such that $L_\hk(\gamma)\leq (\sqrt2+\varepsilon)L_\hk(\alpha_k)$. Then 
    $$
    \begin{aligned}
        d_\hk^\light(x,y)\leq L_\hk(\gamma)&\leq (\sqrt2+\varepsilon)(1+\varepsilon)L_\hk(\alpha)\\
        &\leq (\sqrt2+\varepsilon)(1+\varepsilon)^2d_\hk(x,y).
    \end{aligned}
    $$
    Since the previous inequality holds for any $\varepsilon>0$, we deduce that $d_\hk^\light(x,y)\leq \sqrt2d_\hk(x,y)$.
\end{proof}

\begin{prop}\phantomsection\label{prop_estimee}
    Let $\Omega$ be a future complete domain in $\R^{1,n}$. If $\partial\Omega$ is $\varepsilon$-stably acausal for some $\varepsilon>0$, then for every $x\in \Omega$ and $v\in \R^{1,n}$ lightlike, one has
    $$d_h(x,\partial\Omega)\leq d_h(x,x_v^-)\leq \frac{\sqrt{(2+\varepsilon)^2+\varepsilon^2}}{\varepsilon}d_h(x,\partial\Omega).$$
\end{prop}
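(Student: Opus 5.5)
The first inequality is immediate: $x_v^-$ is the past endpoint of the maximal lightlike segment in $\Omega$ through $x$ tangent to $v$. Since $\partial\Omega$ is stably acausal, it is the graph of a $1$-Lipschitz function $f_-$ defined on all of $\R^n$. Hence every past lightlike ray from $x$ leaves $\Omega$ in finite time, so $x_v^-$ is a finite point of $\partial\Omega$, and $d_h(x,\partial\Omega)\leq d_h(x,x_v^-)$.

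For the second inequality, set $r=d_h(x,\partial\Omega)$ and choose $z\in\partial\Omega$ with $d_h(x,z)=r$. I would use $\varepsilon$-stable acausality to trap $\partial\Omega$ outside a stable cone based at $z$. Because $\Omega$ is future complete, $J^+(z)\setminus\{z\}$ lies in $\Omega$. I claim that $\partial\Omega$ is disjoint from $I^+_\varepsilon(z)$ and from $I^-_\varepsilon(z)$, so that the graph of $f_-$ lies in the region $\{w\,:\,|t(w)-t(z)|\leq |p(w)-p(z)|/(1+\varepsilon)\}$. This follows from $\varepsilon$-acausality, since points of $\partial\Omega$ cannot be $\b_\varepsilon$-causally related to $z$. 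Consequently
\[
\Omega\supset\Big\{w=(t,p)\,:\,t-t(z)>\tfrac{|p-p(z)|}{1+\varepsilon}\Big\}=I^+_\varepsilon(z),
\]
using that $\Omega$ is the epigraph of $f_-$ and that $f_-(p)\leq t(z)+|p-p(z)|/(1+\varepsilon)$. So $\Omega$ contains the open stable cone $I^+_\varepsilon(z)$, and $x_v^-$ is no farther from $x$ than the exit point of the past lightlike ray from $x$ out of $I^+_\varepsilon(z)$. The comparison therefore reduces to a computation inside this cone.

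The main step is to bound $d_h(x,\partial I^+_\varepsilon(z))$ from below, and the past lightlike exit distance from above, in terms of $r$. We know $x\in\Omega$ with $d_h(x,z)=r$. The ball $B_h(x,r)$ lies in $\Omega$, but $x$ itself need not lie in $I^+_\varepsilon(z)$ a priori. This is the obstacle. I would handle it by using the whole boundary: for every $z'\in\partial\Omega$ we have $I^+_\varepsilon(z')\subset\Omega$. I would then pick the boundary point $z'$ directly below $x$, namely $z'=(f_-(p(x)),p(x))$. Then $x-z'=(s,0)$ with $s\geq r$, and a vertical segment is certainly inside $I^+_\varepsilon(z')$. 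Bounding $s$ above by a multiple of $r$ uses the stable cone at the nearest point $z$: the graph lies below $t(z)+|p-p(z)|/(1+\varepsilon)$, so $s\leq C r$ with $C$ of the form $1+1/(1+\varepsilon)$ or similar.

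Finally, an explicit planar computation gives the past lightlike ray from $x=z'+(s,0)$ in direction $-(1,u)$, $|u|=1$. It exits $I^+_\varepsilon(z')$ when $s-\lambda=\lambda/(1+\varepsilon)$, that is at $\lambda=s(1+\varepsilon)/(2+\varepsilon)$, so the $h$-distance travelled is $\sqrt2\,\lambda$. Combining this with the bound on $s$ and optimizing the choice of base point (possibly a point on $\partial\Omega$ in between $z$ and $z'$) should produce the constant $\sqrt{(2+\varepsilon)^2+\varepsilon^2}/\varepsilon$. This constant looks like the $h$-distance ratio between a point on the axis of the cone of slope $1/(1+\varepsilon)$ and that cone's boundary, measured along a lightlike direction. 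I expect the bookkeeping of this final optimization, rather than the geometric reduction, to be the delicate part.
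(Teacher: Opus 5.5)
\textbf{Gap: the key comparison runs in the wrong direction.} From $I^+_\varepsilon(z)\subset\Omega$ you conclude that $x_v^-$ is no farther from $x$ than the point where the past lightlike ray leaves $I^+_\varepsilon(z)$. But a cone contained \emph{in} $\Omega$ only shows that the ray stays in $\Omega$ at least until it leaves the cone. That is a \emph{lower} bound on $d_h(x,x_v^-)$, not an upper bound.

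A concrete counterexample: take $\Omega=\{t>0\}$, $z'=0$, $x=(s,0)$. The ray leaves $I^+_\varepsilon(0)$ after $h$-length $\sqrt2\,s(1+\varepsilon)/(2+\varepsilon)$, whereas $d_h(x,x_v^-)=\sqrt2\,s$.

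The same reversal affects your bound $s\le Cr$. The fact that the graph of $f_-$ lies below $t(z)+|p-p(z)|/(1+\varepsilon)$ bounds $s=t(x)-f_-(p(x))$ from \emph{below}. An upper bound needs $f_-(p)\ge t(z)-|p-p(z)|/(1+\varepsilon)$. So no choice of base point or final optimization in your scheme yields the second inequality, and the constant is never actually derived.

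A minor point on the first inequality: being $1$-Lipschitz is not enough for past lightlike rays to exit (a lightlike half-space is a counterexample). You need the $(1+\varepsilon)^{-1}$-Lipschitz bound that $\varepsilon$-stable acausality gives.

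\textbf{Two ways to get the right direction.}

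\emph{The paper's route.} Put the apex of the stable cone at $x_v^-$ itself. Then $x\in C^+(x_v^-)\subset I^+_\varepsilon(x_v^-)\subset\Omega$. Now use the inclusion to bound $d_h(x,\partial\Omega)$ from below:
\[
d_h(x,\partial\Omega)\ge d_h\bigl(x,C^+_\varepsilon(x_v^-)\bigr)=\sin\theta\, d_h(x,x_v^-).
\]
Here $\theta$ is the angle between a light ray and a generator of the stable cone, with $\tan\theta=\varepsilon/(2+\varepsilon)$. Then $1/\sin\theta$ is exactly the stated constant.

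\emph{Keeping your nearest point $z$.} Use the \emph{exterior} cone instead. The lower bound on $f_-$ gives $\Omega\cap J^-_\varepsilon(z)=\emptyset$. Write $x-z=(a,b)$. Since $|b-\lambda u|\le |b|+\lambda$, the point $x-\lambda(1,u)$ lies in $J^-_\varepsilon(z)$ as soon as $(1+\varepsilon)(\lambda-a)\ge |b|+\lambda$. Hence, with $r=d_h(x,\partial\Omega)$,
\[
d_h(x,x_v^-)\le \frac{\sqrt2\,\bigl((1+\varepsilon)a+|b|\bigr)}{\varepsilon}\le \frac{\sqrt2\sqrt{(1+\varepsilon)^2+1}}{\varepsilon}\, r,
\]
where the second step is Cauchy--Schwarz. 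Since $2\bigl((1+\varepsilon)^2+1\bigr)=(2+\varepsilon)^2+\varepsilon^2$, this is exactly the claimed constant.

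This version needs no information about where $x$ sits relative to $z$, so the obstacle you worried about disappears. It also shows that $x_v^-$ is finite. Your vertical point $z'$ would work the same way with $J^-_\varepsilon(z')$ and the corrected bound $s\le a+|b|/(1+\varepsilon)$.
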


The proof will use the following elementary fact:

\begin{fact}
    \label{tan_formula}
    Let $\Delta=\{y=ax\}$ and $\Delta^\prime=\{y=bx\}$ be two lines in $\R^2$ with $0<a<b$. Denote by $\theta$ the angle between $\Delta$ and $\Delta^\prime$. Then $\tan(\theta)=\frac{b-a}{1+ab}$.\qed
\end{fact}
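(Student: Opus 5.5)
The statement to prove is Fact~\ref{tan_formula}: for the lines $\Delta=\{y=ax\}$ and $\Delta^\prime=\{y=bx\}$ with $0<a<b$, the angle $\theta$ between them satisfies $\tan(\theta)=\frac{b-a}{1+ab}$. The plan is to express both lines through their angles with the $x$-axis and apply the subtraction formula for the tangent.

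First, define $\alpha=\arctan(a)$ and $\beta=\arctan(b)$. These are the angles that $\Delta$ and $\Delta^\prime$ make with the positive $x$-axis, since the direction vectors $(1,a)$ and $(1,b)$ have slopes $\tan\alpha=a$ and $\tan\beta=b$. Because $0<a<b$ and $\arctan$ is increasing with values in $(0,\pi/2)$ on $(0,+\infty)$, we have $0<\alpha<\beta<\pi/2$.

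Second, identify $\theta$ with $\beta-\alpha$. The rotation taking the direction $(1,a)$ to the direction $(1,b)$ has angle $\beta-\alpha\in(0,\pi/2)$. This value is therefore the acute (non-oriented) angle between the two lines, which is the natural meaning of $\theta$ here. This identification is the only step needing a word of justification, namely that $\beta-\alpha$ lies in $(0,\pi/2)$ and so is the acute angle rather than its supplement.

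Third, apply the identity $\tan(\beta-\alpha)=\frac{\tan\beta-\tan\alpha}{1+\tan\alpha\tan\beta}$. This is valid because $1+ab>0$, so the denominator does not vanish and $\beta-\alpha\neq\pi/2$. Substituting $\tan\alpha=a$ and $\tan\beta=b$ gives $\tan\theta=\frac{b-a}{1+ab}$.

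As a cross-check, one can compute $\cos\theta$ and $\sin\theta$ directly from the direction vectors $(1,a)$ and $(1,b)$. The dot product gives the numerator $1+ab$ for $\cos\theta$, and the $2\times 2$ determinant gives the numerator $b-a$ for $\sin\theta$; both share the denominator $\sqrt{1+a^2}\sqrt{1+b^2}$. Taking the quotient yields the same formula. Both quantities are positive, which confirms again that $\theta$ is acute.
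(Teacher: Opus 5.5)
Your proof is correct and complete. The paper states this as an elementary fact with no proof. Your tangent subtraction argument, with $\alpha=\arctan a$, $\beta=\arctan b$ and $\theta=\beta-\alpha\in(0,\pi/2)$, is the standard justification it takes for granted, and so is your dot-product/determinant cross-check with $(1,a)$ and $(1,b)$. The acute-angle reading you justify is also the one needed where the paper applies the fact with $a=\frac{1}{1+\varepsilon}$ and $b=1$ in the proof of Proposition~\ref{prop_estimee}.
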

    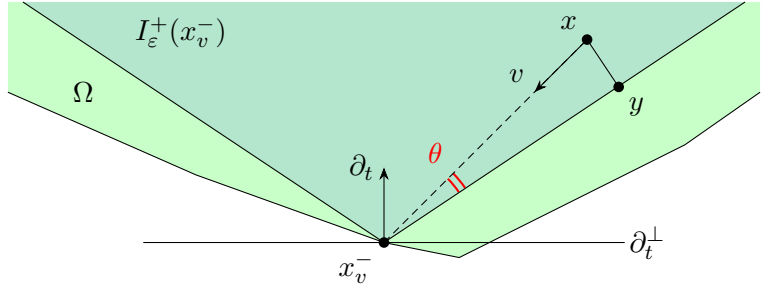
\begin{figure}
    \centering
    \begin{tikzpicture}[xscale=1]

    \def\a{3.2}
    \fill[bovert, opacity=1]  (-5,\a)
    -- (-5,2)
    -- (-2.5,0.9) 
    -- (0,0)  
    -- (1,-0.2) 
    -- (4,1.3)
    -- (5,2)
    -- (5,\a) ;
    \draw  (-5,2)
    -- (-2.5,0.9) 
    -- (0,0)  
    -- (1,-0.2) 
    -- (4,1.3)
    -- (5,2) ;
    \node at (-4,2) {$\Omega$};

    \def\B{1.5}
    \draw[fill=blue,opacity=0.1] (-\a*\B,\a) -- (0,0) -- (\a*\B,\a);

    \def\angle{45-11}
    \def\deltaa{11}
    \def\r{1.2}
     \draw[red, thick] ({\r*cos(\angle)}, {\r*sin(\angle)}) arc[start angle=\angle, end angle=\angle + \deltaa, radius=\r];
     \draw[red, thick] ({(\r+0.1)*cos(\angle)}, {(\r+0.1)*sin(\angle)}) arc[start angle=\angle, end angle=\angle + \deltaa, radius=\r+0.1]node [above left] {$\theta$};

    \draw (-\a*\B,\a) -- (0,0) -- (\a*\B,\a);

    \def\xun{2.7}
    \def\xdeux{2.7}
    \coordinate (x) at (\xun,\xdeux);
    \draw[densely dashed] (0,0) -- (x);
    \fill (x) circle (2pt) node [above left] {$x$};
    \draw[->,>=Stealth] (x) -- (\xun-0.7,\xdeux-0.7)node [above left] {$v$};

    \draw[->,>=Stealth] (0,0) -- (0,1);
    \draw (-0.3,1) node  {$\partial_t$};

    \draw (-\a,0) -- (\a,0);
    \node at (\a+0.3,0) {$\partial_t^\perp$};
    \node at (-2.7,2.8) {$I^+_\varepsilon(x_v^-)$};
    \fill (0,0) circle (2pt) node [below left] {$x_v^-$};

    \def\c{0.42}
    \coordinate (y) at (\xun +\c,\xdeux-\B*\c);
    \draw (y) -- (x);
    \fill (y) circle (2pt) node [below right] {$y$};

\end{tikzpicture}

    \caption{Argument in the proof of Proposition \ref{prop_estimee}.}
    \label{dessin_preuve_inegalité}
\end{figure}
\begin{proof}[Proof of Proposition \ref{prop_estimee}]
    Let $x\in \Omega$ and $v\in \R^{1,n}$ be lightlike. The first inequality is immediate since $x_v^-\in \partial\Omega$. For simplicity, assume that $x_v^-=0$. Since $\partial\Omega$ is $\varepsilon$-stably acausal, one has $I^+_\varepsilon(0)\subset \Omega$. Let 
    $\Delta_x=(\R \partial_t\times \R_{\geq 0}x)\cap C^+_{\varepsilon}(0),$
    and let $y\in \Delta_x$ be the $h$-orthogonal projection of $x$ onto $\Delta_x$. Then 
    $d_h(x,C_\varepsilon^+(0))=d_h(x,y).$
    Let $\theta$ denote the $h$-angle between $x$ and $y$. Then from Fact \ref{tan_formula}, one has
    $$\tan(\theta)=\frac{1-\frac{1}{1+\varepsilon}}{1+\frac{1}{1+\varepsilon}}=\frac{\varepsilon}{2+\varepsilon}.$$
    Hence $\sin(\theta)=\frac{\tan(\theta)}{\sqrt{1+\tan^2(\theta)}}=\frac{\varepsilon}{\sqrt{(2+\varepsilon)^2+\varepsilon^2}}$. The second inequality then follows, since 
    $$ d_h(x,\partial\Omega)\geq d_h(x,C^+_{\varepsilon}(0))=\sin(\theta)d_h(x,x_v^-).$$
\end{proof}

\begin{cor}\label{cor_estimée_delta_O_et_k_O}
    Let $\Omega$ be a causally convex domain in $\R^{1,n}$ that is neither future nor past complete. If $\partial_c^+ \Omega$ and $\partial_c^- \Omega$ are $\varepsilon$-stably acausal, then for every $x\in \Omega$ and $v\in \R^{1,n}$ lightlike, one has
    $$d_h(x,\partial\Omega)\leq \min\{d_h(x,x_v^-),d_h(x,x_v^+)\}\leq \frac{\sqrt{(2+\varepsilon)^2+\varepsilon^2}}{\varepsilon}d_h(x,\partial\Omega).$$
\end{cor}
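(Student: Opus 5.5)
The first inequality is immediate: for every lightlike $v$, both $x_v^-$ and $x_v^+$ lie in $\partial\Omega$, because $\Omega$ is neither future nor past complete and is causally convex, so lightlike rays leave $\Omega$ in both directions. Hence each of $d_h(x,x_v^\pm)$ is at least $d_h(x,\partial\Omega)$, and so is their minimum.

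For the second inequality, the plan is to run the argument of Proposition \ref{prop_estimee} for whichever endpoint realizes the minimum. By symmetry (reverse the time orientation), we may assume the minimum is $d_h(x,x_v^-)$. We show separately that
$$d_h(x,x_v^-)\leq C_\varepsilon\, d_h(x,\partial\Omega), \qquad C_\varepsilon=\frac{\sqrt{(2+\varepsilon)^2+\varepsilon^2}}{\varepsilon}.$$
The analogous bound for $x_v^+$ follows from the same argument, and then $\min\{d_h(x,x_v^-),d_h(x,x_v^+)\}\leq C_\varepsilon\, d_h(x,\partial\Omega)$.

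The key claim is that we cannot simply use $I^+_\varepsilon(x_v^-)\subset\Omega$, since $\Omega$ is bounded above by $\partial_c^+\Omega$. I would instead decompose the distance to the boundary as
$$d_h(x,\partial\Omega)=\min\{d_h(x,\partial_c^-\Omega),\,d_h(x,\partial_c^+\Omega)\},$$
using $\partial\Omega=\partial_c^+\Omega\cup\partial_c^-\Omega$ (the graphs of $f_\pm$ over $\overline U$, which agree on $\partial U$), by Proposition \ref{prop_structure_ouverts_causallement_convexes}.

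Consider the future complete domain $\Omega^-=\{t>f_-(p)\}$, where $f_-$ is extended to all of $\R^n$ as a $1$-Lipschitz function. The extension must be chosen so that the graph stays $\varepsilon$-stably acausal, i.e. $f_-$ is $\frac{1}{1+\varepsilon}$-Lipschitz in the splitting; McShane's extension preserves the Lipschitz constant, so this works. The boundary of $\Omega^-$ is then $\varepsilon$-stably acausal, and the past lightlike ray from $x$ in direction $v$ exits $\Omega^-$ exactly at $x_v^-$, since the two domains agree near the past boundary along that ray. Proposition \ref{prop_estimee} then gives
$$d_h(x,x_v^-)\leq C_\varepsilon\, d_h(x,\partial\Omega^-)\leq C_\varepsilon\, d_h(x,\partial_c^-\Omega),$$
the last step because $\partial_c^-\Omega\subset\partial\Omega^-$. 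Symmetrically, with $\Omega^+=\{t<f_+\}$ (extended), we get $d_h(x,x_v^+)\leq C_\varepsilon\, d_h(x,\partial_c^+\Omega)$.

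Combining the two,
$$\min\{d_h(x,x_v^-),d_h(x,x_v^+)\}\leq C_\varepsilon\min\{d_h(x,\partial_c^-\Omega),d_h(x,\partial_c^+\Omega)\}=C_\varepsilon\, d_h(x,\partial\Omega).$$
So we do not need a bound for each endpoint against the full boundary distance, only against the distance to its own boundary piece, and taking the minimum closes the argument.

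The main obstacle is the identification of $x_v^-$ as the exit point from $\Omega^-$. The past ray from $x$ could a priori leave $\Omega$ through the future boundary, which would be impossible for a past ray, or through a point over $\partial U$, which lies on the graph of $f_-$ anyway. So the exit point of $\Omega$ along the past ray is the first point where $t=f_-(p)$, which is the exit point of $\Omega^-$, provided the ray has not already left the cylinder over $U$; there, $f_-$ on $\partial U$ coincides with its extension. Checking the stable acausality of the extended graph is the other technical point, handled by Lipschitz extension as described above.
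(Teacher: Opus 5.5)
Your argument is correct in substance, but it uses a different reduction from the paper's. The paper does not split $\partial\Omega$. It observes that $\varepsilon$-stable acausality of both causal boundaries puts the whole stable diamond $I_\varepsilon(x_v^-,x_v^+)$ inside $\Omega$. It then reuses the exact identity $d_h(x,x_v^\pm)=\kappa_\varepsilon\, d_h(x,C_\varepsilon^\mp(x_v^\pm))$, with $\kappa_\varepsilon=\sqrt{(2+\varepsilon)^2+\varepsilon^2}/\varepsilon$, which comes from the computation in the proof of Proposition~\ref{prop_estimee}. Finally it notes that the smaller of the two cone distances equals $d_h(x,\partial I_\varepsilon(x_v^-,x_v^+))\leq d_h(x,\partial\Omega)$.

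You instead apply Proposition~\ref{prop_estimee} as a black box to enlarged future and past complete domains $\Omega^\mp\supset\Omega$. This gives slightly sharper one-sided bounds $d_h(x,x_v^\mp)\leq \kappa_\varepsilon\, d_h(x,\partial_c^\mp\Omega)$, at the cost of the extension and the exit-point identification. Your exit-point argument is right. The cleanest justification is this: for $x=(t,p)$ and $(s,q)\in J^-(x)$ one has $s\leq t-\vert q-p\vert<f_+(p)-\vert q-p\vert\leq f_+(q)$. So the past ray exits through the graph of $f_-$ over $U$, where $f_-$ agrees with its extension.

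There are a few small corrections.
\begin{itemize}
\item The McShane extension $\tilde f_-$ with constant $L=1/(1+\varepsilon)$ is only $L$-Lipschitz, and equality is typically attained, for instance between $p\notin\overline U$ and a minimizing $q_0\in\overline U$. Its graph therefore contains $\b_\varepsilon$-lightlike segments, so it is not $\varepsilon$-stably acausal in the strict sense. This is harmless, because the proof of Proposition~\ref{prop_estimee} only uses $I^+_\varepsilon(x_v^-)\subset\Omega^-$. That inclusion follows from $\tilde f_-(q)\leq\tilde f_-(a)+L\vert q-a\vert$. Alternatively, the extended graph is $\varepsilon'$-stably acausal for every $\varepsilon'<\varepsilon$, and you can let $\varepsilon'\to\varepsilon$.
\item You can in fact skip the extension entirely. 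Since $x_v^-\in\partial_c^-\Omega$, acausality gives $\partial_c^-\Omega\cap I^+_\varepsilon(x_v^-)=\emptyset$. Hence $d_h(x,\partial_c^-\Omega)\geq d_h(x,C_\varepsilon^+(x_v^-))$ directly.
\item ``Neither future nor past complete'' does not by itself force lightlike rays to leave $\Omega$. The slab $\{\vert t-p\vert<1\}\subset\R^{1,1}$ is causally convex and neither future nor past complete, yet it contains lightlike lines. The first inequality holds trivially with the convention $d_h(x,x_v^\pm)=\infty$. Finiteness of $x_v^\pm$ comes from stable acausality, since it gives slopes $<1$.
\item Your opening ``we may assume the minimum is $d_h(x,x_v^-)$'' is superfluous, since you prove both bounds anyway.
\end{itemize}
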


\begin{proof}
    Let $x\in \Omega$ and $v\in \R^{1,n}$ be lightlike. Again, the first inequality is immediate. Since $\partial_c^+ \Omega$ and $\partial_c^- \Omega$ are $\varepsilon$-stably acausal, one has $I^+_\varepsilon(x_v^-,x_v^+)\subset \Omega$. 
    From the proof of Proposition \ref{prop_estimee}, one has 
    $$d_h(x,x_v^\pm)=\frac{\sqrt{(2+\varepsilon)^2+\varepsilon^2}}{\varepsilon}d_h(x,C_\varepsilon^\mp(x_v^\pm)).$$
    Therefore 
    $$
    \begin{aligned}
        \min\{d_h(x,x_v^-),d_h(x,x_v^+)\}&=\frac{\sqrt{(2+\varepsilon)^2+\varepsilon^2}}{\varepsilon}\min\{d_h(x,C_\varepsilon^+(x_v^-)),d_h(x,C_\varepsilon^-(x_v^+))\}\\
        &=\frac{\sqrt{(2+\varepsilon)^2+\varepsilon^2}}{\varepsilon}d_h(x,\partial I_\varepsilon(x_v^-,x_v^+))\\
        &\leq\frac{\sqrt{(2+\varepsilon)^2+\varepsilon^2}}{\varepsilon}d_h(x,\partial \Omega).\\
    \end{aligned}
    $$

\end{proof}

\begin{proof}[Proof of Theorem \ref{thm_equiv_explicite_delta_Omega_et_qh}]
    Let $x\in \Omega$ and $v\in T_x\Omega$ be lightlike. From Proposition \ref{prop_estimee} and Corollary \ref{cor_estimée_delta_O_et_k_O}, as well as from the formula given by Proposition \ref{formule_F_Omega}, one has 
    $$\frac{\varepsilon}{\sqrt{(2+\varepsilon)^2+\varepsilon^2}}\sqrt{\hk_\Omega(v,v)}\leq F_\Omega(v)=\frac{\vert v\vert}{d_h(x,x_v^-)}+\frac{\vert v\vert}{d_h(x,x_v^+)}\leq 2\frac{\vert v\vert}{d_h(x,\partial\Omega)}=2\sqrt{\hk_\Omega(v,v)}.$$
    From Theorem \ref{thm_lien_dmark_Fmark}, one deduces that 
    $$\frac{\varepsilon}{\sqrt{(2+\varepsilon)^2+\varepsilon^2}}d_{\hk_\Omega}^\light\leq \delta_\Omega\leq 2d_{\hk_\Omega}^\light.$$
    Since $d_h(\cdot,\partial\Omega)$ is continuous, Corollary \ref{cor_general_metric_conf_plat_et_d_light} implies that $\K_\Omega\leq d_{\hk_\Omega}^\light\leq \sqrt 2\K_\Omega$. The result follows.
\end{proof}

\begin{cor}
    \label{cor_completude_delta_Omega}               
    Let $\Omega$ be a causally convex domain in $\R^{1,n}$. If $\partial_c^+ \Omega$ and $\partial_c^- \Omega$ are stably acausal, then $(\Omega,\delta_\Omega)$ is a complete metric space.
\end{cor}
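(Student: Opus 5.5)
The approach is to reduce completeness of $\delta_\Omega$ to completeness of the quasi-hyperbolic distance $\K_\Omega$, using the bi-Lipschitz comparison of Theorem~\ref{thm_equiv_explicite_delta_Omega_et_qh}. Pick $\varepsilon>0$ such that both $\partial_c^+\Omega$ and $\partial_c^-\Omega$ are $\varepsilon$-stably acausal. If one of them is acausal for some $\varepsilon_1$ and the other for some $\varepsilon_2$, take $\varepsilon=\min(\varepsilon_1,\varepsilon_2)$. This works because $\b_{\varepsilon'}$-causal pairs are $\b_\varepsilon$-causal when $\varepsilon'\le\varepsilon$, so $\varepsilon$-stable acausality is inherited by smaller $\varepsilon$.

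If one of the causal boundaries is empty, say $\Omega$ is future complete, the estimate of Theorem~\ref{thm_equiv_explicite_delta_Omega_et_qh} still holds; its proof treats that case through Proposition~\ref{prop_estimee}. With this, the inequality
$$\frac{\varepsilon}{\sqrt{(2+\varepsilon)^2+\varepsilon^2}}\K_\Omega\leq \delta_\Omega \leq 2\sqrt{2}\K_\Omega$$
holds on $\Omega$. Its left-hand side shows that $\delta_\Omega$ is definite, because $\K_\Omega$ is a genuine Riemannian distance. Hence $\delta_\Omega$ is a distance.

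Next, a Cauchy sequence for $\delta_\Omega$ is Cauchy for $\K_\Omega$ by the lower bound, and a $\K_\Omega$-limit is a $\delta_\Omega$-limit by the upper bound. So it suffices to show that $(\Omega,\K_\Omega)$ is complete. This is the classical fact that the quasi-hyperbolic metric of any proper domain of a Euclidean space is complete; here the Euclidean metric is $h$ on $\R^{n+1}$. I would include the standard short argument. For a $C^1$ path $\alpha$ from $x$ to $y$, the function $u=d_h(\cdot,\partial\Omega)$ is $1$-Lipschitz, so
$$L_{\hk_\Omega}(\alpha)\geq\int\frac{|\alpha'|}{u(\alpha)}\geq\Big|\ln\frac{u(y)}{u(x)}\Big|,$$
and likewise $L_{\hk_\Omega}(\alpha)\ge \ln\big(1+d_h(x,y)/u(x)\big)$. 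Consequently a $\K_\Omega$-Cauchy sequence $(x_k)$ has $u(x_k)$ bounded below by some $c>0$ and is $d_h$-Cauchy. It therefore converges in $\R^{n+1}$ to a point $x$ with $u(x)\ge c>0$, so $x\in\Omega$. On the $h$-ball of radius $c/2$ around $x$, $\hk_\Omega$ is comparable to $h$, so $x_k\to x$ for $\K_\Omega$.

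The main obstacle is only bookkeeping: making sure Theorem~\ref{thm_equiv_explicite_delta_Omega_et_qh} really covers the future- or past-complete cases and the choice of a common $\varepsilon$. The Riemannian completeness itself is standard. Alternatively, one could invoke the Hopf–Rinow-type fact that a length metric bi-Lipschitz to a proper length metric is complete.
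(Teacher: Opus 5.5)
Your proposal is correct and follows the paper's proof: the bi-Lipschitz comparison of Theorem~\ref{thm_equiv_explicite_delta_Omega_et_qh} reduces the statement to completeness of $\K_\Omega$, which the paper simply cites from Bonk--Heinonen--Koskela \cite[Prop. 2.8]{Uniformizing_gromov_hyp_space} rather than reproving it. Your extra bookkeeping is sound: the common $\varepsilon$, the future/past complete cases, definiteness, and your direct proof that $\K_\Omega$ is complete. In that last step, the same logarithmic bound also bounds $d_h(x_k,\partial\Omega)$ from above, which you need for the $d_h$-Cauchy step.
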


\begin{proof}
    This follows from Theorem \ref{thm_equiv_explicite_delta_Omega_et_qh} and the fact that $(\Omega,\K_\Omega)$ is a complete metric space, see \cite[Prop. 2.8]{Uniformizing_gromov_hyp_space}.
\end{proof}

\section{Stable acausality implies Gromov hyperbolicity}\label{section_stab_acaus_implique_Gromov_hyp}

The main goal of this section is to prove the following theorem.

\begin{thm}
    \label{thm_général_condi_suffisante}
    Let $\Omega$ be a future complete domain in $\R^{1,n}$ such that $\partial\Omega$ is stably acausal, or a bounded, convex and causally convex domain in $\R^{1,n}$ such that $\partial_c^+\Omega$ and $\partial_c^-\Omega$ are stably acausal. Then $(\Omega,\delta_\Omega)$ is Gromov hyperbolic.
\end{thm}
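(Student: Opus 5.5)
The plan is to reduce everything to the quasi-hyperbolic metric via Theorem~\ref{thm_equiv_explicite_delta_Omega_et_qh}. In both cases of the statement, the causal boundaries $\partial_c^\pm\Omega$ are $\varepsilon$-stably acausal for some $\varepsilon>0$. In the future complete case, $\partial\Omega=\partial_c^-\Omega$ and $\partial_c^+\Omega=\emptyset$, so the hypothesis holds vacuously on the future side. Theorem~\ref{thm_equiv_explicite_delta_Omega_et_qh} then shows that the identity map $(\Omega,\K_\Omega)\to(\Omega,\delta_\Omega)$ is bi-Lipschitz.

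The subtlety is that Gromov hyperbolicity is invariant under quasi-isometries only between geodesic (or length) spaces. So I first check that both spaces qualify.
\begin{itemize}
\item $(\Omega,\K_\Omega)$ is a complete length space, hence geodesic by the results of Bonk--Heinonen--Koskela.
\item $(\Omega,\delta_\Omega)$ is a length space by Theorem~\ref{thm_lien_dmark_Fmark}, since it is the infimum of lengths of piecewise lightlike curves with respect to the Finsler-type functional $F_\Omega$.
\item $(\Omega,\delta_\Omega)$ is complete by Corollary~\ref{cor_completude_delta_Omega}, and it is locally compact because it is bi-Lipschitz to $\K_\Omega$, which induces the manifold topology.
\end{itemize}
Hopf--Rinow then makes $(\Omega,\delta_\Omega)$ geodesic. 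Consequently, $\delta_\Omega$-geodesics are $\K_\Omega$-quasi-geodesics, and the Morse lemma in a hyperbolic geodesic space transfers thinness of triangles. It therefore suffices to prove that $(\Omega,\K_\Omega)$ is Gromov hyperbolic.

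For the quasi-hyperbolic metric I use Bonk--Heinonen--Koskela together with Balogh--Buckley: a proper domain of $\R^{n+1}$ that is \emph{uniform}, or more generally satisfies a Gehring--Hayman inequality and a ball separation condition, has Gromov hyperbolic quasi-hyperbolic metric. The easiest route is to show $\Omega$ is a uniform domain with respect to $d_h$. This means finding $C>0$ such that any $x,y\in\Omega$ are joined by a curve $\gamma$ satisfying two conditions:
\begin{itemize}
\item $L_h(\gamma)\le C\,d_h(x,y)$;
\item $\min\{L_h(\gamma_{x,z}),L_h(\gamma_{z,y})\}\le C\,d_h(z,\partial\Omega)$ for every $z$ on $\gamma$.
\end{itemize}
In the bounded convex case, convexity together with boundedness make $\Omega$ a bounded convex domain in $\R^{n+1}$. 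Bounded convex domains are uniform, being John domains with a quasiconvexity constant of $1$, so this case follows directly.

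The future complete case is the main obstacle, since $\Omega$ is unbounded and not assumed convex. Here I exploit stable acausality: by Proposition~\ref{prop_structure_ouverts_causallement_convexes}, $\partial\Omega$ is the graph of a $1/(1+\varepsilon)$-Lipschitz function $f_-$ over all of $\R^n$. The $\varepsilon$-stable acausality gives this strict Lipschitz bound, and the fact that $\partial\Omega$ is a graph over the whole $\R^n$ rather than a subset $U$ follows from future completeness, which forces $U=\R^n$. The domain above a Lipschitz graph is uniform, with constant depending only on the Lipschitz constant. To see this, given $x,y$, I join them by the path that goes up vertically from $x$ by a height comparable to $d_h(x,y)$, crosses horizontally to above $y$, and comes down. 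The cone $I^+_\varepsilon(z)\subset\Omega$ at every boundary point $z$ guarantees linear growth of $d_h(\cdot,\partial\Omega)$ along vertical rays, as in Proposition~\ref{prop_estimee}, which yields the cigar condition.

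Alternatively, I would map to the half-space model via the bi-Lipschitz homeomorphism $(t,p)\mapsto(t-f_-(p),p)$. This map quasi-preserves the distance to the boundary, hence the quasi-hyperbolic metric up to bi-Lipschitz distortion, and the quasi-hyperbolic metric of a half-space is exactly $\H^{n+1}$. Verifying this distance-to-boundary comparability carefully is where I expect the main work to lie.
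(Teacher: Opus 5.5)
Your proposal is correct and follows the paper's proof: uniformity of $\Omega$ gives Gromov hyperbolicity of $(\Omega,\K_\Omega)$ by Bonk--Heinonen--Koskela, Theorem~\ref{thm_equiv_explicite_delta_Omega_et_qh} makes $\delta_\Omega$ bi-Lipschitz to $\K_\Omega$, and hyperbolicity transfers. Your explicit check that both metrics are proper and geodesic is something the paper leaves implicit; the only other difference is that you use stable acausality to get uniformity in the future complete case, whereas the paper's Proposition~\ref{prop_futur_complet_implique_uniforme} shows every future complete domain is uniform, since $J^+(x)\subset\Omega$ already gives the needed linear growth of $d_h(\cdot,\partial\Omega)$ (and your flattening map $(t,p)\mapsto(t-f_-(p),p)$ would work for any Lipschitz $f_-$).
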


The ``if'' part of Theorems \ref{thm_intro_CNS_cc} and \ref{thm_intro_CNS_futur_complet} follows immediately from Theorem \ref{thm_général_condi_suffisante}. In the end of this section, we discuss the Gromov boundary, as well as a new notion of \emph{causally thin spacetimes}.

\subsection{Gromov hyperbolic metric spaces} We recall some terminology and properties of Gromov hyperbolic metric spaces, see for instance \cite[Chap. III.H]{bridson2013metric}. All of the following propositions are stated for proper and geodesic metric spaces, but hold for complete, locally compact length metric spaces, since those are always proper and geodesic by the Hopf-Rinow theorem, see \cite[Chap. I.3]{bridson2013metric}. 

\begin{definition}
    \label{def_gromov_hyp}
    A proper and geodesic metric space $(X,d)$ is \emph{Gromov hyperbolic} if there exists $\delta>0$ such that every geodesic triangle is \emph{$\delta$-thin}, that is, for every geodesic triangle $xyz\subset X$, the segment $[x,y]$ is contained in the $\delta$-neighborhood of $[x,z]\cup [y,z]$. In that case, $(X,d)$ is said to be \emph{$\delta$-Gromov hyperbolic}.
\end{definition} 

In a metric space $(X,d)$, the \emph{Gromov product} is defined as 
$$(x,y)_z = \frac{1}{2}\left(d(z,x) + d(z,y) - d(x,y)\right).$$

\begin{prop}[{\cite[Chap. III.H]{bridson2013metric}}]
    \label{prop_caracterisation_produit_gromov}
    Let $(X,d)$ be a proper and geodesic metric space. Then $(X,d)$ is Gromov hyperbolic if and only if there exists a constant $\delta>0$ such that $$(x,z)_w \ge \min\{(x,y)_w,(y,z)_w\} - \delta,$$
    for all $x,y,z,w\in X$.\qed
\end{prop}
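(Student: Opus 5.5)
The statement is the standard equivalence, for proper geodesic spaces, between the thin-triangle definition (Definition~\ref{def_gromov_hyp}) and the four-point condition on Gromov products. I would prove both directions directly, using one bridging estimate: in a geodesic space, $(x,z)_w$ is comparable to $d(w,[x,z])$ up to an additive constant.

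\emph{Bridging estimate.} For any geodesic segment $[x,z]$ and any point $w$, the triangle inequality gives $(x,z)_w\le d(w,[x,z])$. Indeed, if $p\in[x,z]$, then
$$d(w,x)+d(w,z)\le 2d(w,p)+d(x,p)+d(p,z)=2d(w,p)+d(x,z).$$
The reverse inequality, up to a constant, depends on which hypothesis we assume, so I prove it separately in each direction.

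\emph{Four-point condition $\Rightarrow$ thin triangles.} Assume the four-point condition holds with constant $\delta$.
\begin{enumerate}
\item Let $[x,z]$ be a geodesic and $w$ a point. Choose $q\in[x,z]$ with $d(x,q)=(w,z)_x$; this is possible since $0\le (w,z)_x\le d(x,z)$. Then $d(q,z)=(x,w)_z$ and $(x,z)_q=0$.
\item A direct computation gives
$$(x,w)_q=(w,z)_q=\tfrac12\bigl(d(q,w)-(x,z)_w\bigr).$$
For the first equality, use $d(q,x)-d(x,w)=-(x,z)_w$; the second is symmetric.
\item The four-point condition at base point $q$ gives $0=(x,z)_q\ge\min\{(x,w)_q,(w,z)_q\}-\delta$. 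Combined with step 2, this yields
$$d(w,[x,z])\le d(w,q)\le (x,z)_w+2\delta.$$
\item Now take a geodesic triangle $xyz$ and a point $p\in[x,y]$. Then $(x,y)_p=0$, so the four-point condition at $p$ gives $\min\{(x,z)_p,(z,y)_p\}\le\delta$.
\item Applying step 3, $p$ lies within $3\delta$ of $[x,z]\cup[z,y]$. Hence triangles are $3\delta$-thin.
\end{enumerate}

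\emph{Thin triangles $\Rightarrow$ four-point condition.} Assume triangles are $\delta$-thin.
\begin{enumerate}
\item First I show $d(w,[x,z])\le (x,z)_w+C\delta$ for some universal constant $C$. Consider the triangle $wxz$ and move a point $p$ along $[x,z]$. By continuity and thinness, some $p$ is within $\delta$ of both $[w,x]$ and $[w,z]$, say of $a\in[w,x]$ and $b\in[w,z]$.
\item The triangle inequality then gives $d(a,x)\le d(x,p)+\delta$, $d(b,z)\le d(p,z)+\delta$, and $|d(w,a)-d(w,b)|\le 2\delta$.
\item Combining these, $(x,z)_w\ge d(w,p)-C\delta$, which proves the estimate.
\item With this estimate, take $x,y,z,w$ and a geodesic triangle $xyz$. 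Let $p\in[x,z]$ realize $d(w,[x,z])$.
\item By thinness, $p$ is within $\delta$ of $[x,y]\cup[y,z]$. Hence
$$\min\{d(w,[x,y]),d(w,[y,z])\}\le d(w,[x,z])+\delta.$$
\item Converting each distance to a Gromov product with the two comparisons gives $(x,z)_w\ge\min\{(x,y)_w,(y,z)_w\}-(C+1)\delta$.
\end{enumerate}

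The main obstacle is step 1 of this second direction, the lower comparison of $(x,z)_w$ with $d(w,[x,z])$ under thinness. It needs the insize (tripod) argument: locate a point of $[x,z]$ that is simultaneously close to both other sides, and track the additive constants carefully. Everything else is bookkeeping with the triangle inequality.
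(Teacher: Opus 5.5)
Your proof is correct. It is essentially the standard argument in the cited chapter of Bridson--Haefliger, which the paper quotes without proof. Note that you never use properness, only that $X$ is geodesic.

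There is one bookkeeping slip, in step 2 of the converse. The inequalities you list there, $d(a,x)\le d(x,p)+\delta$ and $d(b,z)\le d(p,z)+\delta$, point the wrong way for the lower bound claimed in step 3. What you need instead is $d(x,p)\le d(x,a)+\delta$, $d(p,z)\le d(b,z)+\delta$ and $d(w,p)\le \min\{d(w,a),d(w,b)\}+\delta$. Since $a\in[w,x]$, $b\in[w,z]$ and $p\in[x,z]$, these give $2(x,z)_w=d(w,a)+d(a,x)+d(w,b)+d(b,z)-d(x,p)-d(p,z)\ge 2d(w,p)-4\delta$. So $C=2$, and the four-point constant comes out as $3\delta$.
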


Two distances $d_1$ and $d_2$ on a set $X$ are \emph{quasi-isometrically equivalent} if there exist constants $\alpha,\beta>0$ such that, for all $x,y\in X$, one has
    $$\frac{1}{\alpha} d_1(x,y)-\beta\leq d_2(x,y)\leq \alpha d_1(x,y)+\beta.$$

\begin{prop}[{\cite[Chap. III.H]{bridson2013metric}}]
    \label{prop_dist_quasi_iso}
    Let $d_1,d_2$ be two proper and geodesic distances on a set $X$. If $d_1$ and $d_2$ are quasi-isometrically equivalent,
    then $(X,d_1)$ is Gromov hyperbolic if and only if $(X,d_2)$ is Gromov hyperbolic.\qed
\end{prop}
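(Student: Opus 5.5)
The plan is to show one implication and get the other by symmetry, since the hypothesis is symmetric in $d_1$ and $d_2$: if $\frac{1}{\alpha}d_1-\beta\le d_2\le \alpha d_1+\beta$, then also $\frac{1}{\alpha}d_2-\frac{\beta}{\alpha}\le d_1\le \alpha d_2+\alpha\beta$. So assume $(X,d_1)$ is $\delta$-Gromov hyperbolic in the sense of Definition \ref{def_gromov_hyp}, and prove that geodesic triangles of $(X,d_2)$ are uniformly thin.

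\textbf{Step 1: $d_2$-geodesics are $d_1$-quasi-geodesics.} Let $c:[0,L]\to X$ be a $d_2$-geodesic, so $d_2(c(s),c(t))=|s-t|$. The comparison inequalities give
$$\frac{1}{\alpha}|s-t|-\frac{\beta}{\alpha}\le d_1(c(s),c(t))\le \alpha|s-t|+\alpha\beta .$$
Hence $c$ is an $(\alpha,\alpha\beta)$-quasi-geodesic in $(X,d_1)$. It need not be continuous for $d_1$, since the identity map is only a quasi-isometry.

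\textbf{Step 2: stability of quasi-geodesics.} Invoke the Morse lemma in the form of \cite[Chap. III.H, Thm. 1.7]{bridson2013metric}, which allows non-continuous quasi-geodesics by first replacing them with tame piecewise-geodesic ones. This is the main obstacle, and I would quote it rather than reprove it. It gives a constant $R=R(\delta,\alpha,\alpha\beta)$ such that the image of $c$ is at $d_1$-Hausdorff distance at most $R$ from any $d_1$-geodesic joining $c(0)$ and $c(L)$.

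\textbf{Step 3: thinness of $d_2$-triangles.} Take a $d_2$-geodesic triangle with sides $[x,y]_2$, $[y,z]_2$, $[x,z]_2$, and choose $d_1$-geodesics $[x,y]_1$, $[y,z]_1$, $[x,z]_1$, which exist because $(X,d_1)$ is proper and geodesic. Let $p\in[x,y]_2$. By Step 2 there is $p'\in[x,y]_1$ with $d_1(p,p')\le R$. By $\delta$-thinness there is $q'\in[x,z]_1\cup[y,z]_1$ with $d_1(p',q')\le\delta$. By Step 2 again there is $q\in[x,z]_2\cup[y,z]_2$ with $d_1(q',q)\le R$. Therefore $d_1(p,q)\le 2R+\delta$, and so
$$d_2(p,q)\le \alpha(2R+\delta)+\beta .$$
Thus every $d_2$-geodesic triangle is $\delta'$-thin with $\delta'=\alpha(2R+\delta)+\beta$, and $(X,d_2)$ is Gromov hyperbolic. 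Exchanging the roles of $d_1$ and $d_2$ gives the converse.
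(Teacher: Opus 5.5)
Your argument is correct, and it is essentially the proof behind the paper's citation. Bridson--Haefliger prove the quasi-isometry invariance of hyperbolicity in Chap.~III.H exactly this way: they apply the stability theorem for (not necessarily continuous) quasi-geodesics to the three sides of a triangle, then pull slimness back through the quasi-isometry, just as in your Steps 1--3. The only implicit point is that you take $\alpha\geq 1$, which is harmless since $\alpha$ can always be enlarged.
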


 Let $(X,d)$ be a metric space. Given $A\geq 1$ and $B>0$, a \emph{$(A,B)$-quasi-geodesic} is a curve $\gamma:I\to (X,d)$, where $I$ is an interval of the real line, such that 
$$\frac{1}{A}\vert t-s\vert-B\leq d(\gamma(s),\gamma(t))\leq A\vert t-s\vert+B,$$
for all $s,t\in I$. 

\begin{prop}[{\cite[Chap. III.H]{bridson2013metric}}]
    \label{prop_stability_of_thinness}
    Let $(X,d)$ be a complete and geodesic $\delta$-Gromov hyperbolic metric space. Then, for every $A\geq 1$ and $B>0$, there exists a constant $\delta^\prime=\delta^\prime(\delta,A,B)>0$ such that every $(A,B)$-quasi-geodesic triangle in $X$ is $\delta^\prime$-thin.\qed
\end{prop}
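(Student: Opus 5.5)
The plan is to deduce this from the \emph{stability of quasi-geodesics} (the Morse lemma): there is a constant $R=R(\delta,A,B)$ such that for any $(A,B)$-quasi-geodesic $c:[a,b]\to X$ and any geodesic segment $[c(a),c(b)]$, the Hausdorff distance between the image of $c$ and $[c(a),c(b)]$ is at most $R$. Once this is available, the thinness statement follows directly. Let $c_1,c_2,c_3$ be the sides of an $(A,B)$-quasi-geodesic triangle with vertices $x,y,z$, and choose geodesic sides $[x,y],[y,z],[x,z]$, which exist since $X$ is geodesic. A point of $c_1$ lies within $R$ of $[x,y]$. That point of $[x,y]$ lies within $\delta$ of $[x,z]\cup[y,z]$ by Definition~\ref{def_gromov_hyp}. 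Finally, a point of $[x,z]\cup[y,z]$ lies within $R$ of $c_2\cup c_3$. Hence one can take $\delta'=2R+\delta$.

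To prove the Morse lemma, I first tame $c$, since a quasi-geodesic need not be continuous. I sample $c$ at the points $a,a+1,\dots$ (together with $b$) and join consecutive samples by geodesic segments. This gives a continuous curve $c'$ with the same endpoints. Consecutive samples are at distance at most $A+B$, so $c'$ is at Hausdorff distance at most $A+B$ from $c$. It is also an $(A',B')$-quasi-geodesic with $A',B'$ depending only on $A,B$, and it satisfies the length bound $\ell(c'|_{[s,t]})\le k_1 d(c'(s),c'(t))+k_2$. It therefore suffices to prove the Morse lemma for such tame curves.

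Now let $D=\sup_{p\in[x,y]} d(p,c')$, which is attained by compactness at some $p_0$. The key input, which I expect to be the main obstacle, is the exponential divergence estimate in a $\delta$-hyperbolic space. If a rectifiable path joins two points of $[x,y]$ and stays outside the open $r$-ball around a point of $[x,y]$ between them, then $r\le \delta\log_2\ell+1$, where $\ell$ is the length of the path. This is proved by repeatedly bisecting the path and applying the $\delta$-thin triangle condition to the resulting geodesic triangles.

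Next I take points $y_1,y_2\in[x,y]$ on either side of $p_0$ at distance about $2D$ from $p_0$ (or endpoints of $[x,y]$ if these are closer). I pick points $c'(s_i)$ within $D$ of $y_i$, and build a detour consisting of $[y_1,c'(s_1)]$, then $c'|_{[s_1,s_2]}$, then $[c'(s_2),y_2]$. This detour avoids the $D$-ball around $p_0$. By the length bound for $c'$, its length is at most $k_1(6D)+k_2+2D$, so it is linear in $D$. The divergence estimate forces $D\le \delta\log_2(\text{linear in }D)+1$, which bounds $D$ by a constant depending only on $\delta,A,B$.

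For the reverse inclusion, let $[u,v]$ be a maximal subinterval on which $c'$ stays at distance greater than $D$ from $[x,y]$. Every point of $[x,y]$ is within $D$ of $c'([a,u])$ or of $c'([v,b])$, and both sets are nonempty and closed. By connectedness, some $w\in[x,y]$ is within $D$ of both, which gives points $c'(t_1),c'(t_2)$ with $t_1\le u\le v\le t_2$ and $d(c'(t_1),c'(t_2))\le 2D$. The quasi-geodesic inequality then bounds $t_2-t_1$, hence bounds the distance from any $c'(t)$ with $t\in[u,v]$ to $[x,y]$ by some $D+A'(2AD+AB')+B'$. This completes the Morse lemma, and hence the proposition.
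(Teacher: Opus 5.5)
Your proposal is correct and follows the route of the reference the paper cites instead of giving a proof, namely Bridson--Haefliger III.H: the taming lemma (1.11), exponential divergence (1.6), stability of quasi-geodesics (1.7), and then the $2R+\delta$ comparison of triangles. Two slips are purely cosmetic: the set where $d(c'(t),[x,y])>D$ splits into open rather than closed intervals, which does not affect the connectedness argument; and your final constant should read $A'(2A'D+A'B')$, which agrees with yours because the taming can be done with $A'=A$.
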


One can deduce the following necessary condition for Gromov hyperbolicity.

\begin{prop}
    \label{lemme_triangle_non_fins}
    Let $(X,d)$ be a proper and geodesic Gromov hyperbolic metric space. Let $A\geq 1$ and $B\geq 0$ and let $x_ky_kz_k$ be a sequence of $(A,B)$-quasi-geodesic triangles. If $[x_k,y_k]$ and $[y_k,z_k]$ are eventually disjoint from every compact subset, then $[x_k,z_k]$ is eventually disjoint from every compact subset. 
\end{prop}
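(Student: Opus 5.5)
I will argue by contradiction, using the thinness of quasi-geodesic triangles from Proposition~\ref{prop_stability_of_thinness}. Since $(X,d)$ is proper, geodesic and $\delta$-Gromov hyperbolic, every $(A,B)$-quasi-geodesic triangle is $\delta'$-thin for some $\delta'=\delta'(\delta,A,B)$. Here thinness is understood in the sense of Definition~\ref{def_gromov_hyp}, applied to any side: each side lies in the $\delta'$-neighborhood of the union of the other two.

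Suppose the conclusion fails. Then there is a compact set $K\subset X$ such that, after passing to a subsequence, $[x_k,z_k]\cap K\neq\emptyset$ for every $k$. I will pick points $w_k\in[x_k,z_k]\cap K$.

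Apply thinness to the side $[x_k,z_k]$ of the triangle $x_ky_kz_k$. This gives a point $u_k\in[x_k,y_k]\cup[y_k,z_k]$ with $d(w_k,u_k)\le\delta'$. Consequently $u_k$ lies in the closed $\delta'$-neighborhood $K'=\{p\in X : d(p,K)\le\delta'\}$.

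The set $K'$ is closed and bounded, so it is compact because $X$ is proper. By hypothesis, for $k$ large both $[x_k,y_k]$ and $[y_k,z_k]$ are disjoint from $K'$. This contradicts $u_k\in K'$, which proves the proposition.

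The only subtle point is that Proposition~\ref{prop_stability_of_thinness} is stated for complete geodesic spaces, and proper geodesic spaces are complete, so it applies. One should also check that the thinness constant for quasi-geodesic triangles is uniform over all sides of the triangle. The statement gives this, since the constant $\delta'$ depends only on $\delta$, $A$ and $B$.
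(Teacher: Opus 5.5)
Your proof is correct and follows essentially the same route as the paper: thicken $K$ by the uniform thinness constant from Proposition~\ref{prop_stability_of_thinness}, use properness to see that the thickening is compact, and apply thinness to the side $[x_k,z_k]$. The only differences are cosmetic: the paper argues directly rather than by contradiction, and it thickens by radius (thinness constant $+1$), which gives slack so one need not check that the distance to the other two sides is attained.
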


\begin{proof}
    From Proposition \ref{prop_stability_of_thinness}, let $\delta>0$ be such that every $(A,B)$-quasi-geodesic triangle in $X$ is $\delta$-thin. Let $K$ be a compact subset of $X$, and let $K^\prime=\mathcal{N}_{\delta+1}(K)$ be its $(\delta+1)$-neighborhood in $(X,d)$. Then $K^\prime$ is compact since $X$ is proper. Let $k\geq 0$ be large enough so that $[x_k,y_k]$ and $[y_k,z_k]$ are disjoint from $K^\prime$. Let $a\in [x_k,z_k]$ and let $b\in [x_k,y_k]\cup [y_k,z_k]$ such that $d(a,b)\leq \delta$. Then 
    $$d(a, K)\geq  d(b, K)-d(a,b)\geq d(b, K)-\delta>0,$$
    hence $a\not\in K$ and $[x_k,z_k]$ is disjoint from $K$.
\end{proof}

\subsection{Hyperbolicity of the quasi-hyperbolic metric} Let $\Omega\subset \R^{1,n}$ and let $x,y\in \Omega$. Let $\gamma$ be a piecewise smooth curve in $\Omega$ joining $x$ to $y$. Then $\gamma$ is called $\alpha$-\emph{uniform} for some constant $\alpha>0$ if the following two properties are satisfied:
\begin{itemize}
    \item $\gamma$ is parametrized by arc length with respect to $h=dt^2+d\sigma^2$ and $L_h(\gamma)\leq \alpha\, d_h(x,y)$,
    \item for all $t\leq L_h(\gamma)/2$, one has $B_h(\gamma(t),\frac{t}{\alpha})\subset \Omega$ and $B_h(\gamma(T-t),\frac{t}{\alpha})\subset \Omega$, where $B_h(a,r)$ denotes the $h$-ball of center $a$ and radius $r$.
\end{itemize}

The following definition can be made more general, see \cite{Uniformizing_gromov_hyp_space}. However, we state it only for domains in $\R^{1,n}$.

\begin{definition}
    A domain $\Omega$ in $\R^{1,n}$ is called \emph{uniform} if there exists $\alpha>0$, such that any two points in $\Omega$ can be joined by an $\alpha$-uniform curve. In that case, $\Omega$ is called \emph{$\alpha$-uniform}.
\end{definition}

  Note that for a fixed $\alpha>0$, the notion of $\alpha$-uniform domain depends on the choice of metric $h$, hence on the choice of the splitting of $\R^{1,n}=\R\times \R^n$. However, the notion of uniform domain is independent of such decomposition. 

\begin{thm}[{\cite[Thm. 1.11]{Uniformizing_gromov_hyp_space}}]
    \label{thm_hyperbolicité_dist_q_h}
    Let $\alpha>0$. If $\Omega$ is an $\alpha$-uniform domain in $\R^{1,n}$, then $(\Omega,\K_\Omega)$ is $\delta$-Gromov hyperbolic, for some constant $\delta=\delta(\alpha)$ depending only on $\alpha$.\qed
\end{thm}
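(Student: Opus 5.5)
This result is quoted from Bonk--Heinonen--Koskela, but I would reprove it by comparing $\K_\Omega$ with the explicit distance
$$j_\Omega(x,y)=\ln\Bigl(1+\frac{d_h(x,y)}{\min\{d_h(x,\partial\Omega),d_h(y,\partial\Omega)\}}\Bigr),$$
and by controlling the shape of $\K_\Omega$-geodesics. First, $(\Omega,\K_\Omega)$ is complete, locally compact and a length space (see the reference used in Corollary \ref{cor_completude_delta_Omega}), so it is proper and geodesic. Hence Definition \ref{def_gromov_hyp} applies to it.

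\textbf{Step 1: comparison with $j_\Omega$.} The inequality $j_\Omega\leq \K_\Omega$ holds in every domain; it follows by integrating $\vert d\log d_h(\cdot,\partial\Omega)\vert\leq \hk_\Omega^{1/2}$ along a curve. For the reverse inequality I would integrate $\hk_\Omega$ along an $\alpha$-uniform curve $\gamma$ from $x$ to $y$. On the first half of $\gamma$ one has $d_h(\gamma(t),\partial\Omega)\geq \max\{t/\alpha,\,d_h(x,\partial\Omega)-t\}$, and the same bound holds symmetrically on the second half. Splitting the integral where these two lower bounds cross gives the Gehring--Osgood estimate
$$\K_\Omega\leq c_1(\alpha)\,j_\Omega+c_2(\alpha).$$

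\textbf{Step 2: $\K_\Omega$-geodesics are uniform curves.} Next I would show that every $\K_\Omega$-geodesic is $\alpha'$-uniform, with $\alpha'$ depending only on $\alpha$. This is the main obstacle, since it is the only step where the geodesic itself, and not just some uniform curve, must be controlled. I would argue by contradiction, following Gehring--Osgood. Suppose a subarc of a geodesic wanders close to $\partial\Omega$ relative to its Euclidean length or to its distance from an endpoint. Then its $\K_\Omega$-length exceeds $c_1 j_\Omega+c_2$ between suitable points of the arc. Replacing that subarc by an $\alpha$-uniform curve would then shorten the geodesic, which is impossible. Two consequences follow. The Euclidean length of the geodesic is bounded by $\alpha' d_h(x,y)$. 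The ``double cone'' condition $d_h(\gamma(t),\partial\Omega)\geq t/\alpha'$ holds near each endpoint.

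\textbf{Step 3: thin triangles.} Take a $\K_\Omega$-geodesic triangle $xyz$ and a point $p\in[x,y]$. Write $d(\cdot)=d_h(\cdot,\partial\Omega)$. By Step 2, the sides are cones whose distance to $\partial\Omega$ grows linearly from their endpoints. I would choose the vertex ($x$, say) from whose direction $p$ is ``reached first'' at Euclidean scale comparable to $d(p)$. Then I would find a point $q\in[x,z]\cup[y,z]$ with $d_h(p,q)\lesssim d(p)$ and $d(q)\gtrsim d(p)$. Using the cone condition on both sides together with the length bound of Step 2, the point $q$ can be taken as the point of $[x,z]$ at the same $\K_\Omega$-distance from $x$ as $p$, unless $p$ is closer to $[y,z]$, in which case one uses the symmetric choice. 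The upper bound of Step 1 then gives $\K_\Omega(p,q)\leq c_1 j_\Omega(p,q)+c_2\leq \delta(\alpha)$. Alternatively, one can verify the four-point inequality of Proposition \ref{prop_caracterisation_produit_gromov} for $j_\Omega$, which is quasi-isometric to $\K_\Omega$ by Step 1. In either case all constants depend only on $\alpha$, which gives the claimed $\delta=\delta(\alpha)$.
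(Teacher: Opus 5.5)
The paper gives no proof of this statement; it quotes Bonk--Heinonen--Koskela. Your architecture is the standard one behind that result: compare $\K_\Omega$ with $j_\Omega$ \`a la Gehring--Osgood, show quasi-hyperbolic geodesics are uniform, then prove thin triangles. As written, however, Step~2 is asserted rather than proved, and your replacement argument does not close.

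Write $d=d_h(\cdot,\partial\Omega)$ and $\ell$ for $h$-length. If $\gamma[u,v]$ is a subarc of a $\K_\Omega$-geodesic, its $\K_\Omega$-length is $\K_\Omega(u,v)\le c_1j_\Omega(u,v)+c_2$ automatically. So a contradiction needs a lower bound that beats $c_1j_\Omega(u,v)+c_2$. A point $z\in\gamma[u,v]$ close to $\partial\Omega$ gives only $\K_\Omega(u,v)\ge\ln(1+\ell(\gamma[u,z])/d(z))+\ln(1+\ell(\gamma[z,v])/d(z))$. But $c_1j_\Omega(u,v)$ can be at least as large as soon as $u$ or $v$ is itself close to $\partial\Omega$. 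To get a contradiction you would need $u,v\in\gamma$ with $d(u),d(v)$ comparable to $\ell(\gamma[u,v])$, and producing such points on an arbitrary geodesic is essentially the cone condition you want to prove. Likewise, the length bound does not follow from $\ell(\gamma[u,v])\le d(u)e^{\K_\Omega(u,v)}$: since $c_1>1$, this estimate is superlinear in $d_h(u,v)$. This is exactly where Gehring and Osgood need a more careful argument. You should either cite their theorem or supply that argument.

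Step~3 also has problems. The alternative you offer is false, not merely unjustified. Proposition~\ref{prop_dist_quasi_iso} requires both distances to be geodesic, and $(\Omega,j_\Omega)$ can fail the four-point condition of Proposition~\ref{prop_caracterisation_produit_gromov} even for uniform $\Omega$. Take the half-space $\{t>0\}$, which is uniform by Proposition~\ref{prop_futur_complet_implique_uniforme} and on which $\K_\Omega$ is the hyperbolic metric. Let $x=(e^A,0)$, $y=(e^{-A},0)$, $z=(e^{-A},Lu)$, $w=(e^A,Lu)$ with $\vert u\vert=1$ and $L=e^{2A}$. Then $j(x,y)=j(z,w)=2A$, $j(x,w)=\ln(1+e^A)$, $j(y,z)=\ln(1+e^{3A})$, and $j(x,z),j(y,w)\ge\ln(1+e^{3A})$. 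Hence $j(x,z)+j(y,w)-\max\{j(x,y)+j(z,w),\,j(x,w)+j(y,z)\}\ge 2A-\ln 2$, which is unbounded as $A\to\infty$.

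In your main route, choosing $q$ on $[x,z]$ at the same $\K_\Omega$-distance from $x$ ``unless $p$ is closer to $[y,z]$'' is the tripod form of thinness itself; it does not follow from the cone condition. Granting Step~2 with constant $\alpha'$, it can be repaired by an exit-point argument. Assume $\ell(x,p)\le\ell(p,y)$ and set $r=d(p)$, so $d_h(x,p)\le\alpha' r$.
\begin{itemize}
\item If $d_h(z,p)>2\alpha' r$, let $q$ be the first point of $[x,z]$ on $\partial B_h(p,2\alpha' r)$. If that point lies in the second half of $[x,z]$, take instead the $h$-midpoint of $[x,z]$.
\item If $d_h(z,p)\le 2\alpha' r<4\alpha' r<d_h(y,p)$, do the same with $[z,y]$ and $B_h(p,4\alpha' r)$.
\item If all three vertices lie in $B_h(p,4\alpha' r)$, take $q=x$ when $d(x)\ge r/2$. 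Otherwise take the $h$-midpoint of the longer of $[x,z]$ and $[z,y]$.
\end{itemize}
In every case $d_h(p,q)\le Cr$ and $d(q)\ge r/C$ with $C=C(\alpha')$, so Step~1 bounds $\K_\Omega(p,q)$.

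Finally, a minor point in Step~1. Integrating $\vert d\ln d\vert$ only gives $\vert\ln(d(x)/d(y))\vert\le\K_\Omega(x,y)$. To get $j_\Omega\le\K_\Omega$, use $d(\gamma(s))\le d(x)+s$ along an arclength parametrization.
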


\begin{prop}
    \label{prop_futur_complet_implique_uniforme}
    Any future complete domain in $\R^{1,n}$ is $\frac{3+\sqrt{10}}{2}$-uniform.
\end{prop}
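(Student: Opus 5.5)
The plan is to join two points by an explicit ``lift, traverse, descend'' curve and then tune a single height parameter so that both conditions of $\alpha$-uniformity hold with $\alpha=\frac{3+\sqrt{10}}{2}$. By Proposition~\ref{prop_structure_ouverts_causallement_convexes}, write $\Omega=\{(t,p)\,\vert\,t>f_-(p)\}$ with $f_-$ $1$-Lipschitz. The only geometric input will be that, since $\Omega$ is future complete, $I^+(z)\subset\Omega$ for every $z\in\Omega$. Consequently, for all $x,y\in\Omega$, the union $I^+(x)\cup I^+(y)$ lies in $\Omega$, and
$$d_h(w,\partial\Omega)\geq d_h\bigl(w,\R^{1,n}\setminus(I^+(x)\cup I^+(y))\bigr)\quad\text{for all } w .$$
Distances to the complement of a single cone are explicit: $d_h\bigl(w,\R^{1,n}\setminus I^+(x)\bigr)=\frac{1}{\sqrt2}\bigl(s-\vert q\vert\bigr)$ when $w-x=(s,q)$ lies in the cone. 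The distance to the complement of a union of two cones is attained either on one of the cone boundaries or on the crease $C^+(x)\cap C^+(y)$.

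\textbf{Construction.} Fix $x,y\in\Omega$, let $d=d_h(x,y)$, and let $\lambda>0$ be a parameter to be chosen later. Set $x'=x+\lambda d\,\partial_t$ and $y'=y+\lambda d\,\partial_t$, and let $\gamma$ be the concatenation $[x,x']\cup[x',y']\cup[y',x']$-type path $[x,x']\cup[x',y']\cup[y',y]$, parametrized by $h$-arclength. Then
$$L_h(\gamma)=(2\lambda+1)d,$$
so the first uniformity condition holds for any $\alpha\geq 2\lambda+1$.

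\textbf{Vertical parts.} On the vertical pieces the ball condition is immediate. At arclength $s$ from $x$, the point $x+s\partial_t$ is at $h$-distance $s/\sqrt2$ from $\partial I^+(x)$, and $1/\sqrt2\geq 1/\alpha$ as soon as $\alpha\geq\sqrt2$. The same holds near $y$.

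\textbf{Middle segment.} This is the main obstacle. A point of the middle segment can be written $z_u=x'+u(y'-x')$ with $u\in[0,1]$; by symmetry take $u\leq 1/2$, so its arclength from the nearer endpoint is $(\lambda+u)d$. The requirement is
$$d_h\bigl(z_u,\R^{1,n}\setminus(I^+(x)\cup I^+(y))\bigr)\geq\frac{(\lambda+u)d}{\alpha}\qquad\text{for }u\in[0,1/2].$$
The crude estimate using only the cone $I^+(x)$ gives the constant $2\sqrt2+1$, and must be improved by using both cones. I will normalize $x=0$ and $y-x=d(\cos\phi,\sin\phi\,e)$, with $e$ a unit spatial vector and $\phi$ the $h$-angle of $y-x$ to the spatial direction. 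The problem then reduces to the $2$-plane spanned by $\partial_t$ and $e$, and I expect that spatial directions orthogonal to $e$ only increase the distance. In this plane, compute the distance from $z_u$ to the boundary of $I^+(0)\cup I^+(y-x)$ as a piecewise-explicit function of $(u,\phi,\lambda)$.

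For $\phi=\pi/2$ (purely spatial displacement), the nearest complement point to $z_{1/2}$ is the crease point $(d/2,d/2)$, at distance $(\lambda-\tfrac12)d$. The condition then becomes $(\lambda-\tfrac12)(2\lambda+1)\geq\lambda+\tfrac12$, i.e.\ $\lambda\geq1$, giving $\alpha=3$. When $y-x$ has a timelike component, the crease tilts and the binding constraint moves off $u=1/2$. I will maximize the required ratio over $u$ and $\phi$. The expected outcome is that the worst configuration yields the quadratic
$$\alpha^2=3\alpha+\tfrac14,$$
whose positive root is $\frac{3+\sqrt{10}}{2}$, with $\lambda=(\alpha-1)/2$.

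If the two-cone computation does not close exactly at this constant, the fallback is to let the lift heights at $x$ and $y$ differ. Choosing $\lambda_x d$ and $\lambda_y d$ so that $x'$ and $y'$ share the same time coordinate gives a horizontal middle segment, and the same optimization is then carried out with this extra degree of freedom.
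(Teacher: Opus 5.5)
Your reduction to $U=I^+(x)\cup I^+(y)\subset\Omega$, the length count $L_h(\gamma)=(2\lambda+1)d$ and the vertical-segment estimates are fine. Your purely spatial computation also correctly shows that $\lambda\geq1$ is needed when $\alpha=2\lambda+1$.

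The gap is the step you yourself single out. The ball condition along $[x',y']$ for general $y-x$ is never carried out. The worst case and the quadratic $\alpha^2=3\alpha+\frac14$ are only announced as ``expected'', and in effect they are read off from the target constant. Moreover, both simplifications you propose for that computation fail.
\begin{itemize}
\item Directions orthogonal to $e$ can decrease the distance. Take $n\geq2$, $x=0$, $d=1$, $y=(\tfrac35,\tfrac45\,e)$, $\lambda=1$ and $u=\tfrac12$, so $z_{1/2}=(\tfrac{13}{10},\tfrac25\,e)$. The in-plane crease point $(\tfrac7{10},\tfrac7{10}\,e)$ is at squared distance $\tfrac{45}{100}$ from $z_{1/2}$. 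The point $w=(\tfrac45,\tfrac{31}{40}\,e+\tfrac{\sqrt{63}}{40}\,e')$ of $C^+(x)\cap C^+(y)$, with $e'\perp e$ a unit vector, is at squared distance $\tfrac{43}{100}$.
\item There is no symmetry $u\leftrightarrow 1-u$ once $y-x$ has a nonzero time component, since $z_u$ and $z_{1-u}$ then sit differently relative to the two cones.
\end{itemize}
Finally, the expected outcome is itself wrong for your construction. The choice $\lambda=1$ works in every configuration, so your curve is $3$-uniform, and the spatial case you computed is the worst one.

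To close the argument along your lines, take $\lambda=1$. Normalize $x=0$, $d=1$, $y=(a,b\,e)$ with $a^2+b^2=1$, and assume $a\geq0$ (otherwise reverse the curve).

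If $a\geq b$, then $U=I^+(0)$ and $d_h(z_u,\partial U)=\frac{1+u(a-b)}{\sqrt2}\geq\frac1{\sqrt2}$.

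If $a<b$, then $\partial U=(C^+(0)\setminus I^+(y))\cup(C^+(y)\setminus I^+(0))$. On each of these closed pieces of a light cone, the distance to $z_u$ is minimized at one of three places: the foot of the perpendicular in the plane of $\partial_t$ and $e$ (the only local minimum on the smooth part of the cone), the apex, or the crease $K=C^+(0)\cap C^+(y)$. For $w=(t,w')\in K$, the relations $\vert w'\vert=t$ and $\b(w,y)=\frac12\b(y,y)$ give
$$\vert w-z_u\vert^2=2t^2-2t(1+2ua)+1+2ua+u^2-u(b^2-a^2)\geq\tfrac12-(b^2-a^2)u(1-u)\geq\tfrac14 .$$
The remaining candidates are also at distance at least $\tfrac12$:
\begin{itemize}
\item for $u<1$, the foot on $C^+(y)$ lies in $I^+(0)$, so it is not on $\partial U$;
\item the foot on $C^+(0)$ lies on $\partial U$ only if $u\leq1-\frac1{a+b}$, and then its distance $\frac{1-u(b-a)}{\sqrt2}$ is at least $\frac12$;
\item the apices are at distance at least $\frac1{\sqrt2}$.
\end{itemize}
Hence $d_h(z_u,\partial\Omega)\geq\frac12\geq\frac{\min(1+u,2-u)}{3}$, so $\gamma$ is $3$-uniform, which implies the statement.

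The paper avoids this two-cone analysis altogether. It joins non-causally related points by a two-segment path $x\to z\to y$ through a point $z$ high above the later endpoint. Each segment then stays inside one of the cones $\Conv(x,B)$, $\Conv(y,B)$ over a ball $B$ centered at $z$. Causally related pairs are treated separately with a vertical-then-horizontal path.
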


\begin{proof}
    Let $\Omega$ be a future complete domain and let $x=(p,s)\in\Omega$ and $y=(q,t)\in \Omega$.
    
    \emph{Case 1:  $x$ and $y$ are causally related}. We can assume for instance that $x\leq y$. Let $z=(p,t)$ and $\gamma$ be the union of the segments $[x,z]$ and $[z,y]$. Then $\gamma$ has length $L(\gamma)\leq \sqrt{2}d(p,q)$ and $\gamma$ is a $\sqrt{2}$-uniform curve from $x$ to $y$. 
    
     \emph{Case 2: $x$ and $y$ are not causally related}. We can assume for instance that $t\geq s$, so that $d(p,q)\geq t-s$. Let $z=(q,t+\frac{3}{2}d(p,q))$ and let $\gamma$ be the union of the segments $[x,z]$ and $[z,y]$. Then 
        $$
            L(\gamma)=L([x,z])+L([z,y])
            = \sqrt{d(p,q)^2+\left(\frac{3}{2}d(p,q)\right)^2}+\frac{1}{2}d(p,q)+t-s
            \leq \frac{3+\sqrt{10}}{2}d(p,q).
        $$
        Also, since $f$ is 1-Lipschitz, one has $B=B(z,\frac{d(p,q)}{2\sqrt{2}})\subset \Omega$, and the convex envelopes $\Conv(x,B)$ and $\Conv(y,B)$ are contained in $\Omega$. If $m$ refers to the midpoint of $\gamma$, then 
        $$\frac{L(\gamma)/2}{L([x,z])}B\left(m,\frac{d(p,q)}{2\sqrt{2}}\right)\subset \Omega.$$
        Hence $B(m,r)\subset \Omega$, where 
        $$r=\frac{L(\gamma)d(p,q)}{L([x,z])4\sqrt{2}}=\frac{3+\sqrt{10}}{4\sqrt{20}}d(p,q)\geq \frac{2}{3+\sqrt{10}}d(p,q).$$
        Hence $\gamma$ is $\frac{3+\sqrt{10}}{2}$-uniform. It follows from both cases that $\Omega$ is $\frac{3+\sqrt{10}}{2}$-uniform.   
\end{proof}


\begin{thm}[{see for instance \cite[Thm. 2.19]{Vaisala}}]
\label{thm_convex_implique_uniforme}
Any bounded convex domain in $\R^{1,n}$ is uniform. More precisely, if $\Omega$ is convex and if $r_1,r_2>0$ are such that $B_h(x,r_1)\subset\Omega\subset B_h(x,r_2)$ for some $x\in \Omega$, then $\Omega$ is $\frac{r_2}{r_1}$-uniform. \qed
\end{thm}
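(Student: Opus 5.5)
The plan is to join two points $a,b\in\Omega$ by the affine segment $\gamma=[a,b]$, parametrized by $h$-arc length on $[0,T]$ with $T=d_h(a,b)$. The first condition of $\alpha$-uniformity with $\alpha=r_2/r_1$ is then immediate, since $L_h(\gamma)=d_h(a,b)\leq \frac{r_2}{r_1}d_h(a,b)$ because $r_2\geq r_1$. By the symmetry $a\leftrightarrow b$, the whole proof reduces to one estimate. For $z=\gamma(t)$ with $t\leq T/2$, I want
$$d_h(z,\partial\Omega)\geq \frac{r_1}{r_2}\,t .$$
This gives $B_h(\gamma(t),t/\alpha)\subset\Omega$, and the same estimate with $a,b$ exchanged gives $B_h(\gamma(T-t),t/\alpha)\subset\Omega$.

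To prove the estimate I will use supporting hyperplanes. Since $\Omega$ is convex and open, $d_h(z,\partial\Omega)$ is the infimum, over all supporting hyperplanes $H$ of $\Omega$, of the $h$-distance from $z$ to $H$. Fix such an $H$ and let $\varphi$ be the affine function equal to the signed $h$-distance to $H$, positive on $\Omega$. Three facts about $\varphi$ are available:
\begin{itemize}
\item $\varphi\geq r_1$ at $x$, because $B_h(x,r_1)\subset\Omega$;
\item $\varphi\geq 0$ on $\overline\Omega$, so in particular at $a$ and $b$;
\item $\varphi\leq \varphi(x)+r_2$ on $\Omega$, because $\Omega\subset B_h(x,r_2)$.
\end{itemize}
The goal is $\varphi(z)\geq \frac{r_1}{r_2}t$.

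The idea is to use the cones $\Conv(a,B_h(x,r_1))$ and $\Conv(b,B_h(x,r_1))$, both contained in $\Omega$. Along the segment from $a$ to a point $q\in B_h(x,r_1)$, $\varphi$ grows at rate at least $r_1/d_h(a,q)$ per unit length. I will try to write $z$ as a convex combination $z=(1-\lambda)a'+\lambda q$ with $a'\in\overline\Omega$ (taken on the ray from $q$ through $z$, which exits $\overline\Omega$ at a boundary point beyond $z$) and $\lambda$ bounded below in terms of $t$. Then $\varphi(z)\geq \lambda\varphi(q)$, and $\varphi(z)\geq \lambda r_1$ if $q$ is chosen with $\varphi(q)\geq r_1$. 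Choosing $q$ adapted to the direction of $b-a$, so that $b$ plays the role of the far endpoint, the ratio $\lambda$ should be at least $t/r_2$. This is where the hypothesis $t\leq T/2$ enters: the portion of $[a,b]$ after $z$ has length at least $t$. I would first test this in the plane spanned by $a,b,x$, where it becomes a two-dimensional computation with triangles.

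The main obstacle is exactly this last step: extracting the sharp constant $r_1/r_2$, rather than a constant of order $r_1/(2r_2)$, from the two facts $\varphi(x)\geq r_1$ and $\operatorname{diam}\Omega\leq 2r_2$. I will first check whether the straight segment suffices with this constant. If it does not, I will modify the curve slightly, replacing $[a,b]$ by the union of $[a,m']$ and $[m',b]$ with $m'$ pushed toward $x$, and recheck both uniformity conditions. If even that fails to give exactly $r_2/r_1$, I will fall back on the argument in \cite[Thm. 2.19]{Vaisala}.
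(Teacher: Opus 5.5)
\textbf{The central estimate is false.} Along the straight segment, $d_h(z,\partial\Omega)\geq \frac{r_1}{r_2}t$ fails in general, so the plan breaks at its first step, not just at the level of constants. Take $a,b\in\Omega$ at $h$-distance $\eta$ from a common supporting hyperplane $H$, with $\eta<\frac{r_1}{2r_2}d_h(a,b)$. Such pairs exist, for example on a short chord close to $\partial\Omega$ near any point where $\partial\Omega$ is differentiable; in a ball, chords at depth $\eta$ have length of order $\sqrt{\eta}$.

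For the affine function $\varphi$ attached to $H$ you then have $\varphi(a)=\varphi(b)=\eta$, hence $\varphi\equiv\eta$ on $[a,b]$. But you need $\varphi\geq\frac{r_1}{2r_2}d_h(a,b)$ at the midpoint. None of your three facts bounds $\varphi(a)$ or $\varphi(b)$ from below, so they cannot exclude this. In your convex-combination step, $\varphi(z)\geq\lambda\varphi(q)$ with $\varphi(q)\geq r_1$ forces $\lambda\leq\eta/r_1$. So the claimed bound $\lambda\geq t/r_2$ is exactly the step that breaks. The curve has to be pushed into $\Omega$ by a distance comparable to $d_h(a,b)$, not ``slightly''.

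\textbf{The sharp constant is not attainable.} If $r_1=r_2$, then $\Omega=B_h(x,r_1)$. Here $1$-uniformity forces $\gamma=[a,b]$, since no other curve has $h$-length at most $d_h(a,b)$, and the chord example rules this out. So the statement is only correct with a constant depending on $r_2/r_1$. The paper gives no argument beyond the citation, and this weaker form is all it uses (Lemma \ref{lemme_1_I_eps}).

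\textbf{How to repair it.} Your fallback of bending toward $x$ proves the weaker form once the bending is quantified. By convexity, $B_h((1-\lambda)y+\lambda x,\lambda r_1)\subset\Omega$ for every $y\in\overline\Omega$ and $\lambda\in(0,1]$. Put $\lambda=\min\{1,d_h(a,b)/(2r_1)\}$, $a'=(1-\lambda)a+\lambda x$, $b'=(1-\lambda)b+\lambda x$, and $\gamma=[a,a']\cup[a',b']\cup[b',b]$. Then:
\begin{itemize}
\item $L_h(\gamma)\leq 2\lambda r_2+(1-\lambda)d_h(a,b)\leq(1+\frac{r_2}{r_1})d_h(a,b)$;
\item a point of $[a,a']$ at arc length $s$ from $a$ has depth at least $\frac{r_1}{r_2}s$, and symmetrically on $[b',b]$;
\item points of $[a',b']$ have depth at least $\lambda r_1\geq L_h(\gamma)/(2(1+\frac{r_2}{r_1}))$.
\end{itemize}
Hence $\Omega$ is $(1+\frac{r_2}{r_1})$-uniform.
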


\begin{proof}[Proof of Theorem \ref{thm_général_condi_suffisante}]
    Let $\Omega$ be a bounded, convex, causally convex domain in $\R^{1,n}$, or a future complete domain in $\R^{1,n}$. By Proposition \ref{prop_futur_complet_implique_uniforme} and Theorem \ref{thm_convex_implique_uniforme}, the domain $\Omega$ is uniform. By Theorem \ref{thm_hyperbolicité_dist_q_h}, the metric space $(\Omega,\K_\Omega)$ is Gromov hyperbolic. Assume that $\partial_c^+\Omega$ and $\partial_c^-\Omega$ are stably acausal. Then, by Theorem \ref{thm_equiv_explicite_delta_Omega_et_qh}, the metric space $(\Omega,\delta_\Omega)$ is bi-Lipschitz isometric to $(\Omega,\K_\Omega)$. Hence $(\Omega,\delta_\Omega)$ is Gromov hyperbolic by Proposition \ref{prop_dist_quasi_iso}.
\end{proof}

\subsection{Gromov's boundary vs the causal boundary} In this section, we compare the Gromov boundary and the causal boundary. 

\begin{definition}[Gromov boundary]
    Let $(X,d)$ be a proper and geodesic metric space. The \emph{Gromov boundary of $X$}, denoted $\partial_\infty X$, is the set of equivalence classes of geodesic rays $\gamma:[0,+\infty)\to X$, under the relation $\gamma_1\sim \gamma_2 $ if and only if there exists $M$ such that $d(\gamma_1(t),\gamma_2(t))<M$ for all $t\geq 0$. 
\end{definition}

The Gromov boundary $\partial_\infty X$ is naturally given a topology of uniform convergence on compact subsets. With this topology, the union $X\cup \partial_\infty X$ is a compact topological space.

\begin{prop}[{\cite[Chap. III.H]{bridson2013metric}}]
    \label{prop_identification_bord_Gromov}
    If $d_1$ and $d_2$ are two proper, geodesic and quasi-isometrically equivalent distances on a set $X$, then the identity map $(X,d_1)\to (X,d_2)$ extends to a homeomorphism of their Gromov boundary. 
\end{prop}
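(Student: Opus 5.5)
The plan is to follow the standard argument from Bridson--Haefliger, Chap. III.H: a quasi-isometry between proper geodesic Gromov hyperbolic spaces induces a homeomorphism of Gromov boundaries. The statement implicitly assumes $(X,d_1)$ and $(X,d_2)$ are Gromov hyperbolic; otherwise the topology on $\partial_\infty X$ is not well behaved. By Proposition~\ref{prop_dist_quasi_iso}, hyperbolicity of one space is equivalent to hyperbolicity of the other. So fix constants $\alpha,\beta$ with $\frac1\alpha d_1-\beta\le d_2\le \alpha d_1+\beta$. The identity map $\iota:(X,d_1)\to(X,d_2)$ is then an $(\alpha,\beta)$-quasi-isometry.

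\emph{Step 1: definition of the extension.} Take a $d_1$-geodesic ray $\gamma$. Viewed in $(X,d_2)$, $\gamma$ is an $(\alpha,\beta)$-quasi-geodesic ray. I would invoke the Morse lemma (stability of quasi-geodesics, the ray version of Proposition~\ref{prop_stability_of_thinness}). Using properness and a diagonal Arzel\`a--Ascoli argument on $d_2$-geodesic segments $[\gamma(0),\gamma(n)]_{d_2}$, this produces a $d_2$-geodesic ray $\gamma'$ at bounded Hausdorff distance $R=R(\delta,\alpha,\beta)$ from $\gamma$. I then set $\partial\iota([\gamma])=[\gamma']$.

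\emph{Step 2: well-definedness.} If $\gamma_1\sim\gamma_2$ in $d_1$, then they stay at bounded $d_1$-distance, hence at bounded $d_2$-distance. Their $d_2$-straightenings therefore stay at bounded Hausdorff distance, and after reparametrization they are asymptotic. The same construction applied to $\iota^{-1}$ gives an inverse map, so $\partial\iota$ is a bijection.

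\emph{Step 3: continuity.} This step is the main obstacle. I would not work with uniform convergence of rays directly. Instead I would use the equivalent description of the topology through Gromov products based at a fixed point $w$: $\xi_k\to\xi$ if and only if $(\xi_k,\xi)_w\to\infty$. The key estimate is that $(x,y)_w^{d_2}\ge \frac1{\alpha}(x,y)_w^{d_1}-C$ for some constant $C$, and symmetrically. To prove it, note that in a $\delta$-hyperbolic geodesic space the Gromov product $(x,y)_w$ agrees, up to an additive constant, with the distance from $w$ to a geodesic $[x,y]$. That geodesic is moved by $\iota$ to within bounded $d_2$-distance of a $d_2$-geodesic, by the Morse lemma again. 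Hence $d_2(w,[x,y]_{d_2})\ge \frac1\alpha d_1(w,[x,y]_{d_1})-C$. Passing to rays, sequences with divergent $d_1$-products have divergent $d_2$-products. Applying the same argument to the inverse map gives continuity of $\partial\iota^{-1}$, which completes the proof.

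The delicate points are the uniformity of the constants in the Morse lemma and the comparison between Gromov products and distances to geodesics. Both are standard in hyperbolic spaces, so I expect Step 3 to reduce to careful bookkeeping of these constants.
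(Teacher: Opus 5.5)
Your proposal is correct. It is essentially the standard argument from the reference the paper cites without proof (Bridson--Haefliger, Chap.~III.H). You are also right to state explicitly that $(X,d_1)$ and $(X,d_2)$ must be Gromov hyperbolic: this is needed, and it holds wherever the paper applies the statement. Treating continuity through the Gromov-product description of the boundary topology, rather than through neighbourhoods of rays that fellow-travel on long initial segments, is an equivalent standard variant and works as you describe.
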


\begin{prop}
    Let $\Omega$ be a bounded, convex, causally convex domain in $\R^{1,n}$ (resp. future complete domain in $\R^{1,n}$) such that $\partial_c^+\Omega$ and $\partial_c^-\Omega$ are stably acausal. Then the identity map $\Omega \to \Omega$ extends to an embedding
    $$\Omega\cup \partial_c\Omega\to\Omega\cup\partial_\infty\Omega,$$
    where $\partial_\infty\Omega$ is the Gromov boundary of $(\Omega,\delta_\Omega)$ and $\partial_c\Omega=\partial_c^+\Omega\cup \partial_c^-\Omega$.
\end{prop}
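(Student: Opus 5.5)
By Theorem~\ref{thm_equiv_explicite_delta_Omega_et_qh}, the identity $(\Omega,\delta_\Omega)\to(\Omega,\K_\Omega)$ is bi-Lipschitz. Both spaces are proper and geodesic: they are complete by Corollary~\ref{cor_completude_delta_Omega} and Hopf--Rinow, and $\delta_\Omega$ is a length metric by Theorem~\ref{thm_lien_dmark_Fmark}. So Proposition~\ref{prop_identification_bord_Gromov} identifies the two Gromov boundaries, and it suffices to produce the embedding for the quasi-hyperbolic distance $\K_\Omega$.

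For $\K_\Omega$ I use Bonk--Heinonen--Koskela \cite{Uniformizing_gromov_hyp_space}. The domain $\Omega$ is uniform by Proposition~\ref{prop_futur_complet_implique_uniforme} or Theorem~\ref{thm_convex_implique_uniforme}. For a bounded uniform domain, the identity extends to a homeomorphism $\overline\Omega\to\Omega\cup\partial_\infty\Omega$ for the quasi-hyperbolic distance, restricting to a homeomorphism $\partial\Omega\cong\partial_\infty\Omega$ of the Euclidean boundary (here $h$-boundary). In the bounded convex case I restrict this homeomorphism to the subspace $\Omega\cup\partial_c\Omega\subset\overline\Omega$. The restriction of a homeomorphism to a subspace is an embedding, which gives the claim. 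One should check that $\partial_c\Omega$ carries the topology induced from $\R^{1,n}$; in the statement this is the topology meant, consistent with the Fact identifying it with the GKP boundary.

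The future complete case is the main obstacle, since $\Omega$ is unbounded. Here $\partial\Omega=\partial_c^-\Omega$ is the graph of a $1$-Lipschitz function $f_-$ over all of $\R^n$. I would handle it in one of two ways.

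\emph{Option 1.} Invoke the unbounded version of the BHK theorem. For unbounded uniform domains, $\partial_\infty\Omega$ is homeomorphic to $\partial\Omega\cup\{\infty\}$, and the identity extends to a homeomorphism $\overline\Omega\cup\{\infty\}\to\Omega\cup\partial_\infty\Omega$. Restricting to $\Omega\cup\partial_c\Omega$, which avoids $\infty$, again gives an embedding.

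\emph{Option 2.} If only the bounded version is available, apply a conformal inversion of $\R^{n+1}$ centered outside $\overline\Omega$. The quasi-hyperbolic metric changes only by a bi-Lipschitz factor under Möbius maps, and uniformity is preserved. This reduces to the bounded case.

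In either route, the delicate point is checking that the extension is continuous at boundary points. This amounts to the statement that a sequence $x_k\to\xi\in\partial_c\Omega$ in the $h$-topology has Gromov products $(x_k,x_l)_o\to\infty$, and conversely. I would verify this directly using the uniform curves built in Proposition~\ref{prop_futur_complet_implique_uniforme}, together with the standard estimate $\K_\Omega(x,y)\asymp\log\bigl(1+d_h(x,y)/\min(d_h(x,\partial\Omega),d_h(y,\partial\Omega))\bigr)$.
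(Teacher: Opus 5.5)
Your proposal is correct and is essentially the paper's proof. The paper likewise replaces $\delta_\Omega$ by $\K_\Omega$ using Theorem~\ref{thm_equiv_explicite_delta_Omega_et_qh} and Proposition~\ref{prop_identification_bord_Gromov}, then applies \cite[Prop. 3.12]{Uniformizing_gromov_hyp_space} to the uniform domain $\Omega$ to get a homeomorphism $\Omega\cup\partial_\star\Omega\to\Omega\cup\partial_\infty\Omega$, where $\partial_\star\Omega=\partial\Omega$ if $\Omega$ is bounded and $\partial_\star\Omega$ is the one-point compactification of $\partial\Omega$ if $\Omega$ is future complete (your Option~1), and restricts it. Your Option~2 and the direct continuity check in your last paragraph are unnecessary once that result is invoked.
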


\begin{proof}
    From Proposition \ref{prop_identification_bord_Gromov} and Theorem \ref{thm_equiv_explicite_delta_Omega_et_qh}, we can always replace $\delta_\Omega$ by the quasi-hyperbolic distance. From \cite[Prop. 3.12]{Uniformizing_gromov_hyp_space}, the identity map extends to a homeomorphism $\Omega\cup \partial_\star\Omega\to\Omega\cup\partial_\infty\Omega$, where $\partial_\star\Omega=\partial\Omega$ if $\Omega$ is bounded, and $\partial_\star\Omega$ is the one point compactification of $\partial\Omega$ if $\Omega$ is future complete. The result follows.
\end{proof}

Note that Bonk--Heinonen--Koskela \cite{Uniformizing_gromov_hyp_space} show that for an unbounded uniform domain $\Omega$ in $\R^{1,n}$, any geodesic ray $\gamma$ of $(\Omega,\K_\Omega)$ escaping every compact subset of $\Omega$ must converge to the same point of $\partial_\infty \Omega$. This has the following consequence.

\begin{prop}
    Let $\varepsilon>0$ and let $\Omega$ be a future complete domain in $\R^{1,n}$, such that $\partial\Omega$ is $\varepsilon$-stably acausal. Then there exists a constant $M=M(\varepsilon)>0$ such that, for every $x\in \Omega$ and for every pair of future causal curves $\gamma_1$ and $\gamma_2$ starting at $x$, then 
    $\delta_\Omega(\gamma_1,\gamma_2)\leq M.$
    The same property is satisfied by $\dhat_{\ln(\tau)}$, where $\tau$ is the cosmological time function of $\Omega$.
\end{prop}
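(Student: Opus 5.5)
The plan is to pass to the quasi-hyperbolic distance and prove the bound there by an explicit estimate, rather than through the Bonk--Heinonen--Koskela result quoted just above. I read the statement as a uniform bound on the Hausdorff distance between $\gamma_1$ and $\gamma_2$ in $(\Omega,\delta_\Omega)$.

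\emph{Reduction.} By Theorem~\ref{thm_equiv_explicite_delta_Omega_et_qh} (here $\partial_c^+\Omega=\emptyset$ and $\partial_c^-\Omega=\partial\Omega$), one has $\delta_\Omega\le 2\sqrt2\,\K_\Omega$. So it suffices to bound the $\K_\Omega$-Hausdorff distance. Every future causal curve is non-decreasing in the $t$-coordinate, so after reparametrizing and extending it by a vertical ray if it is bounded in time, we may write $\gamma_i(s)=(t_x+s,p_i(s))$ for $s\ge0$, where $x=(t_x,p_x)$. Let $\sigma(s)=x+s\partial_t$ be the vertical ray. It suffices to show $\K_\Omega(\gamma_i(s),\sigma(s))\le C(\varepsilon)$ for all $s\ge0$: then each point of $\gamma_1$ lies within $2C$ of $\gamma_2$, and conversely.

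\emph{Key estimate.} Fix $s$. Since $\gamma_i(s)\in J^+(x)$, one has $|p_i(s)-p_x|\le s$. Hence the horizontal segment $S=[\sigma(s),\gamma_i(s)]$ lies in the slice $\{t=t_x+s\}\cap J^+(x)$ and has $h$-length at most $s$. Write $\partial\Omega$ as the graph of $f=f_-$ (Proposition~\ref{prop_structure_ouverts_causallement_convexes}). The $\varepsilon$-stable acausality makes $f$ a $\frac{1}{1+\varepsilon}$-Lipschitz function. Let $d=d_h(x,\partial\Omega)$, so that $t_x-f(p_x)\ge d$. For $y=(t_x+s,q)\in S$, the height of $y$ above the graph is
$$t_x+s-f(q)\ge t_x-f(p_x)+s-\frac{|q-p_x|}{1+\varepsilon}\ge d+\frac{\varepsilon}{1+\varepsilon}s.$$
The distance from a point to the graph of an $L$-Lipschitz function is at least its height divided by $\sqrt{1+L^2}$. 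This gives $d_h(y,\partial\Omega)\ge c(\varepsilon)(d+s)$ for an explicit $c(\varepsilon)>0$. Therefore
$$\K_\Omega(\sigma(s),\gamma_i(s))\le L_{\hk_\Omega}(S)\le\frac{s}{c(\varepsilon)(d+s)}\le\frac1{c(\varepsilon)},$$
uniformly in $x$, $s$ and the curves, and we take $M=4\sqrt2/c(\varepsilon)$.

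\emph{Main obstacle.} The delicate point is the lower bound on $d_h(\cdot,\partial\Omega)$ along $S$. It must hold uniformly both for small $s$, where the base distance $d$ controls it, and for large $s$, where the term $\varepsilon s/(1+\varepsilon)$ controls it. Combining the two into the single bound $c(d+s)$ is exactly where $\varepsilon$-stability is used: with mere acausality the gain $\varepsilon s/(1+\varepsilon)$ disappears and the bound fails.

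For the null distance $\dhat_{\ln(\tau)}$, I would invoke the comparison between $\delta_\Omega$ and $\dhat_{\ln(\tau)}$ for future complete domains with stably acausal boundary from Section~\ref{section_comparaison_avec_null_dist}. That comparison is stated later in the paper, and I am assuming it is a quasi-isometry with constants depending only on $\varepsilon$; granting this, a bounded Hausdorff distance is preserved with a new constant depending only on $\varepsilon$.
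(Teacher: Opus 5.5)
Your proof is correct, and it takes a genuinely different and much more elementary route than the paper.

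The paper argues through Gromov hyperbolicity:
\begin{itemize}
\item $\Omega$ is uniform, so $(\Omega,\K_\Omega)$ is $r$-hyperbolic with $r$ universal, by Bonk--Heinonen--Koskela.
\item The comparisons with the null distance and with $\K_\Omega$ make causal curves uniform $(A_\varepsilon,B_\varepsilon)$-quasi-geodesics.
\item Two inextensible future causal curves have infinite $h$-length, so they converge to the same point of $\partial_\infty\Omega$ by \cite[Prop.~3.12]{Uniformizing_gromov_hyp_space}.
\item Stability of quasi-geodesics then gives the Hausdorff bound.
\end{itemize}

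You instead compare each curve synchronously, at equal $t$-levels, with the vertical ray through $x$. You bound the $\hk_\Omega$-length of the horizontal connecting segment directly, using that $\partial\Omega$ is the graph of a $\frac{1}{1+\varepsilon}$-Lipschitz function. Your computations check out and give $c(\varepsilon)=\varepsilon/\sqrt{(1+\varepsilon)^2+1}$.

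For $\delta_\Omega$ you only use $\delta_\Omega\le 2\sqrt2\,\K_\Omega$. This holds on every domain, since $F_\Omega(v)\le 2|v|/d_h(x,\partial\Omega)$ needs no hypothesis. So you need neither the lower comparison, nor hyperbolicity, nor the quasi-geodesic property of causal curves. You also get an explicit constant $M=4\sqrt2/c(\varepsilon)$, and the stronger fellow-travelling statement $\K_\Omega(\gamma_1(s),\gamma_2(s))\le 2/c(\varepsilon)$ at equal times.

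What the paper's route buys is the conceptual picture: all future causal rays define the single point at infinity of $\partial_\infty\Omega$. The price is heavy machinery and non-explicit constants.

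For the null distance, both arguments rely on Theorem~\ref{thm_equivalence_d_lnt_delta_Omega_cas_stablement_acausal}. Only its lower bound $\dhat_{\ln(\tau)}\le \frac{\sqrt{2+2(1+\varepsilon)^2}}{\varepsilon}(\delta_\Omega+c_\varepsilon)$ is needed, and that is exactly what the theorem provides.

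One small correction: drop the ``extend by a vertical ray if bounded in time'' step. Enlarging a curve does not let you transfer a Hausdorff bound back to the original curve. Moreover, the statement is false for non-inextensible curves: a short vertical segment and the vertical ray from $x$ are at infinite Hausdorff distance, since $\ln(\tau)\to+\infty$ along the ray. The statement must be read, as in the paper's proof, for inextensible future causal curves. In a future complete domain these automatically satisfy $t\to+\infty$, so your $t$-parametrization over $[0,+\infty)$ is available without any extension.
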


\begin{proof}
    From Theorem \ref{thm_hyperbolicité_dist_q_h} and Proposition \ref{prop_futur_complet_implique_uniforme}, there exists $r>0$, independent of $\Omega$, such that  $(\Omega,\K_\Omega)$ is $r$-Gromov hyperbolic. 
    From Theorem \ref{thm_equivalence_d_lnt_delta_Omega_cas_stablement_acausal} and Theorem \ref{thm_equiv_explicite_delta_Omega_et_qh}, there exists $A_\varepsilon,B_\varepsilon>0$ such that any causal curve in $\Omega$ is a $(A_\varepsilon,B_\varepsilon)$-quasi-geodesic of $(\Omega,\K_\Omega)$. Let $x\in \Omega$ and let $\gamma_1,\gamma_2:[0,+\infty)\to \Omega$ be two inextensible future causal curves starting at $x$. Then $\gamma_1$ and $\gamma_2$ have the same endpoint in $\partial_\infty\Omega$ since they have infinite $h$-length, see \cite[Prop. 3.12]{Uniformizing_gromov_hyp_space}. Hence, by Proposition \ref{prop_stability_of_thinness}, there exists a constant $M=M(\varepsilon)$ such that $\K_\Omega(\gamma_1,\gamma_2)\leq M(\varepsilon)$. The same property holds for $\delta_\Omega$ and $\dhat_{\ln(\tau)}$, since all these distances are quantitatively quasi-isometrically equivalent by Theorem \ref{thm_equivalence_d_lnt_delta_Omega_cas_stablement_acausal} and Theorem \ref{thm_equiv_explicite_delta_Omega_et_qh}.
\end{proof}

\subsection{Causally thin spacetimes} \label{Section_causally_thin} In full generality, Theorem \ref{thm_général_condi_suffisante} does not hold for arbitrary causally convex domains, even if they are assumed convex, see Example \ref{ex_spacelike_slab}. Nonetheless, stable acausality still implies a certain form of thinness.

\begin{definition}[Causally thin spacetimes]
    Let $(M,[g])$ be a conformal spacetime endowed with a distance $d$. We say that $(M,[g])$ is \emph{causally thin} if causal diamonds are uniformly thin in the following sense: there exists $r>0$ such that every causal diamond $J(x,y)$ is contained in the $r$-neighborhood of any causal curve joining $x$ to $y$.
\end{definition}

\begin{rmk}
    The previous definition has the following interpretation: in a causally thin spacetime ($(M,[g],d)$, two causal curves that meet at some event and meet again later cannot separate arbitrarily far from each other in between. Their mutual distance, measured with respect to $d$, is uniformly bounded, independently of their trajectories and of their initial and terminal meeting points.
\end{rmk} 

\begin{thm}
    \label{thm_causally_thin}
    Let $\Omega$ be a causally convex domain in $\R^{1,n}$. If $\partial_c^-\Omega$ and $\partial_c^+\Omega$ are stably acausal, then $(\Omega,\delta_\Omega)$ is causally thin.
\end{thm}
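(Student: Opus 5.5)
We reduce to the quasi-hyperbolic distance. By Theorem \ref{thm_equiv_explicite_delta_Omega_et_qh}, $\delta_\Omega\leq 2\sqrt2\,\K_\Omega$, so it suffices to find $r>0$ such that for every causal diamond $J(x,y)\subset\Omega$ (with $x\leq y$ in $\Omega$), every point $z\in J(x,y)$ and every causal curve $\gamma$ from $x$ to $y$, we have $\K_\Omega(z,\gamma)\leq r$. Causal convexity guarantees $J(x,y)\subset\Omega$. The basic tool is the standard estimate that if $[a,b]\subset\Omega$ and $d_h(w,\partial\Omega)\geq c\,d_h(a,b)$ for every $w\in[a,b]$, then $\K_\Omega(a,b)\leq 1/c$. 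So we need only a point $w\in\gamma$ and a path from $z$ to $w$ whose $h$-length is controlled by its $h$-distance to $\partial\Omega$.

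\textbf{Step 1: quantitative room around causal curves.} Fix the splitting and assume $\partial_c^\pm\Omega$ are $\varepsilon$-stably acausal. For $w\in\Omega$, let $w^-\in\partial_c^-\Omega$ and $w^+\in\partial_c^+\Omega$ be the points where the vertical line through $w$ exits $\Omega$. By stable acausality, as in the proof of Corollary \ref{cor_estimée_delta_O_et_k_O}, the stable diamond $I_\varepsilon(w^-,w^+)$ lies in $\Omega$. Hence $d_h(w,\partial\Omega)\geq c_\varepsilon\min\{t(w)-t(w^-),\,t(w^+)-t(w)\}$, where $t$ is the time coordinate. If $\partial_c^+\Omega$ is empty, as in the future complete case, Proposition \ref{prop_estimee} gives this directly.

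\textbf{Step 2: choosing the point on $\gamma$.} Given $z\in J(x,y)$, let $w\in\gamma$ be the point with $t(w)=t(z)$; it exists because $\gamma$ is causal and runs from $t(x)$ to $t(y)$. Since both $z$ and $w$ lie in $J(x,y)$ at the same time level, we have $d_h(z,w)\leq 2\min\{t(z)-t(x),\,t(y)-t(z)\}$. We also have $x\in J^-(w)\cap J^-(z)$, and $x\in\Omega$ lies in the future of $\partial_c^-\Omega$ along the vertical lines. The key claim is that the $\varepsilon$-stable cones of the boundary then force
\[
t(z)-t(z^-)\;\geq\; c\,\bigl(t(z)-t(x)\bigr),
\]
and similarly $t(w)-t(w^-)\geq c\,\bigl(t(w)-t(x)\bigr)$, with the analogous inequalities in the future direction using $y$. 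The argument is that $\partial_c^-\Omega$ is a graph with Lipschitz constant $1/(1+\varepsilon)$, so it stays below $C^-_\varepsilon(x)$. Since $z\in J^+(x)$, this gives $t(z)-f_-(p_z)\geq t(z)-t(x)+\bigl(1-\tfrac1{1+\varepsilon}\bigr)|p_z-p_x|\geq \tfrac{\varepsilon}{2+\varepsilon}\bigl(t(z)-t(x)\bigr)$ up to constants.

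\textbf{Step 3: the horizontal segment.} Next we apply the same bound along the horizontal segment $[z,w]$, which lies in the convex hull of $J(x,y)$ at time $t(z)$. Its points are still in $J^+$ of some point, though not necessarily of $x$. This is the main obstacle. Since $[z,w]$ may leave $J(x,y)$ and $\Omega$ need not be convex, I would avoid the segment and instead join $z$ to $w$ through $J(x,y)$. Concretely, let $m=x+\tfrac12(y-x)$ be the timelike midpoint direction, and use the polygonal path $z\to$ (a point on the vertical above or below $z$ toward the midpoint level) $\to w$, staying inside $J(x,y)$ where the Step 2 bound applies uniformly. The path should have $h$-length comparable to $\min\{t(z)-t(x),\,t(y)-t(z)\}$, with every point $u$ on it satisfying $d_h(u,\partial\Omega)\gtrsim \min\{t(u)-t(x),\,t(y)-t(u)\}\gtrsim$ the same quantity. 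I expect this step to require a short case analysis according to whether $t(z)$ lies closer to $t(x)$ or to $t(y)$. With it in place, $\K_\Omega(z,w)$ is bounded by a constant depending only on $\varepsilon$, which yields $r=r(\varepsilon)$ and proves the theorem.
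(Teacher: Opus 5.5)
Your route is genuinely different from the paper's and it does work. However, Step~3 is misdiagnosed, and the detour you propose there is the part that can actually fail.

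\textbf{Step 3: the horizontal segment is fine.} The segment $[z,w]$ cannot leave $J(x,y)$. In $\R^{1,n}$ the sets $J^+(x)$ and $J^-(y)$ are convex cones, so $J(x,y)$ is convex, and it lies in $\Omega$ by causal convexity. Hence every $u\in[z,w]$ satisfies $t(u)=t(z)$ and $u\in J^+(x)\cap J^-(y)$, so your Step~2 bounds hold along the whole segment. Write $\mu=\min\{t(z)-t(x),\,t(y)-t(z)\}$. Then $d_h(u,\partial\Omega)\geq \frac{\varepsilon}{\sqrt2(1+\varepsilon)}\,\mu$ on $[z,w]$, while $L_h([z,w])\leq 2\mu$. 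This gives $\K_\Omega(z,w)\leq 2\sqrt2(1+\varepsilon)/\varepsilon$ and $\delta_\Omega(z,w)\leq 8(1+\varepsilon)/\varepsilon$, with no case analysis.

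\textbf{The vertical detour can fail.} Moving vertically toward the midpoint level can exit $J(x,y)$. In $\R^{1,1}$, take $x=(0,0)$ and $y=(2,1.9)$. The slice of $J(x,y)$ at $t=1/2$ is $\{0.4\leq p\leq 0.5\}$, and at $t=1$ it is $\{0.9\leq p\leq 1\}$. Going up from $z=(1/2,0.45)$ leaves $J^-(y)$ already at $t=0.55$, after which the future half of your Step~2 estimate is unavailable.

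\textbf{Two smaller points.} First, the displayed intermediate inequality in Step~2 is false as written: take $f_-\equiv 0$, $x=(\eta,0)$, and $z\in C^+(x)$ with $|p_z|\gg\eta$. The correct chain is
\[
t(z)-f_-(p_z)\geq t(z)-f_-(p_x)-\tfrac{|p_z-p_x|}{1+\varepsilon}>t(z)-t(x)-\tfrac{|p_z-p_x|}{1+\varepsilon}\geq\tfrac{\varepsilon}{1+\varepsilon}\bigl(t(z)-t(x)\bigr),
\]
which still gives your conclusion. Second, Step~1 needs no stable acausality. $I(w^-,w^+)$ is a union of diamonds $I(a,b)$ with $a,b\in\Omega$ on the vertical segment through $w$, so it lies in $\Omega$ by causal convexity. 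This gives $d_h(w,\partial\Omega)\geq\frac{1}{\sqrt2}\min\{t(w)-t(w^-),\,t(w^+)-t(w)\}$; stable acausality is used only in Step~2.

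\textbf{Comparison with the paper.} The paper works inside a stable diamond $I_\varepsilon(a,b)\subset\Omega$ containing the causal curves. Its ingredients are:
\begin{enumerate}
\item stable diamonds are uniformly Gromov hyperbolic, since they are uniform domains with a common constant and Bonk--Heinonen--Koskela applies (Lemma~\ref{lemme_1_I_eps});
\item causal curves are $(2,0)$-quasi-geodesics there, by the null-distance comparison (Lemma~\ref{lemme_2_0_quasi_geod_grace_a_la_null_dist});
\item the bigon formed by two causal curves with common endpoints is therefore uniformly thin, by stability of quasi-geodesics;
\item monotonicity of the Markowitz metric under inclusion transfers the bound to $\Omega$.
\end{enumerate}
Your argument bypasses Gromov hyperbolicity, quasi-geodesic stability and the null distance. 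It uses only the bound $\delta_\Omega\leq 2\sqrt2\,\K_\Omega$, which holds for every domain, and a one-segment quasi-hyperbolic length estimate. It is more elementary and yields an explicit $r(\varepsilon)$. The paper's proof instead presents causal thinness as an instance of the thin-bigon property and reuses machinery it needs elsewhere.
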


The proof is based on Lemma \ref{lemme_2_0_quasi_geod_grace_a_la_null_dist} and on the following lemma.

\begin{lem}
    \label{lemme_1_I_eps}
    For every $\varepsilon>0$, there exists a constant $r=r(\varepsilon)$ such that, for every $a,b\in \R^{1,n}$ satisfying $a\leq b$, the metric space $({I_\varepsilon(a,b)},\delta_{I_\varepsilon(a,b)})$ is $r$-Gromov hyperbolic.
\end{lem}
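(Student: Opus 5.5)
The plan is to reduce to the established chain: $I_\varepsilon(a,b)$ is a bounded, convex, causally convex domain whose causal boundaries are stably acausal with a constant depending only on $\varepsilon$. Theorem~\ref{thm_equiv_explicite_delta_Omega_et_qh} then makes $\delta_{I_\varepsilon(a,b)}$ bi-Lipschitz to the quasi-hyperbolic distance, with constants depending only on $\varepsilon$. Theorem~\ref{thm_convex_implique_uniforme} together with Theorem~\ref{thm_hyperbolicité_dist_q_h} gives uniform hyperbolicity of the quasi-hyperbolic distance, with constants depending only on the ratio $r_2/r_1$.

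First, dispose of the degenerate case. If $a\leq b$ but $a\not\ll_\varepsilon b$ (for instance $a=b$, or $b-a$ lightlike for $\b$), then $I_\varepsilon(a,b)$ is empty and the statement is vacuous. Otherwise, use invariance. Translations of $\R^{1,n}$ preserve $\b_\varepsilon$ and $h$. Homotheties $x\mapsto \lambda x$ are conformal and preserve the diamond shape. Rotations of the $\R^n$ factor preserve both $\b_\varepsilon$ and $h$. By Proposition~\ref{proposition_naturalité}, each of these maps is an isometry for the Markowitz metric. We may therefore assume $a=0$ and $b=(1,p)$ with $p=(s,0,\dots,0)$, $0\leq s<\frac{1}{1+\varepsilon}$. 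The only remaining parameter is $s$, and the point is that $s$ ranges over a non-compact interval: as $s\to \frac{1}{1+\varepsilon}$ the diamond degenerates.

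This degeneration is the main obstacle, since the ratio $r_2/r_1$ blows up there. To handle it, I would use a Lorentz boost of $\b$. Boosts are conformal for $\b$, hence Markowitz isometries, but they do not preserve $\b_\varepsilon$. The key point is that $I_\varepsilon(0,b)\subset I(0,b)$ is a diamond for $\b$ with $b$ timelike for $\b$, with a definite margin: $|p|<\frac{1}{1+\varepsilon}$. So there is a $\b$-boost $L$ with $L(b)=(\tauL(0,b),0)$. Then $L(I_\varepsilon(0,b))$ is a convex domain bounded by pieces of the images of $\varepsilon$-cones. Its causal boundaries remain stably acausal for $\b_{\varepsilon'}$ with $\varepsilon'=\varepsilon'(\varepsilon)>0$, because the boost has rapidity bounded in terms of $\varepsilon$ alone. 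Indeed, the image of the $\varepsilon$-cone under a boost of rapidity $\le \operatorname{artanh}\frac{1}{1+\varepsilon}$ stays strictly inside the $\b$-cone. Replacing the domain by $L(I_\varepsilon(0,b))$ and rescaling so that $\tauL(0,b)=1$, we get domains from a family that is compact in the Hausdorff topology. Each member of this family is a convex body with nonempty interior, so there are uniform constants $r_1,r_2$ with $B_h(x,r_1)\subset\Omega\subset B_h(x,r_2)$.

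It then remains to apply Theorem~\ref{thm_convex_implique_uniforme} to get uniformity with constant $r_2/r_1$, and Theorem~\ref{thm_hyperbolicité_dist_q_h} to get $\delta_0$-hyperbolicity of $\K_\Omega$ with $\delta_0$ uniform. Theorem~\ref{thm_equiv_explicite_delta_Omega_et_qh}, applied with $\varepsilon'$, gives a uniform bi-Lipschitz equivalence with $\delta_\Omega$. A bi-Lipschitz (in particular quasi-isometric) change of geodesic metric preserves hyperbolicity with a constant depending only on the hyperbolicity constant and the Lipschitz constants, via the quantitative version of Proposition~\ref{prop_dist_quasi_iso}. Geodesicity of $\delta_\Omega$ follows from completeness (Corollary~\ref{cor_completude_delta_Omega}), local compactness and the length structure. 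The resulting $r(\varepsilon)$ is uniform.

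The step I expect to require the most care is verifying that the boosted domain has $\varepsilon'$-stably acausal causal boundaries with $\varepsilon'$ depending only on $\varepsilon$. Alternatively, one could skip the boost and bound the quasi-hyperbolic comparison directly. I would first attempt the compactness argument above, since stable acausality is an open condition and the boost parameters range over a compact set.
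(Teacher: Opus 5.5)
\textbf{There is a genuine gap: you have the stabilized cones backwards, and this breaks your central step.}

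Since $\b_\varepsilon=-(1+\varepsilon)^2dt^2+d\sigma^2$, a vector $(t,p)$ is $\b_\varepsilon$-causal iff $|p|\leq(1+\varepsilon)|t|$. So the $\b_\varepsilon$-cones are \emph{wider} than those of $\b$, and $I(a,b)\subset I_\varepsilon(a,b)$, not the reverse. This inclusion is exactly what the proof of Theorem~\ref{thm_causally_thin} uses. Consequently every nonzero $\b$-causal vector, lightlike ones included, is $\b_\varepsilon$-timelike, and several of your steps fail:
\begin{enumerate}
\item Your ``vacuous case'' is false. When $b-a$ is $\b$-lightlike, $I_\varepsilon(a,b)$ is a genuine nonempty diamond.
\item After your normalization the parameter ranges over the compact interval $s\in[0,1]$, not $[0,\frac{1}{1+\varepsilon})$. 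There is no degeneration at all, since $s\leq 1<1+\varepsilon$ keeps $b$ a definite angle inside the $\b_\varepsilon$-cone of $0$.
\item The boost you introduce to handle the non-existent degeneration fails in the actual setting. No boost sends a $\b$-lightlike $b$ to the time axis, and the required rapidity $\operatorname{artanh}(s)$ is unbounded as $s\to1$.
\end{enumerate}
The boost would not have worked under your reading either. A boost of rapidity bounded in terms of $\varepsilon$ is a linear map whose distortion is bounded in terms of $\varepsilon$, so it changes the ratio $r_2/r_1$ by at most a bounded factor, and rescaling does not change it at all. In your reading $r_2/r_1\to\infty$ for $I_\varepsilon(0,b)$ as $s\to\frac{1}{1+\varepsilon}$, since the diamond collapses onto a null segment. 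The same then holds for $L(I_\varepsilon(0,b))$, so the compact family of convex bodies with nonempty interior that you claim does not exist.

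\textbf{The repair is to drop the boost; this is the paper's argument.} Translations and homotheties reduce to $a=0$ and $b$ in the compact set $\{v\in J^+(0)\,\vert\,h(v,v)=1\}$. For each such $v$, $I_\varepsilon(0,v)$ is a nondegenerate bounded convex body depending continuously on $v$. This gives a uniform bound on the ratio $r_2/r_1$ in Theorem~\ref{thm_convex_implique_uniforme}. Theorem~\ref{thm_hyperbolicité_dist_q_h} then yields a hyperbolicity constant for the quasi-hyperbolic distance depending only on $\varepsilon$.

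Your final transfer step then goes through with one adjustment. The causal boundaries of $I_\varepsilon(a,b)$ are graphs of $\frac{1}{1+\varepsilon}$-Lipschitz functions. They are not $\varepsilon$-stably acausal, since they contain $\b_\varepsilon$-null segments, but they are $\frac{\varepsilon}{2}$-stably acausal. So Theorem~\ref{thm_equiv_explicite_delta_Omega_et_qh} applied with $\frac{\varepsilon}{2}$ gives a bi-Lipschitz comparison with constants depending only on $\varepsilon$. The hyperbolicity constant then transfers quantitatively to $\delta_{I_\varepsilon(a,b)}$.
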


\begin{proof}
    Let $\varepsilon>0$ and let $K=\{v\in I^+(0)\,\vert\, h(v,v)=1\}$. For $v\in K$, the stable diamond $I_\varepsilon(0,v)$ is a non-empty bounded convex domain in $\R^{1,n}$. Hence, there exists $s\geq 1$ such that, there exist $x\in I_\varepsilon(0,v)$ and $\rho>0$ satisfying 
    $$B_h(x,\rho)\subset I_\varepsilon(0,v)\subset B_h(x,s\rho).$$
    Let $s_v$ be the least $s\geq 1$ having this property. The map $f:v\in K\mapsto s_v$ is positive and continuous. Since $K$ is compact, the map $f$ admits a minimum $s_{\min}>0$. Let $a,b\in \R^{1,n}$ such that $a\leq b$. There exists a similarity $g$ with trivial isometry part such that $g(a)=0$ and $g(b)\in K$. Let $x\in I_\varepsilon(0,g(b))$ and $\rho>0$ such that
    $B_h(x,\rho)\subset I_\varepsilon(0,g(b))\subset B_h(x,s_{\min}\rho).$
    Then, if $y=g^{-1}(x)$, one has $$B_h(y,\rho/\lambda)\subset I_\varepsilon(a,b)\subset B_h(y,s_{\min}\rho/\lambda),$$
    where $\lambda$ is the distortion of $g$. From Theorem \ref{thm_convex_implique_uniforme}, the domain $I_\varepsilon(a,b)$ is $s_{\min}$-uniform. The result follows from Theorem \ref{thm_hyperbolicité_dist_q_h}.
\end{proof}

\begin{proof}[Proof of Theorem \ref{thm_causally_thin}]
    Let $\varepsilon>0$ such that $\partial_c^+\Omega$ and $\partial_c^-\Omega$ are $\varepsilon$-stably acausal.
    By Proposition \ref{prop_stability_of_thinness} and Lemma \ref{lemme_1_I_eps}, there exists $r=r(\varepsilon)$ such that for every $a,b\in \R^{1,n}$, any $(2,0)$-quasi-geodesic triangle in  $({I_\varepsilon(a,b)},\delta_{I_\varepsilon(a,b)})$ is $r$-thin. 
     Let $\beta,\gamma$ be two causal curves in $\Omega$ with the same endpoints. Let $a,b\in \Omega$ such that $I(a,b)$ contains the image of $\gamma$. The stable diamond $I_\varepsilon(a,b)$ is contained in $\Omega$ since $\partial_c^+\Omega$ and $\partial_c^-\Omega$ are $\varepsilon$-stably acausal. By Lemma \ref{lemme_2_0_quasi_geod_grace_a_la_null_dist} below, the triangle $\beta\cup\gamma$ is a $(2,0)$-quasi-geodesic bigon of $({I_\varepsilon(a,b)},\delta_{I_\varepsilon(a,b)})$. Hence 
    $\delta_{I_\varepsilon(a,b)}(\beta,\gamma)\leq r.$
      By Proposition \ref{proposition_naturalité}, we deduce
    $$\delta_\Omega(\beta,\gamma)\leq \delta_{I_\varepsilon(a,b)}(\beta,\gamma)\leq r.$$
    It follows that $(\Omega,\delta_\Omega)$ is causally thin.
\end{proof}

\section{Comparison with the null distance}\label{section_comparaison_avec_null_dist} 

\subsection{Null distance on spacetimes} A \emph{time function} on conformal spacetime $(M,[g])$ is a continuous function $\tau:M\to \R$ that is strictly increasing along future causal curves. 
\begin{definition}[{\cite{Sormani_vega}}]
    \label{def_null_distance}
    Let $(M,[g])$ be a conformal spacetime endowed with a time function~$\tau$. The \emph{null distance} between two points $x,y\in M$ is defined as 
    $$\dhat_\tau(x,y)=
\inf_{\substack{x_1,\dots,x_m}}
\sum_{k=1}^m\vert\tau(x_{i+1})-\tau(x_i)\vert,$$
where the infimum runs over all finite sequences of points such that $x_1=x,\dots,x_m=y$ and $x_k$ and $x_{k+1}$ are causally related for all $k$.
\end{definition}
The function $\dhat_\tau$ always defines a pseudodistance on $M$, but it may fail to be definite, see for instance \cite[Prop. 3.4]{Sormani_vega}.

\begin{fact}
    \label{fait_null_distance}
Let $(M,[g])$ be a conformal spacetime endowed with a time function $\tau$. Let $x,y\in M$ such that $x\leq y$. Then
    $\dhat_\tau(x,y)=\tau(y)-\tau(x).$\qed
\end{fact}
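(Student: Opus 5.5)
The statement has two inequalities, and I will prove them in turn: $\dhat_\tau(x,y)\geq\tau(y)-\tau(x)$ and $\dhat_\tau(x,y)\leq\tau(y)-\tau(x)$.

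\emph{Upper bound.} Since $x\leq y$, the two-point sequence $x_1=x$, $x_2=y$ is admissible in Definition~\ref{def_null_distance}, because its consecutive points are causally related. Hence
$$\dhat_\tau(x,y)\leq\vert\tau(y)-\tau(x)\vert.$$
A time function is strictly increasing along future causal curves. If $x=y$ the quantity is $0$. Otherwise a future causal curve from $x$ to $y$ gives $\tau(y)>\tau(x)$. In both cases $\vert\tau(y)-\tau(x)\vert=\tau(y)-\tau(x)$, so $\dhat_\tau(x,y)\leq\tau(y)-\tau(x)$.

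\emph{Lower bound.} Take any admissible sequence $x=x_1,\dots,x_m=y$ with consecutive points causally related. Write the difference as a telescoping sum and apply the triangle inequality in $\R$:
$$\tau(y)-\tau(x)=\sum_{k=1}^{m-1}\bigl(\tau(x_{k+1})-\tau(x_k)\bigr)\leq\sum_{k=1}^{m-1}\vert\tau(x_{k+1})-\tau(x_k)\vert.$$
This step does not use causality at all. Taking the infimum over all admissible sequences gives $\tau(y)-\tau(x)\leq\dhat_\tau(x,y)$.

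Combining the two bounds gives the equality. There is no genuine obstacle. The only point needing care is the sign: one must check that $\tau(y)\geq\tau(x)$ when $x\leq y$, which uses the monotonicity of $\tau$ along future causal curves (or $x=y$).
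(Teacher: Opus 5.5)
Your proof is correct: the two-point chain $x,y$ gives $\dhat_\tau(x,y)\leq\tau(y)-\tau(x)$ once the monotonicity of $\tau$ fixes the sign, and telescoping with the triangle inequality gives the reverse bound for every admissible chain. The paper states this fact without proof, since it is a standard result of Sormani--Vega, and your argument is exactly the standard one.
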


\begin{fact}[{\cite[Rmk. 3.7]{Sormani_vega}}]
\label{fact_def_null_distance}
    Definition \ref{def_null_distance} is unchanged if the infimum is taken over all finite sequences $x_1,\dots,x_m$ such that two consecutive points are joined by a lightlike geodesic.\qed
\end{fact}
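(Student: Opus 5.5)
The idea is to compare the two infima directly. Denote by $\dhat_\tau$ the infimum of Definition~\ref{def_null_distance}, and by $\dhat_\tau^{\light}$ the infimum taken only over sequences in which consecutive points are joined by a lightlike geodesic. Two points joined by a lightlike geodesic are causally related, so every admissible sequence for $\dhat_\tau^{\light}$ is admissible for $\dhat_\tau$. Hence $\dhat_\tau\leq \dhat_\tau^{\light}$ is immediate. For the reverse inequality I will show that every causal step can be refined into lightlike steps without changing the sum.

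Fix a sequence $x_1,\dots,x_m$ admissible for $\dhat_\tau$, and a pair $x_k\leq x_{k+1}$ (the case $x_{k+1}\leq x_k$ is symmetric). I will produce points $x_k=z_0,z_1,\dots,z_N=x_{k+1}$ with $z_{j}\leq z_{j+1}$ and each pair joined by a future lightlike geodesic segment. Since $\tau$ is a time function, it is nondecreasing along this refined chain. Therefore
$$\sum_j\vert\tau(z_{j+1})-\tau(z_j)\vert=\tau(x_{k+1})-\tau(x_k),$$
because the sum telescopes. Replacing every step in this way gives an admissible sequence for $\dhat_\tau^{\light}$ with the same total, so $\dhat_\tau^{\light}\leq\dhat_\tau$.

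To construct the refinement, I use the standard causality dichotomy: if $p\leq q$ but $p\not\ll q$, then any future causal curve from $p$ to $q$ is an unbroken lightlike pregeodesic. In that case $N=1$ works. Otherwise $p\ll q$, and I take a future timelike curve $c$ from $p$ to $q$. By compactness, I cover $c$ by finitely many geodesically convex neighborhoods and choose points $p=c(t_0),c(t_1),\dots,c(t_L)=q$ with consecutive points in a common convex neighborhood $U_i$. Inside $U_i$, consider two consecutive points $a\ll b$. The future lightcone of $a$ and the past lightcone of $b$, both computed in $U_i$, intersect: in normal coordinates this is the flat-space picture, by continuity in a small convex set. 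Pick $w$ in the intersection; then $a\to w\to b$ is a broken future lightlike geodesic. Concatenating these pieces gives the desired chain.

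The main obstacle is this local step, namely that the lightcones $C^+(a)$ and $C^-(b)$ within a convex neighborhood meet. I would handle it by the intermediate value theorem, applied along the past lightlike geodesic from $b$ in a fixed null direction: its points move from the causal future of $a$ (at $b$ itself, since $a\ll b$) to outside $J^+(a)$ (at the boundary of $U_i$, if $U_i$ is chosen small enough). This dichotomy is exactly the standard one in \cite{Sormani_vega}, and I would cite it.
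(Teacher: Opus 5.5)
Your argument is correct and is the standard one behind the cited remark of Sormani--Vega; the paper itself gives no proof. You refine each causal step into future lightlike steps and let the $\tau$-increments telescope.

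The one imprecision is in the local step. Shrinking $U_i$ is not what makes $C^+(a)$ and $C^-(b)$ meet inside $U_i$, and for a general convex $U_i$ they need not meet (e.g.\ a thin convex $U_i$ around a timelike segment $[a,b]$ in $\R^{1,1}$). What you need is a choice that makes the local diamond $J^+_{U_i}(a)\cap J^-_{U_i}(b)$ compact, for instance each $U_i$ causally convex in a larger convex neighborhood, or a subdivision of $c$ fine relative to the cover. The past null geodesic from $b$ then must leave $J^+_{U_i}(a)$ through $C^+_{U_i}(a)$.
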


\subsection{General comparisons}\label{section_comparaison_generale_Mark_vs_null} Our goal in this section is to compare the Markowitz pseudodistance to the null distance in the context of conformal spacetimes.
Although these two distances have similar definitions -- based on different notions of length of piecewise lightlike geodesic curves -- let us mention the following differences between them:
\begin{itemize}
    \item (Definition) $\delta_M$ is defined for any spacetime, and in fact on any pseudo-Riemannian manifold, while $\dhat_\tau$ requires the existence and the choice of a time function $\tau$.
    \item (Regularity) $\delta_M$ requires that the conformal class $[g]$ contains a $C^2$ regular metric, since projective parameters are defined in terms of the Ricci curvature.  In contrast, $\dhat_\tau$  can be defined for low regularity spacetimes, see \cite{Kunzinger_Steinbauer}. 
    \item (Invariance) $\delta_M$ is invariant under the full conformal group $\Conf(M)$, while $\dhat_\tau$ is in general only invariant under the subgroup $\Conf_\tau(M)\subset \Conf(M)$ of conformal transformations preserving the $\tau$-length of causal curves. 
    \item (Computability) Computing $\delta_M$ is in general quite challenging. In contrast, the value of the null distance between  two causally related points is explicit and all causal curves are geodesic for $\dhat_\tau$. 
\end{itemize}

The following two lemmas serve as general comparisons between these two distances.

\begin{lem}
    \label{lemme_comparaison_delta_M_et_d_t}
    Let $(M,[g])$ be a conformal spacetime endowed with a time function $\tau$. Assume that there exists $\alpha>0$, such that for every $x,y\in M$ that lie on a common lightlike geodesic, one has
    \begin{equation}
        \dhat_\tau(x,y)\leq \alpha\,\delta_M(x,y).
    \end{equation}
    Then $\dhat_\tau\leq \alpha\,\delta_M$. The same conclusion holds if the role of $\dhat_\tau$ and $\delta_M$ are reversed. 
\end{lem}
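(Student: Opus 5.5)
The plan is to reduce both distances to infima over the same class of objects, namely finite chains of points in which consecutive points lie on a common lightlike geodesic, and then compare them term by term.

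\emph{Step 1: the null distance.} By Fact \ref{fact_def_null_distance}, for all $x,y\in M$,
$$\dhat_\tau(x,y)=\inf\sum_k\vert\tau(x_{k+1})-\tau(x_k)\vert,$$
where the infimum runs over sequences $x_1=x,\dots,x_m=y$ in which consecutive points are joined by a lightlike geodesic. For such consecutive points $x_k\leq x_{k+1}$ (or the reverse), Fact \ref{fait_null_distance} gives $\vert\tau(x_{k+1})-\tau(x_k)\vert=\dhat_\tau(x_k,x_{k+1})$. So $\dhat_\tau(x,y)$ is the infimum of $\sum_k\dhat_\tau(x_k,x_{k+1})$ over such lightlike chains of points.

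\emph{Step 2: the Markowitz pseudodistance.} A lightlike chain $\Ccal=((\gamma_k),(s_k),(t_k))$ from $x$ to $y$ produces points $x_k=\gamma_k(s_k)$, with $x_{m+1}=y$. Consecutive points $x_k,x_{k+1}=\gamma_k(t_k)$ lie on the common lightlike geodesic $\gamma_k$. By the definition of $\delta_M$ applied to the one-element chain $(\gamma_k,s_k,t_k)$, we have $\delta_M(x_k,x_{k+1})\leq\rho_I(s_k,t_k)$.

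\emph{Step 3: the inequality $\dhat_\tau\leq\alpha\,\delta_M$.} Fix $x,y$ and $\eta>0$, and choose a lightlike chain $\Ccal$ with cost $\sum_k\rho_I(s_k,t_k)\leq\delta_M(x,y)+\eta$. By the triangle inequality for $\dhat_\tau$, the hypothesis applied to each pair, and Step 2,
$$\dhat_\tau(x,y)\leq\sum_k\dhat_\tau(x_k,x_{k+1})\leq\alpha\sum_k\delta_M(x_k,x_{k+1})\leq\alpha\sum_k\rho_I(s_k,t_k)\leq\alpha(\delta_M(x,y)+\eta).$$
Letting $\eta\to0$ gives the claim.

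\emph{Step 4: the reversed inequality $\delta_M\leq\alpha\,\dhat_\tau$.} Use Step 1 to pick a sequence $x_1,\dots,x_m$, consecutive points joined by lightlike geodesics, with $\sum_k\dhat_\tau(x_k,x_{k+1})\leq\dhat_\tau(x,y)+\eta$. The triangle inequality for $\delta_M$ and the reversed hypothesis then give
$$\delta_M(x,y)\leq\sum_k\delta_M(x_k,x_{k+1})\leq\alpha\sum_k\dhat_\tau(x_k,x_{k+1})\leq\alpha(\dhat_\tau(x,y)+\eta).$$
Letting $\eta\to0$ concludes.

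The only delicate point is the reduction in Step 1. Both facts it relies on are already stated in the paper, so no step poses a real obstacle; the rest is bookkeeping together with the triangle inequality, which both pseudodistances satisfy.
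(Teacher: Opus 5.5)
Your proposal is correct and follows essentially the same argument as the paper. In both directions you decompose a (near-)optimal chain, a lightlike chain for $\delta_M$ or a sequence of points joined by lightlike geodesics for $\dhat_\tau$, into pairs lying on a common lightlike geodesic, apply the hypothesis pairwise, and conclude with the triangle inequality and Fact~\ref{fact_def_null_distance}. The only cosmetic differences are that you use $\eta$-optimal chains instead of taking the infimum at the end, and that you spell out via Fact~\ref{fait_null_distance} the identification $\dhat_\tau(x_k,x_{k+1})=\vert\tau(x_{k+1})-\tau(x_k)\vert$, which the paper uses implicitly.
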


\begin{proof}
     Let $x,y\in M$ and let $\Ccal=\{(\gamma_k),(s_k),(t_k)\}$ is a lightlike chain from $x$ to $y$. Write $x_k=\gamma_k(s_k)$ for all $k$. Since $x_k$ and $x_{k+1}$ lie on a common lightlike geodesic, one has 
    $$\dhat_\tau(x,y)\leq \sum_{k=1}^m\dhat_\tau(x_{k-1},x_k)
        \leq \sum_{k=1}^m \alpha \,\delta_M(x_{k-1},x_k)
        \leq \alpha\sum_{k=1}^m  \rho_{I}(s_k,t_k)=\alpha L(\Ccal).$$ 
    Taking the infimum over all chains $\Ccal$ from $x$ to $y$, we deduce that $\dhat_\tau(x,y)\leq \alpha\,\delta_M(x,y)$. 
    
    Conversely, assume that $\delta_M(x,y)\leq \alpha\,\dhat_\tau(x,y)$ holds for every $x,y\in M$ that lie on a common lightlike geodesic. Let $x,y\in M$ and let $x_0=x,\dots,x_m=y$ be a finite sequence of points such that $x_k$ and $x_{k+1}$ are on a common lightlike geodesic. Then 
    $$\delta_M(x,y)\leq \sum_{k=1}^m \delta_M(x_{k-1},x_k)\leq \sum_{k=1}^m \alpha \,\dhat_\tau(x_{k-1},x_k)=\alpha\sum_{k=1}^m  \,\dhat_\tau(x_{k-1},x_k). $$
    Taking the infimum over all finite sequences $x_0,\dots,x_m$ such that consecutive points are on a common lightlike geodesic, we deduce that $\delta_M(x,y)\leq \alpha\,\dhat_\tau(x,y)$ by Fact \ref{fact_def_null_distance}. 
\end{proof}

A time function $\tau:M\to \R$ on a conformal spacetime $(M,[g])$ is \emph{Cauchy} if its level sets are empty or Cauchy hypersurfaces of $(M,[g])$.

\begin{lem}
    \label{lemme_temps_projectif}
    Let $(M,[g])$ be a globally hyperbolic conformal spacetime endowed with a projective Cauchy time function $\tau:M\to \R$. If $J=\tau(M)$ is a strict interval of $\R$, then  
    \begin{equation}
        \delta_M=\dhat_{f\circ\tau}
    \end{equation}
    where $f:(J,\rho_J)\to \R$ is any isometry onto the Euclidean line. 
\end{lem}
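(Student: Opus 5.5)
The plan is to prove the two inequalities $\dhat_{f\circ\tau}\leq \delta_M$ and $\delta_M\leq \dhat_{f\circ\tau}$ separately. By Lemma \ref{lemme_comparaison_delta_M_et_d_t} with $\alpha=1$, it suffices in each direction to check the inequality for pairs $x,y$ lying on a common lightlike geodesic. First note that $f\circ\tau$ is again a time function. Since $\tau$ is strictly increasing along future causal curves and $f$ is an isometry $(J,\rho_J)\to\R$, the map $f$ is a homeomorphism onto its image and is monotone. Replacing $f$ by $-f$ if necessary (which changes neither $\dhat$ nor $\rho_J$), we may assume $f$ is increasing. Then for $x\leq y$, Fact \ref{fait_null_distance} gives
$$\dhat_{f\circ\tau}(x,y)=f(\tau(y))-f(\tau(x))=\rho_J(\tau(x),\tau(y)).$$

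\emph{Lower bound.} Take $x,y$ on a common lightlike geodesic; they are causally related, say $x\leq y$. Since $\tau$ is projective and $\tau(M)=J$ is a strict interval, Proposition \ref{Proposition_applications_projectives} gives
$$\dhat_{f\circ\tau}(x,y)=\rho_J(\tau(x),\tau(y))\leq\delta_M(x,y).$$
Lemma \ref{lemme_comparaison_delta_M_et_d_t} then yields $\dhat_{f\circ\tau}\leq\delta_M$.

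\emph{Upper bound.} Again take $x\leq y$ on a lightlike geodesic $\gamma$, and extend $\gamma$ to an inextensible lightlike geodesic $c$ in $M$. Since $\tau$ is a Cauchy time function on a globally hyperbolic spacetime, $c$ meets every nonempty level set of $\tau$ exactly once, so $\tau\circ c$ is a bijection onto $J$. Parametrize $c$ projectively. Because $\tau$ is projective, $\tau\circ c$ is a homography, and it maps the parameter domain of $c$ bijectively onto $J$. Consequently $c$ can be reparametrized, by precomposing with a homography, as an element $\tilde c\in\PPL(I,M)$ with $\tau\circ\tilde c=h$ a homography satisfying $h(I)=J$. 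Writing $x=\tilde c(s)$ and $y=\tilde c(t)$, the one-segment chain gives
$$\delta_M(x,y)\leq\rho_I(s,t)=\rho_J(h(s),h(t))=\rho_J(\tau(x),\tau(y))=\dhat_{f\circ\tau}(x,y).$$
The reverse direction of Lemma \ref{lemme_comparaison_delta_M_et_d_t} then gives $\delta_M\leq\dhat_{f\circ\tau}$.

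The main obstacle is the upper bound, specifically showing that the maximal projectively parametrized lightlike geodesic has parameter domain exactly an interval homographic to $I$, mapped onto all of $J$. This is where the Cauchy hypothesis enters. We need $\tau\circ c$ to be surjective onto $J$, so that $c$ is not truncated before $\tau$ exhausts $J$, and $J\neq\R$ to ensure the domain is a strict interval rather than something like $\R$ or a projective circle. One must also check that a homography on the maximal projective domain cannot have a pole inside that domain; this holds because $\tau\circ c$ is finite and monotone there.
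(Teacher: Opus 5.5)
Your proposal is correct and follows essentially the same route as the paper. Both arguments reduce to pairs on a common lightlike geodesic via Lemma \ref{lemme_comparaison_delta_M_et_d_t}, obtain the lower bound from Proposition \ref{Proposition_applications_projectives}, and obtain the upper bound from the Cauchy property: the maximal lightlike geodesic can be parametrized by $\tau$ over all of $J$, and this is a projective parametrization because $\tau$ is projective. The points you spell out (the sign of $f$, surjectivity onto $J$, absence of poles) are details the paper leaves implicit.
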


\begin{proof}
    From Lemma \ref{lemme_comparaison_delta_M_et_d_t}, it is sufficient to check the previous equality when $x$ and $y$ are on a common lightlike geodesic. 
    Let $\gamma$ be a maximal lightlike geodesic and let $\gamma:J\to M$ be its $\tau$-parametrization. Let $x,y\in \gamma$ and write $s=\tau(x)$ and $t=\tau(y)$.  From Proposition \ref{Proposition_applications_projectives}, one has 
    $\rho_J(\tau(x),\tau(y))\leq \delta_M(x,y).$ Since $\gamma$ is projectively parameterized, one has
    $$\delta_M(x,y)\leq \rho_J(s,t)=\rho_J(\tau(x),\tau(y)).$$
    Hence $\delta_M(x,y)=\rho_J(\tau(x),\tau(y))=\dhat_{f\circ\tau}(x,y).$
\end{proof}

\subsection{Future complete case} We return to our setting in Minkowski space.

\begin{thm}
    \label{thm_equivalence_d_lnt_delta_Omega}
    Let $\Omega$ be a convex future complete domain in $\R^{1,n}$ containing no lightlike line. Let $\tau$ be its cosmological time function. Then
            \begin{equation}
                \dhat_{\ln(\tau)}\leq \delta_\Omega\leq 2\dhat_{\ln(\tau)}.
                \label{ineg_delt_d_ln}
            \end{equation}
            In addition, equality holds on the left-hand side of Equation (\ref{ineg_delt_d_ln}) (resp. on the right-hand side) if and only if $\Omega$ is the future of a spacelike hyperplane (resp. the future of a point).
\end{thm}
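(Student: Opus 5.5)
By Lemma \ref{lemme_comparaison_delta_M_et_d_t}, it suffices to prove both inequalities of (\ref{ineg_delt_d_ln}) for pairs $x\leq y$ lying on a common lightlike geodesic. Since $\delta_\Omega$ is the infimum of $\int F_\Omega$ (Theorem \ref{thm_lien_dmark_Fmark}) and $\dhat_{\ln\tau}(x,y)=\ln\tau(y)-\ln\tau(x)$ (Fact \ref{fait_null_distance}), both reduce to the infinitesimal comparison
\[
d(\ln\tau)(v)\;\le\; F_\Omega(v)\;\le\; 2\,d(\ln\tau)(v)
\]
for future lightlike $v\in T_x\Omega$. Here $\tau$ is differentiable in the relevant sense: by Proposition \ref{proposition_existence_initial_singularity} with convexity, $\tau(z)\ge \tauL(y_x,z)$ near $x$, with equality at $x$, where $y_x$ is the unique initial singularity. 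For the upper bound along a lightlike segment $[x,y]$, the chain of length $\rho$ along the same segment already bounds $\delta_\Omega$. For the lower bound, I will use the projective-map argument instead, to avoid regularity issues.

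\textbf{Lower bound.} Fix $x$ with initial singularity $y_0=y_x$, and set $u=x-y_0$, which is future timelike. Let $H$ be the hyperplane through $y_0$ orthogonal to $u$. By Proposition \ref{proposition_existence_initial_singularity} it supports $\Omega$, so $\Omega\subset \{z : \b(z-y_0,-u/|u|_\b)>0\}$. The affine function $\phi(z)=-\b(z-y_0,u)/\tauL(y_0,x)$ is positive on $\Omega$. Affine functions are projective maps, since lightlike geodesics of Minkowski space are projectively parametrized by affine parameters. Its image is $(0,\infty)$, so Proposition \ref{Proposition_applications_projectives} gives $|\ln\phi(a)-\ln\phi(b)|\le\delta_\Omega(a,b)$.

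We have $\phi(x)=\tau(x)$. For $z\in\Omega$, reverse Cauchy--Schwarz gives $\tau(z)\ge\tauL(y_0,z)$ when $z\in J^+(y_0)$, and $\phi(z)\ge \tauL(y_0,z)$ as well. These do not yet compare $\tau$ and $\phi$. The cleaner route is to handle $x\le y$ on a lightlike segment by choosing the base point at the later point $y$: take $y_0$ the initial singularity of $y$ and $\phi$ as above. Then $\phi(y)=\tau(y)$, and $\phi(x)\le\tau(x)$ should hold. Indeed, $\tau(x)\ge$ the Lorentzian distance from $x$ to the supporting hyperplane $H$, which is $\phi(x)$, because the orthogonal timelike segment from $H$ to $x$ crosses $\partial\Omega$ at a point of the past of $x$. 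Hence
\[
\ln\tau(y)-\ln\tau(x)\le \ln\phi(y)-\ln\phi(x)\le \delta_\Omega(x,y).
\]
Equality for all pairs forces $\tau=\phi$ along every lightlike segment. That in turn forces $\Omega$ to be the future of the single hyperplane $H$, since otherwise $\tau>\phi$ somewhere.

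\textbf{Upper bound.} For $x\le y$ on a lightlike line with direction $v$, Proposition \ref{formule_F_Omega} with $x_v^+=\infty$ gives $\delta_\Omega(x,y)\le \ln\bigl(d(y,x_v^-)/d(x,x_v^-)\bigr)$. This is affine length from the boundary point $p=x_v^-$. Since $p\in\overline\Omega$, we have $\tau(z)\ge\tauL(p,z)$ for $z\in J^+(p)$, but this gives a bound in the wrong direction. Instead I will compare with $\tau$ via the two-point case: $\tau$ is $\ge\tauL(p,\cdot)$, and $\tau(y)\ge$ something. The expected inequality is $\ln(s_y/s_x)\le 2(\ln\tau(y)-\ln\tau(x))$, where $s$ is the affine parameter from $p$, i.e.\ $\tau(y)/\tau(x)\ge\sqrt{s_y/s_x}$. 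For the future of a point, with $p$ on the cone and $z=p+w+sv$, one computes $\tauL^2$ linear in $s$. This suggests the general proof: $\tau^2$ restricted to a lightlike line is concave? Rather, $\tau^2(x+sv)$ should be at least affine in $s$ with nonnegative value at $s=s_p$. I will show this using $\tau=\inf$ over supporting hyperplanes' distance (the Bonsante description), which makes $\tau^2$ along a lightlike line an infimum of functions affine in $s$ (squared Lorentzian distance to a point $q\in\partial\Omega$ restricted to a null line is affine). Each of these is $\ge0$ at $s_p$, so their infimum $g$ is concave, nonnegative at $s_p$ and positive after. Concavity then gives $g(s_y)/g(s_x)\ge (s_y-s_p)/(s_x-s_p)$, hence the factor $2$. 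Equality forces $g$ affine, vanishing at $p$ for all lines, giving the future of a point.

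\textbf{Main obstacle.} I expect the hardest step to be justifying $\tau^2(z)=\inf_{q\in\partial_c^-\Omega}$ of affine-in-null-direction quantities, and also the equality characterizations. I would first try the representation $\tau(z)=\inf_{q\in\partial\Omega\cap J^-(z)}\tauL(q,z)$, which requires care about points outside $J^+(q)$.
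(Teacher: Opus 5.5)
\textbf{Lower bound.} This direction has a genuine gap: the comparison goes the wrong way. You build $H$ at the initial singularity of the \emph{later} point $y$, and then you need $\phi(x)\le\tau(x)$. The reverse holds on all of $\Omega$. Since $H$ supports $\Omega$, one has $\Omega\subset I^+(H)$, so every $q\in\Omega\cap J^-(z)$ satisfies $\tauL(q,z)\le\phi(z)$, i.e.\ $\tau\le\phi$. Your justification only produces a point $q$ on the normal segment with $\tau(x)\ge\tauL(q,x)$; since $\tauL(q,x)\le\phi(x)$, this proves nothing.

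For a concrete failure, take $\Omega=I^+(0)\subset\R^{1,1}$, $y=(1,0)$ and $x=(3/4,-1/4)$. Then $\phi=t$ and $\phi(x)=3/4>1/\sqrt2=\tau(x)$, and your chain would assert $\ln\sqrt2\le\ln(4/3)$, which is false.

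The repair is the set-up you abandoned. Take $H_x$ through $r(x)$ orthogonal to $x-r(x)$, and let $\phi_x$ be the Lorentzian distance to $H_x$. Then $\phi_x(x)=\tau(x)$, and $\phi_x(y)\ge\tau(y)$ by the inclusion above. Proposition \ref{Proposition_applications_projectives} then gives $\ln\tau(y)-\ln\tau(x)\le\ln\phi_x(y)-\ln\phi_x(x)\le\delta_\Omega(x,y)$. This is the paper's comparison $\delta_\Omega\ge\delta_{I^+(H_x)}$.

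Your equality argument (``otherwise $\tau>\phi$ somewhere'') is inverted for the same reason. Equality instead forces $\tau(y)=\phi_x(y)$, hence $r(y)=p_{H_x}(y)$ and $H_y=H_x$ whenever $x\le y$. So $x\mapsto H_x$ is constant, which yields $\Omega=I^+(H_x)$.

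\textbf{Upper bound.} The first step, $\delta_\Omega(x,y)\le\ln(s_y/s_x)$ with $s$ the affine parameter vanishing at $p=x_v^-$, is correct. The proposed proof of $\tau(y)^2/\tau(x)^2\ge s_y/s_x$ fails for two reasons.
\begin{enumerate}
\item $\tau^2$ restricted to a null line is not an infimum of affine functions: in $\{t>0\}$ it equals $s^2$. The representation $\tau=\inf_H\phi_H$ over supporting hyperplanes makes $\tau$ concave, not $\tau^2$. The formula $\tau(z)=\inf_{q\in\partial\Omega\cap J^-(z)}\tauL(q,z)$ is false: $\tau$ is a supremum, and this infimum is $0$.
\item Even granting concavity, a concave $g$ with $g(0)\ge 0$ has $g(s)/s$ non-increasing. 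That gives $g(s_y)/g(s_x)\le s_y/s_x$, the reverse of what you need.
\end{enumerate}

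What is required is a single minorant of $\tau^2$ touching at the earlier point $x$, which you already noted in your first paragraph. Set $\psi:=\tauL(r(x),\cdot)^2$. Then $\tau^2\ge\psi$ on $I^+(r(x))$, with equality at $x$. Along the null line, $\psi(p+sv)=c\,(s-s_{p'})$ with $c>0$, where $p'$ is the past endpoint of the null segment through $x$ inside $I^+(r(x))\subset\Omega$; hence $s_{p'}\ge 0$. Therefore $\tau(y)^2/\tau(x)^2\ge\psi(y)/\psi(x)=(s_y-s_{p'})/(s_x-s_{p'})\ge s_y/s_x$.

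This is the paper's bound $\delta_\Omega(x,y)\le\delta_{I^+(r(x))}(x,y)=2\ln\bigl(\tauL(r(x),y)/\tauL(r(x),x)\bigr)$. It follows from Lemma \ref{lemme_temps_projectif}, since $\psi$ is a projective Cauchy time function on $I^+(r(x))$. Equality then forces $\tauL(r(x),y)=\tau(y)$, so $r(y)=r(x)$ by uniqueness of initial singularities, and $r$ is constant.

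Finally, you never check the ``if'' directions. For $\Omega=I^+(a)$ you need $\delta_\Omega\ge 2\dhat_{\ln(\tau)}$. This comes from Proposition \ref{Proposition_applications_projectives} applied to the projective map $\tauL(a,\cdot)^2$.
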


\begin{proof}
    Let us first show that equality holds on the right-hand side of Equation (\ref{ineg_delt_d_ln}) when $\Omega$ is the future of a point. Let $a\in \R^{1,n}$ and let $\Omega=I^+(a)$. In that case, the point $a$ is the only initial singularity, and the cosmological time function is given by $\tau(x)=\tauL(a,x)$ for all $x\in \Omega$. Let $f:\Omega \to \R_{>0}$ be defined by $f(x)= \tauL(a,x)^2$. Let $\gamma(t)=b+tv$ be an affine lightlike geodesic in $\Omega$. Then 
    $$f(\gamma(t))=\b(b-a,b-a)+2\b(b-a,v)t.$$
    Thus, the map $f$ is projective. Therefore $f$ is a projective Cauchy time function.
    From Lemma \ref{lemme_temps_projectif}, one has 
    $$\delta_\Omega=\dhat_{\ln(f)}=2\dhat_{\ln(\tau)}.$$

    We now show that equality holds on the left-hand side of Equation (\ref{ineg_delt_d_ln}) when $\Omega$ is the future of a spacelike hyperplane. Let $H$ be a spacelike hyperplane and let $\Omega=I^+(H)$. Let $p_H$ denote the orthogonal projection onto $H$. Then the cosmological time function of $\Omega$ is given by $\tau(x)=\tauL(p_H(x),x)$. This function $\tau$ is a Cauchy time function that is projective, since it is in fact linear. Hence, from Lemma \ref{lemme_temps_projectif}, it follows that
    $\delta_\Omega=\dhat_{\ln(\tau)}.$

    We now turn to the general case. Since $\Omega$ contains no complete lightlike line, it admits at least one spacelike supporting hyperplane. By Proposition \ref{proposition_existence_initial_singularity}, the cosmological time function $\tau$ is finite, and every $x\in \Omega$ admits a unique initial singularity $r(x)$. Let $x,y\in \Omega$ such that $x\leq y$. Since $r(x)\ll x$, the future of $r(x)$ contains $x$ and $y$. Since $I^+(r(x))\subset \Omega$, Proposition \ref{proposition_naturalité} and the previous computation imply that
    $$\delta_\Omega(x,y)\leq\delta_{I^+(r(x))}(x,y)=2\ln(\tauL(r(x),y))-2\ln(\tauL(r(x),x)).$$
    Now, since $r(x)\leq y$, we have $\tauL(r(x),y)\leq \tau(y)$ by definition of $\tau$. Hence
    $$\delta_\Omega(x,y)\leq 2\ln(\tau(y))-2\ln(\tau(x))= 2\dhat_{\ln(\tau)}(x,y).$$
    Let $H$ denote the hyperplane containing $r(x)$ and orthogonal to $x-r(x)$ with respect to $\b$, and let $p_H$ denote the orthogonal projection onto $H$. By Proposition \ref{proposition_existence_initial_singularity}, the domain $\Omega$ is contained in $I^+(H)$, hence
    $$\delta_\Omega(x,y)\geq\delta_{I^+(H)}(x,y)=\ln(\tauL(p_H(y),y))-\ln(\tauL(p_H(x),x)).$$
    Since $r(y)\in J^+(H)\cap J^-(y)$, we have $\tau(y)\leq \tauL(p_H(y),y)$. Since $r(x)=p_H(x)$, we obtain 
    $$\delta_\Omega(x,y)\geq \ln(\tau(y))-\ln(\tau(x))=\dhat_{\ln(\tau)}(x,y).$$
    This shows that Equation (\ref{ineg_delt_d_ln}) holds for any $x\leq y$. From Lemma \ref{lemme_comparaison_delta_M_et_d_t}, we deduce that Equation (\ref{ineg_delt_d_ln}) holds globally on $\Omega$. 

    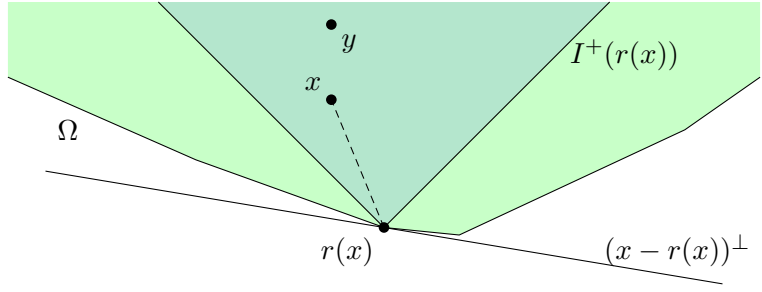
\begin{figure}
        \centering
            \begin{tikzpicture}[xscale=1]

    \def\a{3}
    \fill[bovert, opacity=1]  (-5,\a)
    -- (-5,2)
    -- (-2.5,0.9) 
    -- (0,0)  
    -- (1,-0.1)
    -- (4,1.3)
    -- (5,2)
    -- (5,\a) ;
    \draw  (-5,2)
    -- (-2.5,0.9) 
    -- (0,0)  
    -- (1,-0.1)
    -- (4,1.3)
    -- (5,2) ;
    \node at (-4.2,1.3) {$\Omega$};

    \def\B{1}
    \draw[fill=blue,opacity=0.1] (-\a*\B,\a) -- (0,0) -- (\a*\B,\a);

    \draw (-\a*\B,\a) -- (0,0) -- (\a*\B,\a);

    \def\xun{-0.7}
    \def\xdeux{1.7}
    \coordinate (x) at (\xun,\xdeux);
    \draw[densely dashed] (0,0) -- (x);
    \fill (x) circle (2pt) node [above left] {$x$};

    \draw (-1.5*\a,1.5*0.5) -- (1.5*\a,0-1.5*0.5);
    \node at (\a+0.9,-0.3) {$(x-r(x))^\perp$};
    \node at (3.2,2.3) {$I^+(r(x))$};
    \fill (0,0) circle (2pt) node [below left] {$r(x)$};

    \def\c{0.42}
    \coordinate (y) at (\xun ,\xdeux +1);

    \fill (y) circle (2pt) node [below right] {$y$};

\end{tikzpicture}
        \caption{Argument in the proof of Theorem \ref{thm_equivalence_d_lnt_delta_Omega}.}
        \label{fig_preuve_thm_equiv_nulldist_Mark}
    \end{figure}

    Let us show that if equality holds on the right-hand side of Equation (\ref{ineg_delt_d_ln}) for all $x,y\in\Omega$, then $\Omega$ is the future of a point. Indeed, under that hypothesis, for all $x,y\in \Omega$, the inequality $\tauL(r(x),y)\leq \tau(y)$ must be an equality, so $r(x)=r(y)$. In particular, the map $r$ is locally constant, hence it is constant and equal to some $p\in\partial \Omega$. It then follows that $\Omega=I^+(p)$. A similar argument shows that equality holds on the left-hand side of Equation (\ref{ineg_delt_d_ln}) only if $\Omega$ is the future of a spacelike hyperplane.
\end{proof}


\begin{thm}
    \label{thm_equivalence_d_lnt_delta_Omega_cas_stablement_acausal}
    Let $\Omega$ be a future complete domain in $\R^{1,n}$. If $\partial \Omega$ is $\varepsilon$-stably acausal, then 
    $$\frac{\varepsilon}{\sqrt{2+2(1+\varepsilon)^2}}\dhat_{\ln(\tau)}-c_\varepsilon\leq \delta_\Omega\leq 2\dhat_{\ln(\tau)}. $$
    where $c_\varepsilon=\frac{\varepsilon}{\sqrt{2+2(1+\varepsilon)^2}}\ln\left( \frac{(1+\varepsilon)^2+1}{(1+\varepsilon)^2-1}\right)$.
\end{thm}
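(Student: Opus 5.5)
The upper bound should follow exactly as in the proof of Theorem~\ref{thm_equivalence_d_lnt_delta_Omega}, and I do not expect convexity to be needed. The $\varepsilon$-stable acausality of $\partial\Omega$ places $\Omega$ inside $I^+(C^-_\varepsilon(a))$ for any $a\in\partial\Omega$. Proposition~\ref{proposition_existence_initial_singularity} then shows that $\tau$ is finite and regular, and that every $x$ has an initial singularity $r(x)\in\partial\Omega$. Future completeness gives $I^+(r(x))\subset\Omega$. So for $x\leq y$, Proposition~\ref{proposition_naturalité} and the computation for the future of a point give
$$\delta_\Omega(x,y)\leq 2\ln\tauL(r(x),y)-2\ln\tau(x)\leq 2\ln\tau(y)-2\ln\tau(x)=2\dhat_{\ln(\tau)}(x,y).$$
Lemma~\ref{lemme_comparaison_delta_M_et_d_t} then extends $\delta_\Omega\leq 2\dhat_{\ln(\tau)}$ to all pairs.

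For the lower bound the additive constant rules out Lemma~\ref{lemme_comparaison_delta_M_et_d_t}, so I will pass through the quasi-hyperbolic distance. Write $d_\partial(x)=d_h(x,\partial\Omega)$. The first step compares $\tau$ with $d_\partial$ up to multiplicative constants depending only on $\varepsilon$. By Proposition~\ref{prop_initial_singularity}, $x-r(x)=(t,p)$ satisfies $(1+\varepsilon)|p|\leq t$. Hence
$$d_h(x,r(x))=\sqrt{t^2+|p|^2}\leq\sqrt{\tfrac{(1+\varepsilon)^2+1}{(1+\varepsilon)^2-1}}\,\sqrt{t^2-|p|^2}=\sqrt{\tfrac{(1+\varepsilon)^2+1}{(1+\varepsilon)^2-1}}\,\tau(x).$$
This already gives $d_\partial(x)\leq d_h(x,r(x))$ controlled by $\tau(x)$. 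In the other direction, $\tau(x)\leq d_h(x,r(x))$, and $d_h(x,r(x))$ is bounded by a multiple of $d_\partial(x)$ by the cone argument of Proposition~\ref{prop_estimee}, since $I^+_\varepsilon(r(x))\subset\Omega$. Together these give $|\ln\tau-\ln d_\partial|\leq C_\varepsilon$, where the logarithm of the ratio above is what should produce the constant $c_\varepsilon$ in the statement.

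The second step bounds $\dhat_{\ln(\tau)}$ coarsely by $\K_\Omega$. Given $x,y\in\Omega$, let $z=x+(\ell,0)$ with $\ell=|x-y|+d_\partial(x)+d_\partial(y)$, and choose $\ell$ large enough that $z\in J^+(x)\cap J^+(y)$. By Fact~\ref{fait_null_distance},
$$\dhat_{\ln(\tau)}(x,y)\leq 2\ln\tau(z)-\ln\tau(x)-\ln\tau(y).$$
Using the first step together with $d_\partial(z)\leq d_\partial(x)+\ell$, this becomes at most
$$C\ln\Bigl(1+\tfrac{|x-y|}{\min(d_\partial(x),d_\partial(y))}\Bigr)+C'.$$
The standard (Gehring--Palka) lower bound for the quasi-hyperbolic distance then gives $\dhat_{\ln(\tau)}\leq A\,\K_\Omega+B$. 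Combining this with the left inequality of Theorem~\ref{thm_equiv_explicite_delta_Omega_et_qh}, which only needs $\partial_c^-\Omega=\partial\Omega$ to be stably acausal via Proposition~\ref{prop_estimee}, yields $\delta_\Omega\geq\kappa\,\dhat_{\ln(\tau)}-c$.

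The main obstacle is getting the sharp constants $\varepsilon/\sqrt{2+2(1+\varepsilon)^2}$ and $c_\varepsilon$ rather than some constants. The coarse route above loses factors. For the sharp version I would instead work infinitesimally, as in Theorem~\ref{thm_lien_dmark_Fmark}. Along a lightlike vector $v$ at $x$, Proposition~\ref{formule_F_Omega} gives $F_\Omega(v)\geq |v|/d_h(x,x_v^-)$. I would then bound $|d\ln\tau(v)|$ above by a multiple of $|v|/d_h(x,x_v^-)$, using the cone $\{(1+\varepsilon)|p|\leq t\}$ to control the angle between $v$ and $x-r(x)$. Integrating along piecewise lightlike curves should give the multiplicative constant. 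The additive $c_\varepsilon$ would enter only at the endpoints, when comparing $\ln\tau$ with $\ln d_h(\cdot,r(\cdot))$ through the ratio computed in the first step.
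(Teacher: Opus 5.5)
Your upper bound is the paper's argument. The gap is in the lower bound. The statement asserts the explicit constants $\kappa=\varepsilon/\sqrt{2+2(1+\varepsilon)^2}$ and $c_\varepsilon=\kappa\ln C$, where $C=\frac{(1+\varepsilon)^2+1}{(1+\varepsilon)^2-1}$. As you say yourself, your quasi-hyperbolic route only produces unspecified constants.

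The missing idea is to compare with a single model domain rather than with $\K_\Omega$. For $x\le y$, set $U=\R^{1,n}\setminus J^-_\varepsilon(r(x))$. Stable acausality gives $\Omega\subset U$.
\begin{itemize}
\item Lemma~\ref{cor_Omega_eps} and monotonicity of $\delta$ under inclusion give $\kappa\ln(\tau_U(y)/\tau_U(x))\le\delta_U(x,y)\le\delta_\Omega(x,y)$.
\item Since $\Omega\subset U$, one has $\tau_U(y)\ge\tau(y)$.
\item The formula of Lemma~\ref{temps_cosmo_Omega_eps}, together with the cone inequality $(1+\varepsilon)|p|\le t$ of Proposition~\ref{prop_initial_singularity} (which you already use), gives $\tau_U(x)\le C\,\tau(x)$.
\end{itemize}
This is exactly $\delta_\Omega(x,y)\ge\kappa\dhat_{\ln(\tau)}(x,y)-c_\varepsilon$. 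So $c_\varepsilon/\kappa=\ln C$ comes from the ratio $\tau_U(x)/\tau(x)$, not from your ratio $d_h(x,r(x))/\tau(x)\le\sqrt C$.

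Your infinitesimal sketch does not recover these constants either. A pointwise bound $|d\ln\tau(v)|\le M\,F_\Omega(v)$ integrates to a purely multiplicative inequality, with no additive term at all. Moreover, bounding $d_h(x,x_v^-)$ through $U$ at each point costs the factor $\tau_U(x)/\tau(x)$ at every point, not once at the endpoints. The natural computation gives $M=\frac{(1+\varepsilon)^2+1}{\varepsilon^2}$ rather than $\kappa^{-1}$.

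Your objection to Lemma~\ref{lemme_comparaison_delta_M_et_d_t} is correct, and it applies to the paper's own proof. The paper proves the inequality above for $x\le y$ and then concludes ``by Lemma~\ref{lemme_comparaison_delta_M_et_d_t}''. With an additive constant, the chain argument of that lemma only yields $\kappa\dhat_{\ln(\tau)}(x,y)\le\delta_\Omega(x,y)+m\,c_\varepsilon$ for a chain with $m$ links. Since $m$ is not controlled, the stated constants are not justified there for non-causally related pairs.

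Your coarse argument is sound. Its ingredients are:
\begin{itemize}
\item $\tau\asymp d_h(\cdot,\partial\Omega)$ via the cone condition;
\item a common future point $z$ (take $\ell\ge 2|x-y|$ so that $z\ge y$ really holds);
\item the bound $\K_\Omega(x,y)\ge\ln\bigl(1+|x-y|/\min\{d_\partial(x),d_\partial(y)\}\bigr)$;
\item the left inequality of Theorem~\ref{thm_equiv_explicite_delta_Omega_et_qh}.
\end{itemize}
It gives a genuine bound $\delta_\Omega\ge\kappa'\dhat_{\ln(\tau)}-c'$ for all pairs. That quasi-isometric version is all the later applications use (Proposition~\ref{prop_completude_null_dist}, Theorem~\ref{thm_intro_null_dist}). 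What is actually established is the sharp inequality for causally related pairs via $U$, plus your coarse bound in general. Neither your proposal nor the paper's proof establishes the explicit constants for arbitrary pairs.
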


The proof of Theorem \ref{thm_equivalence_d_lnt_delta_Omega_cas_stablement_acausal} is based on the following three lemmas. Let $\varepsilon>0$ and let 
$$\Omega_{\varepsilon}=\R^{1,n}\setminus J^-_{\varepsilon}(0)=\{x=(t,p)\in\R^{1,n}\,\vert\,\vert p\vert<(1+\varepsilon)t\}.$$
The domain $\Omega_{\varepsilon}$ is future complete and has stably acausal boundary. For $x\in \Omega_{\varepsilon}$ such that $x\not\in \R \partial_t$, we let 
$\Delta_x=(\R \partial_t\times \R_{\geq 0}x)\cap C^-_{\varepsilon}(0).$

\begin{lem}
    \label{temps_cosmo_Omega_eps}
    The cosmological time function of $\Omega_\varepsilon$ is given by 
    $\tau(t,p)=\frac{(1+\varepsilon)t+\vert p\vert}{\sqrt{(1+\varepsilon)^2-1}}.$
\end{lem}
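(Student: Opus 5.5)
The plan is to write $\Omega_\varepsilon$ as a union of futures of spacelike hyperplanes and reduce to the explicit cosmological time of a half-space, which was already used in the proof of Theorem~\ref{thm_equivalence_d_lnt_delta_Omega}. I will read $\Omega_\varepsilon=\R^{1,n}\setminus J^-_\varepsilon(0)$ as the set $\{(t,p)\,\vert\,(1+\varepsilon)t+\vert p\vert>0\}$. This is consistent with the claimed formula, which is positive exactly there. For each unit vector $u\in\R^n$, let $\ell_u(t,p)=(1+\varepsilon)t+\langle p,u\rangle$ and $H_u=\{\ell_u=0\}$. Since $\vert u\vert=1<1+\varepsilon$, each $H_u$ is a spacelike hyperplane and $I^+(H_u)=\{\ell_u>0\}$. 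The identity $\vert p\vert=\max_{\vert u\vert=1}\langle p,u\rangle$ then gives $\Omega_\varepsilon=\bigcup_{\vert u\vert=1}I^+(H_u)$.

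\emph{Step 1: the half-space formula.} The future $I^+(H_u)$ has cosmological time $\tau_u(x)=\tauL(p_{H_u}(x),x)$, as in the proof of Theorem~\ref{thm_equivalence_d_lnt_delta_Omega}. Since $\tau_u$ is a linear function of $x$, it equals $\ell_u(x)$ divided by the Lorentzian norm of $\ell_u$. The dual norm of $\ell_u$ with respect to $\b$ is $\sqrt{(1+\varepsilon)^2-\vert u\vert^2}=\sqrt{(1+\varepsilon)^2-1}$. Hence
$$\tau_u(x)=\frac{\ell_u(x)}{\sqrt{(1+\varepsilon)^2-1}}.$$
I would verify this routine computation directly, by taking the $\b$-orthogonal projection of $x$ onto $H_u$.

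\emph{Step 2: lower bound.} Let $x\in\Omega_\varepsilon$ and choose $u=p/\vert p\vert$ (any unit $u$ if $p=0$). Then $\ell_u(x)=(1+\varepsilon)t+\vert p\vert>0$, so $x\in I^+(H_u)\subset\Omega_\varepsilon$. Every $y\in I^+(H_u)$ with $y\leq x$ lies in $\Omega_\varepsilon$, so the definition of $\tau$ as a supremum gives $\tau(x)\geq\tau_u(x)$. This is exactly the claimed expression.

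\emph{Step 3: upper bound.} This is the only step needing care, because $\Omega_\varepsilon$ is not convex. Take any $y\in\Omega_\varepsilon$ with $y\leq x$. Pick a unit vector $u$ with $y\in I^+(H_u)$. Since $I^+(H_u)$ is future complete, $x\in I^+(H_u)$ as well. The segment from $y$ to $x$ therefore lies in $I^+(H_u)$, so $\tauL(y,x)\leq\tau_u(x)$. Using $\langle p,u\rangle\leq\vert p\vert$,
$$\tau_u(x)\leq\frac{(1+\varepsilon)t+\vert p\vert}{\sqrt{(1+\varepsilon)^2-1}}.$$
Taking the supremum over $y$ gives the matching upper bound, and combining with Step 2 proves the lemma.
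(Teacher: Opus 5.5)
Your proof is correct, and it takes a different route from the paper's.

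\textbf{The paper's route.} It fixes $x=(t,p)$ with $p\neq 0$ and takes an initial singularity $y\in C^-_\varepsilon(0)$; its existence implicitly relies on Proposition~\ref{proposition_existence_initial_singularity}. It then argues that the level set $\Hcal=\{z\in I^-(x)\,\vert\,\tauL(z,x)=\tau(x)\}$ cannot cross the cone, so it must be tangent to the cone at $y$. From this tangency it deduces that $y$ lies on the generator $\Delta_x$ and has the form $\lambda(-\vert p\vert,(1+\varepsilon)p)$. It solves $\b(y,x-y)=0$ for $\lambda$, and finally extends the formula to $p=0$ by continuity.

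\textbf{Your route.} You decompose $\Omega_\varepsilon$ into the future complete half-spaces $I^+(H_u)$ and use only monotonicity of the cosmological time under inclusion. Because each piece is future complete, this gives the exact identity
\[
\tau_{\Omega_\varepsilon}(x)=\sup\{\tau_{I^+(H_u)}(x)\,\vert\, x\in I^+(H_u)\},
\]
and the linear half-space formula finishes the job.

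\textbf{What each buys.} Your argument needs no existence of initial singularities and no tangency or smoothness argument. In particular you never have to exclude the vertex of the cone or justify $y\in\Delta_x$, and $p=0$ needs no separate treatment. The same reasoning shows more generally that the cosmological time of a union of future complete domains is the supremum of their cosmological times. The paper's route makes the location of the initial singularity explicit. Your argument recovers that too: for $u=p/\vert p\vert$, the projection $p_{H_u}(x)$ is exactly the paper's point $\lambda(-\vert p\vert,(1+\varepsilon)p)$.

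\textbf{On the definition of $\Omega_\varepsilon$.} Your reading $\Omega_\varepsilon=\{(1+\varepsilon)t+\vert p\vert>0\}$ is the right one. The set $\{\vert p\vert<(1+\varepsilon)t\}$ printed in the paper is $I^+_\varepsilon(0)$, not $\R^{1,n}\setminus J^-_\varepsilon(0)$. For that set the formula fails, since its cosmological time at $(t,0)$ is $t$.
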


\begin{proof}
    Let $x=(t,p)\in \R^{1,n}$. Assume first that $p\neq 0$. Let $y\in C_\varepsilon^-(0)$ be an initial singularity of $x$. Then $y\in \Hcal=\{z\in I^-(x)\,\vert\,\tauL(z,x)=\tau(x)\}$ and $\Hcal\subset J^-_\varepsilon(0)$, so $\Hcal$ and $C_\varepsilon^-(0)$ cannot intersect  transversely at $y$. Hence 
    $T_y\Hcal=T_yC_\varepsilon^-(0)$, which implies that $y\in \Delta_x$ and that $y$ is the Lorentzian orthogonal projection of $x$ onto $\Delta_x$. In particular, one can write $y=\lambda(-\vert p\vert,(1+\varepsilon)p)$, for some $\lambda>0$. Since $y$ and $x-y$ are orthogonal, one obtains
    $$0=\b(y,x-y)=\lambda\vert p\vert(\lambda \vert p\vert +t)-\lambda(1+\varepsilon)((1+\varepsilon)\lambda-1)\vert p\vert^2.$$
    Solving the previous equation gives $\lambda=\frac{t+(1+\varepsilon)\vert p\vert}{((1+\varepsilon)^2-1)\vert p\vert}$. We deduce that
    $$\tau(x)=\tauL(y,x)=\frac{(1+\varepsilon)t+\vert p\vert}{\sqrt{(1+\varepsilon)^2-1}}.$$
    The previous formula extends to arbitrary values of $p$ by continuity of $\tau$.
\end{proof}

\begin{lem}
    \label{lemme_Omega_eps}
    For all $x\in \Omega_{\varepsilon}$ and $v\in T_x\Omega_\varepsilon$ lightlike, one has 
    $d_h(x,x_v^-)\leq \frac{\sqrt{2(1+\varepsilon)^2-2}}{\varepsilon}\tau(x).$
\end{lem}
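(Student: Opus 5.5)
The plan is to split the estimate into two elementary bounds and multiply them: first bound $d_h(x,x_v^-)$ by a multiple of $d_h(x,\partial\Omega_\varepsilon)$, as in Proposition \ref{prop_estimee}; then compute $d_h(x,\partial\Omega_\varepsilon)$ exactly and compare it with the formula of Lemma \ref{temps_cosmo_Omega_eps}. Here I read $\Omega_\varepsilon$ as $\R^{1,n}\setminus J^-_\varepsilon(0)=\{(1+\varepsilon)t+\vert p\vert>0\}$, which is the reading consistent with Lemma \ref{temps_cosmo_Omega_eps}.

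\emph{Step 1.} I do not invoke Proposition \ref{prop_estimee} as a black box, since $\partial\Omega_\varepsilon=C^-_\varepsilon(0)$ contains $\b_\varepsilon$-null segments and is not literally $\varepsilon$-stably acausal. Instead I re-run its proof. The only input that proof uses is the inclusion $I^+_\varepsilon(x_v^-)\subset\Omega_\varepsilon$.

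This inclusion holds here. Write $y=x_v^-\in\partial\Omega_\varepsilon\subset J^-_\varepsilon(0)$. If some $z\in I^+_\varepsilon(y)$ also lay in $J^-_\varepsilon(0)$, then $y\ll_\varepsilon z\leq_\varepsilon 0$. This forces $y\in I^-_\varepsilon(0)$, the interior of $J^-_\varepsilon(0)$, which is a contradiction.

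The angle computation via Fact \ref{tan_formula} then gives
$$d_h(x,x_v^-)\leq \frac{\sqrt{(2+\varepsilon)^2+\varepsilon^2}}{\varepsilon}\,d_h(x,\partial\Omega_\varepsilon)=\frac{\sqrt{2(1+\varepsilon)^2+2}}{\varepsilon}\,d_h(x,\partial\Omega_\varepsilon).$$

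\emph{Step 2.} Next I compute $d_h(x,\partial\Omega_\varepsilon)$ for $x=(t,p)$. Since $J^-_\varepsilon(0)$ is a convex cone invariant under rotations of $\R^n$, the $h$-nearest point to $x$ lies in the half-plane spanned by $\partial_t$ and $p$ (or any such half-plane if $p=0$). In that half-plane, $\partial\Omega_\varepsilon$ is the ray directed by $(-1,(1+\varepsilon)\tfrac{p}{\vert p\vert})$, whose unit normal is $((1+\varepsilon),1)/\sqrt{(1+\varepsilon)^2+1}$.

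One has to check that the orthogonal projection onto this ray lands on the ray itself rather than at the apex. Granting that, the distance is
$$d_h(x,\partial\Omega_\varepsilon)=\frac{(1+\varepsilon)t+\vert p\vert}{\sqrt{(1+\varepsilon)^2+1}}=\sqrt{\frac{(1+\varepsilon)^2-1}{(1+\varepsilon)^2+1}}\;\tau(x),$$
where the second equality uses Lemma \ref{temps_cosmo_Omega_eps}. If the projection did fall at the apex, the true distance would be smaller, so the upper bound $d_h(x,\partial\Omega_\varepsilon)\leq\frac{(1+\varepsilon)t+\vert p\vert}{\sqrt{(1+\varepsilon)^2+1}}$ is all I need in any case.

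\emph{Step 3.} Combining the two steps, the factors $\sqrt{(1+\varepsilon)^2+1}$ cancel and we obtain
$$d_h(x,x_v^-)\leq \frac{\sqrt{2}\sqrt{(1+\varepsilon)^2-1}}{\varepsilon}\,\tau(x),$$
which is exactly the claimed bound.

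The main obstacle is Step 1, namely justifying the inclusion $I^+_\varepsilon(x_v^-)\subset\Omega_\varepsilon$ despite the null generators of the boundary, since that inclusion is the only hypothesis the proof of Proposition \ref{prop_estimee} actually uses. Step 2 is a routine two-dimensional computation.
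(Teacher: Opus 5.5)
Your Step 1 is correct. The push-up argument gives $I^+_\varepsilon(x_v^-)\subset\Omega_\varepsilon$, so the angle estimate from the proof of Proposition \ref{prop_estimee} yields $d_h(x,x_v^-)\leq\frac{\sqrt{2(1+\varepsilon)^2+2}}{\varepsilon}\,d_h(x,\partial\Omega_\varepsilon)$ for every $x$. Your reading of $\Omega_\varepsilon$ is also the right one.

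\textbf{The gap is in Step 2.} The bound $d_h(x,\partial\Omega_\varepsilon)\leq\frac{(1+\varepsilon)t+\vert p\vert}{\sqrt{(1+\varepsilon)^2+1}}$ holds only when the foot of the perpendicular from $(t,\vert p\vert)$ to the line $\R(-1,1+\varepsilon)$ lands on the ray. That happens exactly when $(1+\varepsilon)\vert p\vert\geq t$.

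Your fallback sentence has the inequality backwards. The distance to a ray is at least the distance to the line containing it, so when the projection falls beyond the apex the true distance is \emph{larger}, not smaller. Concretely, on the open cone $\{t>(1+\varepsilon)\vert p\vert\}$ (the polar cone of $J^-_\varepsilon(0)$), the nearest point of $\partial\Omega_\varepsilon$ to $x$ is the apex $0$. There $d_h(x,\partial\Omega_\varepsilon)=\sqrt{t^2+\vert p\vert^2}$, which by Cauchy--Schwarz is strictly bigger than $\frac{(1+\varepsilon)t+\vert p\vert}{\sqrt{(1+\varepsilon)^2+1}}$.

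For example, at $x=(1,0)$ one has $d_h(x,\partial\Omega_\varepsilon)=1$. Steps 1--2 then only give $d_h(x,x_v^-)\leq\frac{\sqrt{2(1+\varepsilon)^2+2}}{\varepsilon}$, whereas the lemma asserts $\frac{\sqrt2(1+\varepsilon)}{\varepsilon}$, which is in fact attained there. So factoring through $d_h(x,\partial\Omega_\varepsilon)$ is too lossy on that whole cone, and patching Step 2 alone cannot rescue it. This region matters: it contains the positive $t$-axis, and Lemma \ref{cor_Omega_eps} integrates the bound along arbitrary piecewise lightlike curves.

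\textbf{How to fix it.} Do not pass through $d_h(x,\partial\Omega_\varepsilon)$ at all. The paper instead maximises $d_h(x,\cdot)$ directly over the sphere $C(x)\cap C^-_\varepsilon(0)$ containing all the points $x_v^-$. A critical-point analysis places the maximiser in the meridian half-plane where $q-p$ is positively proportional to $p$, giving the sharp value $\frac{\sqrt2((1+\varepsilon)t+\vert p\vert)}{\varepsilon}$.

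An even shorter route works for every $x$:
\begin{itemize}
\item Write $x_v^-=(t-u,\,p-u\omega)$ with $u>0$ and $\vert\omega\vert=1$, so that $d_h(x,x_v^-)=\sqrt2\,u$.
\item Since $x_v^-\in C^-_\varepsilon(0)$, one has $(1+\varepsilon)(u-t)=\vert p-u\omega\vert\leq\vert p\vert+u$, that is, $\varepsilon u\leq(1+\varepsilon)t+\vert p\vert$.
\item Lemma \ref{temps_cosmo_Omega_eps} turns this into exactly the claimed bound, with equality when $\omega=-p/\vert p\vert$.
\end{itemize}
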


\begin{proof}
    Let $x=(t,p)\in \Omega_\varepsilon$. Assume first that $x\not \in C^+(0)$, so that $C(x)$ intersects $C^-_\varepsilon(0)$ transversely in a smooth  $(n-1)$-dimensional spacelike sphere denoted $\Sigma$. Let $f:\Sigma\to \R$ be the function given by $f(y)=d_h(x,y)$. Let $y\in \Sigma$ be a critical point of $f$. Then 
    $$\nabla_h f(y)\in T_y\Sigma^{\perp_h}=T_yC_\varepsilon^-(0)^{\perp_h}\cap T_yC(x)^{\perp_h},$$
    where $\nabla_h f$ denotes the $h$-gradient of $f$. Write $y=(s,q)$. The preceding equation implies 
    $$\left(\begin{array}{c}
        s-t \\
        q-p 
    \end{array}\right)\in \Span\left( \left(\begin{array}{c}
        s \\
        -(1+\varepsilon)^2q 
    \end{array}\right) , \left(\begin{array}{c}
        s-t \\
        -(q-p) 
    \end{array}\right) \right).$$
    It follows that $p$ and $q$ are collinear, and that $f$ admits a unique maximum and minimum, respectively. The maximum of $f$ is attained at the point $y\in \Sigma$ such that $p$ and $q-p$ are positively proportional. In other words, the maximum of $f$ is attained at the intersection between $\Delta_x$ and $\Delta=\{q=-s+t-p\}$. By denoting $y=(s,q)$ for the point realizing the maximum of $f$, then 
    $$\begin{cases}
        \vert q\vert=-s+t+\vert p\vert,\\
        \vert q\vert = -(1+\varepsilon)s.
    \end{cases}$$
    It follows that $s=-(t+\vert p\vert)/\varepsilon$ and 
    $$d_h(x,y)^2=2(s-t)^2=2\frac{\left((1+\varepsilon)t+\vert p\vert\right)^2}{\varepsilon^2}.$$
    From Lemma \ref{temps_cosmo_Omega_eps}, we deduce that 
    $$\sup_\Sigma f=\frac{\sqrt 2((1+\varepsilon)t+\vert p\vert)}{\varepsilon}=\frac{\sqrt{2(1+\varepsilon)^2-2}}{\varepsilon}\tau(x).$$
    The desired inequality follows since $x_v^-\in \Sigma$ for all lightlike vectors $v\in T_x\Omega_\varepsilon$.
\end{proof}

\begin{lem}
    \label{cor_Omega_eps}
    Let $\tau$ be the cosmological time function of $\Omega_\varepsilon$. Then 
    $\dhat_{\ln(\tau)}\leq \frac{\sqrt{2+2(1+\varepsilon)^2}}{\varepsilon}\delta_{\Omega_\varepsilon}.$
\end{lem}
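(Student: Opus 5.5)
By Lemma~\ref{lemme_comparaison_delta_M_et_d_t} (applied with the roles of $\dhat_\tau$ and $\delta_M$ as in its first part), it suffices to prove $\dhat_{\ln(\tau)}(x,y)\leq \alpha\,\delta_{\Omega_\varepsilon}(x,y)$ for points $x,y$ lying on a common lightlike geodesic, where $\alpha=\frac{\sqrt{2+2(1+\varepsilon)^2}}{\varepsilon}$. I will in fact establish the infinitesimal version: for every $x\in\Omega_\varepsilon$ and every lightlike $v\in T_x\Omega_\varepsilon$,
$$\vert d(\ln\tau)_x(v)\vert\leq \alpha\, F_{\Omega_\varepsilon}(v).$$
Integrating this bound along a lightlike segment $\gamma$ from $x$ to $y$ gives $\vert\ln\tau(y)-\ln\tau(x)\vert\leq \alpha\int F_{\Omega_\varepsilon}(\gamma')$. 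By Fact~\ref{fait_null_distance}, the left side equals $\dhat_{\ln(\tau)}(x,y)$. Taking the infimum over piecewise lightlike curves via Theorem~\ref{thm_lien_dmark_Fmark}, each such curve has total $\ln\tau$-variation at least $\dhat_{\ln\tau}(x,y)$ by the triangle inequality, so we obtain $\dhat_{\ln(\tau)}\leq\alpha\,\delta_{\Omega_\varepsilon}$ directly, without needing the reduction lemma.

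For the infinitesimal bound, I use the explicit formula of Lemma~\ref{temps_cosmo_Omega_eps}: $\tau=\frac{(1+\varepsilon)t+\vert p\vert}{\sqrt{(1+\varepsilon)^2-1}}$. This is Lipschitz with respect to $h$, with gradient bounded by $\frac{\sqrt{(1+\varepsilon)^2+1}}{\sqrt{(1+\varepsilon)^2-1}}$ where it is differentiable (it is smooth away from $p=0$, and a one-sided or convexity argument handles the axis). Hence
$$\vert d\tau(v)\vert\leq\frac{\sqrt{(1+\varepsilon)^2+1}}{\sqrt{(1+\varepsilon)^2-1}}\vert v\vert.$$
On the other hand, Proposition~\ref{formule_F_Omega} gives $F_{\Omega_\varepsilon}(v)\geq \vert v\vert/d_h(x,x_v^-)$, since the future endpoint is infinite because the domain is future complete. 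Lemma~\ref{lemme_Omega_eps} bounds $d_h(x,x_v^-)\leq\frac{\sqrt{2(1+\varepsilon)^2-2}}{\varepsilon}\tau(x)$. Combining these,
$$\frac{\vert d\tau(v)\vert}{\tau(x)}\leq \frac{\sqrt{(1+\varepsilon)^2+1}}{\sqrt{(1+\varepsilon)^2-1}}\cdot\frac{\sqrt{2(1+\varepsilon)^2-2}}{\varepsilon}\,F_{\Omega_\varepsilon}(v)=\frac{\sqrt{2+2(1+\varepsilon)^2}}{\varepsilon}F_{\Omega_\varepsilon}(v),$$
which is exactly the constant $\alpha$.

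The main obstacle is a regularity issue: $\tau$ is not smooth on the axis $p=0$, and Lemma~\ref{lemme_Omega_eps} was derived only for $x\notin C^+(0)$. I would handle both points by noting that a lightlike segment meets these bad sets in a null set (at most finitely many points). Since $\tau$ is Lipschitz, the integration argument remains valid. Where needed, the estimate of Lemma~\ref{lemme_Omega_eps} extends to the excluded points by continuity of $x_v^-$ in $x$.
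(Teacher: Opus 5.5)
Your proof is correct and is essentially the paper's argument. Like the paper, you bound $|d(\ln\tau)_x(v)|\le |\nabla_h\tau|\,|v|/\tau(x)\le \frac{\sqrt{2+2(1+\varepsilon)^2}}{\varepsilon}F_{\Omega_\varepsilon}(v)$ using Lemma \ref{temps_cosmo_Omega_eps}, Proposition \ref{formule_F_Omega} and Lemma \ref{lemme_Omega_eps}, integrate along piecewise lightlike curves and conclude with Theorem \ref{thm_lien_dmark_Fmark}; your direct triangle-inequality step in place of Lemma \ref{lemme_comparaison_delta_M_et_d_t} is equally valid.

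One side remark is inaccurate: a lightlike segment can lie entirely inside $C^+(0)$, along a generator, so it need not meet that set in finitely many points. This does not affect the argument, because your continuity extension of Lemma \ref{lemme_Omega_eps} covers it, and the paper states that lemma for all $x\in\Omega_\varepsilon$ anyway.
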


\begin{proof}
    Let $x=(t,p)\in \Omega_\varepsilon$ with $p\neq 0$ and let $v$ be a lightlike vector. From Proposition \ref{formule_F_Omega} and Lemma \ref{lemme_Omega_eps}, one has
    $$T_x\ln(\tau)\cdot v=\frac{h(\nabla_h\tau(x),v)}{\tau(x)}\leq \frac{\vert\nabla_h\tau(x)\vert\,\vert v\vert}{\tau(x)}\leq \vert\nabla_h\tau(x)\vert\frac{\sqrt{2(1+\varepsilon)^2-2}}{\varepsilon}F_{\Omega_\varepsilon}(v).$$
    Now, by Lemma \ref{temps_cosmo_Omega_eps}, one has  
    $$\vert\nabla_h\tau(x)\vert=\frac{\vert(1+\varepsilon,p/\vert p \vert)\vert}{\sqrt{(1+\varepsilon)^2-1}}=\frac{\sqrt{1+(1+\varepsilon)^2}}{\sqrt{(1+\varepsilon)^2-1}},$$
    hence
    $$T_x\ln(\tau)\cdot v\leq \frac{\sqrt{2+2(1+\varepsilon)^2}}{\varepsilon}F_{\Omega_\varepsilon}(v).$$
    Let $x,y\in \Omega_\varepsilon$ such that $x\leq y$, and let $\gamma:[0,1]\to \Omega_\varepsilon$ be a piecewise lightlike geodesic curve from $x$ to $y$. Then 
        $$
        \dhat_{\ln(\tau)}(x,y)=\ln(\tau(y)/\tau(x))=\int_0^1\ln(\tau(\gamma(t))^\prime dt\leq \frac{\sqrt{2+2(1+\varepsilon)^2}}{\varepsilon}\int_0^1F_{\Omega_\varepsilon}(\gamma^\prime(t))dt.
    $$
    Taking the infimum over all $\gamma$, the result follows from Theorem \ref{thm_lien_dmark_Fmark} and Lemma \ref{lemme_comparaison_delta_M_et_d_t}.
\end{proof}

\begin{proof}[Proof of Theorem \ref{thm_equivalence_d_lnt_delta_Omega_cas_stablement_acausal}]
    Assume now that $\partial\Omega$ is $\varepsilon$-stably acausal. Note that in the proof of Theorem \ref{thm_equivalence_d_lnt_delta_Omega}, only future completeness was needed to prove $\delta_\Omega\leq 2\dhat_{\ln(\tau)}$. Therefore the right-hand side inequality holds since $\Omega$ is future complete. Let $x,y\in \Omega$ such that $x\leq y$, and let $r(x)\in\partial\Omega$ be an initial singularity for $x$. We denote $U=\R^{1,n}\setminus J^-_\varepsilon(r(x))$ and $\tau_U$ for the cosmological time function of $U$. Then $\Omega\subset U$ since $\partial\Omega$ is $\varepsilon$-stably acausal. From Proposition \ref{proposition_naturalité} and Corollary \ref{cor_Omega_eps}, one has 
    $$\frac{\varepsilon}{\sqrt{2+2(1+\varepsilon)^2}}\ln(\tau_U(y)/\tau_U(x))\leq \delta_U(x,y)\leq \delta_\Omega(x,y).$$
    Now, since $\Omega\subset U$, one has $\tau_U(y)\geq \tau(y)$. Write $x-r(x)=(t,p)$. From Proposition \ref{prop_initial_singularity}, one has $(1+\varepsilon)\vert p\vert\leq t$. It follows that
        $$
        (1+\varepsilon)t+\vert p\vert\leq \frac{(1+\varepsilon)^2+1}{1+\varepsilon}t,$$
        and $$\frac{(1+\varepsilon)^2-1}{(1+\varepsilon)^2}t^2=\left(1-\frac{1}{(1+\varepsilon)^2}\right)t^2\leq\left(t^2-\vert p\vert^2\right)=\tau(x)^2,$$
        hence 
        $$(1+\varepsilon)t+\vert p\vert\leq \frac{(1+\varepsilon)^2+1}{1+\varepsilon}\sqrt{\frac{(1+\varepsilon)^2}{(1+\varepsilon)^2-1}}\tau(x) 
        = \frac{(1+\varepsilon)^2+1}{\sqrt{(1+\varepsilon)^2-1}}\tau(x).$$
    By Lemma \ref{temps_cosmo_Omega_eps}, we obtain
    $$\tau_U(x)=\frac{(1+\varepsilon)t+\vert p\vert}{\sqrt{(1+\varepsilon)^2-1}}
                 \leq \frac{(1+\varepsilon)^2+1}{(1+\varepsilon)^2-1}\tau(x).$$
    Finally 
    $$ \frac{\varepsilon}{\sqrt{2+2(1+\varepsilon)^2}}\left(\ln\frac{\tau(y)}{\tau(x)}-\ln\left( \frac{(1+\varepsilon)^2+1}{(1+\varepsilon)^2-1}\right)\right)\leq \delta_\Omega(x,y).$$
    The inequality follows by Lemma \ref{lemme_comparaison_delta_M_et_d_t}.
\end{proof}

\subsection{Causally convex case} We perform similar estimates to those above for causally convex domains.

\begin{thm}
    \label{thm_equivalence_mark_nulldistance_cas_cc}
    Let $\Omega$ be a convex, causally convex domain in $\R^{1,n}$. Assume that $\Omega$ is contained between two spacelike hyperplanes. Let $\tau^-$ and $\tau^+$ be the associated past and future cosmological time functions. Then 
    $$\frac{1}{2}\dhat_{\ln(\tau^-/\tau^+)}\leq \delta_\Omega \leq 2\dhat_{\ln(\tau^-/\tau^+)}.$$
\end{thm}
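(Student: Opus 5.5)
By Lemma \ref{lemme_comparaison_delta_M_et_d_t}, it suffices to prove both inequalities for pairs $x\leq y$ lying on a common lightlike geodesic, and in fact for any causally related pair $x\leq y$. Note that $\ln(\tau^-/\tau^+)$ is a time function: $\tau^-$ increases and $\tau^+$ decreases along future causal curves. Hence Fact \ref{fait_null_distance} gives
$$\dhat_{\ln(\tau^-/\tau^+)}(x,y)=\ln\frac{\tau^-(y)}{\tau^-(x)}+\ln\frac{\tau^+(x)}{\tau^+(y)}.$$
Since $\Omega$ lies between two spacelike hyperplanes, Proposition \ref{proposition_existence_initial_singularity} applies in both time directions. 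So $\tau^\pm$ are finite, and since $\Omega$ is convex, every $x$ has unique initial and terminal singularities $r^-(x)\in\partial_c^-\Omega$ and $r^+(x)\in\partial_c^+\Omega$, with supporting hyperplanes $H^\mp_x$ orthogonal to $x-r^\mp(x)$.

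\emph{Upper bound.} For $x\leq y$, I will use the diamond $D=I(r^-(x),r^+(y))$. It contains $x,y$ and lies in $\Omega$ by causal convexity. The key model computation is on a diamond $I(a,b)$, where I expect $f(z)=\tauL(a,z)^2/\tauL(z,b)^2$ to be a projective Cauchy time function. Indeed, along a lightlike line both $\tauL(a,\cdot)^2$ and $\tauL(\cdot,b)^2$ are affine in the parameter, so their ratio is a homography, with image $(0,\infty)$. Lemma \ref{lemme_temps_projectif} then gives $\delta_{I(a,b)}=\dhat_{\ln f}=2\,\dhat_{\ln(\tauL(a,\cdot)/\tauL(\cdot,b))}$. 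By Proposition \ref{proposition_naturalité},
$$\delta_\Omega(x,y)\leq\delta_D(x,y)=2\ln\frac{\tauL(a,y)}{\tauL(a,x)}+2\ln\frac{\tauL(x,b)}{\tauL(y,b)},\qquad a=r^-(x),\ b=r^+(y).$$
We have $\tauL(a,x)=\tau^-(x)$ and $\tauL(a,y)\leq\tau^-(y)$, and symmetrically $\tauL(y,b)=\tau^+(y)$ and $\tauL(x,b)\leq\tau^+(x)$. This yields $\delta_\Omega\leq 2\dhat$.

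\emph{Lower bound.} Let $S$ be the slab between $H^-_x$ and $H^+_y$ (the supporting hyperplanes at $r^-(x)$ and $r^+(y)$). By convexity $\Omega\subset S$, so $\delta_\Omega\geq\delta_S$. If the two hyperplanes are parallel, the slab has a linear projective time function $t\mapsto$ (signed distance to $H^-$), with image a bounded interval $J=(0,L)$. Then $\rho_J$ gives
$$\delta_S(x,y)=\ln\frac{s_y}{s_x}+\ln\frac{L-s_x}{L-s_y},$$
where $s$ is the Lorentzian distance to $H^-$; note $s_x=\tau^-(x)$ and $s_y\geq\tau^-(y)$ as in Theorem \ref{thm_equivalence_d_lnt_delta_Omega}. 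In general the hyperplanes are not parallel, and this is where I expect the main obstacle. I would instead avoid the slab and use the two half-spaces separately. Proposition \ref{Proposition_applications_projectives} applied to the projective maps $\tauL(p_{H^-_x}(\cdot),\cdot)$ on $I^+(H^-_x)\supset\Omega$ gives $\delta_\Omega(x,y)\geq\ln(\tau^-(y)/\tau^-(x))$, using $\tauL(p_{H^-_x}(y),y)\geq\tau^-(y)$. Symmetrically, using $I^-(H^+_y)$ gives $\delta_\Omega(x,y)\geq\ln(\tau^+(x)/\tau^+(y))$. Averaging these two bounds gives $\delta_\Omega\geq\tfrac12\dhat$, which explains the constant $\frac12$.

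Finally, Lemma \ref{lemme_comparaison_delta_M_et_d_t} extends both inequalities from lightlike-related pairs to all of $\Omega$. The delicate points I would check carefully are two. First, that $f$ on the diamond is a projective Cauchy time function with image a strict interval. Second, that $\ln(\tau^-/\tau^+)$ is strictly increasing and continuous; the latter needs the regularity from Proposition \ref{proposition_existence_initial_singularity}.
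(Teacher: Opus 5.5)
Your proposal is correct and follows the paper's proof. The upper bound uses the diamond $I(r^-(x),r^+(y))$ with the projective Cauchy time $\tauL(a,\cdot)^2/\tauL(\cdot,b)^2$, and the lower bound adds the two estimates from the supporting half-spaces at $r^-(x)$ and $r^+(y)$; both are then extended from causally related pairs to all of $\Omega$. The only differences are cosmetic: you invoke the projective-map inequality directly on the half-spaces rather than the exact formula for $\delta_{I^+(H)}$ from the future-complete case, and your slab detour can simply be dropped.
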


\begin{proof}
    Let us prove that equality holds on the right-hand side when $\Omega=I(a,b)$ for some $a,b\in \R^{1,n}$ with $a\ll b$. In that case, one has $\tau^+(x)=\tauL(x,b)$ and $\tau^-(x)=\tauL(a,x)$ for all $x\in \Omega$. Let 
    $\tau:\Omega\to \R$ be the time function given by 
    $$\tau(x)=\frac{\tauL(a,x)^2}{\tauL(x,b)^2}.$$
    Then, if $\gamma(t)=x+tv$ is an affine lightlike geodesic, then 
    $$\tau(\gamma(t))=\frac{\tauL(a,x)^2+2\b(x-a,v)t}{\tauL(x,b)^2+2\b(x-b,v)t}$$
    is a homography, so $\tau$ is projective. Also, if $\gamma$ is an inextensible causal curve of $I(a,b)$, then $\gamma$ has a past and future endpoint in $C^+(a)$ and $C^-(b)$, respectively. Hence $\tau$ intersects every level set $\tau^{-1}(\{t\})$ for $t>0$, so $\tau$ is a Cauchy time function. Now $\tau(\Omega)=\R_{>0}$, hence from Lemma \ref{lemme_temps_projectif}, one has 
    \begin{equation}
    \label{equation_d_T_diamant}
        \delta_\Omega=\dhat_{\ln(\tau)}=2\dhat_{\ln(\tau^-/\tau^+)}.
    \end{equation}
    
    We now return to the general case. For $x\in \Omega$, we let $r^+(x)$ and $r^-(x)$ denote the terminal and initial singularity of $x$, which are given by Proposition \ref{proposition_existence_initial_singularity}. We let $H^\pm(x)$ denote the supporting hyperplane at $r^\pm(x)$ orthogonal to $r^\pm(x)-x$, and we let $p_x^\pm:\R^{1,n}\to H^\pm(x)$ denote the orthogonal projection onto $H_\pm(x)$. Let $x,y\in \Omega$ such that $x\leq y$. As in the proof of Theorem \ref{thm_equivalence_d_lnt_delta_Omega}, one has 
    \begin{align*}
        \delta_\Omega(x,y)&\geq \delta_{I^+(H^-(x))}(x,y)\\
        &=\ln(\tauL(p_x^-(y),y))-\ln(\tauL(p_x^-(x),x))\\
        &\geq \ln(\tau^-(y))-\ln(\tau^-(x)).
    \end{align*}
    Similarly, one has 
    $$\delta_\Omega(x,y)\geq \delta_{I^-(H^+(y))}(x,y)\geq \ln(\tau^+(x))-\ln(\tau^+(y)).$$
    Combining the previous two inequalities, we deduce
    $$\dhat_{\ln(\tau^-/\tau^+)}(x,y)=\ln(\tau^-(y))-\ln(\tau^+(y))-\ln(\tau^-(x))+\ln(\tau^+(x))\leq  2\delta_\Omega(x,y).$$
    Let $a=r^-(x)$ and $b=r^+(y)$ and let $D=I(a,b)$. Then $x,y\in D\subset \Omega$, hence 
    $$\begin{aligned}
        \delta_\Omega(x,y)&\leq \delta_D(x,y)\\
        &=\ln\left(\frac{\tauL(a,y)^2}{\tauL(y,b)^2}\frac{\tauL(x,b)^2}{\tauL(a,x)^2}\right)\\
        &=2\ln\left(\frac{\tauL(a,y)}{\tau^+(y)}\frac{\tauL(x,b)}{\tau^-(x)}\right)\\
        &\leq 2\ln\left(\frac{\tau^-(y)}{\tau^+(y)}\frac{\tau^+(x)}{\tau^-(x)}\right)=2\dhat_{\ln(\tau^-/\tau^+)}(x,y).
    \end{aligned}$$
    
    Hence, the desired inequalities hold for every $x\leq y$. Therefore, the inequalities hold for general $x,y\in \Omega$ by Lemma \ref{lemme_comparaison_delta_M_et_d_t}. 
\end{proof}

\begin{thm}
    \label{thm_equivalence_mark_nulldistance_cas_cc_bord_stablement_acausal}
    Let $\Omega$ be a causally convex domain in $\R^{1,n}$, that is neither future nor past complete. Let $\tau^-$ and $\tau^+$ be the associated past and future cosmological time functions. If $\partial_c^+ \Omega$ and $\partial_c^- \Omega$ are $\varepsilon$-stably acausal, then 
    $$\frac{\varepsilon}{2\sqrt{2+2(1+\varepsilon)^2}}\dhat_{\ln(\tau^-/\tau^+)}-c_\varepsilon\leq \delta_\Omega \leq 2\dhat_{\ln(\tau^-/\tau^+)}.$$
\end{thm}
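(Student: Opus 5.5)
By Lemma \ref{lemme_comparaison_delta_M_et_d_t}, it suffices to prove both inequalities for pairs $x\leq y$ lying on a common lightlike geodesic. For such pairs the null distance is explicit by Fact \ref{fait_null_distance}:
$$\dhat_{\ln(\tau^-/\tau^+)}(x,y)=\ln\frac{\tau^-(y)}{\tau^-(x)}+\ln\frac{\tau^+(x)}{\tau^+(y)}.$$
The right-hand inequality $\delta_\Omega\leq 2\dhat_{\ln(\tau^-/\tau^+)}$ needs no convexity. I would repeat the diamond argument from the proof of Theorem \ref{thm_equivalence_mark_nulldistance_cas_cc}, as follows.

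\begin{enumerate}
\item Since $\Omega$ is causally convex and $\partial_c^\pm\Omega$ are stably acausal, $\Omega\subset I^+(C^-_\varepsilon)$ for a suitable stable cone, and similarly in the future. Proposition \ref{proposition_existence_initial_singularity} then provides an initial singularity $a=r^-(x)$ and a terminal singularity $b=r^+(y)$; these exist even without uniqueness.
\item We have $x,y\in I(a,b)$. Causal convexity gives $I(a,b)\subset\Omega$, because $a,b\in\overline\Omega$ and by approximation.
\item Apply Proposition \ref{proposition_naturalité} together with the explicit formula (\ref{equation_d_T_diamant}) for $\delta_{I(a,b)}$.
\item Bound $\tauL(a,y)\leq\tau^-(y)$ and $\tauL(x,b)\leq\tau^+(x)$, while $\tauL(a,x)=\tau^-(x)$ and $\tauL(y,b)=\tau^+(y)$. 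The calculation is then verbatim the previous one.
\end{enumerate}

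For the lower bound I would mimic the proof of Theorem \ref{thm_equivalence_d_lnt_delta_Omega_cas_stablement_acausal}, once toward the past and once toward the future.

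\textbf{Past comparison.} Let $r^-(x)$ be an initial singularity of $x$ and set $U^-=\R^{1,n}\setminus J^-_\varepsilon(r^-(x))$. By $\varepsilon$-stable acausality of $\partial_c^-\Omega$, together with causal convexity (every point of $\Omega$ lies in the future of $\partial_c^-\Omega$ along verticals), we get $\Omega\subset U^-$. Proposition \ref{proposition_naturalité} and Lemma \ref{cor_Omega_eps} then give
$$\delta_\Omega(x,y)\geq\kappa\ln\bigl(\tau_{U^-}(y)/\tau_{U^-}(x)\bigr),\qquad \kappa=\frac{\varepsilon}{\sqrt{2+2(1+\varepsilon)^2}}.$$
Using $\tau_{U^-}(y)\geq\tau^-(y)$ (since $\Omega\subset U^-$), Proposition \ref{prop_initial_singularity} and the same computation as before, $\tau_{U^-}(x)\leq \frac{(1+\varepsilon)^2+1}{(1+\varepsilon)^2-1}\tau^-(x)$. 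Hence
$$\delta_\Omega(x,y)\geq\kappa\ln\frac{\tau^-(y)}{\tau^-(x)}-c_\varepsilon.$$

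\textbf{Future comparison.} Symmetrically, with $U^+=\R^{1,n}\setminus J^+_\varepsilon(r^+(y))$ and time reversed, we obtain
$$\delta_\Omega(x,y)\geq\kappa\ln\frac{\tau^+(x)}{\tau^+(y)}-c_\varepsilon.$$
Averaging the two bounds gives
$$\delta_\Omega(x,y)\geq\frac{\kappa}{2}\dhat_{\ln(\tau^-/\tau^+)}(x,y)-c_\varepsilon$$
for $x\leq y$. This is exactly the claimed constant $\frac{\varepsilon}{2\sqrt{2+2(1+\varepsilon)^2}}$.

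\textbf{Main obstacle.} The final step, passing from causally related pairs to arbitrary pairs, is where care is needed. Lemma \ref{lemme_comparaison_delta_M_et_d_t} is stated for linear bounds, while here we have an additive constant $c_\varepsilon$. A naive chain argument would accumulate $m\,c_\varepsilon$ over a chain with $m$ links. I would handle this as follows.
\begin{enumerate}
\item Use the infinitesimal version rather than the chain version. The estimate of Lemma \ref{cor_Omega_eps} is a pointwise bound $T\ln\tau_{U}\cdot v\leq\kappa^{-1}F_U(v)$.
\item Along a piecewise lightlike curve, compare $F_\Omega\geq F_{U^\pm}$ pointwise, choosing $U^\pm$ adapted to the current point.
\item Exploit the pointwise comparison $\tau^-\leq\tau_{U^-}\leq C\tau^-$ with $C=\frac{(1+\varepsilon)^2+1}{(1+\varepsilon)^2-1}$, and its analogue for the future, via the differential inequality $|d\ln\tau^\pm(v)|\leq\kappa^{-1}F_\Omega(v)$.
\end{enumerate}
The difficulty is that $\tau^\pm$ is not smooth. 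To get a true Lipschitz bound I would argue locally, on short lightlike segments where one fixed $U^\pm$ works, so that the additive defect disappears in the limit. If this fails, one can fall back on the weaker reading: the inequality is used only up to quasi-isometry, in which case a single additive constant per causally related pair, combined with a geodesic of $\dhat$, may still be arranged.
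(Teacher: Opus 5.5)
\paragraph{Comparison with the paper.} For causally related pairs your argument is the paper's own. The upper bound uses the diamond $I(r^-(x),r^+(y))\subset\Omega$ with Equation~(\ref{equation_d_T_diamant}). The two lower bounds use the comparison domains $\R^{1,n}\setminus J^\mp_\varepsilon$ based at an initial/terminal singularity, together with Lemma~\ref{cor_Omega_eps} and Proposition~\ref{prop_initial_singularity}, and are then averaged. The paper finishes with a one-line appeal to Lemma~\ref{lemme_comparaison_delta_M_et_d_t}, and your objection to that step is correct. The lemma only transports linear inequalities, so a per-link constant $c_\varepsilon$ accumulates to $m\,c_\varepsilon$ along a chain with $m$ links. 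The same remark applies to the last line of the proof of Theorem~\ref{thm_equivalence_d_lnt_delta_Omega_cas_stablement_acausal}.

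\paragraph{The gap: your repair does not work.} A two-sided bound $\tau^-\le\tau_{U^-}\le C\tau^-$ gives no control on derivatives. It therefore cannot turn $d\ln\tau_{U^-}(v)\le\kappa^{-1}F_{U^-}(v)$ into $|d\ln\tau^-(v)|\le\kappa^{-1}F_\Omega(v)$. Localizing does not help either. On a short segment $[x,y]$ your estimate loses $\kappa\ln(\tau_{U^-}(x)/\tau^-(x))$, which depends only on the base point. By Lemma~\ref{temps_cosmo_Omega_eps} this ratio is always at least $(1+\varepsilon)/\sqrt{(1+\varepsilon)^2-1}>1$, because $\tau_{U^-}$ never touches $\tau^-$ at $x$. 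So the resulting lower bound becomes negative as $y\to x$; the defect does not disappear. The fallback is not an argument, and would not give the stated constants anyway.

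\paragraph{What does work.} Compare with a function that does touch. Put $r=r^-(x)$ and $x-r=(t,p)$. Then $\tau^-\ge\tauL(r,\cdot)$ on $\Omega\cap I^+(r)$, with equality at $x$. Restrict $\tau^-$ to a lightlike segment in direction $v$; it is differentiable at almost every point, since a regular cosmological time is locally Lipschitz (Andersson--Galloway--Howard). At such a point its logarithmic derivative is $-\b(x-r,v)/\tau^-(x)^2$, whose absolute value is at most $|v|/(\sqrt2(t-|p|))$. On the other hand, Lemma~\ref{lemme_Omega_eps} gives $F_\Omega(v)\ge F_{U^-}(v)\ge\varepsilon|v|/(\sqrt2((1+\varepsilon)t+|p|))$, and $(1+\varepsilon)|p|\le t$. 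Together these give $|d\ln\tau^-(v)|\le\frac{(1+\varepsilon)^2+1}{\varepsilon^2}F_\Omega(v)$, and symmetrically for $\tau^+$. Integrate along piecewise lightlike curves (Theorem~\ref{thm_lien_dmark_Fmark}) and apply Lemma~\ref{lemme_comparaison_delta_M_et_d_t} in its linear form. This yields $\dhat_{\ln(\tau^-/\tau^+)}\le\frac{2((1+\varepsilon)^2+1)}{\varepsilon^2}\,\delta_\Omega$ on all of $\Omega$. That is more than enough for the quasi-isometry statements where the theorem is later used. Its constant differs from the stated one, however. So for non-causally related pairs, the inequality with exactly the stated constants is established neither by your argument nor by the paper's.
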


\begin{proof}
    The inequality $\delta_\Omega \leq 2\dhat_{\ln(\tau^-/\tau^+)}$ still holds because $\Omega$ is causally convex. From Proposition \ref{proposition_existence_initial_singularity}, the cosmological functions are finite since regular. Let $x,y\in \Omega$ such that $x\leq y$. We choose an initial singularity $r(x)\in I^-(x)$ and we let $U=\R^{1,n}\setminus J_\varepsilon^-(r(x))$ and $\tau_U$ be the cosmological time function of $U$. Then by Lemma \ref{cor_Omega_eps}, one has
    $$\frac{\varepsilon}{\sqrt{2+2(1+\varepsilon)^2}}\ln(\tau_U(y)/\tau_U(x))\leq \delta_U(x,y) \leq\delta_\Omega(x,y) .$$
    A similar computation as in the proof of Theorem \ref{thm_equivalence_d_lnt_delta_Omega_cas_stablement_acausal} then implies that 
        $$\frac{\varepsilon}{\sqrt{2+2(1+\varepsilon)^2}}\ln\frac{\tau^-(y)}{\tau^-(x)}-c_\varepsilon\leq \delta_\Omega(x,y).$$
    A similar argument applied to a terminal singularity in the future of $y$ implies
    $$\frac{\varepsilon}{\sqrt{2+2(1+\varepsilon)^2}}\ln\frac{\tau^+(x)}{\tau^+(y)}-c_\varepsilon\leq \delta_\Omega(x,y).$$
    Combining the previous two inequalities gives
    $$\frac{\varepsilon}{2\sqrt{2+2(1+\varepsilon)^2}}\dhat_{\ln(\tau^-/\tau^+)}(x,y)-c_\varepsilon\leq \delta_\Omega(x,y).$$
    The result follows by Lemma \ref{lemme_comparaison_delta_M_et_d_t}.
\end{proof}

\subsection{Consequences for the null distance}\label{section_consequence_null_distance} In this section we prove Theorem \ref{thm_intro_null_dist}.

\begin{prop}\label{prop_completude_null_dist}
    Let $\Omega$ be a future complete (resp. causally convex and neither future nor past complete) domain in $\R^{1,n}$ and let $\tau$ be its cosmological time function (resp. $\tau=\tau^-/\tau^+$). If  $\partial_c^+\Omega$ and $\partial_c^-\Omega$ are stably acausal, or if $\Omega$ is convex and contains no lightlike line, then $(\Omega,\dhat_{\ln(\tau)})$ is a complete metric space. 
\end{prop}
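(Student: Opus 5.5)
The plan is to transfer completeness from $\delta_\Omega$ (or from $\K_\Omega$) to the null distance $\dhat_{\ln(\tau)}$ using the comparison theorems of this section. The key observation is that completeness passes from $d_1$ to $d_2$ whenever $d_2 \geq a\,d_1 - b$ for some $a>0$ and $b\geq 0$, \emph{provided} $d_2$ is already a genuine distance inducing the manifold topology.

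Indeed, if $(x_k)$ is $d_2$-Cauchy, then the lower bound $d_1 \leq (d_2+b)/a$ shows that $(x_k)$ is $d_1$-bounded. In the bi-Lipschitz case it is even $d_1$-Cauchy. With only a quasi-isometric bound, I would instead use properness. $(\Omega,\delta_\Omega)$ is complete (Corollary \ref{cor_completude_delta_Omega} or Proposition \ref{prop_complétude_delta_Omega}) and a length space, hence proper by Hopf--Rinow. So $(x_k)$ stays in a compact subset $K$ of $\Omega$. It then suffices that $\dhat_{\ln(\tau)}$ induces the manifold topology on $\Omega$: a $d_2$-Cauchy sequence in the compact set $K$ has a convergent subsequence, and therefore converges in $d_2$.

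\textbf{Step 1: choose the comparison.} I would split into cases.
\begin{itemize}
\item Stably acausal boundary: use Theorem \ref{thm_equivalence_d_lnt_delta_Omega_cas_stablement_acausal} in the future complete case and Theorem \ref{thm_equivalence_mark_nulldistance_cas_cc_bord_stablement_acausal} in the causally convex case. Both give $\delta_\Omega \leq 2\,\dhat_{\ln(\tau)}$, i.e.\ $\dhat_{\ln(\tau)} \geq \tfrac12 \delta_\Omega$, with $\delta_\Omega$ complete by Corollary \ref{cor_completude_delta_Omega}.
\item Convex with no lightlike line: use Theorem \ref{thm_equivalence_d_lnt_delta_Omega}, respectively Theorem \ref{thm_equivalence_mark_nulldistance_cas_cc}, which gives $\dhat \geq \tfrac12\delta_\Omega$, with $\delta_\Omega$ complete by Proposition \ref{prop_complétude_delta_Omega}.
\end{itemize}
In the second case, Theorem \ref{thm_equivalence_mark_nulldistance_cas_cc} requires $\Omega$ to lie between two spacelike hyperplanes. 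I would derive this from convexity plus the absence of lightlike lines, by producing spacelike supporting hyperplanes as in the proof of Theorem \ref{thm_equivalence_d_lnt_delta_Omega}. This may need care when $\Omega$ is unbounded.

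\textbf{Step 2: bi-Lipschitz shortcut.} In all cases the useful inequality $\dhat \geq \tfrac12\delta_\Omega$ holds with no additive constant. So a $\dhat$-Cauchy sequence is $\delta_\Omega$-Cauchy, and hence $\delta_\Omega$-convergent to some $x\in\Omega$.

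\textbf{Step 3: upgrade to $\dhat$-convergence.} This is the main obstacle. We must show that $\delta_\Omega$-convergence to $x$ implies $\dhat$-convergence, i.e.\ that $\dhat$ is continuous for the manifold topology.
\begin{itemize}
\item I would first try the other comparison inequality: $\dhat \leq C\,\delta_\Omega + c$ in the stably acausal case, and $\dhat \leq 2\delta_\Omega$ in the convex case. Where the additive constant vanishes this settles Step 3 directly.
\item Where it does not, I would argue locally. By Sormani--Vega, the null distance of a locally Lipschitz (here continuous and regular) time function induces the manifold topology, so near $x$ it is controlled by the Euclidean distance. Since $\delta_\Omega$-convergence implies Euclidean convergence ($\delta_\Omega \geq \tfrac{\varepsilon}{\cdots}\K_\Omega$ locally, or via Proposition \ref{prop_complétude_delta_Omega}), we get $\dhat(x_k,x)\to 0$.
\end{itemize}
Definiteness of $\dhat$ also follows from $\dhat \geq \tfrac12\delta_\Omega$, since $\delta_\Omega$ is a distance. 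This gives completeness.
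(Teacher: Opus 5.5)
Your overall route is the paper's: you push completeness of $\delta_\Omega$ to $\dhat_{\ln(\tau)}$ through the four comparison theorems, which the paper does in a single sentence. Your observation that $\delta_\Omega\leq 2\dhat_{\ln(\tau)}$ never carries an additive constant, together with your Step 3, handles a point that sentence skips. In Theorems~\ref{thm_equivalence_d_lnt_delta_Omega_cas_stablement_acausal} and~\ref{thm_equivalence_mark_nulldistance_cas_cc_bord_stablement_acausal} the upper bound on $\dhat_{\ln(\tau)}$ has an additive constant, so $\delta_\Omega$-convergence does not by itself give $\dhat_{\ln(\tau)}$-convergence. Your local fix is right in substance, but it needs only continuity, not a Lipschitz condition or the Sormani--Vega topology theorem. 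The functions $\tau^\pm$ are regular, hence continuous, by Proposition~\ref{proposition_existence_initial_singularity}; stable acausality puts $\Omega$ in the future (resp.\ past) of a stable lightcone. Then, if $x_k\to x$, pick $z\in I^+(x)$ close to $x$. For large $k$ you have $x_k\ll z$, so $\dhat_{\ln(\tau)}(x_k,x)\leq 2\ln\tau(z)-\ln\tau(x_k)-\ln\tau(x)$, which is small.

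The step that fails as written is in Step 1: you cannot derive ``$\Omega$ lies between two spacelike hyperplanes'' from convexity and the absence of lightlike lines in $\Omega$. Take $\Omega=\{(t,p)\,\vert\,\vert t\vert<p_1\}$. It is convex, causally convex, neither future nor past complete, and contains no lightlike line. Yet $I^+(\Omega)=\{t+p_1>0\}$ is bounded by a lightlike hyperplane, so no spacelike hyperplane lies below $\Omega$. In fact $\tau^-\equiv+\infty$ (and likewise $\tau^+$), so $\tau^-/\tau^+$ is not even defined.

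The hypothesis of Theorem~\ref{thm_equivalence_mark_nulldistance_cas_cc} must come instead from the finiteness of $\tau^\pm$, which is implicit in the statement. By causal convexity, $J^-(x)\cap\Omega=J^-(x)\cap I^+(\Omega)$ for $x\in\Omega$, so $\tau^-$ is the cosmological time of the convex future complete domain $I^+(\Omega)$. If that domain contained a lightlike line, its recession cone would contain a past lightlike direction, and $\tau^-$ would be infinite. So finiteness of $\tau^-$ forces $I^+(\Omega)$ to contain no lightlike line. Hence it has a spacelike supporting hyperplane, as in the proof of Theorem~\ref{thm_equivalence_d_lnt_delta_Omega}; the argument for $\tau^+$ is symmetric. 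In the bounded case, which is the one used for Theorem~\ref{thm_intro_null_dist}, this is automatic. The paper's own proof cites Theorem~\ref{thm_equivalence_mark_nulldistance_cas_cc} without checking this hypothesis either. The correct fix is to state the hypothesis (finite $\tau^\pm$, or boundedness), not to derive it from ``no lightlike line''.
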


\begin{proof}
    From Proposition \ref{prop_complétude_delta_Omega} and Corollary \ref{cor_completude_delta_Omega}, the metric space $(\Omega,\delta_\Omega)$ is complete. By Theorems \ref{thm_equivalence_d_lnt_delta_Omega}, \ref{thm_equivalence_d_lnt_delta_Omega_cas_stablement_acausal}, \ref{thm_equivalence_mark_nulldistance_cas_cc} and \ref{thm_equivalence_mark_nulldistance_cas_cc_bord_stablement_acausal}, we deduce that $(\Omega,\dhat_{\ln(\tau)})$ is a complete metric space.
\end{proof}

\begin{rmk}
    Burtsher--Garc{\'\i}a-Heveling have shown in \cite{Burtscher_Garcia_Heveling} that a conformal spacetime $(M,[g])$ is globally hyperbolic if and only if it admits a time function $\tau$ such that $(M,\dhat_\tau)$ is a complete metric space. If $M=(\Omega,[\b])$ is as in the previous corollary, then $M$ is globally hyperbolic, see Fact \ref{fait_cc_implique_globalement_hyperbolique}. The time function $\ln(\tau)$, where $\tau$ is as above, is a concrete example that illustrates Burtsher--Garc{\'\i}a-Heveling's theorem \cite{Burtscher_Garcia_Heveling}.
\end{rmk} 

\begin{proof}[Proof of Theorem \ref{thm_intro_null_dist}]
    Let $\Omega$ be a convex domain that is either bounded and causally convex, or future complete with no lightlike line, and let $\tau$ be the associated time function, as in Proposition \ref{prop_completude_null_dist}. From Proposition \ref{prop_completude_null_dist}, the metric space $(\Omega,\dhat_{\ln(\tau)})$ is complete. By Theorems \ref{thm_equivalence_d_lnt_delta_Omega}, \ref{thm_equivalence_d_lnt_delta_Omega_cas_stablement_acausal},  \ref{thm_equivalence_mark_nulldistance_cas_cc} and  \ref{thm_equivalence_mark_nulldistance_cas_cc_bord_stablement_acausal}, the distance $\delta_\Omega$ and $\dhat_{\ln(\tau)}$  are quasi-isometrically equivalent.
    By Proposition \ref{prop_dist_quasi_iso}, we deduce that $(\Omega,\delta_\Omega)$ is Gromov hyperbolic if and only if $(\Omega,\dhat_{\ln(\tau)})$ is Gromov hyperbolic. 
\end{proof}

\begin{rmk}
    Note that the previous proof shows that Theorem \ref{thm_général_condi_suffisante} and \ref{thm_causally_thin} also have analogues for the null distance. Precisely, these theorems remain valid if $\delta_\Omega$ is replaced by $\dhat_{\ln(\tau)}$, or by $\dhat_{\ln(\tau^-/\tau^+)}$, depending on whether $\Omega$ is future complete, or causally convex and neither future nor past complete.
\end{rmk}

\section{Gromov hyperbolicity implies stable acausality}\label{section_Gromov_hyp_implique_stab_acaus}

In this section, we prove the ``only if'' part of Theorems \ref{thm_intro_CNS_cc} and \ref{thm_intro_CNS_futur_complet}.

\subsection{Limits of convex domains} Recall that compact subsets of $\R^{1,n}$ are endowed with a natural topology induced by the Hausdorff distance, which can be expressed as
$$d_\text{Haus}(C,C^\prime)=\max_{x\in C^\prime}d_h(x,C)+\max_{y\in C}d_h(y,C^\prime).$$
We  will say that a sequence $(\Omega_k)$ of convex domains in $\R^{1,n}$ \emph{converges} to a convex domain $\Omega$ if for every compact $C\subset \R^{1,n}$, the sequence of compact subsets $C\cap \overline\Omega_k$ converges to  $C\cap \overline\Omega$ with respect to the Hausdorff distance.

\begin{lem}
\label{lemme_convergence_distance_markowitz_dun_convexe}
    Let $(\Omega_k)$ and $\Omega$ be convex domains in $\R^{1,n}$ containing no lightlike line. If $(\Omega_k)$ converges to $\Omega$, then 
    $\delta_{\Omega_k}\to \delta_{\Omega}$
    uniformly on compact subsets of $\Omega$. More precisely, for every compact $K\subset\Omega$, for every $\varepsilon>0$, there exists $k_0$ such that for every $k\geq k_0$, for every $x,y\in K$, one has 
    $$(1-\varepsilon)\delta_{\Omega_k}(x,y)\leq\delta_{\Omega}(x,y)\leq(1+\varepsilon)\delta_{\Omega_k}(x,y) $$
\end{lem}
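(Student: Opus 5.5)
Since $\delta_\Omega$ and $\delta_{\Omega_k}$ are distances (Proposition \ref{prop_complétude_delta_Omega}) whose sizes are governed by the infinitesimal functional of Proposition \ref{formule_F_Omega}, the plan is to prove a two-sided multiplicative comparison $(1-\eta)F_{\Omega_k}\leq F_\Omega\leq(1+\eta)F_{\Omega_k}$ on lightlike vectors based on a slightly larger compact set. We then integrate this along near-optimal piecewise lightlike curves using Theorem \ref{thm_lien_dmark_Fmark}. The difficulty is that near-optimal curves for points of $K$ might leave any prescribed compact set, so we must first localize.

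\emph{Step 1 (localization).} Fix a compact $K\subset\Omega$. Since $(\Omega,\delta_\Omega)$ is complete, closed balls are compact (Hopf--Rinow for the length structure of Theorem \ref{thm_lien_dmark_Fmark}). Set $D=\operatorname{diam}_{\delta_\Omega}K$ and let $K'$ be the closed $\delta_\Omega$-neighborhood of radius $3D+1$ of $K$; this is a compact subset of $\Omega$. A curve between points of $K$ with $F_\Omega$-length at most $(1+\eta)\delta_\Omega(x,y)+\eta$ stays in $K'$. Likewise, for $k$ large, a near-optimal curve for $\delta_{\Omega_k}$ stays in $K'$. To see this, note that once the functionals are comparable on $K'$, a curve exiting $K'$ has $F_{\Omega_k}$-length at least $(1-\eta)^{-1}(3D+1)$ up to the exit point. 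This exceeds $2D$, which bounds $\delta_{\Omega_k}(x,y)$ via the other comparison, so such a curve cannot be near-optimal. Along the way we use that $K'\subset\Omega_k$ for $k$ large, which follows from Hausdorff convergence of convex sets with interior.

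\emph{Step 2 (convergence of the functional).} Consider the quantities $1/d_h(x,x_v^\pm)$ for $x\in K'$ and $v$ in the compact set of $h$-unit lightlike vectors. Hausdorff convergence of $\overline\Omega_k\cap C$ to $\overline\Omega\cap C$, together with convexity, implies that the exit point of the lightlike ray from $x$ in $\Omega_k$ converges to the one in $\Omega$. This convergence is uniform on compacts, including the case where a ray escapes to infinity, in which case $1/d_h\to 0$. We also need a uniform positive lower bound on $F_\Omega$ over $K'$. Each $F_\Omega(v)$ with $|v|=1$ is at least $1/d_h(x,x_v^+)+1/d_h(x,x_v^-)>0$, because $\Omega$ contains no lightlike line, so at least one endpoint is finite. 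We must check that this lower bound is lower semicontinuous and hence bounded below on the compact set. With it, uniform additive convergence upgrades to the multiplicative bound $|F_{\Omega_k}/F_\Omega-1|\leq\eta$ for $k\geq k_0$.

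\emph{Step 3 (conclusion).} For $x,y\in K$, take a near-optimal curve for $\delta_\Omega$ lying in $K'$. Its $F_{\Omega_k}$-length is at most $(1+\eta)$ times its $F_\Omega$-length, so $\delta_{\Omega_k}\leq(1+\eta)\delta_\Omega+\text{small}$. The symmetric argument gives $\delta_\Omega\leq(1+\eta)\delta_{\Omega_k}$ plus the same small error. The additive errors can be made relative because the error in choosing near-optimal curves is arbitrary. Choosing $\eta$ in terms of $\varepsilon$ then gives the stated inequalities.

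The main obstacle is Step 2: establishing continuity of the exit points $x_v^\pm$ under Hausdorff convergence uniformly, especially near the escape-to-infinity case and near directions tangent to $\partial\Omega$. I would handle this using convexity of the sets. For a point $z$ on the ray inside $\Omega$, a small ball around $z$ lies in $\Omega_k$ for large $k$. For a point beyond the exit point, a supporting half-space of $\Omega$ separates it, and an approximate separation persists for $\Omega_k$.
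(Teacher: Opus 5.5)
Your proposal is correct. It is essentially the argument the paper defers to in \cite{Limbeek_Zimmer}: uniform multiplicative convergence $F_{\Omega_k}/F_\Omega\to 1$ on the unit lightlike bundle over a compact $\delta_\Omega$-neighbourhood $K'$ of $K$ (via continuity of the gauge of $\Omega-x$, with positivity from the absence of lightlike lines), followed by Theorem~\ref{thm_lien_dmark_Fmark} and a localization that keeps near-optimal piecewise lightlike curves inside $K'$.

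One harmless slip: with your comparison, a curve exiting $K'$ has $F_{\Omega_k}$-length at least $(1+\eta)^{-1}(3D+1)$, not $(1-\eta)^{-1}(3D+1)$. This still exceeds the length $(1+\eta)^2D+\eta$ of near-optimal $\delta_{\Omega_k}$-curves for small $\eta$, so the argument goes through; your ``$2D$'' bound just needs this slack when $D$ is small.
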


\begin{proof}
    The proof is identical to \cite[App. B]{Limbeek_Zimmer}. 
\end{proof}

We deduce the following corollary, which is similar to \cite[Thm. 5.1]{Zimmer_complex_1}.

\begin{cor}
    \label{cor_limite_de_domaines_Gromov_hyp}
    Let $(\Omega_k)$ and $\Omega$ be convex domains in $\R^{1,n}$ containing no lightlike line, such that $(\Omega_k)$ converges to $\Omega$, and let $r>0$. If $(\Omega_k, \delta_{\Omega_k})$ is $r$-Gromov hyperbolic for all $k\geq 0$, then $(\Omega,\delta_\Omega)$ is Gromov hyperbolic.
\end{cor}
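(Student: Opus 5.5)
We will use the four-point characterization of Proposition~\ref{prop_caracterisation_produit_gromov}. This avoids any need to control geodesics of $(\Omega,\delta_\Omega)$, which are poorly understood. First we check that this characterization applies. By Proposition~\ref{prop_complétude_delta_Omega}, $\delta_\Omega$ and each $\delta_{\Omega_k}$ are complete distances, since the domains are convex and contain no lightlike line. They are length metrics: by Theorem~\ref{thm_lien_dmark_Fmark}, $\delta_\Omega$ is an infimum of lengths of piecewise lightlike curves. They induce the manifold topology, hence are locally compact. By Hopf--Rinow all these spaces are proper and geodesic, so Proposition~\ref{prop_caracterisation_produit_gromov} applies to each of them.

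Since $(\Omega_k,\delta_{\Omega_k})$ is $r$-Gromov hyperbolic with the same $r$ for every $k$, the constant in Proposition~\ref{prop_caracterisation_produit_gromov} depends only on $r$. Thus there is $\delta'=\delta'(r)$ (for instance $\delta'=8r$ in Bridson--Haefliger's conventions) such that for all $k$ and all $x,y,z,w\in\Omega_k$,
$$(x,z)^{(k)}_w\geq \min\{(x,y)^{(k)}_w,(y,z)^{(k)}_w\}-\delta',$$
where $(\cdot,\cdot)^{(k)}$ is the Gromov product of $\delta_{\Omega_k}$. The explicit value is irrelevant; only its uniformity in $k$ matters.

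Now fix $x,y,z,w\in\Omega$ and let $K=\{x,y,z,w\}$, a compact subset of $\Omega$. We need $K\subset\Omega_k$ for $k$ large. By convexity, a small closed $h$-ball around each point of $K$ lies in $\Omega$. Hausdorff convergence on compacta of $\overline{\Omega}_k\cap C$ to $\overline{\Omega}\cap C$ then forces these points to lie in $\Omega_k$ eventually: otherwise a supporting half-space of $\Omega_k$ would miss a fixed ball around the point, contradicting convergence. By Lemma~\ref{lemme_convergence_distance_markowitz_dun_convexe}, for every $\eta>0$ and $k$ large, all six pairwise distances satisfy $\delta_{\Omega_k}\to\delta_\Omega$. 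Hence each Gromov product $(\cdot,\cdot)^{(k)}_w$ among points of $K$ converges to the corresponding $(\cdot,\cdot)_w$ for $\delta_\Omega$. Passing to the limit in the inequality above, and using continuity of $\min$, gives
$$(x,z)_w\geq\min\{(x,y)_w,(y,z)_w\}-\delta'.$$
Since $x,y,z,w$ were arbitrary and $\delta'$ depends only on $r$, Proposition~\ref{prop_caracterisation_produit_gromov} shows that $(\Omega,\delta_\Omega)$ is Gromov hyperbolic.

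The only delicate point is the uniformity of the four-point constant in terms of the thin-triangle constant $r$. This is standard: the proof of Proposition~\ref{prop_caracterisation_produit_gromov} gives explicit constants depending only on $\delta$. It is essential here, because a $k$-dependent constant would not survive the limit. The inclusion $K\subset\Omega_k$ for large $k$ is routine.
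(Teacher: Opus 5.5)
Your proof is correct and follows the paper's argument: apply the four-point characterization of Proposition~\ref{prop_caracterisation_produit_gromov} with a constant that is uniform in $k$, then pass to the limit in the Gromov products using Lemma~\ref{lemme_convergence_distance_markowitz_dun_convexe}. Your additional checks fill in details the paper leaves implicit: properness and geodesicity via completeness and Hopf--Rinow, the fact that the four-point constant depends only on $r$, and the eventual inclusion of $\{x,y,z,w\}$ in $\Omega_k$ (where the small ball comes from openness of $\Omega$ rather than convexity, and the separation step uses convexity of $\Omega_k$).
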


\begin{proof}
    Since $\Omega_k$ is $r$-Gromov hyperbolic, there exists a constant $r\geq 0$ such that 
    $$(x,z)_w^k \ge \min\{(x,y)_w^k,(y,z)_w^k\} - r,$$
    for all $x,y,z,w\in \Omega_k$, where $(\cdot\vert\cdot)^{(k)}$ is the Gromov product of $(\Omega_k, \delta_{\Omega_k})$. Taking the limit when $k\to \infty$, Lemma \ref{lemme_convergence_distance_markowitz_dun_convexe} implies that 
    $$(x,z)_w \ge \min\{(x,y)_w,(y,z)_w\} - r,$$
    for all $x,y,z,w\in \Omega$. It follows from Proposition \ref{prop_caracterisation_produit_gromov} that $(\Omega,\delta_\Omega)$ is Gromov hyperbolic.
\end{proof}

\subsection{A weaker necessary condition} A curve that is the union of two lightlike segments is called a \emph{broken lightlike segment}. Such a curve is either causal or achronal.

\begin{thm}
    \label{thm_coindi_necessaire_faible}
    Let $\Omega$ be a convex, causally convex domain in $\R^{1,n}$. If $(\Omega,\delta_\Omega)$ is Gromov hyperbolic, then $\partial\Omega$ does not contain a causal broken lightlike segment. If moreover $\Omega$ is future complete, then $\partial\Omega$ is acausal. 
\end{thm}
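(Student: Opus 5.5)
We argue by contradiction, using causal curves as quasi-geodesics (Lemma \ref{lemme_2_0_quasi_geod_grace_a_la_null_dist}, quoted in the introduction): in a convex, causally convex domain with no lightlike line, causal curves are $(2,0)$-quasi-geodesics of $(\Omega,\delta_\Omega)$. This follows from Theorem \ref{thm_equivalence_mark_nulldistance_cas_cc} (or Theorem \ref{thm_equivalence_d_lnt_delta_Omega} in the future complete case) together with Fact \ref{fait_null_distance}. First we check that $\delta_\Omega$ is a complete distance. If $\Omega$ contained a lightlike line then $(\Omega,\delta_\Omega)$ would not even be a metric space, so we may assume it does not, and Proposition \ref{prop_complétude_delta_Omega} applies. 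By Hopf--Rinow, the space is then proper and geodesic, so Proposition \ref{lemme_triangle_non_fins} is available.

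Now suppose $\partial\Omega$ contains a causal broken lightlike segment $[a,b]\cup[b,c]$ with $a\le b\le c$, both segments lightlike and non-collinear, so that $a\ll c$. Fix a point $o\in\Omega$. Choose $x_k\to a$, $y_k\to b$, $z_k\to c$ inside $\Omega$, approaching along directions into $\Omega$, with $x_k\le y_k\le z_k$ and with causal segments $[x_k,y_k]$, $[y_k,z_k]$, $[x_k,z_k]\subset\Omega$. For instance, translate the broken segment slightly by $\eta_k(o-b)$: convexity keeps the open part inside $\Omega$, and translation preserves causal relations. These three segments are $(2,0)$-quasi-geodesics, so they form a quasi-geodesic triangle. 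The segments $[x_k,y_k]$ and $[y_k,z_k]$ converge in Hausdorff distance to $[a,b]$ and $[b,c]$, which lie in $\partial\Omega$. By completeness of $\delta_\Omega$ (compact sets of $\Omega$ stay at positive $h$-distance from $\partial\Omega$), these two sides eventually leave every compact subset. On the other hand, $[x_k,z_k]$ converges to the timelike segment $[a,c]$. Its interior lies in $\Omega$: since $\Omega$ is causally convex, any point of $\partial\Omega$ in the open set $I(a,c)$ would force $I(a,c)\cap\Omega$ to be nonempty, and convexity would then put the open segment $(a,c)$ inside $\Omega$. So $[x_k,z_k]$ meets a fixed compact neighbourhood of the midpoint of $[a,c]$, contradicting Proposition \ref{lemme_triangle_non_fins}.

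The step I expect to be the main obstacle is justifying that the midpoint of $[a,c]$ lies in $\Omega$ rather than in $\partial\Omega$. It could happen that the whole triangle $abc$ lies in a supporting hyperplane. That hyperplane contains a timelike direction, so it is timelike, and it would support a causally convex domain. I would rule this out as follows. A timelike supporting hyperplane $P$ at an interior point $m$ of $[a,c]$ forces $\Omega$ to lie on one side of $P$. Causal convexity then forces $I(a,c)\subset\Omega$, since some point of $\Omega$ lies near both $a$ and $c$ up to an $\varepsilon$-perturbation, and $\Omega$ is open. But $I(a,c)$ meets both sides of $P$, a contradiction. If this fails, the fallback is to replace $a,c$ by interior points of the two lightlike segments, which gives a timelike chord through the interior of the solid diamond.

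For the future complete case, we must show that $\partial\Omega$ is acausal. Since $\partial\Omega$ is the graph of a $1$-Lipschitz function, it is achronal. So if it is not acausal, it contains two causally related points, and the segment joining them is lightlike. Since $\partial\Omega$ is convex and lies in the boundary of a future complete set, this lightlike segment $[a,b]$ extends into a lightlike ray or half-line in $\partial\Omega$. Take a second lightlike direction at $b$ that enters $\Omega$ and use future completeness: $[b,c']$ with $c'=b+t w$, where $w$ is lightlike and future directed. To keep this inside $\partial\Omega$ we would rather run the same triangle argument with the third vertex $z_k$ going to infinity along a future timelike ray, taking $z_k=y_k+k\,\partial_t$. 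In that triangle, $[x_k,y_k]$ hugs $\partial\Omega$ and the two long timelike sides diverge from each other. Alternatively, we produce a causal broken lightlike segment directly, reducing to the first part.
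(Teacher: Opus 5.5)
Your broken-segment argument is sound and is essentially the paper's own; the triangle obtained by shrinking $abc$ toward an interior point is what the paper's figure depicts. The two lightlike sides converge to $[a,b],[b,c]\subset\partial\Omega$, and the timelike side converges to $[a,c]$. Its interior lies in $\Omega$ because causal convexity alone gives $I(a,c)\subset\Omega$ whenever $a\ll c$ in $\overline\Omega$. The idea in your ``obstacle'' paragraph is the right one, and the supporting-hyperplane discussion can be dropped. Two small repairs are needed:
\begin{enumerate}
\item A translation by $\eta_k(o-b)$ need not carry $a$ and $c$ into $\Omega$ (for example, a diamond in $\R^{1,1}$ with $b$ a side vertex). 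Use the homothety $x\mapsto o+(1-\eta_k)(x-o)$ instead, which maps $\overline\Omega$ into $\Omega$ and preserves causality.
\item For a general convex causally convex $\Omega$, the quasi-geodesic property is Corollary \ref{lemme_2_0_quasi_geodesiques_dans_un_ccc}, proved by exhausting $\Omega$ by bounded pieces. It is not a direct consequence of Theorem \ref{thm_equivalence_mark_nulldistance_cas_cc}, which needs $\Omega$ to lie between two spacelike hyperplanes.
\end{enumerate}

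The future complete case has a genuine gap. Proposition \ref{lemme_triangle_non_fins} needs two sides that escape every compact set and a third side that meets a fixed one. In your triangle with $z_k=y_k+k\partial_t$, only $[x_k,y_k]$ escapes. Indeed, $[y_k,z_k]$ contains $y_k+\partial_t\to b+\partial_t$, and $[x_k,z_k]$ contains $x_k+\frac1k(z_k-x_k)\to a+\partial_t$; both limits lie in $\Omega$ by future completeness. Your assertion that the timelike sides diverge is not proved, and nothing supports it. If $a$ is not the initial point of the boundary half-line, the lightlike line through $x_k$ parallel to $[a,b]$ stays in $\Omega$ a definite distance behind $x_k$. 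So by Proposition \ref{formule_F_Omega}, even the side $[x_k,y_k]$ has $\delta_\Omega$-length bounded independently of $k$.

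Your fallback of producing a causal broken lightlike segment fails as well. Take $\Omega=\{t>\vert p_1\vert\}\subset\R^{1,2}$. It is convex, future complete, contains no lightlike line, and its boundary contains lightlike half-lines. Yet every broken lightlike segment in $\partial\Omega$ is achronal. The only lightlike directions contained in $\partial\Omega$ lie on the half-planes $t=\pm p_1$, and these meet only along the spacelike line $t=p_1=0$.

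The missing idea is to exploit the unboundedness of the half-line $\Delta\subset\partial\Omega$, as the paper does.
\begin{itemize}
\item Take $p\in\Delta$ and $x_k=p+2^{-k}\partial_t$.
\item Let $v=(1,u)$ with $\vert u\vert=1$ be future lightlike and parallel to $\Delta$, and set $y_k=x_k+s_kv$ with $s_k\to\infty$.
\item Set $z_k=x_k+2s_k\partial_t$, the second intersection of $C(y_k)$ with the vertical line $p+\R\partial_t$.
\end{itemize}
Then $[x_k,y_k]$ is $2^{-k}$-close to $\Delta$, so it escapes every compact set. The side $[y_k,z_k]$ runs off to Euclidean infinity, so it escapes too. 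The vertical side $[x_k,z_k]$ eventually contains $p+\partial_t\in\Omega$. All three sides are causal, hence uniform quasi-geodesics, and Proposition \ref{lemme_triangle_non_fins} yields the contradiction.
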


\begin{lem}
    \label{lemme_2_0_quasi_geod_grace_a_la_null_dist}
    Let $\Omega$ be a convex, future complete domain in $\R^{1,n}$, or a bounded, convex and causally convex domain in $\R^{1,n}$.  Let $\tau$ be the cosmological time function if $\Omega$ is future complete, and let $\tau=\tau^-/\tau^+$ otherwise. Then every causal curve $\gamma(t)$ parametrized by $t=\ln(\tau)$ is a $(2,0)$-quasi-geodesic of $(\Omega,\delta_\Omega)$. 
\end{lem}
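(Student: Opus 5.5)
The plan is to read the claim off the comparison theorems with the null distance, Theorems \ref{thm_equivalence_d_lnt_delta_Omega} and \ref{thm_equivalence_mark_nulldistance_cas_cc}, together with Fact \ref{fait_null_distance}, which computes the null distance along causal curves. Let $\gamma$ be a causal curve in $\Omega$, say future directed, and parametrize it by $t=\ln(\tau(\gamma(t)))$. This is legitimate because $\ln\tau$ is strictly monotone along causal curves. In the future complete case $\tau$ is the cosmological time, which is a time function. In the bounded case $\tau^-$ increases and $\tau^+$ decreases along future causal curves, so $\tau^-/\tau^+$ is strictly increasing.

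First I check the hypotheses of the comparison theorems. In the future complete case, $\Omega$ must contain no lightlike line. This is implicit in the setting where $\delta_\Omega$ is a distance (Proposition \ref{prop_complétude_delta_Omega}), and I would add it as a standing assumption if needed. In the bounded case, $\Omega$ is bounded, so it lies between two spacelike hyperplanes, namely horizontal slices $\{t=\mathrm{const}\}$ below and above it. Proposition \ref{proposition_existence_initial_singularity} then applies, because a bounded set is contained in the future of some stable past cone and in the past of some stable future cone. Consequently $\tau^\pm$ are finite and continuous, and $\ln(\tau^-/\tau^+)$ is a genuine time function.

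Next, for $s\le t$ the points satisfy $\gamma(s)\le\gamma(t)$. Fact \ref{fait_null_distance} applied to the time function $\ln\tau$ gives $\dhat_{\ln\tau}(\gamma(s),\gamma(t))=\ln\tau(\gamma(t))-\ln\tau(\gamma(s))=t-s$.

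Finally I insert this into the comparison inequalities. In the future complete case, Theorem \ref{thm_equivalence_d_lnt_delta_Omega} gives $|t-s|\le\delta_\Omega(\gamma(s),\gamma(t))\le 2|t-s|$. In the bounded case, Theorem \ref{thm_equivalence_mark_nulldistance_cas_cc} gives $\tfrac12|t-s|\le\delta_\Omega(\gamma(s),\gamma(t))\le 2|t-s|$. In both cases the defining inequalities of a $(2,0)$-quasi-geodesic hold with $A=2$ and $B=0$.

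There is no real obstacle here. The only delicate point is verifying the hypotheses, namely the existence and continuity of $\tau^\pm$ and the "no lightlike line" condition, so that the null distance and the comparison theorems apply.
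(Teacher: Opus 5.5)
Your proposal is correct and follows the paper's proof. A causal curve parametrized by $\ln\tau$ is a geodesic of $\dhat_{\ln\tau}$ by Fact \ref{fait_null_distance}, and Theorems \ref{thm_equivalence_d_lnt_delta_Omega} and \ref{thm_equivalence_mark_nulldistance_cas_cc} then give the $(2,0)$ bounds. You are also right that the future complete case tacitly needs the no-lightlike-line hypothesis of Theorem \ref{thm_equivalence_d_lnt_delta_Omega}, which the lemma as stated omits and the paper's proof uses without comment.
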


\begin{proof}
    Let $\gamma(t)$ be a parametrized causal curve such that $t=\ln(\tau(\gamma))$. Then $\gamma$ is a metric space geodesic of $(\Omega,\dhat_{\ln(\tau)})$ by Fact \ref{fait_null_distance}. Hence,  Theorem \ref{thm_equivalence_d_lnt_delta_Omega} and Theorem \ref{thm_equivalence_mark_nulldistance_cas_cc} imply
    $$\frac{1}{2}\vert t-s\vert=\frac{1}{2}\dhat_{\ln(\tau)}(\gamma(s),\gamma(t)) \leq \delta_\Omega(\gamma(s),\gamma(t))\leq 2\dhat_{\ln(\tau)}(\gamma(s),\gamma(t))=2\vert t-s\vert, $$
    for all $s,t\in \R$.
\end{proof}

\begin{cor}
    \label{lemme_2_0_quasi_geodesiques_dans_un_ccc}
    Let $\Omega$ be a convex, causally convex domain in $\R^{1,n}$ containing no lightlike line. Then causal curves in $\Omega$ are $(2,0)$-quasi-geodesics of $(\Omega,\delta_\Omega)$.
\end{cor}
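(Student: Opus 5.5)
The plan is to reduce to Lemma~\ref{lemme_2_0_quasi_geod_grace_a_la_null_dist}, or more precisely to the comparisons it rests on, namely Theorem~\ref{thm_equivalence_d_lnt_delta_Omega} and Theorem~\ref{thm_equivalence_mark_nulldistance_cas_cc}. As in that lemma, ``quasi-geodesic'' is meant for a causal curve parametrized by $t=\ln(\tau)$. Here $\tau$ is the cosmological time function when $\Omega$ is future complete, its time-reversed analogue $1/\tau^+$ when $\Omega$ is past complete, and $\tau^-/\tau^+$ otherwise.

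\textbf{Step 1: case split via the graph description.} Write $\Omega=\{f_-<t<f_+\}$ over a convex open set $U$, as in Proposition~\ref{prop_structure_ouverts_causallement_convexes}. If $f_+\equiv+\infty$ and $f_-\equiv-\infty$, then $\Omega=\R\times U$ contains every lightlike line through a point of $\Omega$, which is excluded. So there are three cases.
\begin{enumerate}
\item $f_+\equiv+\infty$: $\Omega$ is convex and future complete with no lightlike line, and Theorem~\ref{thm_equivalence_d_lnt_delta_Omega} applies directly.
\item $f_-\equiv-\infty$: this is the time reversal of the first case, and reversing time is a conformal (anti-time-orientation) isometry of $\R^{1,n}$ that preserves $\delta$ by Proposition~\ref{proposition_naturalité}.
\item Both $f_\pm$ are finite.
\end{enumerate}

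\textbf{Step 2: the case where $f_\pm$ are both finite.} To apply Theorem~\ref{thm_equivalence_mark_nulldistance_cas_cc}, I will show that $\Omega$ lies between two spacelike hyperplanes. Since $f_-$ is convex and $1$-Lipschitz, every subgradient of $f_-$ has norm at most $1$, and any affine minorant of $f_-$ whose slope has norm strictly less than $1$ gives a spacelike hyperplane with $\Omega$ in its future.

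Such a minorant should exist. Suppose instead that every affine minorant of $f_-$ has slope of norm $1$. Then $f_-$ is asymptotically of slope $1$ in some direction $u$. Combined with $f_+\ge f_-$, $f_+$ concave and $1$-Lipschitz, this should force a lightlike ray in direction $(1,u)$ to stay in $\Omega$, both forwards and backwards, which would produce a lightlike line and a contradiction. The same argument applied to $f_+$ gives the upper hyperplane.

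\textbf{Step 3: conclusion.} Once the hyperplane comparison holds, the inequalities $\tfrac12\dhat_{\ln\tau}\le\delta_\Omega\le 2\dhat_{\ln\tau}$ hold in every case. By Fact~\ref{fait_null_distance}, $\dhat_{\ln\tau}(\gamma(s),\gamma(t))=|t-s|$ along any causal curve parametrized by $\ln\tau$, so such a curve is a $(2,0)$-quasi-geodesic, exactly as in the proof of Lemma~\ref{lemme_2_0_quasi_geod_grace_a_la_null_dist}.

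For the unbounded causally convex case, the existence of initial and terminal singularities from Proposition~\ref{proposition_existence_initial_singularity} should also be needed. That proposition assumes a stable cone containing $\Omega$. I would try to replace it by using the spacelike supporting hyperplanes from Step 2: the relevant sets $J^\mp(x)\cap\overline\Omega$ lie between $x$ and a spacelike hyperplane, hence are compact, so the suprema defining $\tau^\pm$ are finite and attained.

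\textbf{Main obstacle.} The main difficulty is Step 2, namely turning ``no lightlike line'' into the existence of spacelike supporting hyperplanes on both sides, via the convex-analysis argument on the subgradients of $f_\pm$ at infinity.
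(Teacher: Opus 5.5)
\textbf{The gap is in Step 2, and it cannot be repaired.} A convex, causally convex domain with no lightlike line that is neither future nor past complete need not lie between two spacelike hyperplanes.

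Take the wedge $\Omega=\{(t,p)\in\R^{1,n} : |t|<p_1\}$, that is, $U=\{p_1>0\}$ and $f_\pm=\pm p_1$. It has all the required properties:
\begin{itemize}
\item it is convex;
\item it is causally convex, being the intersection of the past set $\{t-p_1<0\}$ with the future set $\{t+p_1>0\}$;
\item it contains no lightlike line.
\end{itemize}
However, any affine minorant $a\cdot p+b$ of $f_-$ on $U$ must have $a_1\le -1$, so no spacelike hyperplane has $\Omega$ in its future.

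Your heuristic fails at exactly this point. Subgradients at points of $U$ do have norm at most $1$, but minorants on a proper convex set $U$ can be steeper. Asymptotic slope $1$ then only produces lightlike half-lines such as $\{t=c-p_1,\ p_1>c/2\}$, never full lines.

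Worse, $\tau^-=\tau^+=+\infty$ on this $\Omega$. For fixed $x$, take points $y$ with $t+p_1=(t_x+p_{x,1})/2$ and $p_1-t\to+\infty$. They lie in $J^-(x)\cap\Omega$ and satisfy $\tauL(y,x)\to\infty$. Consequences:
\begin{itemize}
\item $\ln(\tau^-/\tau^+)$ is undefined;
\item Theorem~\ref{thm_equivalence_mark_nulldistance_cas_cc} cannot be invoked;
\item the $\ln\tau$-parametrization you use to define ``quasi-geodesic'' does not even exist.
\end{itemize}

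This is not a marginal case. The limit domains $\Omega'$ of Lemma~\ref{lemme_dynamique_cas_causalement_convexe} are exactly where the corollary is applied, in the proof of Theorem~\ref{thm_coindi_necessaire_faible}. Each of them contains a planar quarter space bounded by a causal broken lightlike line. So they never lie between two spacelike hyperplanes, and $\tau^\pm=+\infty$ at points of that plane. Your argument only covers $\Omega$ future complete, past complete, or lying between two spacelike hyperplanes (for example, bounded).

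\textbf{What is missing} is an approximation argument, together with a notion of quasi-geodesic intrinsic to $\delta_\Omega$. The paper proceeds as follows.
\begin{enumerate}
\item Fix a causal curve $\gamma:[a,b]\to\Omega$. It is $\delta_\Omega$-rectifiable, since it is locally Lipschitz for $d_h$ and $\delta_\Omega$ is locally equivalent to $d_h$.
\item Truncate: set $\Omega_k=\Omega\cap I(x_k,y_k)$ with $x_k=(-2^k,0)$ and $y_k=-x_k$. These domains are bounded, convex and causally convex.
\item Apply the bounded case of Lemma~\ref{lemme_2_0_quasi_geod_grace_a_la_null_dist} in each $\Omega_k$. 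This bounds $\delta_{\Omega_k}(\gamma(a),\gamma(b))$ above and below by the $\delta_{\Omega_k}$-length of $\gamma$, with constants independent of $k$.
\item Since $\Omega_k\to\Omega$, Lemma~\ref{lemme_convergence_distance_markowitz_dun_convexe} gives $(1\pm\varepsilon)$-multiplicative closeness of $\delta_{\Omega_k}$ and $\delta_\Omega$ on a compact set containing $\gamma$, for $k$ large. Hence both the endpoint distance and the length pass to the limit.
\end{enumerate}
This is where the hypothesis ``no lightlike line'' is really used: as the hypothesis of that convergence lemma, not to produce spacelike supporting hyperplanes.
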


\begin{proof}
    Let $\gamma:[a,b]\to \Omega$ be a causal curve in $\Omega$. We claim that $\gamma$ is a rectifiable curve of $(\Omega,\delta_\Omega)$. Indeed $\gamma$ is a rectifiable curve of $(\R^{1,n},d_h)$, since it is a locally Lipschitz curve. Now $d_h$ and $\delta_\Omega$ are locally equivalent \cite[Thm. A]{article_dist_markotitz}, so $\gamma$ is a rectifiable curve of $(\Omega,\delta_\Omega)$. For all $k\geq 0$, let $x_k=(-2^k,0)$ and $y_k=-x_k$, and let 
    $\Omega_k=\Omega\cap I(x_k,y_k).$
    Since diamonds of $\R^{1,n}$ are convex, causally convex and bounded, it follows that $\Omega_k$ is convex, causally convex and bounded for all $k\geq 0$. By construction, the sequence $(\Omega_k)$ converges to $\Omega$ in the Hausdorff topology. 
    By Lemma \ref{lemme_convergence_distance_markowitz_dun_convexe}, one has
    $L_{\delta_{\Omega_k}}(\gamma)\to L_{\delta_{\Omega}}(\gamma),$
    as $k\to\infty$.
    By Lemma \ref{lemme_2_0_quasi_geod_grace_a_la_null_dist}, one has 
    $$\frac{1}{2}L_{\delta_{\Omega_k}}(\gamma)\leq \delta_{\Omega_k}(\gamma(a),\gamma(b))\leq 2L_{\delta_{\Omega_k}}(\gamma),$$
    for all sufficiently large $k\geq 0$.
    Taking the limit as $k\to \infty$ in the previous equation, we deduce that 
    $$\frac{1}{2}L_{\delta_{\Omega}}(\gamma)\leq \delta_{\Omega}(\gamma(a),\gamma(b))\leq 2L_{\delta_{\Omega}}(\gamma),$$
    hence $\gamma$ is a $(2,0)$-quasi-geodesic.
\end{proof}

\begin{proof}[Proof of Theorem \ref{thm_coindi_necessaire_faible}]
    Assume first that $\Omega$ is future complete. Assume by contradiction that $\partial\Omega$ is not acausal. Then there exists a half-line $\Delta$ in $\partial\Omega$. Let $p\in \Delta$ and let $x_k=p+2^{-k}\partial_t$, for all $k\geq 0$. Let $(y_k)$ be a sequence of points such that $y_k$ belongs to the future lightlike half-line parallel to $\Delta$ and containing $\Omega$, and such that $d_h(x_k,y_k)\to \infty$ as $k\to \infty$. Finally, for all $k\geq 0$, let $z_k$ be the unique point, distinct from $x_k$, that belongs to $C(y_k)\cap(p+\R\partial_t)$. Then the triangle $x_ky_kz_k$ is a causal triangle, hence it is a $(2,0)$-quasi-geodesic triangle of $(\Omega,\delta_\Omega)$ by Lemma \ref{lemme_2_0_quasi_geod_grace_a_la_null_dist}. By construction, the segments $[x_k,y_k]$ and $[y_k,z_k]$ leave every compact subset of $\Omega$ but $[x_k,z_k]$ does not. By Proposition \ref{lemme_triangle_non_fins}, $(\Omega,\delta_\Omega)$ is not Gromov hyperbolic.

    Assume now that $\Omega$ is causally convex and that $\partial\Omega$ contains a causal broken lightlike segment. Then one can construct similarly a sequence $x_ky_kz_k$ of causal triangles, such that $[x_k,y_k]$ and $[y_k,z_k]$ are eventually disjoint from every compact subset of $\Omega$, but $[x_k,z_k]$ is not, see Figure \ref{fig_non_gromov_hyp_cc}. These triangles are $(2,0)$-quasi-geodesic triangles of $(\Omega,\delta_\Omega)$ by Corollary \ref{lemme_2_0_quasi_geodesiques_dans_un_ccc}, hence $(\Omega,\delta_\Omega)$ is not Gromov hyperbolic.
\end{proof}

\begin{figure}
\centering
\captionsetup{width=.4\textwidth}
\begin{minipage}{.5\textwidth}
  \centering

    \begin{tikzpicture}[scale=0.8]

    \draw[fill=bovert, opacity=0.7]
    plot[smooth cycle, tension=0.3] coordinates {
      (0,0) (3,0) (6,3) (5,6) (2,3) (-1,3)
    };

        \coordinate (X) at (-1.5,4.5);
        
        \coordinate (Xp) at (-1.5,1);
        
        \begin{scope}[shift={(4.1,0.7)}]
            \draw[thick, cyan, opacity=1] (-1.5,1) -- (1.5,4);
        
        \coordinate (X) at (-1.5,4.5);
        
        \coordinate (Xp) at (-1.5,1);
        
        \draw[thick] (X) -- (Xp);

        \coordinate (a) at (-1.5,1.5);
        \coordinate (b) at (-1.5,4);
        \coordinate (c) at (-0.25,2.75);
        \draw[very thick,red] (a) -- (b) -- (c) -- cycle;
        \fill[red] (a) circle (2pt)node[above left] {$x_k$};
        \fill[red] (b) circle (2pt)node[above left] {$z_k$};
        \fill[red] (c) circle (2pt)node[above right] {$y_k$};

        \node at (2.1,4) {$\partial\Omega$};
        \end{scope}

\end{tikzpicture}
    \caption{\small{A non-thin family of causal geodesic triangles of $(\Omega,\delta_\Omega)$, inside a future complete domain $\Omega$, whose boundary contains a lightlike half-line (in blue).}}
    \label{fig_non_gromov_hyp_fc}

\end{minipage}%
\begin{minipage}{.5\textwidth}
  \centering

    \begin{tikzpicture}[scale=1.5]

    \draw[fill=bovert, opacity=0.7]
  (0,1) -- (1,0) -- (0,-1);
  \draw[very thick, cyan, opacity=1] (0,1) -- (1,0) -- (0,-1);
    
        \draw[fill=bovert, opacity=0.7] plot[smooth, tension=0.3] coordinates { (0,1)  (-1,1)  (-2,0.7) (-3.2,-0.2) };

        \fill[bovert, opacity=0.7] plot coordinates { (0,1)  (-3.2,-0.2) (0,-1) };

        \draw[fill=bovert, opacity=0.7] plot[smooth, tension=0.3] coordinates { (0,-1)  (-1.5,-1)  (-3.2,-0.2) };

\draw [densely dashed, smooth, tension=0.3] plot coordinates {
(-3.2,-0.20)
(-2.8,-0.05)
(-2.4,0.29)
(-2.2,0.17)
(-2.0,0.33)
(-1.8,0.21)
(-1.6,0.36)
(-1.5,0.40) 
(-1.4,0.27)
(-1.2,0.34+0.1)
(-1.0,0.19+0.2)
(-0.8,0.28+0.2)
(-0.6,0.13+0.2)
(-0.4,0.22+0.2)
(-0.2,0.06+0.2)
( 0.0,0.16+0.2)
( 0.2,0.01+0.2)
( 0.4,0.10+0.2)
( 0.6,0.03+0.1)
( 0.8,0.03+0.1)
( 1.0,0.00)
};

        \coordinate (a) at (0,-0.6);
        \coordinate (b) at (0,0.6);
        \coordinate (c) at (0.6,0);
        \draw[very thick,red] (a) -- (b) -- (c) -- cycle;
        \fill[red] (a) circle (1pt)node[left] {$x_k$};
        \fill[red] (b) circle (1pt)node[left] {$z_k$};
        \fill[red] (c) circle (1pt)node[right] {$y_k$};

        \node at (-2.3,0.8) {$\partial_c^+\Omega$};
        \node at (-2.3,-0.9) {$\partial_c^-\Omega$};

    \draw plot[smooth, tension=0.3,] coordinates {
(-3.2,-0.20)
(-2.8,-0.15-0.15)
(-2.4,-0.15-0.1)
(-2.2,-0.17-0.2)
(-2.0,-0.33)
(-1.8,-0.21)
(-1.6,-0.36)
(-1.5,-0.40) 
(-1.4,-0.27)
(-1.2,-0.34-0.1)
(-1.0,-0.19-0.1)
(-0.8,-0.28-0.1)
(-0.6,-0.13-0.1)
(-0.4,-0.22-0.2)
(-0.2,-0.06-0.2)
( 0.0,-0.16-0.2)
( 0.2,-0.01-0.2)
( 0.4,-0.10-0.1)
( 0.6, 0.03-0.1)
( 0.8,-0.03-0.1)
( 1.0, 0.00)
};

\end{tikzpicture}
    \caption{\small{A non-thin family of causal geodesic triangles of $(\Omega,\delta_\Omega)$, inside a bounded causally convex domain $\Omega$ such that $\partial\Omega$ contains a broken lightlike segment (in blue).}}
    \label{fig_non_gromov_hyp_cc}

\end{minipage}
\end{figure}

\subsection{Zooming with similarities}\label{section_zooming} A \emph{similarity} of $\R^{1,n}$ is an affine transformation of $\R^{1,n}$ that is also conformal. Any similarity $g$ can be decomposed as 
$g(x)=\lambda Ax+t,$
where $\lambda>0$ is the \emph{distortion}, $A\in \O(1,n)$ is the \emph{isometric linear part} and $t\in \R^{1,n}$ is the \emph{translation part} of $g$.

\begin{lem}\label{lemme_dynamique_cas_futur_complet}
    Let $\Omega$ be a convex, future complete domain in $\R^{1,n}$ containing no lightlike line. If $\partial\Omega$ is not stably acausal, then there exists a sequence $(g_k)$ of similarities of $\R^{1,n}$ and there exists a convex future complete domain $\Omega^\prime$ containing no lightlike line, such that $g_k(\Omega)\to \Omega^\prime$ and such that $\partial\Omega^\prime$ is not acausal.  
\end{lem}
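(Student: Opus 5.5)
Write $\Omega=\{t>f(p)\}$ with $f$ convex and 1-Lipschitz, using Proposition~\ref{prop_structure_ouverts_causallement_convexes}. Failure of stable acausality means that for every $\varepsilon>0$ there are distinct points $a,b\in\partial\Omega$ with $b\in J^+_\varepsilon(a)$. Taking $\varepsilon_k=1/k$ gives pairs $a_k,b_k$ with $f(q_k)-f(p_k)\geq \frac{1}{1+1/k}|q_k-p_k|$. By convexity of $f$, the slope along the segment from $p_k$ to $q_k$ is nondecreasing. So the directional derivative of $f$ at $q_k$ in the unit direction $u_k=(q_k-p_k)/|q_k-p_k|$ is at least $1/(1+1/k)$. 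Equivalently, $\Omega$ admits at $b_k$ a supporting hyperplane that is almost lightlike.

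The plan is to zoom at $b_k$ (or at a nearby point of $\partial\Omega$) so that this almost lightlike supporting direction becomes a genuine lightlike ray in the limit boundary.

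Concretely, I would take $g_k(x)=\lambda_k A_k(x-b_k)$. Here $\lambda_k>0$ is a scaling and $A_k\in\O(1,n)$ is a Lorentz boost in the plane spanned by $\partial_t$ and $u_k$. The boost is chosen so that the almost-null supporting hyperplane of $\Omega$ at $b_k$ is sent to a fixed spacelike hyperplane, say one of slope $1/2$, through $0$.

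After passing to a subsequence (with $u_k\to u$ after a rotation of $\R^n$), compactness of closed convex sets in the local Hausdorff topology lets me extract a limit $g_k(\Omega)\to\Omega'$. Future completeness, convexity and the graph description $t>f'(p)$ with $f'$ 1-Lipschitz convex all pass to the limit. The limit is nonempty and $\neq\R^{1,n}$ because $0\in\partial g_k(\Omega)$ for every $k$. The point of the boost is that the original boundary contains points of $J^+_{\varepsilon_k}(a_k)$; their images become asymptotically lightlike separated from $0$. I then choose $\lambda_k$ so that the image of $a_k$ stays at unit $h$-distance from $0$. In the limit this yields two distinct boundary points on a common lightlike segment, so $\partial\Omega'$ is not acausal.

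The main obstacle is keeping $\Omega'$ free of lightlike lines, which Proposition~\ref{prop_complétude_delta_Omega} needs so that $\delta_{\Omega'}$ is a distance. Boosts tend to degenerate $\Omega$ toward a half-space bounded by a null hyperplane, or toward a wedge containing a null line. To prevent this, I would keep the fixed spacelike supporting hyperplane: every $g_k(\Omega)$ is contained in the future of a spacelike hyperplane of controlled slope through the origin, hence so is $\Omega'$. A domain contained in the future of a spacelike hyperplane contains no lightlike line, since such a line would cross that hyperplane.

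The delicate step is reconciling the two normalizations. One must boost enough that the near-causal pair $a_k,b_k$ becomes exactly null in the limit, yet keep a uniformly spacelike supporting hyperplane. I would try first to choose $b_k$ where the slope of $f$ in direction $u_k$ is close to $1$, and to pick an auxiliary supporting hyperplane at $b_k$ of slope bounded away from $1$ in a transverse direction. That auxiliary hyperplane exists because $\Omega$ contains no lightlike line, so $f$ is not affine-null in every direction. If needed, I would replace $b_k$ by a point chosen via an extremal (maximal slope) argument in the spirit of Benoist–Zimmer rescalings.
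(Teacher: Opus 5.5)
There is a genuine gap, and it lies in the step meant to produce a lightlike segment in $\partial\Omega'$: the boost works against you. A boost preserves $\b$, but it does not preserve nearness to the light cone measured in the fixed splitting. By the convexity inequality you used, the secant $b_k-a_k$ lies in the plane $P_k$ of the boost with slope at most that of the trace of the supporting hyperplane on $P_k$. Boosts of $P_k$ preserve the order of spacelike directions in $P_k$. So the boost that brings the hyperplane down to slope $1/2$ straightens the secant along with it, and nothing forces $g_k(a_k)$ to become lightlike separated from $0$.

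Here is a concrete counterexample. Take $\Omega=\{t>\sqrt{1+|p|^2}\}$, which satisfies all the hypotheses. Fix a unit $u\in\R^n$ and let $a_k,b_k$ be the boundary points $(\cosh\theta,\sinh\theta\,u)$ with $\theta=\beta_k-1$ and $\theta=\beta_k$ respectively, where $\beta_k\to\infty$. The secant slope is $\tanh(\beta_k-\frac12)\to 1$, so these are admissible pairs. Since $\Omega$ is invariant under boosts, your $A_k$ maps $\Omega$ to itself and $(a_k,b_k)$ to a fixed pair $(a_*,b_*)$. Hence $g_k(\Omega)=\lambda_k(\Omega-b_*)$ with $\lambda_k=1/|a_*-b_*|$ constant. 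The sequence is therefore constant, and its boundary is a hyperboloid, which is acausal. The proposed extremal choice of $b_k$ is left unspecified, so as written the proposal has no mechanism that produces the lightlike segment.

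The missing idea is simply not to boost. Since $\Omega$ contains no lightlike line, it has a spacelike supporting hyperplane, so after one fixed similarity you may assume $\Omega\subset\{t>0\}$. Let $x_k=(t_k,p_k)\in\partial\Omega$ carry spacelike supporting hyperplanes $H_k$ whose orthogonals $H_k^\perp$ tend to a lightlike direction $\Delta$; your first paragraph provides exactly these. Let $g_k$ be the translation by $(0,-p_k)$ followed by the dilation of ratio $1/\max\{1,t_k\}$ centered at $0$. These homotheties preserve $\{t>0\}$ and preserve the directions $\vec H_k$.

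Then $g_k(\Omega)=\{t>f_k(p)\}$ with $f_k(0)\in[0,1]$. Arzel\`a--Ascoli gives $f_k\to f$ with $f$ convex, $1$-Lipschitz and $\geq 0$. So $\Omega'=\{t>f(p)\}\subset\{t>0\}$ contains no lightlike line. Meanwhile $g_k(H_k)$ subconverges to a supporting hyperplane $H$ of $\Omega'$ at $y=\lim g_k(x_k)$, with $H^\perp=\Delta$ lightlike. Since $J^+(y)\subset\overline{\Omega'}$, the future lightlike ray from $y$ in direction $\Delta$, which lies in $H$, is contained in $\partial\Omega'$.

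In short, the almost-null direction is what has to survive the rescaling. The spacelike control you were after comes for free from one fixed hyperplane preserved by all the $g_k$.
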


\begin{proof}
     For simplicity, we can always assume that $\Omega$ is disjoint from the hyperplane $(t=0)$. Assume that $\partial\Omega$ is not stably acausal. Then there exists a sequence $(x_k)$ in $\partial\Omega$, and a sequence of spacelike affine hyperplanes $(H_k)$, such that $H_k$ is a supporting hyperplane of $\Omega$ at $x_k$ for all $k\geq 0$, and such that the sequence $(H_k^\perp)\in \H^n$ converges to a lightlike direction $\Delta\in\partial\H^n$. For $k\geq 0$, write $x_k=(t_k,p_k)$ and let $g_k$ denote the composition of the translation of vector $v_k=(0,-p_k)$ with the dilation of distortion $\lambda_k=1/\max\{1,t_k\}$. Let $k\geq 0$ and let $\Omega_k=g_k(\Omega)$. Then, by construction, the domain $\Omega_k$ is convex, future complete, disjoint from the hyperplane $(t=0)$, and its boundary intersects the lightlike line $(p=0)$ at a point $y_k=g_k(x_k)$ whose $t$-coordinate belongs to $[0,1]$. Also $\Omega_k$ has a supporting hyperplane at $y_k$ of direction $g_k(\vec{H}_k)=\vec{H}_k$. By Proposition \ref{prop_structure_ouverts_causallement_convexes}, we can write $\Omega_k$ as the epigraph of a 1-Lipschitz function $f_k:\R^n\to \R$, for all $k\geq 0$. From what precedes, one has $f_k(0)\in [0,1]$ for all $k\geq 0$. By the Arzel\`a-Ascoli Theorem, up to taking a subsequence, we can always assume that $(f_k)$ converges uniformly on compact subsets to a function $f$. The limit $f$ is still 1-Lipschitz, convex and positive. Hence its epigraph $\Omega^\prime$ is future complete, convex and disjoint from the hyperplane $(t=0)$, and $(\Omega_k)$ converges to $\Omega^ \prime$. Let $y=\lim_k y_k$ and let $H$ be an accumulation point of $g_k(H_k)$. Then $H$ is a supporting hyperplane of $\Omega^\prime$ at $y$, and $H^\perp=\Delta$, so $H$ is lightlike. Since $\Omega^\prime$ is future complete, the lightlike line starting at $y$ and parallel to $\Delta$ is contained in $\partial\Omega^\prime$. Hence $\partial\Omega^\prime$ is not acausal.
\end{proof}

\begin{lem}\label{lemme_dynamique_cas_causalement_convexe}
    Let $\Omega$ be a bounded, convex, causally convex domain in $\R^{1,n}$. If either $\partial_c^+\Omega$ or $\partial_c^-\Omega$ is not stably acausal, then there exists a sequence $(g_k)$ of similarities of $\R^{1,n}$ and there exists a causally convex domain $\Omega^\prime$ containing no lightlike line, such that $g_k(\Omega)\to \Omega^\prime$ and such that $\partial\Omega^\prime$ contains a causal broken lightlike segment.
\end{lem}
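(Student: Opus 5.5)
We will mimic the proof of Lemma \ref{lemme_dynamique_cas_futur_complet}, with the extra care needed because $\Omega$ is now bounded and we must keep both causal boundaries under control. By symmetry (time reversal), assume $\partial_c^-\Omega$ is not stably acausal. Write $\Omega$ as in Proposition \ref{prop_structure_ouverts_causallement_convexes}, with $U$ bounded convex, $f_-$ convex, $f_+$ concave, both $1$-Lipschitz.

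\emph{Choosing the sequences.} Failure of stable acausality of $\partial_c^-\Omega$ gives points $x_k,x_k'\in\partial_c^-\Omega$ with $x_k'-x_k$ becoming lightlike in the limit. Since $\Omega$ is convex, $f_-$ is convex, so the segment $[x_k,x_k']$ lies in $\partial_c^-\Omega$ (or at least on the graph of the convex function below its secant). Hence there are supporting hyperplanes $H_k$ of $\Omega$ at points $x_k\in\partial_c^-\Omega$, lying over $U$ (so not on $\partial U$), whose normals $H_k^\perp\in\H^n$ tend to a lightlike direction $\Delta$. By compactness of $\overline\Omega$, we may assume $x_k\to x_\infty\in\partial\Omega$.

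\emph{Zooming.} Let $g_k$ be the translation sending $x_k$ to $0$, composed with a dilation of factor $\lambda_k\to\infty$. We choose $\lambda_k$ as $1/d_k$, where $d_k=f_+(p_k)-f_-(p_k)$ is the vertical height of $\Omega$ above $x_k=(t_k,p_k)$, or a comparable scale such as $d_h(x_k,\partial_c^+\Omega)$. This normalization keeps a point of $\partial_c^+(g_k\Omega)$ at height $1$ above $0$. The rescaled domains $g_k(\Omega)$ are convex and causally convex. They are described by $1$-Lipschitz functions $f_\pm^k$ with $f_-^k(0)=0$ and $f_+^k(0)=1$, and by Arzel\`a--Ascoli these converge locally uniformly to $1$-Lipschitz $f_\pm$, with $f_-$ convex and $f_+$ concave. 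The limit $\Omega'$ is convex and causally convex, and it is nonempty because $f_+(0)-f_-(0)=1$. The hyperplanes $g_k(H_k)$ accumulate on a lightlike supporting hyperplane $H$ of $\Omega'$ at $0$, with $H^\perp=\Delta$.

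\emph{Producing the broken segment.} At $0$, the past boundary $\partial_c^-\Omega'$ has a lightlike supporting hyperplane $H$. Since $f_-$ is convex, $1$-Lipschitz, and lies above $H$, which has slope $1$ in direction $-\Delta$ projected, $f_-$ is forced to be affine of slope $1$ along the ray from $0$ in the lightlike direction. So $\partial\Omega'$ contains a lightlike segment starting at $0$, namely the intersection of $H$ with $\overline{\Omega'}$. Following this segment until it meets $\partial_c^+\Omega'$, which must happen within bounded time since $f_+$ is $1$-Lipschitz and lies above, gives a point where $f_-=f_+$. From there the future boundary descends, and we should get a second lightlike segment in $\partial_c^+\Omega'$. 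Together these form a causal broken lightlike segment through the "edge" of $\Omega'$, as in Figure \ref{fig_non_gromov_hyp_cc}. Finally, $\Omega'$ contains no lightlike line because it lies between the graphs of $f_\pm$, which meet at the boundary of the projection.

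\emph{Main obstacle.} The hardest step is choosing the scale $\lambda_k$ so that three things hold at once:
\begin{itemize}
\item the limit does not degenerate (empty, or unbounded in a way that creates lightlike lines);
\item the limit retains a genuine lightlike segment on both $\partial_c^-\Omega'$ and $\partial_c^+\Omega'$;
\item these two segments meet to form a causal broken segment.
\end{itemize}
My first attempt would be to normalize by the lightlike exit distance, $\lambda_k=1/d_h(x_k,(x_k)^+_v)$, where $v$ is the lightlike vector of $\Delta$ at $x_k$ pointing into $\Omega$. Then in the limit the lightlike segment from $0$ in direction $\Delta$ has $h$-length $1$ and ends at a point of $\partial_c^+\Omega'$. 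Convexity, together with the fact that lightlike segments in $\partial_c^-$ and $\partial_c^+$ meeting at a point of $\overline{\partial U}$ must be consecutive, should then yield the broken lightlike segment. If this normalization fails to produce the second segment, we fall back on Theorem \ref{thm_coindi_necessaire_faible}, applied through Corollary \ref{cor_limite_de_domaines_Gromov_hyp}. That weaker route only needs $\partial\Omega'$ to contain a causal broken lightlike segment in the limit. We would obtain it by a second zoom at the endpoint of the first segment, which lies on the edge $f_-=f_+$ and therefore produces a wedge bounded by two lightlike half-hyperplanes.
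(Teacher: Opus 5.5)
\textbf{There is a genuine gap.} The claim ``from there the future boundary descends, and we should get a second lightlike segment in $\partial_c^+\Omega'$'' is never justified, and it is false in general. Your maps $g_k$ are only translations composed with homotheties. These preserve angles, so they cannot create a lightlike face that the domain does not already have.

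Here is a concrete example. In $\R^{1,1}$ take $U=(-2,1)$, $f_-(p)=\max(p,-p/2)$ and $f_+\equiv 1$. This is the triangle with vertices $(t,p)=(0,0),(1,1),(1,-2)$. It is bounded, convex and causally convex, and $\partial_c^-\Omega$ contains the lightlike edge from $(0,0)$ to $(1,1)$.

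Any limit of $\lambda_k(\Omega-c_k)$ that is a domain must be one of the following:
\begin{itemize}
\item a homothetic copy of the triangle;
\item a half-plane bounded by an edge line;
\item a translate of one of the three vertex wedges.
\end{itemize}
The half-plane bounded by the lightlike edge contains lightlike lines, so it is excluded. None of the others has a causal broken lightlike segment in its boundary. In particular, the wedge at $(1,1)$, where your lightlike segment ends, has one lightlike side and one horizontal side.

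So no choice of $\lambda_k$ can work. Your fallback ``second zoom at the endpoint \dots\ produces a wedge bounded by two lightlike half-hyperplanes'' is false for the same reason: a dilation at an edge point only produces the tangent cone. Invoking Theorem~\ref{thm_coindi_necessaire_faible} through Corollary~\ref{cor_limite_de_domaines_Gromov_hyp} is also circular here, because that route needs exactly the conclusion of this lemma.

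\textbf{The missing idea is to use the isometric linear part of the similarities, namely a Lorentz boost.} The paper's proof proceeds as follows.
\begin{itemize}
\item It moves the bad point to the edge, a point $x_k$ over $\partial U$ where $f_+=f_-$, keeping a supporting hyperplane $H_k$ at $x_k$ that tends to a lightlike hyperplane $H$. Your idea of following an exact lightlike segment of a first limit to the edge is an acceptable substitute for this step.
\item It takes a second supporting hyperplane $H_k'$ at $x_k$, coming from the other causal boundary. It then works in a Lorentzian $2$-plane $P_k$ through $x_k$ containing a direction $v_k\in\vec H_k$ with $v_k\to v$ lightlike.
\item The trace $\Delta_k=H_k'\cap P_k$ stays away from the direction $v$, since otherwise $\Omega$ would be squeezed into a hyperplane.
\item It sets $g_k$ to be the dilation of factor $\lambda_k$ at $x_k$ composed with the boost $\mathrm{diag}(\mu_k,1/\mu_k)$. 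The boost is written in a lightlike basis $(e_k,f_k)$ of $P_k$ with $f_k\to v$, and acts trivially on $P_k^\perp$.
\end{itemize}
A boost fixes both lightlike directions and pushes every other direction of $P_k$ toward $e_k$. One takes $\lambda_k,\mu_k\to\infty$, with $\mu_k$ growing slowly enough that $g_k(H_k)$ still converges to $H$ while $g_k(\Delta_k)$ is pushed onto the other lightlike direction. The limit $\Omega'$ then meets $P=\lim P_k$ in a quarter-plane bounded by a causal broken lightlike line.

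In the triangle example this is simply boosting the wedge at $(1,1)$. The lightlike side is fixed, and the horizontal side is rotated onto the other lightlike direction.

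Finally, your argument that $\Omega'$ has no lightlike line, via ``graphs of $f_\pm$ meeting over the boundary of the projection'', fails when the limit projection is unbounded. The correct reason is that $\Omega'$ avoids two degenerate hyperplanes, $H$ and $\lim g_k(H_k')$, whose lightlike directions are distinct.
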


\begin{figure}
    \centering
             \begin{tikzpicture}[scale=1.5]

   \begin{scope}[shift={(-4.1,0)}]
        \draw[fill=bovert, opacity=0.7]
  (-1.5,0.6) -- (-1,0.5) -- (-0.5,0.35) -- (0,0) -- (-0.5,-0.15) -- (-1,-0.2) -- (-1.5,-0.2);
   \fill (0,0) circle (1pt)node[right] {$x_k$};
   \draw[thick] (0.5,-0.4)-- (-1,0.8)  node[right]{$H_k$};
   \end{scope}

  \begin{scope}[shift={(-2.8,0.5)}]
        \draw[->,>=Stealth](-0.8,0) to[out=30,in=150](0.8,0) ;
        \node at (0,0.5) {$x\mapsto \lambda_k x$};
  \end{scope}

    \begin{scope}[shift={(1.5,0.3)}]
        \draw[->,>=Stealth](-0.8,0) to[out=30,in=150](0.8,0) ;
        \node at (0,0.8) {  $\begin{pmatrix}
\mu_k &   0 \\
0 & 1/\mu_k
\end{pmatrix}$};
  \end{scope}

      \begin{scope}[shift={(0,0)}]
        \draw[fill=bovert, opacity=0.7]
  (-1.5,0.9) -- (-1,0.75) -- (-0.5,0.4) -- (0,0) -- (-0.5,-0.2) -- (-1,-0.3) -- (-1.5,-0.3);
   \fill (0,0) circle (1pt)node[right] {$x_k$};
   \draw[thick] (0.5,-0.4)-- (-1,0.8)  node[right]{$H_k$};
   \end{scope}

      \begin{scope}[shift={(+4.1,0)}]
        \draw[fill=bovert, opacity=0.7]
  (-1.5,0.9) -- (-1,0.75) -- (-0.5,0.4) -- (0,0) -- (-0.5,-0.4) -- (-1,-0.8) -- (-1.5,-1);
   \fill (0,0) circle (1pt)node[right] {$x_k$};
   \draw[thick] (0.5,-0.4)-- (-1,0.8)  node[right]{$H_k$};
   \end{scope}

\end{tikzpicture}
    \caption{The ``zooming'' step in the proof of Lemma \ref{lemme_dynamique_cas_causalement_convexe}. We depict the effect of the dilation and the isometry part of the similarity $h_k$ on $\Omega$.}
    \label{fig_claim_2}
\end{figure}
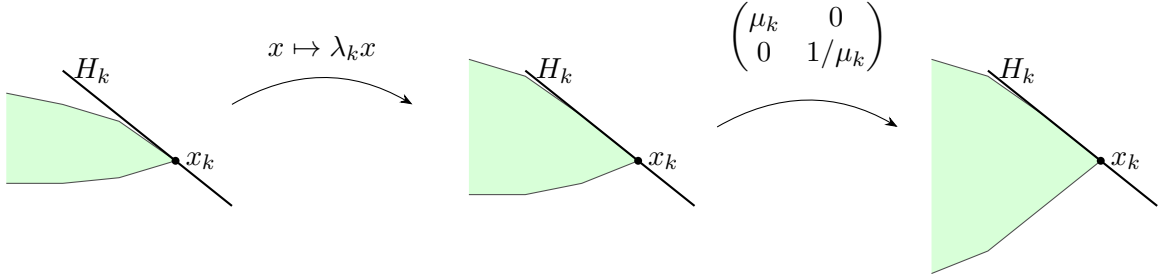

\begin{proof}
    Since $\Omega$ is causally convex, we can write 
    $\Omega=\{(t,p)\in\R\times U\vert\,f_-(p)<t<f_+(p)\},$
    according to Proposition \ref{prop_structure_ouverts_causallement_convexes}. Without loss of generality, assume that $\partial_c^+\Omega$ is not stably acausal. Let $(p_k)$ be a sequence of points in $U$ and let $(H_k)$ be a sequence of hyperplanes such that $H_k$ is a contact hyperplane at $x_k=(f_+(p_k),p_k)$ for all $k\geq 0$, and such that $(H_k)$ converges to a lightlike hyperplane. Fix $v_k\in \vec{H}_k$ such that $(v_k)$ converges to a past lightlike vector $v$, and let $u_k$ be the orthogonal projection of $v_k$ onto $\R^n$, for all $k\geq 0$. We can always replace $p_k$ with the intersection point of $p_k+\R_{>0}v_k$ with $\partial U$ and assume that $p_k\in \partial U$ for all $k\geq 0$.
    
    Let $k\geq 0$, let $(q_m)$ be a sequence of $U$ converging radially to $p_k$ in the direction $u_k$, and let $F_m$ be a sequence of supporting hyperplanes to $\Omega$ at $z_m=(q_m,f_-(q_m))$. Up to taking a subsequence, we can assume that $F_m$ converges to a supporting hyperplane $H^\prime_k$ at $x_k$. The line $\Delta_k=H_k^\prime\cap P_k$ is then asymptotic to $\smash{\overline{\partial_c^-\Omega}}\cap P_k$ at $x_k$. Also, the sequence $(\Delta_k)$ cannot converge to a line directed by $v$, since this would imply that $H_k^\prime\to H $, hence $\Omega$ would be contained in $I^+(H)\cap I^-(H)=\emptyset$, a contradiction. 

    We define the sequence $(g_k)$ of similarities as follows. For every $k\geq 0$, the similarity $g_k$ is centered at $x_k$, has dilation factor $\lambda_k>1$, and its isometry part acts trivially on $P_k^\perp$, and acts by the matrix 
    $$\begin{pmatrix}
\mu_k &   0 \\
0   & 1/\mu_k
\end{pmatrix}$$
on $P_k$, where the previous matrix is written in a basis $(e_k,f_k)$ of lightlike vectors of $P_k$ such that $\lim_k f_k=v$. The quantities $\lambda_k$ and $\mu_k$ are chosen so that $g_k(H_k)\to H$ and such that $g_k(\Delta_k)$ converges to a lightlike direction transverse to $e$. This can be done for a suitable choice of sequence $(\lambda_k)$ and $(\mu_k)$ such that $\lambda_k,\mu_k\to \infty$ and $\mu_k\ll\lambda_k$, because $\lim_k\vec\Delta_k\neq \R v$, see Figure \ref{fig_claim_2}. Up to extracting a subsequence, we may assume that $g_k(H_k^\prime)$ converges to a hyperplane $H^\prime$, which by construction is degenerate. 

Up to extraction, we may assume that the sequence $\Omega_k=g_k(\Omega)$ converges to a convex, causally convex domain $\Omega^\prime$. Since $\Omega^\prime$ is disjoint from $H$ and $H^\prime$, it cannot contain a lightlike line. If $P=\lim_kP_k$, then $P\cap\Omega^\prime$ is a quarter space bounded by a causal broken lightlike line, hence $\partial\Omega^\prime$ contains a causal broken lightlike line.
\end{proof}

\begin{proof}[Proof of Theorem \ref{thm_intro_CNS_cc} and \ref{thm_intro_CNS_futur_complet} (necessary condition)] 
    Let $\Omega$ be a convex, future complete domain containing no lightlike line such that $\partial\Omega$ is not stably acausal, or a bounded, convex and causally convex domain, such that $\partial_c^+\Omega$ or $\partial_c^-\Omega$ is not stably acausal. Combining Lemma \ref{lemme_dynamique_cas_futur_complet}, Lemma \ref{lemme_dynamique_cas_causalement_convexe} and Theorem \ref{thm_coindi_necessaire_faible}, there exists a sequence $(g_k)$ of similarities of $\R^{1,n}$ and there exists a convex, causally convex domain $\Omega^\prime$ containing no lightlike line, such that $g_k(\Omega)\to \Omega^\prime$ and such that $(\Omega^\prime,\delta_{\Omega^\prime})$ is not Gromov hyperbolic. If $(\Omega,\delta_\Omega)$ were $r$-Gromov hyperbolic for some $r>0$, then $g_k(\Omega)$ would be $r$-Gromov hyperbolic for all $k\geq 0$ by Proposition \ref{proposition_naturalité}. This would contradict the fact that $(\Omega^\prime,\delta_{\Omega^\prime})$ is not Gromov hyperbolic by Corollary \ref{cor_limite_de_domaines_Gromov_hyp}. Therefore $(\Omega,\delta_\Omega)$ is not Gromov hyperbolic. 
\end{proof}

\section{Maximal domains are not Gromov hyperbolic}\label{Section_preuve_non_maximalité}

This section is devoted to Theorem \ref{thm_intro_maximalité}. The proof of Theorem \ref{thm_intro_maximalité} is given at the end of this section. We recall the notion of $C$-maximal spacetime introduced in \cite{Clara}. A conformal map $f:M\to N$ between two globally hyperbolic spacetimes is a \emph{Cauchy embedding} if it maps a Cauchy hypersurface of $M$ to a Cauchy hypersurface of $N$. 

\begin{definition}[$C$-maximality]
    A globally hyperbolic conformal spacetime $(M,[g])$ is \emph{$C$-maximal} if every Cauchy embedding $f:M\to N$ is onto.
\end{definition}

Recall that causally convex domains in $\R^{1,n}$ are globally hyperbolic, see Fact~\ref{fait_cc_implique_globalement_hyperbolique}. Causally convex domains in $\R^{1,n}$ that are moreover maximal are well understood.

 \begin{fact}[{see \cite[Prop. 6.3]{Cha_Gal} and \cite[Thm. C]{article_dist_markotitz}}]
    \label{fact_structure_des_CC_maximaux}
     Let $\Omega$ be a domain in $\R^{1,n}$. Then $\Omega$ is causally convex and $C$-maximal
        if and only if $\Omega$ is dually convex in the sense of \cite{Zimpropqh}, that is, for every point $x\in\partial\Omega$, there exists a lightcone $C$ or a lightlike hyperplane $H$ containing $x$ and disjoint from $\Omega$.
    In that case, the metric space $(\Omega,\delta_\Omega)$ is complete as soon as $\Omega$ contains no lightlike line. \qed
 \end{fact}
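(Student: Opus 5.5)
The plan is to use the graph description of Proposition~\ref{prop_structure_ouverts_causallement_convexes}, $\Omega=\{f_-<t<f_+\}$ over $U\subset\R^n$, and to reduce $C$-maximality to a local condition at each point of $\partial_c^\pm\Omega$. Any spacelike Cauchy hypersurface $\Sigma\subset\Omega$ has a Cauchy development $D(\Sigma)$ in $\R^{1,n}$, and more generally in the Einstein universe, which is the natural conformal completion. Smaï's description of maximal conformally flat globally hyperbolic spacetimes \cite{SmaiThese,smaï2025futures} says that a Cauchy extension of $\Omega$ is realized inside the enveloping space of $\Omega$. So $\Omega$ is $C$-maximal if and only if $\Omega$ equals the Cauchy development of $\Sigma$ in that enveloping space. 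I would take this identification from Smaï and not reprove it.

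\emph{Dually convex implies $C$-maximal.} Suppose $N$ is a strict Cauchy extension. Then some past or future inextensible causal curve of $\Omega$, say future, extends in $N$ beyond its endpoint $x\in\partial_c^+\Omega$. Hence a whole neighborhood of $x$ in $N$, in particular a small diamond around $x$, lies in the Cauchy development of $\Sigma$. By hypothesis there is a lightcone $C$ or a lightlike hyperplane $H$ through $x$ that is disjoint from $\Omega$. Pick a point $y$ slightly in the future of $x$ on a null generator of $C$ (or $H$) through $x$. Then one can build a past inextensible causal curve from $y$ that runs along the degenerate hypersurface $C$ (or $H$) and never enters $\Omega$, so it never meets $\Sigma$. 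This contradicts $y\in D(\Sigma)$.

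\emph{$C$-maximal implies dually convex.} I argue by contraposition. Suppose some $x\in\partial\Omega$ has no lightcone and no lightlike hyperplane through it avoiding $\Omega$. The first step is to show that $x$ is then in the interior of $D(\Sigma)$, computed in $\R^{1,n}$ or in the Einstein universe. Equivalently, every inextensible causal curve through points near $x$ meets $\Sigma$. A causal curve that avoids $\Sigma$ near $x$ must stay in $\partial J^\pm$ of a limit point, and passing to the limit produces a lightcone or lightlike hyperplane through $x$ disjoint from $\Omega$, which is excluded. Then $\Omega\cup D(\Sigma)$ strictly contains $\Omega$ and gives a nontrivial Cauchy embedding, so $\Omega$ is not $C$-maximal. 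The delicate part is the compactness argument when boundary points escape to infinity. This is why the Einstein universe, where lightlike hyperplanes become lightcones of points at infinity, is the right setting.

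For the last sentence, completeness when $\Omega$ contains no lightlike line, I would invoke \cite[Thm.~C]{article_dist_markotitz}, as in Proposition~\ref{prop_complétude_delta_Omega}. Dual convexity provides, at every boundary point, degenerate hypersurfaces avoiding $\Omega$. These give projective-map lower bounds via Proposition~\ref{Proposition_applications_projectives}, which force $\delta_\Omega$ to blow up near $\partial\Omega$. I expect the main obstacle to be the non-maximal direction, specifically the claim that the absence of a separating null hypersurface at $x$ places $x$ in the interior of the Cauchy development.
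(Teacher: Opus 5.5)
The paper does not prove this Fact. The equivalence is quoted from \cite[Prop.~6.3]{Cha_Gal} and the completeness statement from \cite[Thm.~C]{article_dist_markotitz}. Your last paragraph does the same, which is the right call. The lower bounds from Proposition~\ref{Proposition_applications_projectives} only give blow-up at each finite boundary point. They give neither definiteness nor control of sequences escaping to infinity, and that is where ``no lightlike line'' enters. Also, the completeness proposition of Section~\ref{Section_distance_de_mark} is stated for convex domains, and dually convex domains need not be convex (e.g.\ $\{|t|<|p|\}$). Your sketch of the equivalence, however, leaves both load-bearing steps unproved.

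\emph{$C$-maximal $\Rightarrow$ dually convex.} The step ``a causal curve that avoids $\Sigma$ near $x$ must stay in $\partial J^\pm$ of a limit point'' is not valid. Curves missing $\Sigma$ need not lie on any light cone, and their limits do not produce a whole cone or null hyperplane avoiding $\Omega$. What works is the structure of Cauchy horizons of $S=\overline\Sigma$, which is closed and achronal in $\R^{1,n}$ by causal convexity. A point $x\in\partial\Omega\setminus\operatorname{int}D(S)$ lies in $\partial D(S)=H^+(S)\cup H^-(S)$; say $x\in H^+(S)$. Then $x$ lies on a (possibly trivial) null generator of $H^+(S)$, which is either past-inextensible or has a past endpoint $e\in\operatorname{edge}(S)$.
\begin{itemize}
\item In the first case, $I^+(z)\cap D(S)=\emptyset$ for every $z$ on the generator. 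These $I^+(z)$ exhaust an open null half-space, so the null hyperplane through $x$ misses $\operatorname{int}D(S)\supset\Omega$.
\item In the second case, achronality of $S$ and the definition of the edge give $I^\pm(e)\cap D(S)=\emptyset$, so the cone $C(e)\ni x$ misses $\Omega$.
\end{itemize}
Even then you must pass from $D(\overline\Sigma)$ to the development of the non-closed $\Sigma$, or $\Sigma$ need not be Cauchy in the extension. For the dually convex domain $\Omega=\{|t|<|p|\}$ with $\Sigma=\{t=0\}\setminus\{0\}$, one has $\operatorname{int}D(\overline\Sigma)=\R^{1,n}\supsetneq\Omega$. Repairing this uses three facts: the points of $\overline\Sigma\setminus\Sigma$ lie over $\partial U$, where $f_-=f_+$; they are dually convex via their own cones; and a limiting argument. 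None of this is in your proposal, and it is the whole content of this direction.

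\emph{Dually convex $\Rightarrow$ $C$-maximal.} Granting Sma\"{\i}'s realization of all Cauchy extensions in an ambient space, your local argument works at points of $\partial_c^\pm\Omega\subset\R^{1,n}$. You should add that $y\notin J^-(\Sigma)$ by causal convexity, so the future half of your curve cannot meet $\Sigma$ either. You should also derive causal convexity from dual convexity, as the statement requires; this is easy, by looking at the first exit point of a causal curve. The real gap is that you only treat extensions past points of $\partial_c^\pm\Omega$. For unbounded domains an extension could cross the conformal boundary. For example, $I^+(0)$ is dually convex and future complete, so $\partial_c^+\Omega=\emptyset$. Your argument then says nothing against an extension across future null infinity. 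You must run the argument in the Einstein universe (or its universal cover), where $\partial\Omega$ has points at infinity. There you need to observe that those points lie on the light cone of a point at infinity, which does not meet $\R^{1,n}\supset\Omega$. Finally, the reduction ``$C$-maximal iff $\Omega$ is the Cauchy development of $\Sigma$ in the enveloping space'' carries the main difficulty. Once you cite it, your argument is no more self-contained than the paper's appeal to \cite{Cha_Gal}.
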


\begin{lem}
    \label{formule_delta_diamant_dim_1+1}
    Let $D=I(a,b)$ be a diamond in $\R^{1,1}$, and let $e_1,e_2$ be the intersection points of the lightcone of $a$ and $b$. Then, for every $x,y\in D$ that are not causally related, one has 
    $$\delta_D(x,y)=\vert\ln\left(\frac{\b(x-e_1,x-e_1)\b(y-e_2,y-e_2)}{\b(y-e_1,y-e_1)\b(x-e_2,x-e_2)}\right)\vert.$$
    Also, causal curves and achronal curves in $D$ are geodesic for $(D,\delta_D)$.
\end{lem}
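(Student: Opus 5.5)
The plan is to use lightlike coordinates to reduce $D$ to a product of intervals, where the Markowitz distance can be computed explicitly. In $\R^{1,1}$, write $u,v$ for null coordinates adapted to the two lightlike directions, so that $\b=-du\,dv$ up to a constant. After an affine change, $D=I(a,b)$ becomes the square $(0,1)\times(0,1)$ in $(u,v)$. The corners $e_1,e_2$ then have coordinates $(1,0)$ and $(0,1)$, say, with $a=(0,0)$ and $b=(1,1)$.

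\emph{Step 1: upper bound.} Lightlike geodesics in $D$ are the segments $\{v=\mathrm{const}\}$ and $\{u=\mathrm{const}\}$, and each maximal one is projectively an interval $(0,1)$. By Proposition~\ref{formule_F_Omega}, a vector $w=\dot u\,\partial_u$ has $F_D(w)=|\dot u|/u+|\dot u|/(1-u)$, which is exactly the $\rho_{(0,1)}$ metric in the $u$-variable; similarly for $\partial_v$. By Theorem~\ref{thm_lien_dmark_Fmark}, $\delta_D$ is therefore the length metric obtained from piecewise horizontal/vertical curves with this weight. Hence
\[\delta_D\le \rho(u_x,u_y)+\rho(v_x,v_y),\]
where $\rho=\rho_{(0,1)}$, i.e. $\delta_D$ is bounded by the $\ell^1$ product of two hyperbolic lines. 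Since any horizontal/vertical path has length at least $\rho(u_x,u_y)+\rho(v_x,v_y)$ (project it to each factor), equality holds. So $\delta_D=\rho(u_x,u_y)+\rho(v_x,v_y)$ everywhere.

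\emph{Step 2: identify the formula.} For $x,y$ not causally related, the signs are opposite: $u_y>u_x$ and $v_y<v_x$, or the reverse. Then
\[\rho(u_x,u_y)+\rho(v_x,v_y)=\Bigl|\ln\frac{u_y(1-u_x)}{u_x(1-u_y)}\Bigr|+\Bigl|\ln\frac{v_x(1-v_y)}{v_y(1-v_x)}\Bigr|.\]
Because of the opposite signs, both logarithms inside the absolute values have the same sign, so the sum equals the absolute value of a single logarithm of the product. In null coordinates one computes $\b(x-e_1,x-e_1)\propto -(u_x-1)v_x=(1-u_x)v_x$ and $\b(x-e_2,x-e_2)\propto u_x(1-v_x)$. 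The stated cross-ratio therefore equals $\frac{(1-u_x)v_x\,u_y(1-v_y)}{(1-u_y)v_y\,u_x(1-v_x)}$, which matches the product above.

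\emph{Step 3: geodesics.} Along a causal curve, $u$ and $v$ are both monotone in the same direction. Along an achronal curve, they are monotone in opposite directions; a lightlike segment counts as both. In either case, for points $x\le z\le y$ in the curve's order, each of $\rho(u_\cdot,u_\cdot)$ and $\rho(v_\cdot,v_\cdot)$ is additive, because $\rho$ restricted to a monotone path on a line is additive. Hence $\delta_D(x,y)=\delta_D(x,z)+\delta_D(z,y)$, so the curve is a metric geodesic once parametrized by $\delta_D$-arclength.

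The main point to check carefully is the lower bound in Step 1, namely that the infimum in Theorem~\ref{thm_lien_dmark_Fmark} cannot beat the sum of the projections. This follows since $F_D$ splits as a sum of pullbacks of $\rho$ by the projections $u$ and $v$, and the functions $u,v$ are projective in the sense of Proposition~\ref{Proposition_applications_projectives}. That proposition only gives each term separately, so the additivity must come from the infinitesimal formula, which is precisely why the reduction to piecewise lightlike curves is needed.
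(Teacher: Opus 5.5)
Your proof is correct, but it takes a different route from the paper. The paper proves the lemma in one line. It starts from Equation~(\ref{equation_d_T_diamant}), i.e.\ $\delta_D=\dhat_{\ln(\tau)}$ for the projective Cauchy time function $\tau=\tauL(a,\cdot)^2/\tauL(\cdot,b)^2$ (via Lemma~\ref{lemme_temps_projectif}), and then exchanges the roles of space and time in $\R^{1,1}$. In dimension $1+1$ the metrics $\b$ and $-\b$ have the same lightlike geodesics, so $\delta_D$ is unchanged. Under this exchange, $D$ becomes a diamond with vertices $e_1,e_2$, pairs that are not causally related become causally related, and achronal curves become causal. The displayed formula is then the null-distance value of Fact~\ref{fait_null_distance} for the new time function $\b(\cdot-e_1,\cdot-e_1)/\b(\cdot-e_2,\cdot-e_2)$, and the geodesic claims reduce to causal curves being geodesics of a null distance.

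You instead compute directly in null coordinates. On lightlike vectors, $F_D$ is the sum of the pullbacks of the infinitesimal $\rho_{(0,1)}$ by $u$ and $v$, so Theorem~\ref{thm_lien_dmark_Fmark} gives $\delta_D(x,y)=\rho(u_x,u_y)+\rho(v_x,v_y)$ for \emph{all} $x,y\in D$. In other words, $(D,\delta_D)$ is the $\ell^1$-product of two hyperbolic lines.

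What your route buys:
\begin{itemize}
\item It is more elementary and self-contained, using only Proposition~\ref{formule_F_Omega} and Theorem~\ref{thm_lien_dmark_Fmark}.
\item It recovers Equation~(\ref{equation_d_T_diamant}) and the present formula at once: they are the cases where $u_y-u_x$ and $v_y-v_x$ have the same or opposite signs.
\item It makes the geodesic statement transparent, since any injective curve along which $u$ and $v$ are both monotone is a geodesic.
\end{itemize}
The price is that it relies on the product structure of diamonds, which is special to dimension $1+1$. The paper's argument instead recycles the projective-time-function machinery already in place.

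One detail is worth adding in Step~3: why $u$ and $v$ are monotone in opposite directions along an injective achronal curve. The reason is that its image is a graph $t=g(p)$ with $g$ $1$-Lipschitz, and $p$ is monotone along the curve.
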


\begin{proof}
    This follows from Equation \ref{equation_d_T_diamant} by exchanging the roles of space and time in $\R^{1,1}$.
\end{proof}

\begin{lem}
    \label{Lemme_D1_1}
    Let $D=I(x,y)$ be a diamond in $\R^{1,1}$. Let $a,b\in D$ such that $a\ll b$. Then the restrictions of $\delta_{I(x,y)}$ and $\delta_{I(a,y)}$ to $I(b,y)$ are equivalent. 
\end{lem}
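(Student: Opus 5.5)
Since $I(a,y)\subset I(x,y)$, Proposition \ref{proposition_naturalité} gives immediately $\delta_{I(x,y)}\leq \delta_{I(a,y)}$ on $I(b,y)$. It remains to prove a reverse inequality $\delta_{I(a,y)}\leq C\,\delta_{I(x,y)}$ on $I(b,y)$ for some constant $C$ depending on the configuration. I would reduce to the infinitesimal functional via Theorem \ref{thm_lien_dmark_Fmark}. For $z\in I(b,y)$ and $v$ lightlike at $z$, Proposition \ref{formule_F_Omega} gives
$$F_{I(x,y)}(v)=\frac{\vert v\vert}{d_h(z,z_v^-)}+\frac{\vert v\vert}{d_h(z,z_v^+)},$$
and the analogous expression for $I(a,y)$ with past endpoints $z_v^{-,a}$ on the boundary of $I(a,y)$. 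In dimension $1+1$ the future endpoints $z_v^+$ coincide for both diamonds, since both have the same future boundary $C^-(y)$. Only the past endpoints differ, and $d_h(z,z_v^{-,a})\leq d_h(z,z_v^-)$.

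The key step is a uniform bound $d_h(z,z_v^-)\leq C'\, d_h(z,z_v^{-,a})$ for $z\in I(b,y)$. Because $a\ll b$, the region $I(b,y)$ stays at positive distance from the past boundary $C^+(a)\cap \overline{I(a,y)}$. More precisely, the distance from $z\in I(b,y)$ along a lightlike direction to $C^+(a)$ is at least the corresponding distance from $b$, which is positive: moving along the same null direction, the distance to the parallel null line of $C^+(a)$ is constant in the transverse null coordinate and minimized at $b$. Meanwhile $d_h(z,z_v^-)$ is bounded above by the diameter of $I(x,y)$. Hence $d_h(z,z_v^-)\leq C' d_h(z,z_v^{-,a})$ with $C'=\operatorname{diam}_h I(x,y)/\min_\pm d^\pm(b)$, where $d^\pm(b)$ are the two null distances from $b$ to $C^+(a)$. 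Using null coordinates $u,w$, this is explicit: $z_v^{-,a}$ has $u$-coordinate $u(a)$ and $z\in I(b,y)$ has $u(z)>u(b)>u(a)$.

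Then $F_{I(a,y)}(v)\leq C' F_{I(x,y)}(v)$ pointwise on $I(b,y)$. I expect the main obstacle to be integrating this: Theorem \ref{thm_lien_dmark_Fmark} computes $\delta_{I(x,y)}(z,z')$ as an infimum over piecewise lightlike curves in $I(x,y)$, which may leave $I(b,y)$. To handle this, I would use that $I(b,y)$ is itself a diamond whose null coordinate ranges are products, so that the projection clamping each null coordinate into the range of $I(b,y)$ maps piecewise lightlike curves to piecewise lightlike curves. This projection does not increase $F_{I(x,y)}$: the clamped segments are either constant or unchanged pieces, and along them the quantities $d_h$ to the boundaries only increase or are unchanged, by monotonicity in the null coordinates. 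Alternatively, I would invoke Lemma \ref{formule_delta_diamant_dim_1+1}, noting that causal and achronal curves are geodesic with explicit formulas, and compare the resulting explicit logarithmic expressions directly. Either route yields $\delta_{I(a,y)}\leq C'\delta_{I(x,y)}$ on $I(b,y)$, which gives the claimed equivalence.
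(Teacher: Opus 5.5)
Your proof is correct, and it takes a genuinely different route from the paper's.

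\textbf{The paper's route.} The paper works with the global distance. For $z_1\le z_2$ in $I(b,y)$, it compares the explicit diamond formula (\ref{equation_d_T_diamant}) for $I(a,y)$ and for $I(x,y)$, using inequalities between powers of $\tauL(a,\cdot)$ and $\tauL(x,\cdot)$. For non-causally related points, it splits along an achronal broken lightlike segment $z_1z_3z_2\subset I(b,y)$, which is a geodesic for both diamonds by Lemma~\ref{formule_delta_diamant_dim_1+1}.

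\textbf{Your route.} You work infinitesimally instead. By Proposition~\ref{formule_F_Omega}, only the past endpoint changes. In null coordinates $(u,w)$, its $d_h$-distance to $z$ is proportional to $u(z)-u(a)\geq u(b)-u(a)$ (or the $w$-analogue), which gives $F_{I(a,y)}\leq C'F_{I(x,y)}$. The clamping map $(u,w)\mapsto(\max\{u,u(b)\},\max\{w,w(b)\})$ then handles almost-minimizing staircases for $\delta_{I(x,y)}$ that leave $I(b,y)$. This retraction is legitimate for three reasons. It sends null segments to null segments or points. It keeps curves inside $I(a,y)$. It does not increase the $F_{I(x,y)}$-length, because on a diamond of $\R^{1,1}$ the density of $F$ along a $u$-line depends only on $u$. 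Two small corrections to your wording: clamped pieces are translated rather than literally unchanged, and the clamped curve may run along $\partial I(b,y)$, where your lower bound still holds with $\geq$ in place of $>$.

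\textbf{What your route buys.} You get an explicit, scale-invariant constant, $\max\{\frac{u(y)-u(x)}{u(b)-u(a)},\frac{w(y)-w(x)}{w(b)-w(a)}\}$. The structure also becomes transparent: $\delta$ of a diamond in $\R^{1,1}$ is the $\ell^1$-sum of the one-dimensional metrics $\rho$ in the two null coordinates. The lemma therefore reduces to comparing Hilbert metrics of nested intervals on $(u(b),u(y))$ and $(w(b),w(y))$.

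\textbf{Robustness.} Your argument is also more robust than the paper's as written. There, the middle term $2\ln\bigl(\frac{\tauL(z_1,y)\tauL(x,z_2)}{\tauL(z_2,y)\tauL(x,z_1)^\alpha}\bigr)$ tends to $2(1-\alpha)\ln\tauL(x,z_2)$ as $z_1\to z_2$. This limit is nonzero in general, while $\alpha\,\delta_{I(x,y)}(z_1,z_2)\to 0$, so that chain of inequalities needs repair. Your argument provides a clean one.
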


\begin{proof}
    Since $I(a,y)\subset I(x,y)$, we already know from Proposition \ref{proposition_naturalité} that $\delta_{I(x,y)}\leq \delta_{I(a,y)}$ on $I(a,y)$. 
    Let $z\in I(b,y)$. Then, if $\alpha=\ln(\tauL(x,y))/\ln(\tauL(a,y))$, one has
    $$\tauL(x,z)^\alpha\leq \tauL(x,y)^\alpha=\tauL(a,b)\leq \tauL(a,z).$$    
    Let $z_1,z_2\in I(b,y)$. If $z_1\leq z_2$, then from Equation \ref{equation_d_T_diamant}, we deduce
    $$\delta_{I(a,y)}(z_1,z_2)=2\ln\left(\frac{\tauL(z_1,y)\tauL(a,z_2)}{\tauL(z_2,y)\tauL(a,z_1)}\right)\leq 2\ln\left(\frac{\tauL(z_1,y)\tauL(x,z_2)}{\tauL(z_2,y)\tauL(x,z_1)^\alpha}\right)\leq \alpha\delta_{I(x,y)}(z_1,z_2).$$
    If $z_1$ and $z_2$ are arbitrary, let $z_3\in I(b,y)$ be such that $z_1z_3z_2$ is an achronal broken lightlike segment. Then, applying Lemma \ref{formule_delta_diamant_dim_1+1} twice, we get
    $$\begin{aligned}
        \delta_{I(a,y)}(z_1,z_2)&=\delta_{I(a,y)}(z_1,z_3)+\delta_{I(a,y)}(z_3,z_2)\\
        &\leq \alpha\delta_{I(x,y)}(z_1,z_3)+\alpha\delta_{I(x,y)}(z_3,z_2)= \alpha\delta_{I(x,y)}(z_1,z_2).
    \end{aligned}$$
    Hence $\delta_{I(x,y)}\leq \delta_{I(a,y)}\leq \alpha \delta_{I(x,y)}$ on $I(b,y)$.
\end{proof}

\begin{definition}
    Let $\Omega$ be a bounded, causally convex domain in $\R^{1,n}$. A Lorentzian plane $\Sigma$ is called a \emph{future maximal 2-section of $\Omega$} if there exists a connected component $\Omega_\Sigma$ of $\Omega\cap\Sigma$ such that the future causal boundary of $\Omega_\Sigma$ is a broken lightlike segment. See Figure \ref{fig_ccm_bigon}.
\end{definition}

\begin{lem}
    \label{lem_existance_2_sections}
    Let $\Omega$ be a bounded, causally convex, maximal domain in $\R^{1,n}$. Then $\Omega$ admits a future maximal 2-section. 
\end{lem}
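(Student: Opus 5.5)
We will use the characterization of causally convex $C$-maximal domains recalled in Fact~\ref{fact_structure_des_CC_maximaux}: $\Omega$ is dually convex. So every boundary point lies on a lightcone $C(z)$ or a lightlike hyperplane that avoids $\Omega$. The idea is to find a future boundary point $x\in\partial_c^+\Omega$ that is \emph{extremal in the time direction}, and to show that the obstruction there is a past lightcone $C^-(x')$ (or a pair of lightlike hyperplanes) meeting $\partial_c^+\Omega$ along two lightlike segments lying in a common Lorentzian plane.

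Concretely, write $\Omega=\{f_-<t<f_+\}$ over a bounded $U\subset\R^n$ as in Proposition~\ref{prop_structure_ouverts_causallement_convexes}. Since $\Omega$ is bounded, $f_+$ attains its maximum on $\overline U$ at some $p_0$. We put $x_0=(f_+(p_0),p_0)$, and we may take $p_0\in U$, since $f_+=f_-$ on $\partial U$ and $f_+>f_-$ in $U$.

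By dual convexity there is a lightcone $C$ or a lightlike hyperplane $H$ through $x_0$ disjoint from $\Omega$. A lightlike hyperplane through $x_0$ disjoint from $\Omega$ is impossible near an interior maximum. Indeed, the vertical segment below $x_0$ lies in $\Omega$, and $\Omega$ fills a neighbourhood below the graph, whose maximum value is $t_0=f_+(p_0)$; any lightlike hyperplane through $x_0$ has points with $t<t_0$ arbitrarily close to $x_0$ that lie in $\Omega$.

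The same argument excludes a future cone $C^+(z)$ with $x_0\in C^+(z)$ and $z\ll$ points below, and also a cone $C^-(z)$ with $z$ in the past. So the avoiding set must be $C^+(z)$ with vertex $z$ in the future of $x_0$: then $\Omega\cap I^+(z)=\emptyset$ and $\Omega$ lies outside the cone. Here I expect some case analysis with the 1-Lipschitz graph.

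Given such a cone $C^+(z)$ with $x_0\in C^+(z)$ and $\Omega$ outside $I^+(z)$, take the Lorentzian plane $\Sigma$ containing $z$, $x_0$ and $\partial_t$. The set $\Sigma\cap C^+(z)$ is a pair of lightlike lines through $z$, one passing through $x_0$. We then look at the connected component $\Omega_\Sigma$ of $\Omega\cap\Sigma$ containing the vertical segment below $x_0$. Its future causal boundary lies below the broken line $\Sigma\cap C(z)$ and touches it at $x_0$.

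The remaining task, which I expect to be the main obstacle, is to upgrade this ``touching'' to the statement that the future causal boundary of $\Omega_\Sigma$ \emph{is} a broken lightlike segment. For this we apply dual convexity again at every point of $\partial_c^+\Omega_\Sigma$, using the 2-dimensional picture. In $\R^{1,1}$ a causally convex, dually convex domain is, by the 2D case of Fact~\ref{fact_structure_des_CC_maximaux}, a diamond $I(a,b)$. So the plan is to show that $\Omega_\Sigma$ is itself dually convex in $\Sigma$: lightcones and lightlike hyperplanes in $\R^{1,n}$ avoiding $\Omega$ intersect $\Sigma$ in sets containing lightlike lines of $\Sigma$ through the boundary point and avoiding $\Omega_\Sigma$.

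The delicate point is that a cone $C(z)$ with $z\notin\Sigma$ meets $\Sigma$ in a hyperbola, not in lightlike lines. I would handle this by a limiting argument: choose $x_0$, and hence $\Sigma$, so that the vertex $z$ lies in $\Sigma$ (as above). If this cannot be arranged everywhere, replace the maximum point by a point where the obstructing cone's vertex lies in the plane, for instance by choosing $\Sigma$ through the vertex $z$ and checking the boundary of $\Omega_\Sigma$ near $x_0$. Once $\Omega_\Sigma$ is a 2D dually convex causally convex domain, it is a diamond, and its future causal boundary is a broken lightlike segment. Hence $\Sigma$ is a future maximal 2-section.
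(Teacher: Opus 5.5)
There is a genuine gap: the plane you build is in general not a future maximal 2-section, and the step meant to fix this is not an argument.

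\emph{Preliminary errors.} Your exclusion of lightlike hyperplanes at $x_0$ is false. For a diamond $I(a,b)$ the maximum of $f_+$ is attained at $b$, and every lightlike hyperplane through $b$ is disjoint from $I(a,b)\subset I^-(b)$, since a timelike vector is never orthogonal to a nonzero lightlike one. Your argument tacitly assumes $\Omega$ contains every point slightly below $x_0$, but $f_+$ may decrease at unit rate. Similarly, $x_0\in C^+(z)$ forces $z\leq x_0$, so the case you retain is inconsistent. Also, $p_0\in U$ is not justified by $f_+=f_-$ on $\partial U$.

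\emph{The choice of plane fails.} No repair of these steps saves a plane containing $x_0$ and $\partial_t$. In $\R^{1,2}$, let $u_1,u_2,u_3$ be unit vectors at mutual angles $2\pi/3$, let $a=(-T,0)$ with $T>0$, and set $\Omega=I^+(a)\cap\{(t,p)\,:\,t<-\max_i\langle p,u_i\rangle\}$. This domain is bounded, causally convex and dually convex: each boundary point lies on $C(a)$ or on one of the lightlike planes $t=-\langle p,u_i\rangle$, all disjoint from $\Omega$. Hence it is $C$-maximal by Fact~\ref{fact_structure_des_CC_maximaux}. Its highest point is $x_0=0$. On the vertical plane through $0$ directed by a unit vector $w$, the future boundary is $t=-\vert s\vert\max_i\langle \pm w,u_i\rangle$ for $\pm s>0$. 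Both maxima cannot equal $1$, since no $-u_j$ is among the $u_i$, so at least one side is spacelike. The planes that do work here are spanned by two of the lightlike rays $\R_{\geq 0}(-1,u_i)$, and they do not contain $\partial_t$.

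\emph{The upgrade step.} It is also unsupported. $\Omega\cap\Sigma$ need not be dually convex in $\Sigma$ even when $\Sigma$ is a future maximal 2-section. For instance, cut $I(a,b)\subset\R^{1,2}$ by a Lorentzian plane through $b$ but not $a$: its past boundary is a hyperbola arc. The ``limiting argument'' is never specified.

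\emph{The missing idea.} First locate two lightlike segments of $\partial_c^+\Omega$ through a common point, and take $\Sigma$ to be the plane they span. Dual convexity is then used only along segments already known to lie in the boundary, never to manufacture them. The paper gets these segments as follows:
\begin{enumerate}
\item By Sma\"i's theorem, $I^+(x)\cap\Omega$ is conformally equivalent to a convex past complete regular domain $V$.
\item By Bonsante's result, two lightlike half-lines of $\partial V$ pass through the terminal singularity of a point of $V$.
\item This yields lightlike segments $c_1,c_2\subset\partial_c^+\Omega$ through a point $s_+$, which are then extended maximally.
\end{enumerate}
The difficulty you flagged then becomes the lever. At an interior point $y$ of $c_1$, an obstructing lightcone whose vertex is not in $\Sigma$ meets $\Sigma$ in a spacelike curve through $y$, transverse to $c_1$. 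That curve would disconnect $\Omega_\Sigma$, a contradiction. Hence the obstruction, cone or lightlike hyperplane, contains the line spanned by $c_1$, which is therefore disjoint from $\Omega_\Sigma$. The same holds for $c_2$, so $\Omega_\Sigma\subset I^-(s_+)\cap\Sigma$. Maximality of $c_1$ and $c_2$ then gives $\partial_c^+\Omega_\Sigma=c_1\cup c_2$.
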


\begin{proof}
    Let $x\in \Omega$ and let $U=I^+(x)\cap \Omega$. Then $U$ is conformally equivalent to a convex, past complete regular domain $V$ in $\R^{1,n}$, see \cite{smaï2025futures}. Let $f:U\to V$ be such a conformal equivalence, and let $s_+\in \partial_c^+\Omega$ such that $f(s_+)$ is the terminal singularity of some point of $V$. By \cite[Prop. 4.12]{Bonsante}, there exist two lightlike half-lines in $\partial V$ through $f(s_+)$. In particular, there exist two lightlike segments $c_1$ and $c_2$ through $s_+$ contained in $\partial_c^+\Omega$. Extend these two segments maximally in $\partial_c^+\Omega$, and let $e_1,e_2$ be their respective endpoints. 
    Let $\Sigma$ be the plane containing $e_1,e_2$ and $s_+$ and let $\Omega_\Sigma$ be the connected component of $\Omega\cap\Sigma$ containing $s_+$ in its boundary.  
    Let $y$ be a point in the interior of $c_1$. From Fact \ref{fact_structure_des_CC_maximaux}, there exists $C$, that is either a lightcone or a lightlike hyperplane, such that $C$ contains $y$ and is disjoint from $\Omega$. Assume first that $C$ is a lightcone centered at some point $z\in \R^{1,n}$. If $z\not\in \Sigma$, then $C\cap \Sigma$ would be a spacelike curve disconnecting $\Omega_\Sigma$, which would be a contradiction. Hence $z\in \Sigma$ and $C\cap \Sigma$ is a union of two lightlike lines, one of which contains $c_1$. Assume now that $C$ is a degenerate hyperplane. Then, arguing similarly, the intersection $C\cap \Sigma$ is a lightlike line parallel to $c_1$. It follows from both cases that $\Omega_\Sigma$ is disjoint from the line generated by $c_1$. Similarly  $\Omega_\Sigma$ is disjoint from the line generated by $c_2$. Hence $\Omega_\Sigma$ is contained in the past of $s_+$. It follows from the maximality of $c_1$ and $c_2$,  that $\partial_c^+\Omega_\Sigma=c_1\cup c_2$.  
\end{proof}

\begin{lem}
    \label{lemme_2_quasi_geod_CCM}
    Let $\Omega$ be a bounded, causally convex domain in $\R^{1,n}$, and let $\Sigma$ be a future maximal 2-section of $\Omega$. Let $\Omega_\Sigma$ be a connected component of $\Omega\cap \Sigma$ whose future causal boundary is a broken lightlike segment. Then, for every $a\in \Omega_\Sigma$, there exists $A>0$ such that every acausal curve in $\Omega_\Sigma\cap I^+(a)$ is a $(A,0)$-quasi-geodesic of $(\Omega,\delta_\Omega)$. 
\end{lem}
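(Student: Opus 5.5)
The idea is to trap $\delta_\Omega$ on $\Omega_\Sigma\cap I^+(a)$ between two two-dimensional diamond metrics, and use that acausal curves are geodesics for those diamonds by Lemma \ref{formule_delta_diamant_dim_1+1}.

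\textbf{Setup.} Write $\partial_c^+\Omega_\Sigma=c_1\cup c_2$, where $c_1,c_2$ are lightlike segments meeting at $s_+$. Let $e_1,e_2$ be their other endpoints. Extend $c_1,c_2$ to full lightlike lines in $\Sigma$. They bound a past cone $I^-_\Sigma(s_+)$, which contains $\Omega_\Sigma$.

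\textbf{Upper bound.} Fix $a\in\Omega_\Sigma$ and choose $a'\in\Omega_\Sigma$ with $a'\ll a$ (possible since $\Omega_\Sigma$ is open in $\Sigma$). By causal convexity of $\Omega$, the set $\Omega_\Sigma\cap I^+(a')$ is the diamond $D'=I_\Sigma(a',s_+)$ of $\Sigma$. Indeed, $\Omega_\Sigma\cap I^+(a')$ is causally convex in $\Sigma$, and its future causal boundary lies on $c_1\cup c_2$. Since $D'\subset\Omega$, Proposition \ref{proposition_naturalité} gives $\delta_\Omega\le\delta_{D'}$ on $D'$.

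\textbf{Lower bound.} We need $\delta_\Omega\ge\delta_D$ for a two-dimensional diamond $D=I_\Sigma(x,s_+)\supset D'$. The natural tool is a projective map, as in Proposition \ref{Proposition_applications_projectives}. The lines through $c_1$ and $c_2$ are each contained in a lightcone or a lightlike hyperplane disjoint from $\Omega$; this is the content of the proof of Lemma \ref{lem_existance_2_sections}, and I would reprove it via maximality of the segments $c_i$ in $\partial_c^+\Omega$ and causal convexity. I expect this to be the main obstacle, since $\Omega$ is only assumed causally convex here, not $C$-maximal. The fallback is to use only that $\Omega\subset I^-(\text{support sets of } c_1,c_2)$, which follows from $c_i\subset\partial_c^+\Omega$ by an achronality argument on $\partial_c^+\Omega$.

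Granting this, take for each $i$ a null quadric $Q_i$ (either $\b(z-z_i,z-z_i)$ or a linear form $\b(z-z_i,\ell_i)$) that is positive on $\Omega$ and vanishes on the line of $c_i$. Each $Q_i$ is projective along lightlike lines, since its restriction is affine. The Lorentzian plane $\Sigma$ then carries coordinates $u_i=Q_i|_\Sigma$ of null type. We also need a lower barrier in the past, and since $\Omega$ is bounded, $\Omega\subset I^+(q)$ for some point $q$. Then the map
$$\phi=\ln\!\left(\frac{\b(z-q,z-q)}{Q_1(z)}\right)-\ln\!\left(\frac{\b(z-q,z-q)}{Q_2(z)}\right)$$
or a similar cross-ratio-type combination is a homography along each lightlike line, and it takes values in a strict interval on $\Omega$. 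By Proposition \ref{Proposition_applications_projectives}, $\delta_\Omega(y,z)\ge\rho(\phi(y),\phi(z))$. On the smaller diamond $I_\Sigma(a,s_+)$, this matches, up to a multiplicative constant $A_1$, the achronal formula of Lemma \ref{formule_delta_diamant_dim_1+1} for the two-dimensional diamond with corners determined by $q$ and $s_+$. This is where I would use Lemma \ref{Lemme_D1_1} to change diamonds at the cost of a constant.

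\textbf{Conclusion.} Let $\gamma$ be an acausal curve in $I_\Sigma(a,s_+)$, parametrized by $\delta_{D'}$-arclength. It is a $\delta_{D'}$-geodesic by Lemma \ref{formule_delta_diamant_dim_1+1}. Combining the two bounds with Lemma \ref{Lemme_D1_1}, which compares $\delta_{D'}$ with the lower-bound diamond metric on $I^+(a)$, we obtain constants $A_1,A_2$ with
$$A_1^{-1}|t-s|\le\delta_\Omega(\gamma(s),\gamma(t))\le\delta_{D'}(\gamma(s),\gamma(t))=|t-s|.$$
Hence $\gamma$ is an $(A,0)$-quasi-geodesic, with $A$ depending only on $a$.
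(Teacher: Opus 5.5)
Your overall plan is the paper's: an upper bound $\delta_\Omega\le\delta_{D'}$ from a two-dimensional diamond $D'=I_\Sigma(a',s_+)\subset\Omega$ (the paper's $S$), a lower bound from a projective ratio of null quadrics via Proposition \ref{Proposition_applications_projectives}, Lemma \ref{Lemme_D1_1} to compare diamond metrics with top vertex $s_+$ on $I_\Sigma(a,s_+)$, and geodesicity of achronal curves from Lemma \ref{formule_delta_diamant_dim_1+1}. The gap is in the lower bound, which is the only non-trivial half since the upper bound is a mere inclusion. There you \emph{grant} the existence of null quadrics $Q_1,Q_2$ that are positive on $\Omega$ and vanish on the lines of $c_1,c_2$. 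The route you suggest for proving this is maximality of the $c_i$, as in the proof of Lemma \ref{lem_existance_2_sections}. That route goes through Fact \ref{fact_structure_des_CC_maximaux}, i.e.\ $C$-maximality, which is not a hypothesis of this lemma. Your fallback ``$\Omega\subset I^-(\text{support sets})$'' does not produce a projective function, so Proposition \ref{Proposition_applications_projectives} cannot be applied to it. As written, the proof is incomplete at its key step.

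The missing idea is to put the cone vertices at the endpoints $e_1,e_2$ of $\partial_c^+\Omega_\Sigma$; causal convexity alone then suffices.

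\begin{enumerate}
\item Applying Proposition \ref{prop_structure_ouverts_causallement_convexes} inside $\Sigma$ gives $\Omega_\Sigma\subset D=I_\Sigma(s_-,s_+)$, whose side corners are $e_1,e_2$.
\item Suppose some $x\in\Omega$ lay in $I^-(e_1)$. Then every $z\in I^-_\Sigma(e_1)$ close to $e_1$ satisfies $x\ll z\ll e'$ for some $e'\in\Omega_\Sigma$ close to $e_1$. Hence $z\in\Omega$, and the timelike segment $[z,e']\subset\Sigma\cap\Omega$ puts $z$ in $\Omega_\Sigma$, although $z\notin D$.
\item The same argument rules out $I^+(e_1)$ and $I^\pm(e_2)$. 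Since $\Omega$ is open, it then avoids $J^\pm(e_i)$, so $Q_i(z)=\b(z-e_i,z-e_i)$ is positive on $\Omega$, and $C(e_i)$ contains the line of $c_i$.
\end{enumerate}

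Your auxiliary point $q$ plays no role: it cancels identically in your formula, so $\phi=\ln(Q_2/Q_1)$ and no ``lower barrier'' is needed. The relevant diamond is $D$, with corners $s_-,e_1,s_+,e_2$, not one determined by $q$. With this choice, Lemma \ref{formule_delta_diamant_dim_1+1} gives $|\phi(y)-\phi(z)|=\delta_D(y,z)$ exactly for non-causally related $y,z\in\Sigma$. Hence $\delta_D\le\delta_\Omega\le\delta_{D'}\le A\,\delta_D$ for such pairs in $I_\Sigma(a,s_+)$, and your conclusion goes through. This is precisely the paper's argument, with $f=Q_1/Q_2$.
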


\begin{proof}
    Let $e_1$ and $e_2$ be the endpoints of $\partial_c^+\Omega_\Sigma$. Note that $\Omega$ cannot intersect $I^\pm(e_1)$ and $I^\pm(e_2)$. Indeed, assume for instance that $x\in \Omega\cap I^-(e_1)$. The diamond $I(x,e_1)$ is contained in $\Omega$, since $\Omega$ is causally convex. Then $I(x,e_1)\cap \Sigma$ is non-empty and contained in $\Omega\cap \Sigma\cap I^-(e_1)$. This contradicts the fact that $e_1\not \in \partial_c^+\Omega_\Sigma$. Let $U$ be the set of points in $\R^{1,n}$ that are not causally related to $e_1$ or $e_2$. Let $f:U\to \R_{>0}$ be the map defined by $f(x)=\frac{\b(x-e_1,x-e_1)}{\b(x-e_2,x-e_2)}$. Then $f$ is projective, hence by Proposition \ref{Proposition_applications_projectives}, one has 
    $$\vert\ln(f(x)/f(y))\vert\leq \delta_U(x,y),$$
    for every $x,y\in U$.
    We denote by $s_-$ the intersection point of $\Sigma$ with the past lightcones of $e_1$ and $e_2$. We write $D=I(s_-,s_+)\cap\Sigma$. Fix a point $b\in\Omega\cap I^-(a)$ and write $S=\Sigma\cap I(b,s_+)$. Let $x,y\in \Omega_\Sigma\cap I^+(a)$ be two non-causally related points. The inclusions $S\subset\Omega\subset U$ imply, by Proposition \ref{proposition_naturalité}, that 
    $$ \delta_U(x,y)\leq\delta_\Omega(x,y)\leq \delta_S(x,y).$$
    From Lemma \ref{Lemme_D1_1}, we can find a constant $A \geq 1$ such that $\delta_S\leq A\delta_D$ on $\Omega_\Sigma\cap I^+(a)$. Now, since $x$ and $y$ are not causally related, Lemma \ref{formule_delta_diamant_dim_1+1} implies that
    $$\delta_{D}(x,y)=\vert\ln(f(x)/f(y))\vert.$$    
    Combining the preceding inequalities, we obtain
    $$\delta_D(x,y)\leq \delta_\Omega(x,y)\leq A\delta_D(x,y).$$
    Since achronal curves in $D$ are geodesic for $\delta_D$ by Lemma \ref{formule_delta_diamant_dim_1+1}, any achronal curve of $\Omega_\Sigma\cap I^+(a)$ is a $(A,0)$-quasi-geodesic of $(\Omega,\delta_\Omega)$.
\end{proof}

\begin{figure}
    \centering
        \begin{tabular}{ccccc}
    \begin{tikzpicture}[scale=0.9]

    \coordinate (sp) at (-0.50*1.801,1.8);
    \coordinate (eun) at (-1.3*1.801,-0.45);
    \coordinate (edeux) at (0*1.801,0.6);

    \draw[thick,opacity=0.7] 
    (0.3*1.801,3.5) to[out=-100, in=110] 
    (0.6*1.801,-2) to[out=-120, in=-30] 
    (-1.9*1.801,-3) to[out=80, in=-80] 
    (-1.9*1.801,2.5) to[out=-10, in=-130] 
    (0.3*1.801,3.5);
    \fill[fill=white]  (sp) -- (1.2*1.801,1)
    -- (1.2*1.801,-1.6) -- (0*1.801,-1.8) ;

    \fill[fill=bovert]  (sp) -- 
    (eun) -- 
    (-1.1*1.801,-0.6) --
    (-0.8*1.801,-1.15) --
    (-0.5*1.801,-1.46) -- 
    (-0.3*1.801,-0.9) --
    (-0.1*1.801,-0.5) -- (edeux) ;

    \draw[thick] (eun) -- 
    (-1.1*1.801,-0.6) --
    (-0.8*1.801,-1.15) --
    (-0.5*1.801,-1.46) ;

    \draw[thick, densely dashed] (-0.5*1.801,-1.46) -- 
    (-0.3*1.801,-0.9) --
    (-0.1*1.801,-0.5) -- (edeux) ;

    \fill (sp) circle (2pt) node [above right] {$s_+$};
    \fill (eun) circle (2pt) node [above left, xshift=7pt, yshift=2] {$e_1$};
    \fill (edeux) circle (2pt) node [above right,  xshift=-5pt] {$e_2$};
    
    \draw[thick, densely dashed] (edeux)
    -- (1*1.801,-0.1)
    -- (2*1.801,-0.5);
    \draw[thick, densely dashed, opacity=0.5] (-1.9*1.801,0.75)
    -- (-1.5*1.801,1) 
    -- (edeux);
    \draw[thick]  (-1.75*1.801,0.04)
     -- (-2*1.801,0.5);

     \draw[thick]  (2*1.801,-0.5)
    -- (1.9*1.801,-0.75)
    -- (1.5*1.801,-1)
    -- (0*1.801,-0.8) 
    -- (-0.6*1.801,-0.6)
    -- (eun);
     \draw[thick, opacity = 0.5]  (eun)
     -- (-2*1.801,0.5)
     -- (-1.9*1.801,0.75);
     \draw[thick]  (-1.75*1.801,0.17)
     -- (-2*1.801,0.5)
     -- (-1.9*1.801,0.75);

     \draw[thick] (-1.77*1.801,0.9)
     -- (-1.9*1.801,0.75);
     
    \draw[thick,opacity=0.4]  (-2*1.801,0.5) 
        -- (-1.2*1.801,-1) 
        -- (-0.5*1.801,-1.46);
    \draw[thick] (-0.5*1.801,-1.46)
     -- (0*1.801,-1.8) 
     -- (1.2*1.801,-1.6)     
    -- (1.9*1.801,-0.75) ;

    \draw[thick]  (2*1.801,-0.5) 
        -- (1.2*1.801,1) 
     -- (sp)  ;
    \draw[thick,opacity=0.5] (sp) 
     -- (-1.2*1.801,1.6)     
    -- (-1.9*1.801,0.75) ;
    \draw[thick]  (sp) -- (eun) ;
    \draw[thick, densely dashed]  (sp) -- (edeux) ;

    \draw[fill=blue,opacity=0.05] 
    (-1.9*1.801,2.5) to[out=-10, in=-130] 
    (0.3*1.801,3.5) to[out=-100, in=110] 
    (0.6*1.801,-2) to[out=-120, in=-30] 
    (-1.9*1.801,-3) to[out=80, in=-80] 
    (-1.9*1.801,2.5);
    \draw[thick,opacity=0.4] 
    (0.3*1.801,3.5) to[out=-100, in=110] 
    (0.6*1.801,-2) to[out=-120, in=-30] 
    (-1.9*1.801,-3) to[out=80, in=-80] 
    (-1.9*1.801,2.5) to[out=-10, in=-130] 
    (0.3*1.801,3.5);

    \node at (-1.7*1.801,2) {$\Sigma$};
    \node at (1.7*1.801,0.7) {$\Omega$};
        
\end{tikzpicture}
             
             &  &&&

    \begin{tikzpicture}
    \def\a{2.6}
    \def\B{2.6}
    \draw (\a,\B) -- (-\a,\B) -- (-\a,-\B) -- (\a,-\B) -- cycle;
    \draw[fill=blue,opacity=0.03] (\a,\B) -- (-\a,\B) -- (-\a,-\B) -- (\a,-\B) -- cycle;
    \draw[fill=bovert, opacity=1]  (0,2) -- (-2,0) 
    -- (-1.5,-0.3) 
    -- (-1,-0.7) 
    -- (-0.5,-0.6) 
    -- (0,-0.8) 
    -- (0.5,-0.8) 
    -- (1,-0.4) 
    -- (1.5,-0.3) 
    -- (2,0) -- cycle;
    \draw (0,2) -- (-2,0) -- (0,-2) -- (2,0) -- cycle;
    \draw (0,2) -- (-1,1) -- (0,0) -- (1,1) -- cycle;

    \coordinate (a) at (-0.6,1);
    \coordinate (b) at (0,1.6);
    \coordinate (c) at (0.6,1);
    \coordinate (d) at (0,0.4);
    \draw[very thick,red] (a) -- (b) -- (c)  -- cycle;
    \fill[red] (a) circle (2pt);
    \fill[red] (b) circle (2pt);
    \fill[red] (c) circle (2pt);

    \fill (0,0) circle (2pt) node [left] {$a$};
    \fill (-2,0) circle (2pt) node [left] {$e_1$};
    \fill (2,0) circle (2pt) node [right] {$e_2$};
    \fill (0,2) circle (2pt) node [above left] {$s_+$};
    \fill (0,-2) circle (2pt) node [below left] {$s_-$};
    \node at (-2.3,2.2) {$\Sigma$};
    \node at (1.3,0) {$\Omega_\Sigma$};
        
\end{tikzpicture}
        \end{tabular}
    \caption{On the left: a future maximal 2-section $\Sigma$ of a causally convex maximal domain $\Omega$. On the right, in red: an achronal quasi-geodesic triangle in $\Omega_\Sigma$.}
    \label{fig_ccm_bigon}
\end{figure}
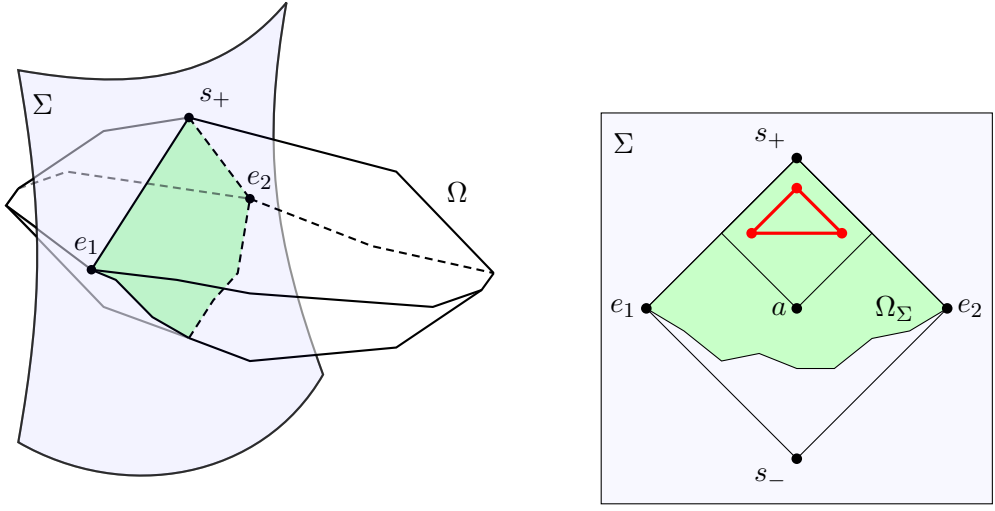

\begin{proof}[Proof of Theorem \ref{thm_intro_maximalité}]
     Let $\Omega$ be a bounded, causally convex, maximal domain in $\R^{1,n}$, let $\Sigma$ be a future maximal 2-section of $\Omega$, let $\Omega_\Sigma$ be a connected component of $\Omega\cap \Sigma$ whose future causal boundary is a broken lightlike segment, let $a\in \Omega_\Sigma$ and let $A>0$ be the constant given by Lemma \ref{lemme_2_quasi_geod_CCM}.  Let $(x_ky_kz_k)$ be a sequence of achronal triangles inside $\Omega_\Sigma\cap I^+(a)$ such that $[x_k,y_k]$ and $[y_k,z_k]$ are lightlike segments, for all $k\geq 0$. One can choose $(x_ky_kz_k)$ so that $[x_k,y_k]$ and $[y_k,z_k]$ converge to lightlike segments in the future causal boundary of $\Omega_\Sigma$, and such that $[x_k,z_k]$ converges to a spacelike segment, see Figure \ref{fig_ccm_bigon}. In particular, the edges $[x_k,y_k]$ and $[y_k,z_k]$ leave every compact subset of $\Omega$, but $[x_k,z_k]$ intersect a fixed compact subset of $\Omega$. By Lemma \ref{lemme_2_quasi_geod_CCM}, the triangle $x_ky_kz_k$ is a $(A,0)$-quasi-geodesic triangle for all $k\geq 0$, and by Fact \ref{fact_structure_des_CC_maximaux}, the metric space $(\Omega,\delta_\Omega)$ is complete. We deduce from Proposition \ref{lemme_triangle_non_fins} that $(\Omega,\delta_\Omega)$ is not Gromov hyperbolic. 
\end{proof}

\section{Examples}\label{section_examples} 

\subsection{Homogeneous Bonsante domains}\label{section_HBD} A homogeneous Bonsante domain is a domain $\Omega_\ell$ that can be expressed as the future $I^+(F)$ of a $\ell$-dimensional Riemannian subset $F\subset \R^{1,n}$. 
Explicitly, one can write 
    $$\Omega_\ell=\left\{(t,p)\in\R^{1,n}\,\vert\, t>\sqrt{p_1^2+\dots+p_{n-\ell}^2}\right\},$$
in a suitable basis. These domains are convex and future complete, hence satisfy the hypothesis of Theorem \ref{thm_intro_CNS_futur_complet}. The author proved the following proposition in \cite[Prop. 11.2]{article_dist_markotitz}.

\begin{prop}
    \label{prop_equivalence_d_Omega_ell}
    There exists a bi-Lipschitz mapping between $(\Omega_\ell,\delta_{\Omega_\ell})$ and $\H^{n-\ell}\times \H^{\ell+1}$.\qed
\end{prop}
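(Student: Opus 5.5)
The plan is to split the domain as a product in which each factor is a homogeneous piece whose Markowitz metric is known. Choose coordinates $p=(p',p'')$ with $p'=(p_1,\dots,p_{n-\ell})$ and $p''\in\R^\ell$, so that $\Omega_\ell=\{t>|p'|\}=I^+(0)\times\R^\ell$, where $I^+(0)$ denotes the future cone in $\R^{1,n-\ell}$. The group $G=\O(1,n-\ell)_0\ltimes\R^\ell\times\R_{>0}$ acts on $\Omega_\ell$ by similarities: Lorentz transformations in the $(t,p')$ factor, translations in $p''$, and dilations. By Proposition \ref{proposition_naturalité}, $G$ acts by isometries of $\delta_{\Omega_\ell}$. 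The candidate bi-Lipschitz map is
\[
\Phi(t,p',p'')=\Big(\tfrac{(t,p')}{\tauL(0,(t,p'))},\ \big(\ln \tauL(0,(t,p')),\,p''/\tauL(0,(t,p'))\big)\Big)\in \H^{n-\ell}\times\H^{\ell+1},
\]
where the second factor is the upper half-space model with height $\tauL(0,(t,p'))$. By construction, $\Phi$ is $G$-equivariant for the obvious isometric action of $G$ on the product.

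First I establish an infinitesimal comparison. By Theorem \ref{thm_lien_dmark_Fmark}, $\delta_{\Omega_\ell}$ is the length metric of the Finsler-type functional $F_{\Omega_\ell}$ on lightlike vectors. The formula of Proposition \ref{formule_F_Omega} gives $F_{\Omega_\ell}$ explicitly. The past endpoint $x_v^-$ is where the lightlike line $x+sv$ exits the convex cone $\{t>|p'|\}$, and $x_v^+$ is infinite. So $F_{\Omega_\ell}(v)=|v|/d_h(x,x_v^-)$. Using $G$-invariance, it is enough to compare $F_{\Omega_\ell}$ with the pulled-back product Riemannian norm $\|\Phi_*v\|$ at the single point $x_0=(1,0,0)$. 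The estimate I want is $c\|\Phi_*v\|\le F(v)\le C\|\Phi_*v\|$ for all lightlike $v$ at $x_0$, with exceptional directions handled by continuity or compactness of the unit lightlike sphere. This should be a direct computation. At $x_0$, write $v=(1,a,b)$ with $|a|^2+|b|^2=1$. The line $x_0+sv$ exits at the $s<0$ solving $1+s=|sa|$, which is finite whenever $|a|<1$. If $|a|=1$, then $b=0$ and the exit point is at $s=-1/2$. I will check that both sides are positive and continuous on this compact set, which gives the constants.

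Next I pass from the infinitesimal comparison to distances. Corollary \ref{cor_general_metric_conf_plat_et_d_light}, adapted to an arbitrary continuous Riemannian metric (the proof of Lemma \ref{lemme_general_metric_conf_plat_et_d_light} only uses local almost-constancy), shows that lightlike-path length and full path length for the pulled-back product metric $\Phi^*g$ agree up to a factor $\sqrt2$ times a distortion constant. The distortion comes from the fact that $\Phi^*g$ is not conformal to $h$; I would instead use that $\Phi^*g$ is locally comparable to a constant metric and that lightlike zig-zags approximate any segment with bounded length inflation, uniformly by homogeneity. Combining the two steps gives $\delta_{\Omega_\ell}\asymp d_{\Phi^*g}$, so $\Phi$ is bi-Lipschitz.

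The main obstacle is this last step. The approximation of a segment by lightlike zig-zags must have a length inflation constant that is uniform for the non-conformal metric $\Phi^*g$. I expect to resolve it by $G$-equivariance: the constant only needs to be controlled at $x_0$, where it depends on a fixed quadratic form.
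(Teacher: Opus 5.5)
Your argument is correct once one slip is fixed. The paper gives no proof of this statement; it only cites \cite[Prop. 11.2]{article_dist_markotitz}. So your proposal is a self-contained proof using Theorem~\ref{thm_lien_dmark_Fmark}, Proposition~\ref{formule_F_Omega} and the zig-zag idea of Lemma~\ref{lemme_general_metric_conf_plat_et_d_light}.

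\textbf{The slip.} The $\H^{\ell+1}$-component of $\Phi$ must be the point of the upper half-space with height $\tau=\tauL(0,(t,p'))$ and horizontal coordinate $p''$, i.e.\ $(\tau,p'')$. Equivalently, it is $(\ln\tau,p'')$ in horospherical coordinates $ds^2+e^{-2s}|dz|^2$. With horizontal coordinate $p''/\tau$, the map is equivariant neither under dilations nor under $p''$-translations. Your reduction to $x_0$ rests entirely on that equivariance, together with the (easy, but unstated) transitivity of $G$: dilations and $p''$-translations act simply transitively on the half-space factor, and $\O(1,n-\ell)_0$ acts transitively on the hyperboloid.

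\textbf{The infinitesimal comparison is explicit.} With the corrected $\Phi$, one has $d\Phi_{x_0}(w_t,w',w'')=((0,w'),(w_t,w''))$, so $(\Phi^*g)_{x_0}=h$. For $v=(1,a,b)$ with $|a|^2+|b|^2=1$, the exit parameter is $s=-1/(1+|a|)$ for every $a$, so there is no exceptional case. Hence $F_{\Omega_\ell}(v)=1+|a|\in[1,2]$, while $\|\Phi_*v\|=\sqrt2$. This gives $\frac{1}{\sqrt2}\|\Phi_*v\|\le F_{\Omega_\ell}(v)\le\sqrt2\,\|\Phi_*v\|$ without any compactness argument.

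\textbf{The zig-zag step works as you propose.} Elements of $G$ are affine and conformal, so they map lightlike zig-zags to lightlike zig-zags and act by $\Phi^*g$-isometries. The planar decomposition in the proof of Lemma~\ref{lemme_general_metric_conf_plat_et_d_light} gives inflation $\sqrt2$ for the frozen form $(\Phi^*g)_{x_0}=h$. By equivariance, the same holds for every frozen form $(\Phi^*g)_x$. The continuity argument of Corollary~\ref{cor_general_metric_conf_plat_et_d_light} then yields $d^\light_{\Phi^*g}\le\sqrt2\,d_{\Phi^*g}$. Altogether you get $\frac{1}{\sqrt2}\,d_{\Phi^*g}\le\delta_{\Omega_\ell}\le 2\,d_{\Phi^*g}$.

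\textbf{A shorter alternative.} The paper invokes a homogeneity principle in Section~\ref{Section_hilbert}, via \cite[Cor. 4.9]{article_dist_markotitz}, that avoids both the infinitesimal comparison and the zig-zags. By \cite[Thm. A]{article_dist_markotitz}, $\delta_{\Omega_\ell}$ is locally bi-Lipschitz to $d_h$, hence to $d_{\Phi^*g}$ near $x_0$. Two $G$-invariant length metrics that are comparable near one point of a $G$-homogeneous space are globally comparable. That route leans on the cited local equivalence and yields no constants; yours is self-contained and gives explicit ones.
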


It follows from Proposition \ref{prop_equivalence_d_Omega_ell} that $(\Omega_\ell,\delta_{\Omega_\ell})$ is Gromov hyperbolic if and only if $\ell=n$. This is in accordance with Theorem \ref{thm_intro_CNS_futur_complet}, since $\partial\Omega_\ell$ is not acausal when $\ell<n$, and $\partial\Omega_\ell$ is $\varepsilon$-stably acausal for every $\varepsilon>0$ when $\ell=n$. 
Note that the quasi-hyperbolic metric $\hk_{\Omega_\ell}=t^{-2}h$ is simply the hyperbolic metric of the upper half-space when $\ell=n$, and $F_{\Omega_\ell}=\frac{1}{\sqrt2}\hk_{\Omega_\ell}$ in that case. From Corollary \ref{cor_general_metric_conf_plat_et_d_light}, one has 
$$\frac{1}{\sqrt 2}\K_{\Omega_n}\leq \delta_{\Omega_n}\leq \K_{\Omega_n}.$$
Compare also with \cite[Thm. 5]{markowitz_warped_product}.

\subsection{The spacelike slab}
    \label{ex_spacelike_slab}
    Let $\Omega=\{(t,p)\in\R^{1,n}\,\vert\,\vert t\vert<1\}$ be a spacelike slab. Then $\Omega$ is causally convex and $\partial_c^+\Omega$ and $\partial_c^-\Omega$ are stably acausal. From Theorem \ref{thm_causally_thin}, we know that $(\Omega,\delta_\Omega)$ is causally thin. However, the metric space $(\Omega,\delta_\Omega)$ is not Gromov hyperbolic. To see this, it is sufficient to show that $(\Omega,\K_\Omega)$ is not Gromov hyperbolic by Theorem \ref{thm_equiv_explicite_delta_Omega_et_qh}. Now the hyperplane $(t=0)$ is a totally geodesic copy of $\R^n$ inside $(\Omega,\K_\Omega)$. Hence $(\Omega,\K_\Omega)$ contains Euclidean triangles and $(\Omega,\K_\Omega)$ is not Gromov hyperbolic. 
    Note also that $\delta_\Omega$ coincides with the null distance of the time function  $\tau(t,x)=\ln((1+t)/(1-t))$ by Lemma \ref{lemme_temps_projectif}, since $(t,x)\to t$ is a projective Cauchy time function.

\section{Comparison with the Hilbert metric}\label{Section_hilbert}

Let $\Omega$ be a convex domain in $\R^{1,n}$ containing no complete affine line, and let $H_\Omega$ denote the Hilbert metric on $\Omega$. Recall that the Hilbert metric is obtained by integrating the Finsler metric 
$$G_\Omega(v)=\frac{\vert v \vert}{d_h(x,x_v^-)}+\frac{\vert v \vert}{d_h(x,x_v^+)},$$
along piecewise smooth curves in $\Omega$, where $x_v^\pm$ are the endpoints of the geodesic segment generated by $v\in T_x\Omega$ in $\Omega$.
It is clear from Proposition \ref{formule_F_Omega} that  $H_\Omega\leq \delta_\Omega$ holds on $\Omega$, but it is not clear in general if these two distances are equivalent, that is, if there exists a constant $\alpha\geq 1$ such that 
$\delta_\Omega\leq \alpha H_\Omega$, see for instance \cite[Ex. 7.8]{article_dist_markotitz} for the case of the disc.

\begin{cor}[of Theorem \ref{thm_intro_CNS_cc}]
    \label{corollaire_pour_hilbert}
    There exist infinitely many bounded convex domains $\Omega$ in $\R^{1,n}$ such that $\delta_\Omega$ and $H_\Omega$
    are not quasi-isometrically equivalent.
\end{cor}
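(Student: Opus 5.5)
The argument plays the hyperbolicity criterion for $\delta_\Omega$ in Theorem~\ref{thm_intro_CNS_cc} against the hyperbolicity criterion for the Hilbert metric. Gromov hyperbolicity is a quasi-isometry invariant: by Proposition~\ref{prop_dist_quasi_iso}, two quasi-isometrically equivalent proper geodesic distances are either both hyperbolic or both non-hyperbolic. So it suffices to produce infinitely many bounded convex domains $\Omega$ for which $(\Omega,\delta_\Omega)$ is Gromov hyperbolic but $(\Omega,H_\Omega)$ is not.

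\textbf{Step 1 (the family).} Take bounded, convex, causally convex domains with stably acausal causal boundaries. For instance, the stable diamonds $I_\varepsilon(a,b)$ with $a\ll b$ and $\varepsilon>0$. Others come from Proposition~\ref{prop_structure_ouverts_causallement_convexes} by taking $U$ a ball and $f_-$ convex, $f_+$ concave, both $\lambda$-Lipschitz for some $\lambda<1$ in the interior, with $f_-=f_+$ on $\partial U$. These are pairwise non-similar for different parameters, which gives infinitely many examples.

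\textbf{Step 2 ($\delta_\Omega$ is hyperbolic).} By Theorem~\ref{thm_intro_CNS_cc} (or directly Theorem~\ref{thm_général_condi_suffisante}), each such $(\Omega,\delta_\Omega)$ is Gromov hyperbolic. Both metrics are proper and geodesic:
\begin{itemize}
\item $\delta_\Omega$ is complete by Corollary~\ref{cor_completude_delta_Omega} and is a length metric by Theorem~\ref{thm_lien_dmark_Fmark}, so Hopf--Rinow applies;
\item $H_\Omega$ on a bounded convex domain is classically proper and geodesic.
\end{itemize}

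\textbf{Step 3 ($H_\Omega$ is not hyperbolic).} This is the main point. By Karlsson--Noskov and Benoist, recalled in the introduction, a bounded convex domain with Gromov hyperbolic Hilbert metric must have $C^1$ boundary. So I need $\partial\Omega$ to fail to be $C^1$ somewhere. It does, along the edge $\partial_c^+\Omega\cap\partial_c^-\Omega$, i.e. over $\partial U$:
\begin{itemize}
\item Near $p\in\partial U$, the future causal boundary lies on or below the future $\lambda$-cone and the past causal boundary lies on or above the past $\lambda$-cone.
\item Hence $\Omega$ admits two distinct supporting hyperplanes at $(f_\pm(p),p)$: one supporting $\partial_c^+\Omega$ and one supporting $\partial_c^-\Omega$.
\item These two hyperplanes are distinct because the graphs of $f_+$ and $f_-$ meet transversally there; in the diamond case $\partial\Omega$ has a genuine corner along the equatorial sphere.
\item A boundary point with two supporting hyperplanes is a non-$C^1$ point, so $(\Omega,H_\Omega)$ is not Gromov hyperbolic.
\end{itemize}
To be more self-contained, I could instead exhibit explicitly fat Hilbert triangles near such a corner point, as in Benoist's argument.

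\textbf{Step 4 (conclusion).} If $\delta_\Omega$ and $H_\Omega$ were quasi-isometrically equivalent, Proposition~\ref{prop_dist_quasi_iso} would force $H_\Omega$ to be hyperbolic, contradicting Step 3.

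The main obstacle is Step 3, namely justifying the non-$C^1$ claim in general. Stable acausality gives slopes $\le 1/(1+\varepsilon)$, so the opposite slopes of $f_+$ and $f_-$ at $\partial U$ cannot match. I would verify this first for the stable diamond family, which already yields infinitely many examples and makes the general claim unnecessary.
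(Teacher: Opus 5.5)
Your proposal is correct and follows the paper's own argument. Theorem~\ref{thm_intro_CNS_cc} gives hyperbolicity of $\delta_\Omega$, the corner of $\partial\Omega$ along $\partial(\partial_c^+\Omega)$ rules out hyperbolicity of $H_\Omega$ via Karlsson--Noskov and Benoist, and Proposition~\ref{prop_dist_quasi_iso} concludes. Your explicit family of stable diamonds, the check that both metrics are proper and geodesic, and the justification of the non-$C^1$ corner fill in details the paper leaves implicit.
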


\begin{proof}
    Let $\Omega$ be a bounded, convex and causally convex domain in $\R^{1,n}$ with stably acausal boundary. From Theorem \ref{thm_intro_CNS_cc}, the metric space $(\Omega,\delta_\Omega)$ is Gromov hyperbolic. Now the boundary of $\Omega$ is not $C^1$ at points of $\partial(\partial_c^+\Omega)$. Hence $(\Omega,H_\Omega)$ is not Gromov hyperbolic, see \cite{karlsson_Noskov,Benoist_convexes_quasisymetriques}. In particular, $H_\Omega$ and $\delta_\Omega$ are not quasi-isometrically equivalent by Proposition \ref{prop_dist_quasi_iso}. 
\end{proof}

In dimension $n+1=2$, there exist, up to a similarity, exactly four domains such that the Markowitz metric is equivalent to the Hilbert metric.

\begin{figure}
    \centering
         
\begin{tikzpicture}[scale=1]

\def\h{0.2} 
\def\colorNS{red!50} 
\def\colorOE{blue!50} 

\def\SmoothConvex{
  (0.2,0) (3,0.3) (3.2,1.8) (2.2,3) (1.2,3.2) (0,1.5)
}

\filldraw[blue!10, thick]
  plot [smooth cycle, tension=0.7] coordinates {\SmoothConvex};

\begin{scope}
  \clip plot [smooth cycle, tension=0.7] coordinates {\SmoothConvex};

  \foreach \x in {0,\h,...,3.5} {
    \draw[\colorNS, thin] (\x,-0.5) -- (\x,3.5);
  }

  \foreach \y in {0,\h,...,3.5} {
    \draw[\colorOE, thin] (-0.5,\y) -- (3.5,\y);
  }
\end{scope}

\draw[thick] (1,0.6) -- (1.2,0.6) -- (1.2,1.2) -- (1.6,1.2) -- (1.6,1.4) -- (1.8,1.4) -- (1.8,1.6) -- (2.2,1.6) -- (2.2,2);

\fill[black] (1,0.6) circle (1.5pt) node [left] {$x$};
\fill[black] (2.2,2) circle (1.5pt) node [right] {$y$};

\draw[thick]
  plot [smooth cycle, tension=0.7] coordinates {\SmoothConvex};

\end{tikzpicture}

    \caption{The Markowitz metric of a convex subset of $\R^{1,1}=(\R^2,[dxdy])$ can be thought of as a ``Manhattan-Hilbert metric''. It is the metric one would obtain in a convex city, where the street grid is of Manhattan type, but where distances are measured according to the Hilbert metric rather than the Euclidean one.}
    \label{Figure_manhattan_convex}
\end{figure}
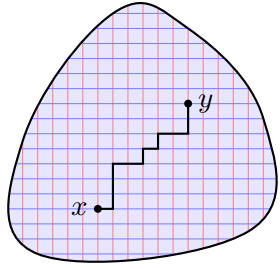

\begin{thm}
    Let $\Omega$ be a convex domain in $\R^{1,1}$, containing no complete line. Then the following propositions are equivalent:
    \begin{enumerate}
        \item $\delta_\Omega$ and $H_\Omega$ are equivalent;
        \item $\delta_\Omega$ and $H_\Omega$ are quasi-isometrically equivalent;        
        \item $\Omega$ is Möbius equivalent to a diamond;
        \item $\Omega$ is a diamond, a space component, a future, a past, or a half lightlike slab.
    \end{enumerate}
\end{thm}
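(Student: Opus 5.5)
We prove the cycle of implications (4)$\Rightarrow$(3)$\Rightarrow$(1)$\Rightarrow$(2)$\Rightarrow$(4). Work in null coordinates $(u,v)$ on $\R^{1,1}=(\R^2,[du\,dv])$, where lightlike geodesics are the horizontal and vertical lines. The formula of Proposition~\ref{formule_F_Omega} then shows that $F_\Omega$ is the restriction of the Hilbert Finsler function $G_\Omega$ to the two coordinate directions. By Theorem~\ref{thm_lien_dmark_Fmark}, $\delta_\Omega$ is therefore the ``Manhattan--Hilbert'' metric obtained by integrating $G_\Omega$ along staircase curves, as in Figure~\ref{Figure_manhattan_convex}. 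In particular $H_\Omega\le\delta_\Omega$.

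For (4)$\Rightarrow$(3), we note that in $(\R^2,[du\,dv])$ the conformal group is generated by products of homographies acting separately on $u$ and on $v$ (Möbius maps of $\RP^1\times\RP^1$). A diamond is a product of two bounded intervals $J_1\times J_2$. Sending one or both endpoints of $J_1$ or $J_2$ to infinity by homographies produces, up to similarity, the future, the past, a space component $\{uv<0,\dots\}$, and the half lightlike slab $J_1\times\R_{>0}$. This is a case check on which endpoints are sent to $\infty$.

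For (3)$\Rightarrow$(1), Proposition~\ref{proposition_naturalité} shows that the Markowitz metric is Möbius invariant. On the diamond $J_1\times J_2$, Lemma~\ref{lemme_temps_projectif}, or a direct computation, gives $\delta=\rho_{J_1}\oplus_{\ell^1}\rho_{J_2}$. The Hilbert metric of a square is bi-Lipschitz to $\rho_{J_1}\oplus\rho_{J_2}$, since the Hilbert metric of a polygon is bi-Lipschitz to the product of the Hilbert metrics of its pairs of parallel sides. Hence $\delta\le 2H$ on the square. For the four unbounded models, we do not transport $H$ by the Möbius map, since $H$ is not Möbius invariant. Instead we check each model directly: each is an affine product of intervals and half-lines in the $(u,v)$ coordinates, hence a convex ``polygon'' with sides parallel to the null directions, and the same product argument applies. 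For the future, for instance, both metrics are comparable to $|\ln(u/u')|+|\ln(v/v')|$. The implication (1)$\Rightarrow$(2) is trivial.

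The main obstacle is (2)$\Rightarrow$(4). Assume $\delta_\Omega\le\alpha H_\Omega+\beta$ and suppose $\partial\Omega$ is not a union of at most four null segments or rays in the configurations listed in (4). Then some boundary point $p$ has a supporting line that is not null and is approached by a non-null, strictly curved or spacelike/timelike boundary arc. We then use the zooming argument of Section~\ref{section_zooming}. Rescale at $p$ by similarities that preserve the null directions, namely diagonal maps $(u,v)\mapsto(\lambda u,\mu v)$ composed with translations; these are conformal, so they preserve $\delta$, and they are affine, so they preserve $H$. Passing to a limit via Lemma~\ref{lemme_convergence_distance_markowitz_dun_convexe} and its Hilbert analogue, we may assume $\Omega$ is a half-plane bounded by a non-null line, which is Einstein--de Sitter or a timelike analogue. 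There $H$ is not even a metric, as it degenerates along directions parallel to the boundary, whereas $\delta$ is a genuine distance comparable to hyperbolic distance. Take points $x_k,y_k$ at fixed height with $|x_k-y_k|\to\infty$, so that $H(x_k,y_k)\to0$ while $\delta(x_k,y_k)\to\infty$. Transferring this back to $\Omega$ along a sequence violates the quasi-isometry inequality. The delicate part is ensuring that the limit is genuinely a half-plane with non-null boundary, or more generally a domain with a non-null boundary piece, and that the constants $\alpha,\beta$ survive the limit. This requires a careful choice of $\lambda_k,\mu_k$, similar to Lemma~\ref{lemme_dynamique_cas_causalement_convexe}. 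We would also need to rule out the case where $\partial\Omega$ consists only of null segments but in an unlisted configuration, for example a triangle with one non-null side; that case again produces a non-null side, to which the same argument applies.
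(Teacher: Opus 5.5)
Your implications (4)$\Rightarrow$(3)$\Rightarrow$(1)$\Rightarrow$(2) are essentially sound. However, (2)$\Rightarrow$(4) stops exactly at the step you call delicate, and the fix you suggest goes the wrong way. Two ingredients are missing.

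First, you must reduce to a point $p$ where $\partial\Omega$ is \emph{differentiable} with non-null tangent line $\Delta$. A point with a non-null supporting line is not enough: the vertex of a future has spacelike supporting lines, and blowing up there returns the future. Such a $p$ exists unless the tangent line, which is defined off a countable subset of $\partial\Omega$, is null everywhere.

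Second, at such a $p$ no anisotropic rescaling is needed. The homotheties $h_k$ of ratio $2^k$ centred at $p$ are affine and conformal, so $\Omega_k=h_k(\Omega)$ satisfies $\delta_{\Omega_k}\le\alpha H_{\Omega_k}+\beta$ with the same constants. By convexity, $\Omega_k$ increases to the open half-plane $U$ bounded by $\Delta$. The constants then survive for free:
\begin{itemize}
\item $\Omega_k\subset U$ gives $\delta_U\le\delta_{\Omega_k}$ by naturality;
\item $H_{\Omega_k}(a,b)\to 0$ whenever $[a,b]$ is parallel to $\Delta$, since the chord through $a,b$ exhausts the whole line.
\end{itemize}
Thus $\delta_U(a,b)\le\beta$ for all such pairs, contradicting the unboundedness of $\delta_U$ along lines parallel to $\Delta$. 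This is the paper's argument.

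By contrast, boost-type rescalings $(u,v)\mapsto(\lambda_k u,\mu_k v)$ with $\lambda_k/\mu_k\to\infty$, as in Lemma~\ref{lemme_dynamique_cas_causalement_convexe}, push non-null lines towards null ones. They tend to destroy exactly the non-null boundary piece you want to keep. No Hilbert analogue of Lemma~\ref{lemme_convergence_distance_markowitz_dun_convexe} is needed either; a multiplicative one would fail anyway, since $H_U$ degenerates.

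You must also settle the complementary case. If all tangent lines are null, $\partial\Omega$ is a finite union of null segments and rays. Edge directions of a convex polygon turn monotonically, so each null direction occurs at most twice. Boundedness or the absence of complete lines then leaves exactly the diamond, the four null wedges and the half lightlike slabs. Your ``triangle with one non-null side'' does not belong to this case; it is handled by the blow-up.

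For (4)$\Rightarrow$(1), your route is genuinely different from the paper's, and cleaner. The paper proves $\delta_\Omega\le\alpha H_\Omega$ near one point of a future, past or space component and spreads it by the transitive action of linear similarities. It then cuts the half lightlike slab and the diamond into such pieces, ending with a non-explicit constant $2\alpha$.

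You instead compute $\delta$ exactly on a product $\Omega=J_1\times J_2$ in null coordinates, $\delta_\Omega((u,v),(u',v'))=\rho_{J_1}(u,u')+\rho_{J_2}(v,v')$. This is correct, because $F_\Omega$ in each null direction depends only on the corresponding coordinate, and L-shaped staircases stay in the product.

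Your justification on the Hilbert side (``the Hilbert metric of a polygon is bi-Lipschitz to the product of the Hilbert metrics of its pairs of parallel sides'') is not a valid general statement. The right reason is elementary. On one side, $H_\Omega\le\delta_\Omega$ always. On the other, $\Omega\subset J_1\times\R$ and $\Omega\subset\R\times J_2$, whose Hilbert pseudo-distances are exactly $\rho_{J_1}(u,u')$ and $\rho_{J_2}(v,v')$, because parallel projection along the boundary direction preserves cross-ratios. Monotonicity of $H$ under inclusion then gives $\tfrac12\delta_\Omega\le H_\Omega\le\delta_\Omega$ uniformly on all the models in (4).

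Finally, your (3)$\Rightarrow$(1) silently uses (3)$\Rightarrow$(4). The case check on which endpoints a pair of homographies sends to $\infty$ does give this, but it should be stated in that direction, as the paper does by treating (3)$\Leftrightarrow$(4) separately.
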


In the previous statement, a \emph{space component} refers to a connected component of $\R^{1,1}\setminus J^\pm(x)$, for some $x\in \R^{1,1}$. A \emph{half lightlike slab} corresponds to the intersection of a lightlike slab with the future or the past of a point. A \emph{Möbius transformation} is the composition of similarities with the Lorentzian inversion $i(x)=\frac{x}{\b(x,x)}$.

\begin{proof}
    We first prove $(2)\Rightarrow (4)$. Assume that there exist constants $\alpha,\beta>0$ such that $\delta_\Omega\leq \alpha H_\Omega+\beta$. Let $x\in \partial \Omega$ be such that $\partial\Omega$ is differentiable at $x$. Let $\Delta$ be the tangent line to $\Omega$ at $x$, and let $U$ be the half-space bounded by $\Delta$ and containing $\Omega$. We claim that $\Delta$ is lightlike. Assume instead that $\Delta$ is nondegenerate. Without loss of generality, we can assume that $\Delta$ is Riemannian. For $k\geq 0$, let $h_k$ be the dilation at $x$ of distortion $2^k$, and let $\Omega_k=h_k(\Omega)$. Then $(\Omega_k)$ converges to $U$ in the Hausdorff topology. Let $a,b\in U$, and let $k$ be large enough, so that $a,b\in \Omega_k$. Then 
    $$\delta_U(a,b)\leq \delta_{\Omega_k}(a,b)\leq \alpha H_{\Omega_k}(a,b)+\beta.$$
    Choose $a$ and $b$ so that $[a,b]$ is collinear to $\Delta$. Then $H_{\Omega_k}(a,b)\to 0$ as $k\to \infty$, hence $\delta_U(a,b)\leq \beta$. This contradicts the fact that $\delta_U(a,b)$ can be chosen arbitrarily large. Hence $\Delta$ is lightlike. It follows that the boundary of $\Omega$ is a union of lightlike segments or half-lines, and that $\Omega$ is a diamond, a space component, a future, a past or a half lightlike slab. 
    
    We now prove $(4)\Rightarrow (1)$. We distinguish three cases.
    \begin{enumerate}
        \item \emph{$\Omega$ is a future, a past or a space component}. Then there exists $\alpha>0$ such that $\delta_\Omega\leq \alpha H_\Omega$ holds on a neighborhood of a point of $\Omega$. Since the group $\O(1,1)$ acts by similarities of $\Omega$, its action is isometric with respect to both $\delta_\Omega$ and $H_\Omega$. Since this action is transitive, and since $\delta_\Omega$ and $H_\Omega$ are both length metrics, we deduce that $\delta_\Omega\leq \alpha H_\Omega$ holds on $\Omega$, see for instance \cite[Cor. 4.9]{article_dist_markotitz}. Note that the constant $\alpha>0$ is independent of $\Omega$.
        \item \emph{$\Omega$ is a half lightlike slab}. Up to applying a similarity, we can assume that 
        $$\Omega=\{(t,p)\in \R^{1,1} \,\vert\, t+p>0\text{ and }\vert t-p\vert<1\}.$$ 
        Let $\Delta=\{t=p\}$ and let $U=\Omega\cap J^-(\Delta)$ and $V=\Omega\cap J^+(\Delta)$ be the two subdomains of $\Omega$ bounded by $\Delta$. Let $I=I^+((-1/2,1/2))$. 
        Let $x\in U$ and $v\in T_xU$ be lightlike. If $v$ is collinear to $(1,1)$, then $F_\Omega(v)=F_U(v)$. If not, then 
        $$F_\Omega(v)=\frac{\vert v\vert}{d(x,x_v^+)}+\frac{\vert v\vert}{d(x,x_v^-)}\leq 2\frac{\vert v\vert}{d(x,x_v^-)}=2F_I(v).$$
        From the previous case, we deduce that 
        $$\delta_\Omega\leq 2\delta_I\leq 2\alpha H_I\leq 2\alpha H_\Omega,\text{ on $U$}.$$
        Similarly, one has $\delta_\Omega\leq  2\alpha H_\Omega$ on $V$. Since $\delta_\Omega$ and $H_\Omega$ are length metrics on $\Omega$, it follows that $\delta_\Omega\leq  2\alpha H_\Omega$ on $\Omega$. 
        \item \emph{$\Omega$ is a diamond.} Up to applying a similarity, we can assume that 
        $$\Omega=\{(t,p)\in \R^{1,1} \,\vert\, \vert t\pm p\vert<1\}.$$ 
        Let $D_1,\dots,D_4$ be the four diamonds of $\Omega$ bounded by the lightcone of the origin. Then, as in the previous case, one has $\delta_\Omega\leq  2\alpha H_\Omega$ on $D_i$, for every $i=1,\dots,4$. We deduce that $\delta_\Omega\leq  2\alpha H_\Omega$ on $\Omega$.
    \end{enumerate}
    In every case, we have $\delta_\Omega\leq  2\alpha H_\Omega$, hence $\delta_\Omega$ and $H_\Omega$ are equivalent. Implication $(1)\Rightarrow (2)$ is immediate. The equivalence $(4)\Leftrightarrow (3)$ is standard and holds in any dimension, see for instance \cite[Sect. 1.4.5]{thèse}.
\end{proof}

\subsection*{Declaration} This research was funded in part by the Luxembourg National Research Fund (FNR), grant reference O24/18936913/RiGA.  For the purpose of open access, and in fulfillment of the obligations arising from the grant agreement, the author has applied a Creative Commons Attribution 4.0 International (CC BY 4.0) license to any Author Accepted Manuscript version arising from this submission.

\bibliographystyle{amsalpha} 
\bibliography{biblio}

\end{document}